\renewcommand{\le}{\leqslant}
\renewcommand{\ge}{\geqslant}
\newcommand{\R}{\mathbb{R}}
\newcommand{\ff}{\mathcal{F}}
\newcommand{\s}{\mathcal{S}}
\newcommand{\aaa}{\mathcal{A}}
\newcommand{\bb}{\mathcal{B}}
\renewcommand{\E}{\mathsf{E}}
\newcommand{\m}{\mathcal}
\newcommand{\T}{\mathcal{T}}
\newcommand{\W}{\mathcal{W}}
\newtheorem{thm}{Theorem}
\newtheorem{lem}[thm]{Lemma}
\newtheorem{cor}[thm]{Corollary}
\newtheorem{prop}[thm]{Proposition}
\newtheorem{conj}{Conjecture}
\newtheorem{obs}[thm]{Observation}
\newtheorem{cla}[thm]{Claim}
\newtheorem{defn}[thm]{Definition}
\title{Delta-system method: a survey}
\author{Andrey Kupavskii}
\institute{Andrey Kupavskii\at Moscow Institute of Physics and Technology, St. Petersburg State University, Innopolis University, Russia \email{kupavskii@ya.ru}}
\date{}
\begin{document}
\maketitle

\section{Introduction}
In 1960 Erd\H os and Rado \cite{ER} published a paper that, in retrospect, became one of the most influential papers in extremal set theory. (It was also probably the second paper on the subject, after the famous paper of Sperner.) They proved a result of Ramsey theoretic flavour, stating that in any sufficiently large family of sets of bounded size there is a homogeneous substructure, called a {\it $\Delta$-system} (also known under the name of a {\it sunflower}). For many qualitative results in Discrete Mathematics and Theoretical Computer Science, this has become a very powerful tool to analyze complex set families. Extremal set theory flourished in the 1970's--80's, and many exciting developments happened then. One of them was the development of the  {\em $\Delta$-system method} in the works of Frankl and F\"uredi. In this survey, we try to give a concise picture of this method starting from its early stages and to the modern day. We also tried to present the proofs of most of the key results. On top of this, we survey the literature on the problems that the Delta-systems was applied to. 

We should also note that two other excellent sources on the Delta-system method are the survey of F\"uredi \cite{Fur91} and the book of Frankl and Tokushige \cite{FT}. The content of this survey turned out to have quite significant intersections with these two sources. We decided to leave it as it is, partly because we aimed for completeness, and partly because presenting proofs allowed us to elaborate on some of the aspects of the method.
\tableofcontents

\subsection{Notation} We use notation, standard for extremal set theory: for non-negative integers $n,a,b$, put  $[n]=\{1,\ldots, n\}$ and, more generally, $[a,b]=\{a,a+1,\ldots, b\}$; for a set $X$, let $2^{X}$ (${X\choose k}$, ${X\choose \le k}$) stand for  the collection of all subsets of $X$ (all $k$-element subsets of $X$, all $\le k$-element subsets of $X$); a {\it family} is simply a collection of subsets of the (not necessarily specified) {\it ground set}; a {\it matching} is a family of sets that are pairwise disjoint. We typically denote elements of the ground set by lower case letters ($x,y,v$ etc.), sets (subsets of the ground set) by capital letters ($A,B,F$ etc.), and families by calligraphic letters ($\mathcal A, \m B, \ff$ etc.). For a family $\ff$, we use $\cup \ff$ as a shorthand for $\cup_{F\in\ff}F$, and similarly for $\cap \ff$. Given a family $\ff$ and an element $x$, the {\it degree of $x$} is the number of sets from $\ff$ containing $x$.

We use the following standard notation for families $\ff, \m S$ and a set $X$.
\begin{align*}
  \ff(X) &= \big\{A\setminus X: A\in \ff, X\subset A\big\}, \\
  \ff[X] &= \big\{A: A\in \ff, X\subset A\big\}, \\
  \ff[\m S] &= \bigcup_{X\in \m S}\ff[X].
\end{align*}
We use $\ff(x)$ as a shorthand for $\ff(\{x\})$. We use $O_{k,s}(n)$ to denote a function of the form $g(k,s)\cdot O(n)$ for some $g(k,s)$, and similarly for $\Omega_k(n)$ etc. Notation related to F\"uredi's structural theorem is given in Section~\ref{sec31}. For a family $\m G$ and integer $m$, the {\it shadow on level $m$} is defined as follows: $\partial^{(m)}\m G:= \cup_{G\in \m G}{G\choose m}$.

The key extremal functions that we study are $m(n,k,L)$, the largest size of family in  ${[n]\choose k}$ with pairwise intersections in $L$ (see the beginning of Section~\ref{sec2}); $f(n,k,\ell,s)$, the largest family in ${[n]\choose k}$ with no sunflower with $s$ petals and core of size $\ell$ (see Section~\ref{sec41}); $sim(n,k,d)$, $ssim(n,k,d)$, $sim_\ell(n,k,d)$, $ssim_\ell(n,k,d)$, the largest sizes of families without different simplices and special simplices (see Section~\ref{sec43});  more generally, $ex_k([n],\m H)$, the largest family in ${[n]\choose k}$ that does not contain a fixed family $\m H$. Sometimes we write {\it avoids $\m H$} or {\it is $\m H$-free} to denote the same thing.

\subsection{$\Delta$-systems and the result of Erd\H os and Rado} A family $\aaa = \{A_1,\ldots, A_s\}$ of sets is a {\em $\Delta(s)$-system}, or a {\it sunflower} if $A_i\cap A_j = \bigcap_{i\in [s]}A_i$ for all $i\ne j\in [s]$. If we want to specify the size, the we say that it is a $\Delta(s)$-system or an $s$-sunflower. The set $\bigcap_{i\in [s]}A_i$ is called the {\em core} (or {\em kernel}) of the $\Delta$-system. Erd\H os and Rado introduced $\Delta$-systems in 1960 in the paper \cite{ER} and proved the following result.

\begin{thm}[Erd\H os and Rado \cite{ER}]\label{thmer}
If $\ff$ is a family of sets of size $\le k$ and $|\ff|> k!(s-1)^k$, then $\ff$ contains a $\Delta(s)$-system.
\end{thm}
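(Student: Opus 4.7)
The plan is to prove the statement by induction on $k$, with the base case $k=1$ being essentially trivial: a family of singletons with more than $s-1$ elements contains at least $s$ distinct singletons, and any $s$ pairwise disjoint singletons constitute a $\Delta(s)$-system with empty core.

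For the inductive step, I would exploit a maximal matching inside $\ff$. Let $\m M = \{A_1, \ldots, A_t\} \subset \ff$ be a matching of maximum size. If $t \ge s$, then $\m M$ already contains a $\Delta(s)$-system (with empty core), so we may assume $t \le s-1$, in which case $|\cup \m M| \le k(s-1)$. By maximality, every $F \in \ff$ meets $\cup \m M$, so averaging over the elements of $\cup \m M$ produces some $x$ lying in at least
\[
\frac{|\ff|}{k(s-1)} > \frac{k!(s-1)^k}{k(s-1)} = (k-1)!(s-1)^{k-1}
\]
sets of $\ff$. The derived family $\ff(x) = \{F\setminus\{x\} : F\in \ff,\ x\in F\}$ consists of sets of size at most $k-1$, and the bound above is exactly the threshold required to apply the inductive hypothesis to $\ff(x)$. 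The resulting $\Delta(s)$-system in $\ff(x)$, with $x$ added back to every petal, gives a $\Delta(s)$-system in $\ff$.

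I do not anticipate a genuine obstacle: the only real design choices are how to amplify the hypothesis into a high-degree element, and the maximal-matching/pigeonhole combination handles this cleanly while producing exactly the right numerical constant. The one point to keep honest is making sure the strict inequality $|\ff| > k!(s-1)^k$ is preserved along the recursion, which is why the pigeonhole must be applied to the strict inequality and not to a rounded-down version. Everything else — the size bound on $\cup \m M$, the fact that $\ff(x)$ consists of sets of size $\le k-1$, and reattaching $x$ to recover a sunflower — is routine.
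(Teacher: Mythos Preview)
Your proposal is correct and is essentially identical to the paper's proof: the same induction on $k$, the same maximal-matching argument to locate a high-degree element $x$, and the same application of the inductive hypothesis to $\ff(x)$ followed by reattaching $x$. The paper is slightly terser but there is no substantive difference.
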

\begin{proof}
  The proof is by induction on $k$. The result is evident for $k=1$. Assuming that the result holds for $k-1$, let us prove it for $k$. Take the largest matching $M_1,\ldots, M_{\ell}\in \ff$. If $\ell\ge s$, then $\{M_1,\ldots, M_s\}$ form a $\Delta(s)$-system, and thus the theorem is proved. If $\ell\le s-1$, then $M:=\bigcup_{i\in[\ell]} M_i$ has size at most $(s-1)k$. By maximality of the chosen matching, any set from $\ff$ intersects $M$. Thus, there is an element $x\in M$  such that
  $$\ff(x):=\{F\setminus \{x\}: x\in F, F\in \ff\}$$
  has size at least $\frac{|\ff|}{(s-1)k}>(k-1)!(s-1)^{k-1}$. The sets in $\ff(x)$ have size at most $(k-1)$, and $|\ff(x)|>(k-1)!(s-1)^{k-1}$. Thus we may apply the inductive assumption to $\ff(x)$ and find a $\Delta(s)$-system in $\ff(x)$. If we append $x$ to every set in that $\Delta(s)$-system, we get a $\Delta(s)$-system in $\ff$, as claimed.
\end{proof}

Let us denote
\begin{multline}\label{eqphi}\phi(k,s):=\max\big\{|\ff|: \ff \text { consists of sets of size }\le k\\ \text{ and }\ff \text{ contains no }\Delta(s+1)-\text{system}\big\}.\end{multline}
In these terms, Theorem~\ref{thmer} states that $\phi(k,s)\le k!s^k$. Also note that the maximum is attained on a family of sets of size $k$: we can always complement sets of size $<k$ by new elements of the ground set that are all pairwise distinct. Clearly, this will not create new $\Delta$-systems. Abbot, Hanson and Sauer \cite{AHS} in 1972, and then Spencer \cite{S} in 1977 improved upper bounds on $\phi(k,s)$. The result of Spencer states that for any fixed $s$ and $\epsilon>0$ there exists $C$ such that $\phi(k,s)\le Ck!(1+\epsilon)^k$. Kostochka \cite{Kost} improved this bound for $\Delta(s)$-free systems to $\phi(k,s)\le C(s,\alpha)k! \Big(\frac{(\log\log\log k)^2}{\alpha \log\log k}\Big)^k$ for any $\alpha>1$ and a certain function $C(s,\alpha)$. For fixed $k$ and large $s$ Kostochka, R\"odl and Talysheva \cite{KRT} showed that $\phi(k,s) = (1+o(1))k^{s}$.

One natural example for a $\Delta(s+1)$-free $k$-uniform family is a $k$-partite hypergraph with parts of size $s$. It has $s^k$ sets. There are, however, examples that are exponentially bigger. The authors of \cite{AHS} showed that $\phi(2,s)=s(s+1)$ for even $s$ and $s^2+\frac{s-1}2$ for odd $s$.  The following observation allows us to extend the graph bound to higher uniformities. \begin{obs}We have $\phi(a+b,s)\ge \phi(a,s)\phi(b,s)$.\end{obs}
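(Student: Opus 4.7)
The plan is to construct an explicit product family achieving the claimed size. Using the remark just above that the extremum in \eqref{eqphi} is achieved by uniform families, I would fix an $a$-uniform $\Delta(s+1)$-free family $\aaa$ of size $\phi(a,s)$ on a ground set $X$ and a $b$-uniform $\Delta(s+1)$-free family $\bb$ of size $\phi(b,s)$ on a ground set $Y$ disjoint from $X$, and consider $\ff:=\{A\cup B:A\in\aaa,\,B\in\bb\}$ on the ground set $X\cup Y$. Disjointness of $X,Y$ makes the decomposition $F=A\cup B$ with $A\subseteq X,\,B\subseteq Y$ unique, so $|\ff|=\phi(a,s)\cdot\phi(b,s)$ and every set in $\ff$ has size $a+b$.

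The remaining task is to verify $\ff$ contains no $\Delta(s+1)$-system. I would assume $F_1,\dots,F_{s+1}\in\ff$ formed such a sunflower with core $C$, write $F_i=A_i\cup B_i$ and $C=C_A\cup C_B$ along the partition of the ground set, and split the sunflower identities into $A_i\cap A_j=C_A$ and $B_i\cap B_j=C_B$ for $i\ne j$. The pivot of the argument is a dichotomy in each coordinate: the $A_i$'s are either all pairwise distinct or all equal, and likewise for the $B_i$'s. Granting this, pairwise distinct $A_i$'s form a $\Delta(s+1)$-system in $\aaa$ and contradict the choice of $\aaa$; the symmetric case contradicts the choice of $\bb$; and ``all equal in both coordinates'' collapses the $F_i$'s to a single set, violating the implicit distinctness of members of a sunflower.

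The only subtle moment is establishing the dichotomy, which is exactly where uniformity is essential. If $A_i=A_j$ for some $i\ne j$, then this common set equals $A_i\cap A_j=C_A$, so $C_A$ is an $a$-set. Since $C_A\subseteq A_k$ for every $k$ (as $C_A\subseteq C\subseteq F_k$) and every $A_k$ also has size exactly $a$, one concludes $A_k=C_A$ for all $k$. Without the uniformity reduction provided by the preceding remark, one would have to track partial coincidences in both coordinates simultaneously, which is messier but unnecessary here.
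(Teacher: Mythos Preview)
Your proof is correct and follows exactly the same product construction as the paper: form $\{A\sqcup B:A\in\aaa,\,B\in\bb\}$ on disjoint ground sets and observe it is $\Delta(s+1)$-free of the required size. The paper simply asserts the last point as ``easy to see,'' whereas you spell out the dichotomy argument carefully; your use of uniformity to force the all-equal-or-all-distinct alternative is the clean way to do this.
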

\begin{proof} Let $\aaa$ be a family of $a$-element sets with no $\Delta(s)$-system and of size $\phi(a,s)$. Define $\bb$, $|\bb| = \phi(b,s)$ similarly. Assume that $\aaa$ and $\bb$ live on disjoint ground sets $X$ and $Y$, respectively. Consider a family $\m C:=\{A\sqcup B: A\in \m A, B\in \m B\}$. It is easy to see that $|\m C| = \phi(a,s)+\phi(b,s)$ and that it is a $\Delta(s+1)$-system free.
\end{proof}
Abbott, Hanson and Sauer also showed a lower bound $\phi(k,3)\ge 10^{(k/2)-c\log k}$ with some positive constant $c$, which is exponentially better than $3^k$ which follows from the construction above. They used a construction of a $3$-uniform family of size $10$ and with no $\Delta(3)$-system, and then leveraged it to any uniformity using an iterated product construction, which gives a recursion $\psi(ab)\ge \psi(a)\psi(b)^a$, where $\psi(a)$ is the size of their iterated construction of uniformity $a$.
The celebrated conjecture of Erd\H os and Rado, mentioned in their paper \cite{ER}, suggests that we cannot improve the lower bound given by a complete $k$-partite hypergraph too much further. It states that the dependence on $k$ is exponential: $\phi(k,s)\le (Cs)^k$ for some absolute constant $C$.

For a more detailed survey on the Erd\H os--Rado question and related questions (weak $\Delta$-systems, as well as the Erd\H os--Sz\'emeredi question on $\Delta$-systems for a restricted ground set), we refer to a survey of Kostochka \cite{Kost00}.

\subsection{How $\Delta$-systems changed to sunflowers?}
The original name of the object from the paper of Erd\H os and Rado is a {\it $\Delta$-system}, and it was called this way for the next 20 years.

The first occurrence of the name {\it sunflower} I could trace is in the name of the paper of Deza and Frankl \cite{DF} from 1981 ``Every large set of equidistant $(0,+1,-1)$-vectors form a sunflower,'' which can be seen as a follow-up to the paper of Deza \cite{D0} that we discuss below. Funnily, the word ``sunflower'' does not appear in the body of the paper, only in the title. Moreover, the conclusion of the main theorem of that paper guarantees a structure which is not exactly a sunflower, but a (one possible) $(-1,0,1)$-generalization of it. Peter Frankl (private communication) confirmed that he came up with the term while working with Michel Deza on the paper \cite{DF}.

The second occurrence of the term I could trace (and the first one where it is actually used in the paper and gets its present meaning) is the influential paper of Alon, Frankl and Lov\'asz \cite{AFL} from 1985 on the chromatic number of Kneser hypergraphs. Two years later, in 1987, Alon and Boppana \cite{AB} write another influential paper ``Monotone circuit complexity of Boolean functions'', where they use structures akin to sunflowers for proving certain complexity lower bounds. Their result extended a breakthrough result of Razborov \cite{Raz}, who also used similar structures in the argument. My guess is that it is at this point, when the term entered TCS community, that it solidified. Later, it appeared in different other complexity lower bounds papers, in Jukna's book \cite{Juk}, and ultimately took over the $\Delta$-system terminology.

In combinatorics community, $\Delta$-system terminology prevailed for a while. But the object went under different names, notably, it was called an {\it $s$-star} in some of the most important papers on the subject of the $\Delta$-system method. This term was introduced by Chung and, in particular, used by F\"uredi in his paper \cite{Fu1} and by Frankl and F\"uredi in \cite{FF1}. Peter Frankl (private communication) pointed out that the $s$-star term came as a generalization of graph stars, which are $2$-uniform $\Delta$-systems with kernel of size $1$. In another influential paper \cite{FF2}, however, Frankl and F\"uredi used the sunflower terminology.

\subsection{Recent progress on the Erd\H os--Rado conjecture}

A breakthrough in the study of the Erd\H os--Rado Delta-system question came several years ago in a work of Alweiss, Lovett, Wu and Zhang \cite{Alw}. They managed to reduce the dependence on $k$ in  Theorem~\ref{thmer} from $k^k$ to essentially $(\log k)^k$. It spurred a lot of research. Several follow-up papers slightly improved upon the bound and presented the proof quite differently. Rao \cite{Rao} gave a code-theoretic proof with a better bound; Bell, Chueluecha and Warnke \cite{BCW} improved upon some of the arguments of the original proof, which lead to the  bound $\phi(s,k)\le (Cs\log k)^k$;  Tao~\cite{Tao} gave a proof based on entropy, which, however, contained a mistake; Lu~\cite{Hu} and then Stoeckl \cite{Sto} gave another entropy proof, which gave the bound $\phi(s,k)\le (64s\log k)^k$; Mossel, Niles-Weed, Sun, and Zadik \cite{MNSZ} gave a second moment proof; finally, Rao \cite{Rao2} gave the same bound with a very short combinatorial proof in the spirit of the original paper \cite{Alw}.

The proof is based on a result, which is called the spread lemma. Given a real number $r>1$, we say that a family $\ff$ of sets is {\it $r$-spread} if for each set $X$ we have $|\ff(X)|< r^{-|X|}|\ff|$. We say that $W$ is a {\it $p$-random subset} of $[n]$ if each element of $[n]$ is included in $W$ with probability $p$ and independently of others.

\begin{thm}[The spread lemma, \cite{Alw}, a sharpening due to \cite{Sto}]\label{thmtao}
  If for some $n,k,r\ge 1$ a family $\ff\subset {[n]\choose \le k}$ is $r$-spread and $W$ is a $(\beta\delta)$-random subset of $[n]$, then $$\Pr[\exists F\in \ff\ :\ F\subset W]> 1-\Big(\frac 2{\log_2(r\delta)} \Big)^\beta k.$$
\end{thm}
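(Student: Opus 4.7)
The plan is to prove the lemma by iteratively revealing $W$ through $\beta$ independent rounds. Let $W_1,\dots,W_\beta$ be independent $\delta$-random subsets of $[n]$, and set $W' := W_1 \cup \cdots \cup W_\beta$. Bernoulli's inequality gives $\Pr[x \in W'] = 1 - (1-\delta)^\beta \le \beta \delta$, so $W'$ is stochastically dominated by a $(\beta\delta)$-random set; by coupling $W' \subseteq W$ it suffices to bound the analogous probability for $W'$. Writing $\gamma := 2/\log_2(r\delta)$, the goal becomes showing that the failure event ``no $F \in \ff$ lies in $W'$'' has probability at most $\gamma^\beta k$.

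The technical heart is a \emph{one-round progress lemma}: if $\ff \subseteq {[n]\choose \le k}$ is $r$-spread, then with probability at least $1 - \gamma$ over the choice of $W_1$, either some $F \in \ff$ is already covered by $W_1$, or one can extract a subfamily whose residuals $\{F \setminus W_1\}$ live on $[n] \setminus W_1$, remain $r$-spread, and exhibit strict progress in a suitable size-based potential (the progress being what ultimately produces the $k$-factor in the final bound). Once this lemma is in hand, one iterates: condition on $W_1$; on the good event of probability at least $1-\gamma$ apply the inductive hypothesis to the residual family together with $W_2,\dots,W_\beta$; on the complementary event of probability at most $\gamma$ use the trivial bound $1$.

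The one-round lemma itself is proved by a union bound over \emph{bad links}: sets $X$ which happen to lie in $W_1$ and at which the conditional density $|\ff(X)|/|\ff|$ exceeds $r^{-|X|}$ by more than a tolerance scaling with $|X|$. The $r$-spread hypothesis bounds the number of such $X$ at each level $j = |X|$, while $\Pr[X \subseteq W_1] = \delta^j$; summing these contributions level by level yields a geometric series in $r\delta/2$, telescoping to at most $\gamma$ whenever $r\delta$ is sufficiently large (the lemma being vacuous in the opposite regime).

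The main obstacle is calibrating the one-round lemma to produce the sharp constant $2/\log_2(r\delta)$ rather than a weaker $C/(r\delta)^{c}$, which requires optimizing the level-wise trade-off between ``each $X$ is small-probability to appear'' and ``many $X$'s may contribute''. A related subtlety is the choice of the progress potential: a naive use of ``maximum set size'' costs an additional logarithmic factor and must be replaced by a more refined quantity that interacts well with the spread hypothesis on the residual family. Once this step is secured, the induction on $\beta$ and the stochastic-domination setup combine routinely to yield the claimed tail bound.
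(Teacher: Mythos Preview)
The paper does not prove Theorem~\ref{thmtao}; it is quoted from \cite{Alw} with the sharp constant attributed to \cite{Sto}, and the surrounding discussion only explains how to apply the lemma to bound $\phi(k,s)$. So there is no in-paper proof to compare against; I comment on your sketch relative to the cited sources.

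Your high-level architecture --- write $W$ as a union of $\beta$ independent $\delta$-random rounds and iterate a one-round lemma --- is correct and is what all the cited proofs do. But two concrete things go wrong in your sketch.

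First, the iteration as you describe it does not produce the bound $\gamma^\beta k$ with $\gamma = 2/\log_2(r\delta)$. You phrase the one-round step as ``with probability at least $1-\gamma$ we can pass to a residual spread family and continue''; conditioning and applying the inductive hypothesis then gives $\Pr[\text{fail}] \le \gamma + (1-\gamma)\gamma^{\beta-1}k'$, which is of order $\gamma + \gamma^{\beta-1}k$, not $\gamma^\beta k$. The sharp versions (e.g.\ \cite{Sto}, \cite{Rao2}) instead prove an \emph{expectation} bound of the form $\E_{W_1}[k'] \le \gamma k$, where $k'$ is the uniformity of a carefully chosen $r$-spread residual subfamily; iterating gives $\E[k^{(\beta)}]\le \gamma^\beta k$, and then Markov yields $\Pr[k^{(\beta)} \ge 1] \le \gamma^\beta k$, which is the claimed bound. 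Your own remark that ``a naive use of maximum set size costs an additional logarithmic factor'' is a symptom of this: the fix is not a cleverer potential but switching from a tail bound to an expectation bound at each round.

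Second, your proposed proof of the one-round lemma does not work as stated. You suggest a union bound over ``bad links'': sets $X \subseteq W_1$ where $|\ff(X)|/|\ff|$ exceeds $r^{-|X|}$ by more than some tolerance. But the $r$-spread hypothesis says precisely that $|\ff(X)| < r^{-|X|}|\ff|$ for \emph{every} $X$, so no such bad link exists and there is nothing to union-bound; the spread condition also does not bound the \emph{number} of sets $X$ with near-maximal density at a given level. The actual mechanism is different: one couples a uniformly random $F\in\ff$ with $W_1$ and, for each outcome, selects from $\{F'\setminus W_1 : F'\in\ff\}$ an $r$-spread subfamily on $[n]\setminus W_1$; the spread hypothesis is used to control, for each candidate residual $X\subseteq F\setminus W_1$, the fraction of $F'$ whose residual contains $X$, and a dyadic argument over $|X|$ is what produces the factor $2/\log_2(r\delta)$.
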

In order to prove the aforementioned bound on $\phi(k,s)$, one needs, first, to find a spread subfamily $\ff(X)$ in a large family $\ff$. (Choose $X$ to be a maximal set that violates the $r$-spreadness condition.) Then, using the spread lemma, one sees that, if the parameters are chosen right (say, $r=32s \log_2(2k), \delta = 1/(2s \log_2(2k)) ,\beta = \log_2 (2k)$), a random $\frac 1{2s}$-subset contains a set from $\ff(X)$ with probability at least $1/2$. Then, in a random $2s$-coloring there are in expectation at least $s$ colors that contain a set from the family. These sets form a matching in $\ff(X)$, which gives a $\Delta(s)$-system with kernel $X$ in $\ff$.

The paper of Rao \cite{Rao2} gives a broad view on different applications of sunflowers.

\subsection{Structure of the survey}
Roughly speaking, the survey consists of three parts. In the first part (Sections~\ref{sec2}-\ref{sec4}), we discuss the development of the method in the 1970's and 1980's. In the second part (Sections~\ref{sec6}-\ref{sec7}), we discuss aspects of the method and some relevant recent developments. In the third part (Section~\ref{sec8}), I tried to make a comprehensive survey of the literature that deals with the problems from the earlier parts.

Section~\ref{sec2} contains the early manifestations of the method. It starts with the Erd\H os--Ko--Rado theorem, then it gives the result of Deza on $\Delta$-systems and families with one intersection size, and then presents the theorem of Deza, Erd\H os and Frankl on $(n,k,L)$-systems. The latter is considered to be the starting point of the method. Then we discuss the development of the method  by Peter Frankl up until 1983. One thing that I realized while going through the literature is that Peter Frankl developed a (lesser known) approach that can be considered as an `intermediate' approach between the earlier `base' variant of the method and the `homogeneous families' variant.

Section~\ref{sec3} is devoted to F\"uredi's structural theorem on homogeneous families, which underlies further development of the method, and its early applications. In particular, we present the results of F\"uredi on families with three intersection sizes.

In Section~\ref{sec33}, we discuss the `Forbidding one intersection' paper of Frankl and F\"uredi, which to a large extent shaped the $\Delta$-system method. The result of F\"uredi meets the result of Frankl and Katona on families with many intersection conditions, and then they develop the shadow/stability machinery to get exact results.

Section~\ref{sec4} is devoted to the paper `Exact solutions of Tur\'an-type problems', in which Frankl and F\"uredi applied the method to a variety of Tur\'an-type questions. Although most of the key $\Delta$-system-related ideas were already present in the previous paper, here they harvest the fruits of their previous findings, but also delineate the scope and the limitations of the method in terms of classes of hypergraphs that it works for.

In Section~\ref{sec6}, we discuss stability and supersaturation that could be achieved using Delta-systems. Stability via Delta-systems is an aspect that is often overlooked in the literature, and we go through the proof of Frankl and F\"uredi \cite{FF1} and derive an efficient stability result. We present some further ideas on stability via Delta-systems for harder problems. We also discuss a recent result of Jiang and Longbrake that gives a supersaturation result for $2$-reducible trees via $\Delta$-systems.

In Section~\ref{secbase}, we discuss different approaches to base constructions. This is a key part of the earlier variant of the method, and can be used in conjunction with other approaches. We go over different approaches that appeared in the literature and  present the peeling/simplification procedure, which is an important part of the spread approximation method.

In Section~\ref{sec7}, we briefly discuss other general methods that were developed recently to tackle T\'uran-type problems: junta approximations, spread approximations and hypercontractivity. We do not go in depth on the methods, but rather underline the different regimes, in which these methods work. In particular, we show that, as it stands now, each of the $\Delta$-system method, hypercontractivity method and spread approximation method allows to solve Tur\'an-type problems in incompatible regimes of parameters.

In Section~\ref{sec8}, we survey literature concerning the topics of the previous sections, and especially further studies on the topics of the papers \cite{DEF}, \cite{FF1}, \cite{FF2}. We cover forbidden one intersection problem; $(n,k,L)$-systems; forbidden sunflowers and unavoidable hypergraphs; forbidden simplices and clusters; forbidden hypertrees and expanded hypergraphs.

\section{Early days of the Delta-system method}\label{sec2}
\subsection{$(n,k,L)$-systems of Deza, Erd\H os and Frankl, the precursor of the Delta-system method and the proof of the Erd\H os--Ko--Rado theorem for large $n$}\label{sec21} $\Delta$-system method was introduced in the paper of Deza, Erd\H os and Frankl \cite{DEF}. They introduced and studied the following question, along with its several generalizations. Given positive integers $n,k$ and a set $L\subset [0,k-1]$, we say that $\ff\subset {[n]\choose k}$ is an {\it $(n,k,L)$-system} if for any $A\ne B\in \ff$ we have $|A\cap B|\in L$. The question is as follows: \begin{quote} what is the largest size $m(n,k,L)$ of an $(n,k,L)$-system, for different values of $n,k, L$?\footnote{In the paper \cite{FF1}, Frankl and F\"uredi attribute  the idea of using Delta-systems in this context to Deza.}\end{quote}
Before discussing the result of Deza, Erd\H os and Frankl and the $\Delta$-system method, we want to put the result and the method into context. Another cornerstone result (and probably the third paper on the topic of extremal set theory) in extremal set theory is the Erd\H os--Ko--Rado theorem \cite{EKR}. It determines for all $n,k$ the size of the largest $(n,k,\{1,2,\ldots, k-1\})$-system, and moreover, the size of the largest  $(n, k, \{t,t+1,\ldots, k-1\})$-system for any $t\in [1,k-1]$, provided $n\ge n_0(k)$. (Such families are called {\it intersecting} and {\it $t$-intersecting}, respectively.) It is instructive to discuss the proof of the EKR theorem.

\begin{thm}[Erd\H os, Ko and Rado \cite{EKR}] Let $n\ge k\ge t$ be positive integers and suppose that $\ff\subset {[n]\choose k}$ is $t$-intersecting, that is, $|A\cap B|\ge t$ for any $A,B\in \ff$. Then $|\ff|\le {n-t\choose k-t}$ for $n>n_0(k)$.
\end{thm}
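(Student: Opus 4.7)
The plan is to leverage the Erdős--Rado sunflower theorem (Theorem~\ref{thmer}) to extract a rich sunflower from any sufficiently large $t$-intersecting family and then use the sunflower to force every set in $\ff$ through a fixed $t$-set. Assume for contradiction that $|\ff| > \binom{n-t}{k-t}$, and set $s := k - t + 2$. For $n$ sufficiently large in terms of $k$, we have $\binom{n-t}{k-t} > k!(s-1)^k$, so Theorem~\ref{thmer} yields a $\Delta(s)$-system $\{A_1,\dots,A_s\} \subseteq \ff$ with kernel $K$; the $t$-intersecting hypothesis immediately forces $|K| \ge t$.

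The central step is to show that every $B \in \ff$ satisfies $|B \cap K| \ge t$, by exploiting the pairwise disjointness of the petals $P_i := A_i \setminus K$. Writing $t' := |B \cap K|$, the condition $|B \cap A_i| \ge t$ gives $|B \cap P_i| \ge t - t'$ whenever $t' < t$. Summing over $i$, the disjointness of the petals together with $|B \setminus K| = k - t'$ force $s(t - t') \le k - t'$, which for the choice $s = k - t + 2$ is incompatible with any $t' < t$. Hence $|B \cap K| \ge t$ for all $B \in \ff$.

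If $|K| = t$, this immediately gives $K \subseteq B$ for every $B \in \ff$, so $|\ff| \le \binom{n-t}{k-t}$, contradicting our assumption. The main obstacle is the case $|K| > t$: the conclusion only embeds $\ff$ into $\{B : |B \cap K| \ge t\}$, a family of size $\binom{|K|}{t}\binom{n-t}{k-t}(1 + o(1))$, which is too large by a constant factor. To overcome this I would iterate the Delta-system argument on the pieces $\ff[T]$ for $T \in \binom{K}{t}$: each such piece is already bounded by $\binom{n-t}{k-t}$, while the $t$-intersecting constraint between sets of $\ff[T_1]$ and $\ff[T_2]$ for distinct $t$-subsets $T_1, T_2 \subseteq K$ with $|T_1 \cap T_2| < t$ is strong enough — for $n$ large — to force only one $T$ to support a substantial sub-family, with all others contributing a $o(n^{k-t})$ error. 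Quantifying this collapse to a single $t$-star, and thus reducing the large-kernel case to the clean $|K|=t$ case, is the place where the hypothesis $n > n_0(k)$ really enters.
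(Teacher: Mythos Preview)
Your first three steps are correct and are essentially Observation~\ref{obsdelta1} in the survey: once you have a $\Delta(k-t+2)$-system with kernel $K$ (and $|K|\ge t$ follows from $t$-intersection), every $B\in\ff$ must meet $K$ in at least $t$ elements. The gap is your handling of $|K|>t$. The proposed ``iterate on $\ff[T]$ for $T\in\binom{K}{t}$'' does not work as stated: these pieces overlap, each is only bounded by $\binom{n-t}{k-t}$, and the $t$-intersecting condition between $\ff[T_1]$ and $\ff[T_2]$ does not by itself force all but one piece to be $o(n^{k-t})$ --- two sets $F_1\supset T_1$ and $F_2\supset T_2$ can easily $t$-intersect via elements outside $K$. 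You yourself flag that ``quantifying this collapse'' is where the work lies; as written, the argument only yields $|\ff|\le\binom{|K|}{t}\binom{n-t}{k-t}$, off by a constant factor, and no mechanism is given for closing that gap.

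The paper's argument sidesteps this entirely and is shorter. It does not invoke Erd\H os--Rado. Instead: if $\bigcap_{F\in\ff}F$ has size $\ge t$, the bound is immediate. Otherwise one directly exhibits three sets $A,B,C\in\ff$ with $|A\cap B\cap C|<t$ (take $A,B$ with $|A\cap B|=t$ by maximality, then $C\not\supset A\cap B$). Now if some $F\in\ff$ met $A\cup B\cup C$ in only $t$ elements, then $|F\cap A|\ge t$ forces those $t$ elements all into $A$, and likewise into $B$ and into $C$, contradicting $|A\cap B\cap C|<t$. Hence every $F$ meets $A\cup B\cup C$ in at least $t+1$ elements, giving $|\ff|\le\binom{3k}{t+1}\binom{n-t-1}{k-t-1}$, which is smaller than $\binom{n-t}{k-t}$ for $n>n_0(k)$. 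The key difference from your approach: the paper gains a full power of $n$ by producing an obstruction (the triple $A,B,C$) that forces a $(t{+}1)$-element hit on a set of size $\le 3k$, whereas your sunflower only forces a $t$-element hit on $K$, after which you are stuck at the correct order of magnitude but with the wrong constant.
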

Here, we sketch a variant of their proof. In the proof of Erd\H os, Ko and Rado, $n_0(k)$ is roughly ${k\choose t}^3$.
\begin{proof}
Take a maximal $t$-intersecting family $\ff$. Maximality implies that there are $A,B\in \ff$ with $|A\cap B|=t$. There are two cases to consider. If $|\bigcap_{A\in \ff} A|\ge t$, then $|\ff|\le {n-t\choose k-t}$ and we are done. If $|\bigcap_{A\in \ff} A|< t$ then take two sets $A,B\in\ff$ such that $|A\cap B|=t$ and add a third set $C$ such that $C\not\supset A\cap B$ (we leave the verification of the existence of such $A,B,C$ as an exercise). Then $|A\cap B\cap C|<t$. Any set $F\in \ff$ intersects each of $A,B,C$ in $\ge t$ elements, and thus it must intersect $A\cup B\cup C$ in at least $t+1$ elements. Therefore, $|\ff|\le {|A\cup B\cup C|\choose t+1}{n-t-1\choose k-t-1}\le {3k\choose t+1}{n-t-1\choose k-t-1}$. For $n>n_0(k)$ this is smaller than ${n-t\choose k-t}$.
\end{proof}

Summarizing, the `trivial $t$-intersecting' families are covered by one set of size $t$ (that is, all sets from the family contain a given set of size $t$), while the non-trivial families are covered by $const(k,t)$ sets of size $t+1$, and for large $n$ the latter gives smaller sizes since the number of ways to extend sets of size $t+1$ to sets of size $k$ is roughly $n$ times smaller than the number of ways to extend sets of size $t$. The important common point between the $\Delta$-system method and the proof above is that in both we compare contributions of `cover sets' of different sizes. The $\Delta$-system method, however, finds and deals with these covers more efficiently.

\subsection{Families of sets with only one intersection size}\label{sec22}
One result that precedes the general question of  Deza, Erd\H os,  and Frankl is due to Deza and deals with the case when $L = \{\ell\}$, that is, all pairwise intersections in the family have size $\ell$. According to the paper of Deza \cite{D1}, the following conjecture was posed by Erd\H os on a seminar in Paris in 1973.
\begin{quote}
  Let $k> \ell$ be positive integers and consider a family $\ff$ of at least $k^2-k+2$ sets, each of size $k$, such that for any $A,B\in \ff$ we have $|A\cap B|=\ell$. Then $\ff$ is a $\Delta$-system.
\end{quote}
The bound $k^2-k+2$ is best possible: for $k-1$ being a prime power, one can consider a projective plane of order $k-1$. It contains exactly $k^2-k+1$ lines, each two intersecting in exactly $1$ point and is obviously not a sunflower.

Deza \cite{D1} noticed that this conjecture is true because it immediately reduces to his result from the recently published paper \cite{D0}, in which he studied one-distance codes. The main result of \cite{D0} has an elegant proof with geometric flavour, and we present it here. (For the rest of this subsection, $u,v,w$ will exceptionally stand for vectors.) Recall that, given two vectors $v,w$ in $\{0,1\}^n$, the {\it Hamming distance} between $v$ and $w$ is the number of coordinates $i$ such that $v_i\ne w_i$. Also recall that there is a natural correspondence between vectors in $\{0,1\}^n$ and subsets of $[n]$: set $\leftrightarrow$ characteristic vector.

\begin{thm}[Deza \cite{D0}]\label{thmde} Let $m,q$ be integers and  $\mathcal F$ be a collection of vectors $v^0,v^1,\ldots v^m\in \m \{0,1\}^n$ such that the Hamming distance between any two vectors is $2q$. If $m>q^2+q+1$ then the code is {\em trivial}: for each coordinate $i\in[n]$, either $\sum_{j\in[m]} v_i^j\le 1$ or $\sum_{j\in[m]} v_i^j\ge m-1$.
\end{thm}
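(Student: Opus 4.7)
First, I would apply the Hamming-distance-preserving transformation $v^j \mapsto v^j \oplus v^0$. This makes $v^0$ the zero vector, and the conclusion is unchanged since complementing any single coordinate across all vectors swaps the two inequalities ``$\sum_{j\in[m]} v^j_i \le 1$'' and ``$\sum_{j\in[m]} v^j_i \ge m-1$'' but preserves their disjunction. After the reduction, each $v^j$ (for $j\in[m]$) has weight $2q$ and any two share exactly $q$ ones; writing $A_j := \{i : v^j_i = 1\}$, the goal becomes: every coordinate has degree $d_i := |\{j \in [m] : i\in A_j\}| \in \{0,1,m-1,m\}$.

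Second, I would establish the key structural inequality: for every $q$-set $I \subseteq [n]$, the sub-family $\mathcal F_I := \{A_j : I \subseteq A_j\}$ is automatically a sunflower with core $I$ (any two members meet in a $q$-set containing $I$, hence meet precisely in $I$). If some $|\mathcal F_I|=m$, the whole family is a sunflower with kernel $I$, giving the conclusion immediately. Otherwise, pick any $D \notin \mathcal F_I$; since the petals $\{C \setminus I : C \in \mathcal F_I\}$ are pairwise disjoint and each satisfies $|C \cap D \setminus I| = q-|D\cap I|\ge 1$, we get $|\mathcal F_I|(q-|D\cap I|) \le |D\setminus I| = 2q-|D\cap I|$, so $|\mathcal F_I| \le q+1$.

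Third, to get a contradiction with $m>q^2+q+1$ in the non-sunflower case, I would fix $A:=A_1$ and classify the remaining sets by their trace $T_j := A \cap A_j \in \binom{A}{q}$. The sets sharing a trace $T$, together with $A$, form the sunflower $\mathcal F_T$, so by Step~2 each trace class has multiplicity $c_T \le q$. Comparing two occupied traces $T \ne T'$ through a set of trace $T'$ whose non-$A$ part must host the $c_T$ pairwise disjoint petals of $\mathcal F_T$ (each meeting it in $q-|T\cap T'|$ elements) yields $c_T(q-|T\cap T'|) \le q$. A structural case analysis of this quadratic inequality — showing the trace family must concentrate around a common $(q-1)$-subset of $A$ or around a symmetric swap pattern — bounds the number $s$ of distinct traces by $q+1$, giving $m-1 = \sum_T c_T \le sq \le q^2+q$, contradicting the hypothesis.

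The main obstacle will be the structural claim $s \le q+1$ in Step~3: when all $c_T = 1$ the inequality $c_T(q-|T\cap T'|)\le q$ is vacuous, so one must also bring in the parallel sunflower bound $|\mathcal F_I|\le q+1$ applied to $q$-sets $I$ lying \emph{outside} $A$ (every intersection of distinct-trace pairs sits in such a sunflower), or invoke a Fisher-type linear-algebra argument on the incidence matrix of traces. The arithmetic $q^2+q+1$ mirrors the line count of a projective plane of order $q$, strongly suggesting that the extremal configurations are projective-plane-like and that a rank computation is the cleanest route to pinning down $s$.
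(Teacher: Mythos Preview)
Your reduction in Step~1 and the sunflower bound $|\mathcal F_I|\le q+1$ in Step~2 are correct. The gap is entirely in Step~3: the inequalities you derive --- $c_T\le q$ and $c_T(q-|T\cap T'|)\le q$ for distinct occupied traces $T,T'$ --- are \emph{not} sufficient to force $s\le q+1$, because they are also satisfied by configurations with many more traces. For $q=2$ one can realise all six $2$-subsets of $A=\{1,2,3,4\}$ as traces with $m=7$ sets (e.g.\ $A_2=\{1,2,a,b\}$, $A_3=\{3,4,a,b\}$, $A_4=\{1,3,a,f\}$, $A_5=\{2,4,a,f\}$, $A_6=\{1,4,a,r\}$, $A_7=\{2,3,a,r\}$), and your inequalities hold with room to spare. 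The ``structural case analysis'' you sketch uses only these local pairwise constraints and never the global hypothesis $m>q^2+q+1$, so it cannot distinguish this $s=6$ example from a hypothetical one with larger $m$. Your two suggested remedies are unfleshed: the sunflower bound on external $q$-sets does not feed back into a bound on $s$, and no Fisher-type rank argument is available since the traces have no fixed pairwise intersection size.

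The paper's argument is entirely different and does not reason through traces or sunflowers. It fixes a single coordinate $i$, splits the $m+1$ vectors according to the value of $v^j_i$, and applies a short second-moment (``cross-variance'') inequality --- essentially $\|\E\mathbf X-\E\mathbf Y\|^2\ge 0$ for the two random subvectors --- to obtain the quadratic constraint $(m+1)q\ge d_i(m+1-d_i)$. Under $m>q^2+q+1$ this forces every coordinate degree into $[0,q+1]\cup[m-q,m]$. Two brief double-counting claims then show that the high-degree coordinates form a $q$-set contained in every $A_j$, giving the sunflower directly. The key idea you are missing is this analytic coordinate-wise degree dichotomy; your combinatorial bounds on trace multiplicities cannot substitute for it.
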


This theorem allows us to deduce the conjecture of Erd\H os and Lov\'asz.

\begin{cor}[Deza \cite{D1}]\label{corde} Let $k> \ell$ be positive integers and consider a family $\ff$ of at least $k^2-k+2$ sets, each of size $k$, such that for any $A,B\in \ff$ we have $|A\cap B|=\ell$. Then $\ff$ is a $\Delta$-system.
\end{cor}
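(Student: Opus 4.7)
The plan is to reduce directly to Deza's Theorem~\ref{thmde} via the characteristic-vector correspondence. Each $A\in\ff$ has $|A|=k$ and any two sets satisfy $|A\cap B|=\ell$, so the Hamming distance between the indicator vectors $v^A,v^B\in\{0,1\}^n$ equals $|A\triangle B|=2k-2\ell=2q$ with $q:=k-\ell$. The degenerate case $\ell=0$ is immediate because then all sets are pairwise disjoint and $\ff$ is a sunflower with empty core, so I may assume $\ell\ge 1$, which gives $q\le k-1$ and hence
$$q^2+q+1\le (k-1)^2+(k-1)+1=k^2-k+1<k^2-k+2\le |\ff|,$$
so the size hypothesis of Theorem~\ref{thmde} is satisfied. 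Its conclusion translates to: every element $x\in[n]$ has degree in $\ff$ belonging to $\{0,1,|\ff|-1,|\ff|\}$.

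It remains to rule out elements of degree $|\ff|-1$ and to identify the common core. Set $N:=|\ff|$ and let $\alpha_d$ denote the number of elements with degree exactly $d$. The degree-$N$ elements belong to $\cap_{A\in\ff}A$, while every other element of any $A\in\ff$ has degree $1$ or $N-1$. For a fixed $A\in\ff$ let $\beta_A$ count how many of the $\alpha_{N-1}$ degree-$(N-1)$ elements are \emph{missing} from $A$; since each such element is missing from exactly one set, $\sum_{A\in\ff}\beta_A=\alpha_{N-1}$. The crucial observation is that a degree-$(N-1)$ element cannot simultaneously miss two different sets, so for any distinct $A,B\in\ff$,
$$\ell=|A\cap B|=\alpha_N+(\alpha_{N-1}-\beta_A-\beta_B),$$
which says $\beta_A+\beta_B$ is independent of the pair $\{A,B\}$. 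Hence all $\beta_A$ equal a common value $\beta$; summing over $A$ gives $\alpha_{N-1}=N\beta$ and the displayed identity becomes $\alpha_N=\ell-(N-2)\beta$. Non-negativity of $\alpha_N$ combined with $N\ge k^2-k+2>\ell+2$ forces $\beta=0$, and therefore $\alpha_{N-1}=0$ and $\alpha_N=\ell$.

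Consequently $C:=\{x:d(x)=N\}$ has size $\ell$ and is contained in every $A\in\ff$, while every element of $A\setminus C$ has degree $1$ and so appears in no other set of $\ff$. This is exactly the assertion that $\ff$ is a $\Delta$-system with core $C$ of size $\ell$. The main obstacle in this plan is the elimination of the ``all-but-one'' elements; the small but essential point is that two distinct sets cannot share a missing degree-$(N-1)$ element, which is what turns the pairwise intersection identity into an equation forcing all local deficits $\beta_A$ to be equal and then, via the largeness of $N$, to vanish.
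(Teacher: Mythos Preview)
Your approach is different from the paper's and, modulo one boundary issue, correct. The paper pads each characteristic vector with extra coordinates so that the common pairwise distance equals twice the weight (forcing $q=k-1$), then adjoins the all-zero vector and applies Theorem~\ref{thmde}; the ``trivial code'' conclusion then reads off directly as the sunflower property. You instead apply Theorem~\ref{thmde} to the unmodified characteristic vectors with $q=k-\ell$, and supply a neat counting argument to eliminate elements of degree $N-1$. Your post-processing (via the constancy of $\beta_A+\beta_B$ and the bound $N-2>\ell$) is correct and is the extra work your route requires in exchange for avoiding the padding.

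The gap is an off-by-one in invoking Theorem~\ref{thmde}. That theorem concerns $m+1$ vectors $v^0,\dots,v^m$ under the hypothesis $m>q^2+q+1$; with $N=|\ff|$ vectors you have $m=N-1$, so you actually need $N-1>q^2+q+1$, not merely $N>q^2+q+1$. For $\ell\ge 2$ this still goes through (then $q\le k-2$ and $q^2+q+2\le k^2-3k+4<k^2-k+2\le N$), but for $\ell=1$ and $N=k^2-k+2$ you fall exactly one short, and this is the sharp case (the projective plane of order $k-1$ shows $k^2-k+1$ sets do not suffice). The paper's device of adjoining the zero vector is precisely what recovers that missing unit. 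A smaller point: the theorem's conclusion is stated asymmetrically (the sum is over $j\in[m]$, omitting $v^0$), so to obtain ``every element has degree in $\{0,1,N-1,N\}$'' you should note that the roles of the $m+1$ vectors are interchangeable and intersect the resulting constraints.
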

\begin{proof} Consider a collection of $m$ sets, each of size $k$ and with pairwise intersections of size $\ell\ge 1$. Note that, seen as vectors $v^1,\ldots, v^{m}$, they have pairwise Hamming distance $2(k-\ell)$. 
 We want to transform them into vectors with pairwise Hamming distance equal to their Hamming weight. For this, add to each vector $\ell-1$ new coordinates with ones, such that these $\ell-1$ coordinates for different vectors are pairwise distinct, as well as $(k-\ell-1)$ new coordinates where all vectors will have ones. It is easy to check that now each vector has Hamming weight $2q := 2(k-1)$, and pairwise Hamming distances are also $2q$. Now we add the all-zero vector $v^0$ to the collection and get a family of $m+1$ vectors $v^0,\ldots, v^m$ forming a code as in Theorem~\ref{thmde}. Thus, if $m>(k-1)^2+(k-1)+1$, then the code is trivial. This condition is equivalent to $m>k^2-k+1$.

In set terms, what does it mean for the corresponding code to be trivial? If two vectors have a common $1$, then all vectors have $1$ there, which means that any element common for two sets is common for all sets. This is exactly the definition of a $\Delta$-system.
\end{proof}

Let us prove Theorem~\ref{thmde}. The first step can be summarized in the following elegant lemma, which is an interpretation and generalization of \cite[Theorem 2.1]{D0}.

\begin{lem}[Lemma on cross-variance]\label{lemcrossvar} Consider two independent random vectors $\mathbf X, \mathbf Y\in \mathbb R^n$. Let $(\mathbf{X_1},\mathbf{Y_1}),$ $(\mathbf{X_2},\mathbf{Y_2})$ be independent copies of $(\mathbf X,\mathbf Y)$.  Then
\begin{equation}\label{eqcrossvar} 2\E[(\mathbf X-\mathbf Y)^2]\ge \E[(\mathbf X_1-\mathbf X_2)^2]+\E[(\mathbf Y_1-\mathbf Y_2)^2],\end{equation}
where the norm is the Euclidean norm.
\end{lem}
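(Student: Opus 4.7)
The plan is to expand both sides using the polarization identity $\|\mathbf U-\mathbf V\|^2=\|\mathbf U\|^2-2\,\mathbf U\cdot\mathbf V+\|\mathbf V\|^2$ and then exploit the two independence assumptions to factor every cross term. Once all cross terms are written as inner products of expectations, the desired inequality reduces to the non-negativity of a single squared Euclidean norm, namely $\|\E[\mathbf X]-\E[\mathbf Y]\|^2$.

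First I would compute the left-hand side. Since $\mathbf X$ and $\mathbf Y$ are independent, $\E[\mathbf X\cdot\mathbf Y]=\E[\mathbf X]\cdot\E[\mathbf Y]$, and hence
$$2\,\E\|\mathbf X-\mathbf Y\|^2 \;=\; 2\,\E\|\mathbf X\|^2+2\,\E\|\mathbf Y\|^2-4\,\E[\mathbf X]\cdot\E[\mathbf Y].$$
Next I would treat each term of the right-hand side. Because $\mathbf X_1,\mathbf X_2$ are i.i.d.\ copies of $\mathbf X$, we get $\E[\mathbf X_1\cdot\mathbf X_2]=\|\E[\mathbf X]\|^2$, so $\E\|\mathbf X_1-\mathbf X_2\|^2=2\,\E\|\mathbf X\|^2-2\,\|\E[\mathbf X]\|^2$, and symmetrically for $\mathbf Y$. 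Summing yields $2\,\E\|\mathbf X\|^2+2\,\E\|\mathbf Y\|^2-2\,\|\E[\mathbf X]\|^2-2\,\|\E[\mathbf Y]\|^2$ on the right-hand side of \eqref{eqcrossvar}.

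Subtracting, the LHS minus the RHS of \eqref{eqcrossvar} equals $2\,\|\E[\mathbf X]\|^2+2\,\|\E[\mathbf Y]\|^2-4\,\E[\mathbf X]\cdot\E[\mathbf Y]=2\,\|\E[\mathbf X]-\E[\mathbf Y]\|^2\ge 0$, which proves the inequality and in fact pinpoints the deficit, so that equality holds iff $\E[\mathbf X]=\E[\mathbf Y]$. There is no genuine obstacle: the only things to be careful about are that $(\cdot)^2$ denotes the squared Euclidean norm, and that the two uses of independence play distinct roles—$\mathbf X\perp\mathbf Y$ handles the single cross term on the LHS, while the i.i.d.\ structure of the copies $(\mathbf X_i,\mathbf Y_i)$ handles the two cross terms on the RHS.
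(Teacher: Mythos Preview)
Your proof is correct and is essentially identical to the paper's: both expand the squared norms, use independence to factor the cross terms, and reduce the inequality to $\|\E[\mathbf X]-\E[\mathbf Y]\|^2\ge 0$. The paper's write-up is slightly terser (it does not spell out which independence assumption handles which cross term), but the argument is the same, including the implicit characterization of equality.
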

We will need the case when the random vectors are uniformly distributed on sets of vectors in $\R^m$, and the vectors are actually $\{0,1\}$-vectors.
\begin{proof}
  We have \begin{align*}
            \E[(\mathbf X_1-\mathbf X_2)^2]&=2\E[\mathbf X^2]-2\E[\mathbf X]^2 \\
            \E[(\mathbf Y_1-\mathbf Y_2)^2]&=2\E[\mathbf Y^2]-2\E[\mathbf Y]^2 \\
            \E[(\mathbf X-\mathbf Y)^2]&= \E[\mathbf X^2]+\E[\mathbf Y^2]-2\E[\mathbf X]\E[\mathbf Y].
              \end{align*}
  The statement now follows from $\E[\mathbf X]^2+\E[\mathbf Y]^2-2\E[\mathbf X]\E[\mathbf Y]\ge 0$.
\end{proof}
The motivation behind the name {\em cross-variance of $\mathbf X$ and $\mathbf Y$} for $\E[\|\mathbf X-\mathbf Y\|^2]$ is because $\E[\|\mathbf X_1-\mathbf X_2\|^2]$ is twice the variance of $\mathbf X$.

\begin{proof}[Proof of Theorem~\ref{thmde}] We may w.l.o.g. assume that $v^0$ is the all-zero vector. For any coordinate $j$, we show that  \begin{equation}\label{eqvalq} s_j:=\sum_{i\in [m]}v^i_j \text{ \ \ belongs to }[0,q+1]\cup [m-q,m].\end{equation}
 Split the set of indices $[0,m]$ into $A\sqcup B$, where $A:=\{i\in[0,m]: v_j^i=1\}$ and $B:=[0,m]\setminus A$. For a vector $v^i\in \mathcal F$, define the vector $w^i\in \mathbb R^{n-1}$ obtained from $v^i$ by deleting the $j$-th coordinate. Let us define a random vector $\mathbf X$ ($\mathbf Y$) by selecting a uniformly random $i\in A$ ($i\in B$) and taking the  vector $w^i$. We apply Lemma~\ref{lemcrossvar} to $\mathbf X$ and $\mathbf Y$ and get
  $$2\E[\|\mathbf X-\mathbf Y\|^2]\ge \E[\|\mathbf X_1-\mathbf X_2\|^2]+\E[\|\mathbf Y_1-\mathbf Y_2\|^2]$$
  At the same time, $\|w^i-w^\ell\|^2$ is exactly the Hamming distance between $w^i$ and $w^\ell$. It is equal to $2q$ for any two distinct vectors with indices in $A$, or any two distinct vectors with indices in $B$. If the vectors correspond to different sets of indices, then their Hamming distance is $2q-1$. Also note that $|A| = s_j$ and $|B| = m-s_j+1$, and thus $\E[\|\mathbf X_1-\mathbf X_2\|^2] = \frac{s_j-1}{s_j} \cdot 2q$, and $\E[\|\mathbf Y_1-\mathbf Y_2\|^2] = \frac{m-s_j}{m-s_j+1} \cdot 2q$. Combining with the displayed inequality, we get
  \begin{align*} 2(2q-1)&\ge 2q\left(\frac{s_j-1}{s_j}+\frac{m-s_j}{m-s_j+1}\right)\\
  \frac 1{s_j}+\frac1{m-s_j+1} &\ge \frac 1q\\
  (m+1)q&\ge s_j(m-s_j+1)\end{align*}
The last inequality and the condition on $m$ from the statement imply that $s_j\le q+1$ or $s_j\ge m-q$. Consider the set of coordinates $U$ such that $U:=\{j: s_j\ge m-q\}$.
\begin{cla}We have $|U|\le q$.\end{cla}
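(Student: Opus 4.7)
The plan is a one-shot union-bound argument applied to the vectors $v^1,\dots,v^m$ after translating to the set picture. I will identify each $v^i$ (for $i\in[m]$) with its support $A_i := \{j\in[n] : v^i_j = 1\}$. Since $v^0=\mathbf 0$ and the Hamming distance between $v^i$ and $v^0$ is $2q$, we have $|A_i|=2q$. For distinct $i,i'\in[m]$ the identity $2q = |A_i\triangle A_{i'}| = |A_i|+|A_{i'}|-2|A_i\cap A_{i'}|$ then forces $|A_i\cap A_{i'}| = q$.

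Assuming for contradiction that $|U|\ge q+1$, I will pick any $q+1$ distinct coordinates $j_1,\dots,j_{q+1}\in U$. By the definition of $U$, each $j_k$ satisfies $s_{j_k}\ge m-q$, so at most $q$ indices $i\in[m]$ have $j_k\notin A_i$. The union bound then shows that at most $q(q+1)=q^2+q$ indices fail to contain at least one of $j_1,\dots,j_{q+1}$. Since $m > q^2+q+1$, there remain at least two distinct indices $i\ne i'$ with $\{j_1,\dots,j_{q+1}\}\subseteq A_i\cap A_{i'}$, yielding $|A_i\cap A_{i'}|\ge q+1$ and contradicting the identity $|A_i\cap A_{i'}|=q$ derived above.

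There is no real obstacle here: the argument is a single counting step, and the threshold $m>q^2+q+1$ is used in exactly one place, namely to guarantee that strictly more than one index survives the union bound over the $q+1$ popular coordinates. In particular, for this claim I expect no further appeal to Lemma~\ref{lemcrossvar} or to the bound $(m+1)q\ge s_j(m-s_j+1)$ from~\eqref{eqvalq} — the equidistant-code structure alone, combined with the definition of $U$, suffices.
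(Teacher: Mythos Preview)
Your proof is correct and is essentially the same as the paper's: both use a union bound over $q+1$ coordinates of $U$ to find two vectors $v^i,v^{i'}$ sharing at least $q+1$ ones, and then derive a contradiction with the pairwise intersection size. The only cosmetic difference is that you precompute $|A_i\cap A_{i'}|=q$ and contradict it directly, whereas the paper phrases the contradiction as the Hamming distance being at most $2q-2$.
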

\begin{proof}   For any $j\in U$, there are at most $q$ vectors  $v\in \{v^1,\ldots, v^m\}$ such that $v_j=0$. If $|U|\ge q+1$, then, taking  $U'\subset U$ with $|U'|=q+1$, and by pigeon-hole principle, there are $m-(q+1)q\ge 2$ vectors from $\mathcal F$ that have $1$'s on each coordinate from $U'$. Since $v^0$ is the all-zero vector, every other vector has exactly $2q$ coordinates equal to $1$. Thus, these two vectors have $q-1$ coordinates equal to $1$ outside $U'$, and thus their Hamming distance is at most $2q-2$, a contradiction.\end{proof}

\begin{cla}For any vector $v^i\in \mathcal F$, $i\in[m]$, the set $U^i:=U\cap\{j: v_j^i=1\}$ has size at least $q$. \end{cla}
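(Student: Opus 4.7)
The plan is to run a double-counting argument over the ones of $v^{i^*}$ that lie outside $U$. Fix $i^*\in[m]$ and set $u := |U^{i^*}|$; the goal is to show $u\ge q$, which, combined with the previous claim $|U|\le q$, will force $U^{i^*}=U$ for every $i^*$ — precisely the structural ingredient needed to push the proof of Theorem~\ref{thmde} toward the ``trivial code'' conclusion.

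The starting observation is that each $v^i$ with $i\in[m]$ has Hamming weight $2q$ (its distance to the all-zero vector $v^0$), and any two of them are at distance $2q$, so any two such vectors share exactly $q$ ones. For a fixed $\ell\in[m]\setminus\{i^*\}$, at most $u$ of the $q$ ones shared with $v^{i^*}$ can lie in $U$, so at least $q-u$ of them must lie in $W':=\{j:v_j^{i^*}=1\}\setminus U$, a set of size $2q-u$. Summing this over all $\ell\ne i^*$ yields
$$\sum_{j\in W'}(s_j-1) \;\ge\; (m-1)(q-u),$$
and therefore $\sum_{j\in W'} s_j \ge (m-1)(q-u)+(2q-u)$. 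Next I would apply the coordinate-wise bound~(\ref{eqvalq}) to $W'\subset [n]\setminus U$, which gives $s_j\le q+1$ for every $j\in W'$ and hence $\sum_{j\in W'} s_j \le (q+1)(2q-u)$. Combining the two bounds simplifies to the linear inequality $(m-1-q)\,u \ge q(m-1-2q)$, and under the hypothesis $m>q^2+q+1$ a short calculation shows this forces $u>q-1$, hence $u\ge q$ since $u\in\Z$.

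The step I expect to be the main obstacle is choosing the correct set to sum over: summing over all of $U$, or over all ones of $v^{i^*}$, does not align the ``shared ones'' lower bound with the coordinate-wise upper bound coming from~(\ref{eqvalq}). Restricting to $W'$ is what makes the two bounds compatible, and — pleasingly — the resulting algebra tightens up to exactly $m>q^2+q+1$, matching the numerical hypothesis of Theorem~\ref{thmde}. The remaining work is routine arithmetic.
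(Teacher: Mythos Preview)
Your argument is correct and is essentially the paper's double-counting, organized from the complementary side. The paper sums $s_j$ over all of $\mathrm{supp}(v^{i^*})$ to get exactly $(m+1)q$, then bounds the $U^{i^*}$-part by $m\,|U^{i^*}|$ and the rest by $(q+1)\,|W'|$; rearranging yields precisely your inequality $(m-1-q)\,u \ge q(m-1-2q)$, and the final step $m>q^2+q+1 \Rightarrow u\ge q$ is the same.
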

\begin{proof}
Put ${\rm supp}(v^{i'}):=\{j: v_j^{i'}=1\}$.  On the one hand, $$\sum_{i'=1}^m|{\rm supp}(v^{i})\cap {\rm supp}(v^{i'})|=2q+(m-1)q=(m+1)q.$$  The first term is the sum of coordinates of $v^i$, and the second term accounts for the fact that each of the $m-1$ non-zero vectors has $q$ common ones with $v^i$. On the other hand, if $|U^i|\le q-1$ then \begin{align*}\sum_{i'=1}^m\sum_{j\in {\rm supp}(v^i)} v^{i'}_j&\le m|U^i|+(q+1)|{\rm supp}(v^i)\setminus U| \le m(q-1)+(q+1)^2\\ &= (m+1)q-m +q^2+q+1<(m+1)q,\end{align*}
a contradiction. 
\end{proof}
Combining the two claims, we see that $U^i = U$ for all $i\in[m]$, and thus $s_j=m$ for all $j\in U$. We have $|U|=q$, and so in order to assure Hamming distance $2q$, the vectors $v_i$, $i\in[m]$ must have pairwise disjoint sets of $1$'s outside $U$. This completes the proof of the theorem.
\end{proof}
Let us summarize that, as long as $\ff$ is an $(n,k,\{\ell\})$-system and $|\ff|>k^2-k+1$, then $\ff$ is a $\Delta$-system with kernel of size $\ell$. Since outside the kernel the sets from the family are disjoint, we must have
$$|\ff|\le \frac{n-\ell}{k-\ell}.$$
\subsection{The result of Deza, Erd\H os and Frankl}\label{sec23}
This result came as a natural generalization of the result of Deza \cite{D1} and the result of Deza, Erd\H os and Singhi \cite{DES}, who studied the $|L|=2$ case of the problem. In \cite{DES} the authors also cite and use a result of a preprint of Deza and Erd\H os ``to appear in Aequationes Math.'', which has never actually appeared. (My guess is that it was absorbed by the Deza--Erd\H os--Frankl paper.)
\begin{thm}[Deza, Erd\H os, Frankl \cite{DEF}]\label{thmdef}
Fix positive integers $n,k>0$ and $r\ge 2$, and a set $L=\{\ell_1<\ldots<\ell_r\}\subset [0,k-1]$. Assume that $\epsilon>0$, $n\ge n_0(k,\epsilon)$, and that $\ff$ is an $(n,k,L)$-system.
\begin{itemize}
  \item[(i)] If $|\ff|>\big(\epsilon+\max\{k-\ell_1+1,\ell_2^2-\ell_2+1\}\big) \prod_{i=2}^r\frac{n-\ell_i}{k-\ell_i}$ then there exists a set $D$ of cardinality $\ell_1$ such that $D\subset A$ for every $A\in \ff$.
  \item[(ii)] If $|\ff|\ge k^22^{r-1}n^{r-1}$ then
  $$(\ell_2-\ell_1)|(\ell_3-\ell_2)|\ldots |(\ell_r-\ell_{r-1})|(k-\ell_r).$$
  \item[(iii)] We have $$|\ff|\le \prod_{i=1}^r\frac{n-\ell_i}{k-\ell_i}.$$
\end{itemize}
\end{thm}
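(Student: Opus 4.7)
All three parts share a common engine based on the $\Delta$-system method. Whenever $|\ff|$ is large enough, Theorem~\ref{thmer} extracts a sunflower $\aaa = \{A_1,\ldots,A_s\} \subset \ff$ with kernel $Y$; since any two petals intersect exactly in $Y$, automatically $|Y| \in L$, say $|Y| = \ell_j$. The pivotal observation, used throughout, is that once $s \ge k+1$ every $F \in \ff$ satisfies $|F \cap Y| \in L$: the petals $A_i \setminus Y$ are $s$ pairwise disjoint sets of size $k - \ell_j \ge 1$, so $F$ cannot meet them all, and for a missed petal one gets $F \cap A_i = F \cap Y \in L$.

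For part~(iii), I would induct on the uniformity $k$. The base case is Corollary~\ref{corde}: when $|L| = 1$, either $|\ff| \le k^2 - k + 1 \le (n - \ell_1)/(k - \ell_1)$ for $n$ large, or $\ff$ is a sunflower with kernel of size $\ell_1$ and disjoint petals, giving the bound directly. For the inductive step, extract a sunflower of size $k+1$ with kernel $Y$ of size $\ell_j$, and partition $\ff = \bigsqcup_{D} \ff_D$ over $\ell_i$-subsets $D \subset Y$ ($i \le j$) according to $F \cap Y = D$. For each such $D$, the shifted family $\{F \setminus D : F \in \ff_D\}$ lives on $[n] \setminus D$ with uniformity $k - \ell_i < k$ and pairwise intersections in $\{0, \ell_{i+1} - \ell_i, \ldots, \ell_r - \ell_i\}$, so the inductive hypothesis bounds each piece by $\frac{n - \ell_i}{k - \ell_i}\prod_{i' > i}\frac{n - \ell_{i'}}{k - \ell_{i'}}$. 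Summing over the choices of $D$ telescopes into the claimed product.

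For part~(i), I apply the same extraction and pigeonhole on the identity of $F \cap Y$ inside $Y$. Some $\ell_i$-subset $D \subset Y$ is the trace of a large subfamily $\m G \subset \ff$; applying~(iii) to the residual system and comparing with the stated threshold forces $i = 1$. The remaining and subtler task is to \emph{globalize} $D$ from common-to-$\m G$ to common-to-$\ff$: any hypothetical $F^* \in \ff$ with $D \not\subset F^*$ would, via intersection constraints with pairs drawn from $\m G$, manufacture either a large $(n,k,\{\ell_1\})$-type substructure extending $D$, or a large $(n,k,\{\ell_2\})$-type substructure at the next level, each of which is controlled by Corollary~\ref{corde}. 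The two summands $k - \ell_1 + 1$ and $\ell_2^2 - \ell_2 + 1$ in the threshold precisely balance these two obstructions: the former counts extensions of $D$ to $k$-sets, the latter is the Deza constant at level $\ell_2$.

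Part~(ii) is the \emph{main obstacle}. I would iterate the sunflower extraction to build a chain of nested kernels $Y_1 \subset Y_2 \subset \ldots \subset Y_r$ of strictly increasing sizes $\ell_1, \ldots, \ell_r$ inside $\ff$, and then analyze how petals at one level sit across petals at the next. Concretely, the portion of a level-$(i+2)$ petal lying outside a level-$(i+1)$ petal has size $\ell_{i+2} - \ell_{i+1}$, while on the other hand it decomposes as a union of level-$(i+1)$ petal fragments each of size a multiple of $\ell_{i+1} - \ell_i$; this forces $(\ell_{i+1} - \ell_i) \mid (\ell_{i+2} - \ell_{i+1})$. The hardest step is making this modular accounting precise across all $r$ levels simultaneously while keeping the counting thresholds at the stated $O(n^{r-1})$; the sunflower extractions themselves are routine given Theorem~\ref{thmer}.
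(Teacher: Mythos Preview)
Your proof of~(iii) has a real gap: the ``telescoping'' does not work. After partitioning $\ff$ by the trace $D = F \cap Y$, each piece $\ff_D$ with $|D| = \ell_i$ is indeed bounded (inductively) by $\prod_{i' \ge i}\frac{n-\ell_{i'}}{k-\ell_{i'}}$, but the number of possible $D$'s of size $\ell_i$ inside $Y$ is $\binom{\ell_j}{\ell_i}$. The leading term of the resulting sum is $\binom{\ell_j}{\ell_1}\prod_{i'=1}^r \frac{n-\ell_{i'}}{k-\ell_{i'}}$, which strictly exceeds the target whenever $0 < \ell_1 < \ell_j$. Also, if $\ell_1 = 0$, the piece $D = \emptyset$ still has uniformity $k$, so your induction on $k$ does not reduce. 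The paper avoids this by a different reduction: for $\ell_1 = 0$ it never extracts a single sunflower at all but instead double-counts $k|\ff| = \sum_x |\ff(x)|$ and applies the inductive~(iii) to each link; for $\ell_1 > 0$ it first invokes~(i) to locate \emph{one} common $\ell_1$-set $D$, and then $\ff(D)$ is an $(n-\ell_1, k-\ell_1, L-\ell_1)$-system with smallest intersection~$0$. The point is that~(iii) is proved \emph{through}~(i), not independently.

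For~(i), ``globalization'' is the entire difficulty, and your sketch does not match how the constants arise. In the paper, the argument is: suppose no $\Delta(k-\ell_1+2)$-system with kernel of size $\ell_1$ exists in $\ff$ (else every set contains that kernel by Observation~\ref{obsdelta1}, and we are done). One then greedily builds a layered base $\m B$ (Lemma~\ref{lemdeltabase}) whose $\ell_1$-layer is empty, and shows via Lemma~\ref{lemdeltadef} that any two sets in $\m B^{(\ell_2)}$ intersect in exactly $\ell_1$ elements. Deza's Corollary~\ref{corde} then says: either $|\m B^{(\ell_2)}| \le \ell_2^2 - \ell_2 + 1$, or $\m B^{(\ell_2)}$ is itself a sunflower, which by Lemma~\ref{lemdeltabase}(iii) has at most $k - \ell_1 + 1$ petals. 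So \emph{both} summands $k-\ell_1+1$ and $\ell_2^2 - \ell_2 + 1$ come from this single analysis of $\m B^{(\ell_2)}$ --- they are not two separate obstructions (one about extensions of $D$, one about level $\ell_2$) as you describe. Each $B \in \m B^{(\ell_2)}$ then covers at most $\prod_{i \ge 2}\frac{n-\ell_i}{k-\ell_i}$ sets by inductive~(iii), and the higher layers of $\m B$ contribute only $O_k(n^{r-2})$, giving the threshold in~(i).
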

First, remark that we restrict to $r\ge 2$ since the case $r=1$ is dealt with in Corollary~\ref{corde}.
Below we sketch the original proof of Deza, Erd\H os and Frankl and discuss the $\Delta$-system part in detail. The proof of the theorem is by induction on $k$, the case $k=1$ being trivial.

We first  treat the case $\ell_1=0$. For each $x$, the family $\ff(x)$ is an $(n-1,k-1,\{\ell_2-1,\ldots, \ell_r-1\})$-system. Apply inductive assumption (iii)  to each $\ff(x)$ and then do double-counting $k|\ff| = \sum_{x\in[n]}|\ff(x)|$. It is not hard to check that it gives the bound on $|\ff|$ as is claimed in (iii). The divisibility part $(\ell_3-\ell_2)|\ldots|(k-\ell_r)$ is by inductive assumption (ii), applied to any large $\ff(x)$. We must have at least one large $\ff(x)$ by double counting, provided that $\ff$ itself is large.

It remains to show that $\ell_2|(\ell_3-\ell_2)$ in order to complete the proof of (ii) and the theorem in this case. First, we only leave vertices of high degree. Specifically, we remove elements $x$ with $|\ff(x)|\le k^22^{r-2}n^{r-2}$  one by one, until we reach the situation with all $x$ having  degree more than that expression. (Such deletion is possible thanks to the generous lower bound on $|\ff|$ in (ii).) Then, applying inductive statement (i) for $k-1$ to $\ff(x)$ with any of the remaining $x$, we see that there must be a set $D_x$ of size $\ell_2-1$ that all sets from $\ff(x)$ must contain. Using the lower bound on $|\ff(x)|$, it is not difficult to check that sets $D_x\cup \{x\}$ for different $x$ either coincide or are disjoint and that, consequently, each $A\in \ff$, if intersecting $D_x\cup \{x\}$, must contain it entirely.\footnote{If $y\in D_x$ and $D_y\cup \{y\}\ne D_x\cup\{x\}$, then all sets containing $x$ must contain $D_x$, and thus $y$. But then they also contain $D_y$, and have a fixed set of size bigger than $\ell_2$, which reduces the number of available distances to $r-2$. This, in turn, implies by induction that the degree of $x$ is too small.} Thus all intersections of the sets in $\ff$ must be divisible by the size of $D_x\cup \{x\}$, which is $\ell_2$. At the same time, intersection $\ell_3$ must appear, since otherwise $|\ff(x)|$ is too small.

The main challenge is in the case $\ell_1\ge 1$ and, more specifically, in the part (i) of the theorem. Once we managed to show that all sets should contain a  set $X$ of size $\ell_1$, we can finish the proof by applying inductive statement for the $(n-\ell_1,k-\ell_1,\{0,\ell_2-\ell_1,\ldots, \ell_r-\ell_1\})$ system $\ff(X)$. (That is, reduce it to the $\ell_1=0$ case with a smaller $k$.)

Proving (i) for $\ell_1\ge 1$ is where the $\Delta$-system method pops up. The goal is to find an economical family $\mathcal B$ (the {\it base}) that {\em covers} $\mathcal F$ in the sense that, for each $A\in \ff$ there is $B\in \m B$ such that $B\subset A$, and then to bound the size of $\m F$ using the structure of $\m B$. The first observation is as follows.
\begin{obs}\label{obsdelta1} Let $k\ge \ell>0$ be integers. Let $A$ be set of size $k$ and $\m F$ is a $\Delta(k-\ell+2)$-system with kernel $D$ of size $\ell$. Then $|A\cap F|\ge \ell$ for every $F\in \m F$ implies $|A\cap D|\ge \ell$.
\end{obs}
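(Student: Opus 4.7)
The plan is to argue by contradiction: suppose $t := |A \cap D| < \ell$, and derive that $\m F$ cannot have as many as $k - \ell + 2$ petals. The only place the hypothesis $|A \cap F| \ge \ell$ will enter is through the decomposition
$$|A \cap F| = |A \cap D| + |A \cap (F \setminus D)|,$$
which forces $|A \cap (F \setminus D)| \ge \ell - t \ge 1$ for every $F \in \m F$. So each petal must contribute at least $\ell - t$ fresh elements of $A$ outside $D$.

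Next I would exploit the $\Delta$-system structure to count these contributions. Since the petals $F \setminus D$ (for $F \in \m F$) are pairwise disjoint and automatically disjoint from $D$, the sets $A \cap (F \setminus D)$ are pairwise disjoint subsets of $A \setminus D$, a set of size $k - t$. Summing over the $k-\ell+2$ petals gives
$$(k-\ell+2)(\ell - t) \;\le\; \sum_{F \in \m F} |A \cap (F \setminus D)| \;\le\; k - t.$$
Setting $s := \ell - t \ge 1$ rewrites this as $(k - \ell + 1)s \le k - \ell$, which fails for every $s \ge 1$ (for $s=1$ it reduces to $k-\ell+1 \le k-\ell$), giving the contradiction.

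I do not expect any real obstacle here: the argument is a one-line pigeonhole once the petal decomposition is in place. The only subtle point worth noting is that the count $k-\ell+2$ in the statement is exactly sharp for this argument — with only $k-\ell+1$ petals the inequality $(k-\ell)s \le k-\ell$ would be satisfied by $s=1$, so $k-\ell+2$ is the minimum number of petals that forces the kernel to sit inside $A$.
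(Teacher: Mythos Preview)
Your proof is correct and essentially identical to the paper's: both argue by contradiction, use the decomposition $|A\cap F|=|A\cap D|+|A\cap(F\setminus D)|$ to force each petal to meet $A$ in at least $\ell-|A\cap D|$ points, and then exploit pairwise disjointness of the petals to overcount $|A|$ (the paper bounds $|A|\ge (\ell-x)+(k-\ell+2)x\ge k+1$, which is just your inequality rearranged). The only cosmetic difference is that you parametrize by $t=|A\cap D|$ and $s=\ell-t$, whereas the paper writes $|A\cap D|=\ell-x$ and bounds $|A|$ directly.
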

\begin{proof}
  Put $\ff = \{F_1,\ldots, F_{k-\ell+2}\}$ and assume that $|A\cap D| = \ell-x$ for some $x\ge 1$. Then $|A\cap (F_i\setminus D)|\ge x$ for each $i$ and, since $F_i\setminus D$ are pairwise disjoint, we get that $|A|\ge (\ell-x)+(k-\ell+2)x =k+1+(k-\ell+1)(x-1)\ge k+1$, a contradiction.
\end{proof}
A very similar observation, that was not explicitly stated in \cite{DEF}, but was de-facto used there in the proof is as follows. We omit the proof, which is akin to the previous proof.
\begin{obs}\label{obsdelta2} Let $k$ be an integer. Let $A$ be set of size $k$ and $\m F$ is a $\Delta(k+1)$-system with kernel $D$. Then $|A\cap D| = |A\cap F|$ for some $F\in \m F$.
\end{obs}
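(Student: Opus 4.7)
The plan is to mimic the proof of Observation~\ref{obsdelta1}, exploiting the same disjointness of the petals of the sunflower. Write $\m F = \{F_1,\ldots,F_{k+1}\}$ and $P_i := F_i \setminus D$ for the petals, so that the $P_i$ are pairwise disjoint. Since every $F_i$ decomposes as $D \sqcup P_i$, the quantity we want to control factors as
\begin{equation*}
  |A \cap F_i| = |A \cap D| + |A \cap P_i|,
\end{equation*}
so proving $|A \cap F_i| = |A \cap D|$ for some $i$ amounts to finding an $i$ with $|A \cap P_i| = 0$.

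I would then obtain such an $i$ by a simple pigeonhole. Because the petals $P_i$ are pairwise disjoint, we have
\begin{equation*}
  \sum_{i=1}^{k+1} |A \cap P_i| \;=\; \Bigl|A \cap \bigcup_{i=1}^{k+1} P_i\Bigr| \;\le\; |A| \;=\; k.
\end{equation*}
If every summand were at least $1$, the left-hand side would be at least $k+1$, a contradiction. Hence some $|A \cap P_{i_0}| = 0$, which gives the desired $F_{i_0}$.

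There is essentially no obstacle here: the observation is a one-line pigeonhole argument once one makes the petal decomposition explicit, and it is strictly easier than Observation~\ref{obsdelta1} because we only need a single petal that misses $A$ entirely, rather than a quantitative bound forcing $A$ into the kernel. The only thing to double-check is the edge case $|A \cap D| > |A|$, which cannot occur since $A \cap D \subseteq A$, so the conclusion is meaningful and the argument goes through verbatim.
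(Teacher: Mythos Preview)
Your proof is correct and is precisely the argument the paper has in mind: the paper omits the proof, noting only that it is ``akin to the previous proof'' of Observation~\ref{obsdelta1}, and your petal-disjointness plus pigeonhole is exactly that.
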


The following lemma is an important ingredient for the construction of the base of the family, which comes in the lemma after.
\begin{lem}\label{lemdeltadef}
  Let $s,k,t$ be integers such that $t>(s-1)k$. For each $i\in [s]$ let $F_i^1,\ldots F_i^t$ form a $\Delta(t)$-system with kernel $E_i$, where $|F_i^j| = k$ for $i\in[s]$, $j\in[t]$.

  Suppose that $E_1,\ldots, E_s$ form a $\Delta(s)$-system with kernel $D$. Then there are distinct indices $j_1,\ldots, j_s\in[t]$, such that the sets $F_1^{j_1},\ldots, F_s^{j_s}$ form a $\Delta(s)$-system with kernel $D$.
\end{lem}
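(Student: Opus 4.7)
My plan is a greedy construction. I will pick $j_1,\ldots,j_s$ one index at a time, maintaining the invariant that after step $i$ the sets $F_1^{j_1},\ldots,F_i^{j_i}$ already form a $\Delta$-system with kernel $D$, and showing that at each step the number of ``bad'' choices of $j_i\in[t]$ is strictly less than $t$. The basic bookkeeping is to write $F_i^j=E_i\sqcup P_i^j$, where $P_i^j:=F_i^j\setminus E_i$ is the petal; by the sunflower assumption on the $i$-th family, the petals $\{P_i^j\}_{j\in[t]}$ are pairwise disjoint, and since $D\subset E_i$ they are disjoint from $D$. Using $E_i\cap E_{i'}=D$, the requirement $F_i^{j_i}\cap F_{i'}^{j_{i'}}=D$ unpacks into two conditions: (a) $P_i^{j_i}$ is disjoint from $E_{i'}\setminus D$ for every $i'\ne i$, and (b) $P_i^{j_i}$ is disjoint from $P_{i'}^{j_{i'}}$ for every $i'\ne i$.

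At step $i$, call $j\in[t]$ \emph{bad} if $P_i^j$ meets $\bigcup_{i'\ne i}(E_{i'}\setminus D)\cup\bigcup_{i'<i}P_{i'}^{j_{i'}}$. Since the petals $P_i^j$ (for varying $j$) are pairwise disjoint, each element of this forbidden set lies in at most one $P_i^j$, so
\begin{align*}
\#\{\text{bad }j\} &\le \sum_{i'\ne i}(|E_{i'}|-|D|)+\sum_{i'<i}(k-|E_{i'}|)\\
&= \sum_{i'<i}(k-|D|)+\sum_{i'>i}(|E_{i'}|-|D|)\\
&\le (s-1)(k-|D|)\le (s-1)k<t.
\end{align*}
Hence a good $j_i$ exists at every step, preserving the invariant and finishing the construction.

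The step to watch is the telescoping in the second line above: for each $i'<i$, the kernel contribution $|E_{i'}|-|D|$ and the petal contribution $k-|E_{i'}|$ add up to $k-|D|$, independently of $|E_{i'}|$. Without this cancellation a crude bound would give roughly $2(s-1)k$ bad indices, which is too weak to match the hypothesis $t>(s-1)k$. Everything else is a routine set-theoretic unpacking of the sunflower condition, using only $D\subset E_i$, the hypothesis $E_i\cap E_{i'}=D$, and the sunflower structure of each family.
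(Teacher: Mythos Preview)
Your proof is correct and follows the same greedy strategy as the paper's: choose $j_1,\ldots,j_s$ one at a time, bounding the number of bad indices at each step by the size of a forbidden set via the pairwise disjointness of the petals $P_i^j$. The paper writes the forbidden set as $\big(\bigcup_{\ell<i}F_\ell^{j_\ell}\cup\bigcup_{\ell>i}E_\ell\big)\setminus D$ and bounds its size by $(s-1)k$ directly, whereas your telescoping recombines $(E_{i'}\setminus D)\cup P_{i'}^{j_{i'}}=F_{i'}^{j_{i'}}\setminus D$ to reach the slightly sharper $(s-1)(k-|D|)$; either bound suffices.
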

\begin{proof}
  For each $i=1,\ldots, s$ we shall find a set $F^{j_i}_i$ that is disjoint from \begin{equation}\label{eqsetdelta}\Big(\bigcup_{\ell=1}^{i-1}F^{j_\ell}_\ell \cup\bigcup_{\ell=i+1}^s E_\ell\Big)\setminus D.\end{equation}
 Then it should be clear that the sets  $F_1^{j_1},\ldots, F_s^{j_s}$ form a $\Delta(s)$-system with kernel $D$. Why is it possible to find such sets? First, by the choice of $F_{\ell}^{j_\ell}$, $\ell<i$, we have $$F_{\ell}^{j_\ell}\cap F_i^j\setminus D\subset F_i^j\setminus E_i$$   for any $\ell<i$ and any $j\in [t]$.  Second, the sets  $F_i^j\setminus E_i$, $j\in[t]$ are pairwise disjoint. Third, the set \eqref{eqsetdelta} has size at most $k(s-1)$. 
 Concluding, we have $t>(k-1)(s-1)$, and so by the pigeon-hole principle we can find $j_i\in[t]$ such that $F_i^{j_i}\setminus E_i$ is disjoint from the displayed set.
\end{proof}
Using this lemma, we may construct the {\em base} $\m B$ of the family $\ff$. For a family $\m X\subset 2^{[n]}$, denote by $\m X^{(\ell)}:=\m X\cap {[n]\choose \ell}$ the $\ell$-th layer in $\m X$.
\begin{lem}\label{lemdeltabase}
  For any $(n,k,L)$-family $\ff\subset {[n]\choose k}$ there exists a family $\m B\subset {[n]\choose \le k}$ such that for any $F\in \m F$ there exists $B\in \m B$ with $B\subset F$, and that satisfies the following:
  \begin{itemize}
  \item[(i)] for any $B\in \m B$ we have $|B|\in L\cup \{k\}$;
  \item[(ii)] for any $B\in \m B^{(\ell_1)}$, the family $\ff$ contains a $\Delta(k-\ell_1+2)$-system with center in $B$. For any $i\in [2,r]$ and $B\in \m B^{(\ell_i)}$, the family $\ff$ contains a $\Delta(k^i+1)$-system with kernel $B$;
  \item[(iii)] the family $\m B^{(\ell_2)}$ does not contain a $\Delta(k-\ell_1+2)$-system. For each $i\in [3,r]$, the family  $\m B^{(\ell_i)}$ does not contain a $\Delta(k^{i-1}+1)$-system. The family $\m B^{(k)}$ does not contain a $\Delta(k^r+1)$-system.
  \end{itemize}
\end{lem}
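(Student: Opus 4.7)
The plan is to construct $\mathcal{B}$ by a top-down peeling procedure. Initialize $\mathcal{B}:=\ff$, so $\mathcal{B}=\mathcal{B}^{(k)}$. Process the layers $k,\ell_r,\ell_{r-1},\ldots,\ell_2$ in this order, maintaining throughout the following invariants: (a) every $B\in\mathcal{B}$ is contained in some $F\in\ff$; (b) every $B\in\mathcal{B}$ has $|B|\in L\cup\{k\}$; and (c) for every $i\ge 2$ and every $B\in\mathcal{B}^{(\ell_i)}$, the family $\ff$ contains a $\Delta(k^i+1)$-system with kernel $B$, with the analogous assertion for $B\in\mathcal{B}^{(\ell_1)}$ using a $\Delta(k-\ell_1+2)$-system.

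At layer $k$, while $\mathcal{B}^{(k)}$ contains a $\Delta(k^r+1)$-system, pick one such system (directly a sub-collection of $\ff$), let $D$ be its kernel, delete these $k^r+1$ sets from $\mathcal{B}$, and insert $D$ instead. Since the sets are distinct of size $k$, we have $|D|<k$; since all pairwise intersections in a $\Delta$-system equal the kernel and live in $L$, we obtain $|D|\in L$. At a subsequent layer $\ell_i$, $i\in\{r,\ldots,2\}$, while $\mathcal{B}^{(\ell_i)}$ contains a $\Delta(k^{i-1}+1)$-system, or (when $i=2$) a $\Delta(k-\ell_1+2)$-system, with kernel $D$ and petals $B_1,\ldots,B_s$, invoke invariant (c) to get $\Delta(k^i+1)$-systems $F_j^1,\ldots,F_j^{k^i+1}\subset\ff$ with kernels $E_j=B_j$, and apply Lemma~\ref{lemdeltadef} with $t=k^i+1$. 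The required inequality $t>(s-1)k$ reads $k^i+1>k\cdot k^{i-1}$ in the first case and $k^2+1>k(k-\ell_1+1)$ in the second, the latter using $\ell_1\ge 1$. The lemma supplies an $s$-set $\Delta$-system in $\ff$ with kernel exactly $D$; hence $|D|$ is a pairwise intersection in $\ff$, so $|D|\in L$, and $|D|<\ell_i$ since the $B_j$ are distinct of size $\ell_i$. Remove $B_1,\ldots,B_s$ from $\mathcal{B}$ and insert $D$ at level $|D|$.

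One verifies that invariants (a)--(c) are preserved. For (c), when $D$ is inserted at some level $\ell_j$ with $j<i$, the $\Delta$-system produced in $\ff$ has size $s$, which is at least the size required of the certificate at level $\ell_j$: at layer $k$ we used $s=k^r+1\ge k^j+1$; at layer $\ell_i$ with $i\ge 3$ we used $s=k^{i-1}+1\ge k^j+1$ for $j\le i-1$ and $s\ge k-\ell_1+2$ for $j=1$ (since $k^{i-1}\ge k^2\ge k-\ell_1+1$); at layer $\ell_2$ we used $s=k-\ell_1+2$ directly, which is exactly the certificate needed at level $\ell_1$. Condition (i) of the lemma is (b); condition (ii) is (c); condition (iii) is the termination condition at each processed layer. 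The process terminates because every replacement strictly reduces $|\mathcal{B}|$.

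The main obstacle is ensuring that each reduction produces a kernel $D$ whose size actually lies in $L$ rather than being some arbitrary value below $\ell_i$. A raw $\Delta$-system inside $\mathcal{B}$ provides no such guarantee on its own. The role of Lemma~\ref{lemdeltadef}, and hence of the specific exponential growth rates $k^r,k^{i-1},\ldots$ chosen in the statement, is precisely to lift a $\Delta$-system found at a lower level of $\mathcal{B}$ back up to a $\Delta$-system living in $\ff$ with the same kernel. Once that lift is made, membership in $L$ is automatic from the definition of an $(n,k,L)$-system, and the whole peeling procedure goes through cleanly.
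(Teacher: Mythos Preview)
Your argument is correct, but it runs in the opposite direction from the paper's. The paper builds $\mathcal B$ \emph{bottom-up}: it greedily searches the current remainder of $\ff$ for $\Delta(k-\ell_1+2)$-systems with kernel of size $\ell_1$, then $\Delta(k^2+1)$-systems with kernel of size $\ell_2$, and so on, each time adding the kernel to $\mathcal B$ and deleting the petals from $\ff$; the leftover $k$-sets become $\mathcal B^{(k)}$. In that version (i) and (ii) come for free, while (iii) is proved \emph{a posteriori} by contradiction: a large $\Delta$-system inside $\mathcal B^{(\ell_i)}$ is lifted via Lemma~\ref{lemdeltadef} to a $\Delta$-system in $\ff$ with a strictly smaller kernel, which the greedy sweep should already have extracted at an earlier stage. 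You instead peel \emph{top-down}, starting from $\mathcal B=\ff$ and collapsing $\Delta$-systems layer by layer; Lemma~\ref{lemdeltadef} is invoked \emph{during} the construction (to certify that each new kernel has size in $L$ and carries a sufficiently large $\Delta$-system in $\ff$), and (iii) is simply the halting condition at each layer. Both routes use Lemma~\ref{lemdeltadef} in the same way and produce the same object; the paper in fact describes your top-down variant later, in Section~\ref{secbase}, as an alternative base construction with somewhat better quantitative behaviour. One minor slip: the lemma's headline conclusion is that every $F\in\ff$ contains some $B\in\mathcal B$, but your stated invariant (a) is the reverse inclusion. The correct covering direction is of course preserved by your procedure---you only ever replace sets by subsets of themselves---so this is a wording issue, not a mathematical one, but you should say it explicitly.
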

Note that we treat the first layers of $\m B$ slightly differently.
\begin{proof}
  We construct the base $\m B$ in a greedy fashion: first, we search for sets of size $\ell_1$ that are a kernel of a $\Delta(k-\ell_1+2)$-system. If we find one, we remove the sets forming this $\Delta$-system from the family $\ff$, add the kernel to $\m B$ and continue our search in the family. Once there are no more kernels of size $\ell_1$ of $\Delta(k-\ell_1+2)$-systems, we start searching for sets of size $\ell_2$ that are kernels of $\Delta(k^2+1)$-system in the remaining family. Once we ran out of such sets of size $\ell_2$, we start searching for sets of size $\ell_3$ that are kernels of a $\Delta(k^3+1)$-system in the remaining family etc.

  At the last step, we include in $\m B$ the remaining sets of size $k$ and note that there is no $\Delta(k^{r}+1)$-system formed by these sets. Indeed, otherwise its core would have been included into $\m B^{(\ell_i)}$ for some $i\in[r]$ and the sets would have been removed.

  This verifies the first two parts of the lemma. 
  To verify (iii), we argue indirectly. Assume that for some $i\ge 3$ the sets $B_1,\ldots, B_{k^{i-1}+1}\in \m B^{(\ell_i)}$ form a $\Delta$-system (with core of size in $L$). Recall that each of them is a core of a $\Delta(k^i+1)$-system of sets from $\m F$. We may apply  Lemma~\ref{lemdeltadef} to these $\Delta$-systems and conclude that the union of these $\Delta$-systems contains a $\Delta(k^{(i-1)}+1)$-system with core of size at most $\ell_{i-1}$. However, this contradicts the greedy construction of $\m B$: we should have included the kernel of this sunflower in $\m B$ before including $B_1,\ldots, B_{k^{i-1}+1}$. The argument for $i=2$ is virtually identical. This contradiction proves the lemma.
\end{proof}

In retrospect, this lemma is far from being efficient if one tries to get best bounds on $n$ when applying the $\Delta$-system method. We shall discuss and compare different possible approaches for constructing bases in Section~\ref{secbase}.

Let us return to the proof of the theorem. Lemma~\ref{lemdeltabase} allows us to relate the structure of the base and the size of $\ff$. If $\ff$ contains a $\Delta(k-\ell_1+2)$-system with core $X$ of size $\ell_1$, then by Observation~\ref{obsdelta1}, all sets in $\m F$ must contain $X$, and we are done. Thus, we may assume that this is not the case and, in particular, $\m B^{(\ell_1)}$ is empty. Next, remark that by Lemma~\ref{lemdeltadef} with $s=2$, for any two sets $B_1,B_2$ in $\m B^{(\ell_2)}$ we can find two sets $F_1,F_2$ from the $\Delta$-systems with cores $B_1,B_2$, respectively, such that $F_1\cap F_2 = B_1\cap B_2$.  Since $|B_1\cap B_2|< |B_1|=\ell_2$ and $|F_1\cap F_2|\in L$, we get $|B_1\cap B_2|=\ell_1$. Thus, if $\m B^{(\ell_2)}$ contains at least $\max\{k-\ell_1+2, \ell_2^2-\ell_2+2\}$ sets, then $\m B^{(\ell_2)}$ must be a $\Delta(\ge k-\ell_1+2)$-system with kernel $K$ of size $\ell_1$ by Corollary~\ref{corde}. But this contradicts Lemma~\ref{lemdeltabase} (iii). Thus, $|\m B^{(\ell_2)}|\le \max\{k-\ell_1+1, \ell_2^2-\ell_2+1\}$.

What is left is to count the contribution of each $\m B^{(\ell)}$ to the cardinality of $\m \ff$. For each $B\in \m B^{(\ell_2)},$
the family $\ff(B)$ is an $(n-\ell_2,k-\ell_2,\{0,\ell_3-\ell_2,\ell_4-\ell_2,\ldots, \ell_r-\ell_2\})$-system, and so by induction $|\ff(B)|\le \prod_{i=2}^r\frac{n-\ell_i}{k-\ell_i}$. We have
$$\sum_{B\in \m B^{(\ell_2)}}|\ff(B)|\le \max\{k-\ell_1+1, \ell_2^2-\ell_2+1\}\prod_{i=2}^r\frac{n-\ell_i}{k-\ell_i}.$$
Next, for each $B\in \m B\setminus \m B^{(\ell_2)}$, the family $\ff(B)$ has only $\le r-2$ distances and thus $|\ff(B)|\le n^{r-2}$. At the same time, using Theorem~\ref{thmer}, there are at most $\sum_{i=2}^r\varphi(k,k^{i}+1)$ sets in $\m B$, and thus
$$\sum_{B\in \m B\setminus\m B^{(\ell_2)}}|\ff(B)|\le \sum_{i=1}^r\varphi(k,k^{i}+1)n^{r-2}<\epsilon \prod_{i=2}^r\frac{n-\ell_i}{k-\ell_i},$$
provided $n$ is large enough. Thus, if there is no set $D$ of size $\ell_1$ contained in all sets of $\ff$, then $|\ff|<(\max\{k-\ell_1+1, \ell_2^2-\ell_2+1\}+\epsilon)\prod_{i=2}^r\frac{n-\ell_i}{k-\ell_i}$, contradicting the assumption. This concludes the proof.

{\bf Remark. }  For a different proof of Theorem~\ref{thmdef}  we refer to the book \cite{FT}. That proof uses Observation~\ref{obsdelta1}, but avoids the  construction of the base for the family.

\subsection{Further developments of the early $\Delta$-system method}\label{sec2.4} The development of the method for the next several years came mostly thanks to the work of Peter Frankl. He pushed forward the original approach and proposed another point of view on how to decompose a family using $\Delta$-systems. This work mostly revolved around families with prescribed intersections.

In Section~\ref{sec22}, we discussed families with only one intersection size. Erd\H os and S\'os (see the collection of problems by Erd\H os \cite[page 18]{Erd75}) suggested the {\it opposite} question on families $\ff\subset {[n]\choose k}$ that are allowed to have all intersection but one. Specifically, the conjecture of Erd\H os and S\'os was that if $\ff\subset {[n]\choose k}$ satisfies $|A\cap B|\ne 1$ for each $A,B\in \ff$, then $|\ff|\le {n-2\choose k-2}$ for $n>n_0(k)$. That is, forbidding intersection $1$ gives the same bound as forbidding intersections $0$ and $1$, by the Erd\H os--Ko--Rado theorem. In terms of Section~\ref{sec21}, it asks for the value $m(n,k,\{0,2,3,\ldots, k-1\})$ of the size of the largest $(n,k,\{0,2,3,\ldots, k-1\})$-system.

In the paper \cite{Fra1}, Frankl proved this conjecture using the $\Delta$-system method.\footnote{This paper was published in 1977, which is earlier than the Deza--Erd\H os--Frankl paper \cite{DEF}, and does not refer to \cite{DEF}, but the latter was received in the journal earlier.} He employs a variant of the $\Delta$-system construction described in the previous section, applies it to families $\ff(x)$ and, importantly, notices the following:
\begin{obs}
  If $\ff$ is an $(n, \{0,2,3,\ldots, k-1\},k)$ system and  $\m B_x$, $\m B_y$ are the bases of $\ff(x),\ff(y)$, respectively, constructed as in the previous section, then for any $C_x\in \m B_x$ and $C_y\in \m B_y$, we have $|(\{C_x\cup \{x\})\cap (C_y\cup \{y\})|\ne 1$.
\end{obs}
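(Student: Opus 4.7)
The plan is a proof by contradiction. Write $E_x:=\{x\}\cup C_x$ and $E_y:=\{y\}\cup C_y$, and suppose for contradiction that $|E_x\cap E_y|=1$. The strategy is to promote this putative one-element intersection to the intersection of two actual sets of $\ff$, which would immediately violate the hypothesis that $\ff$ avoids pairwise intersection size~$1$.

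Unpacking the base: Lemma~\ref{lemdeltabase}, applied to the $(n-1,k-1,\{1,\ldots,k-2\})$-system $\ff(x)$, says that if $|C_x|<k-1$, then $C_x$ is the kernel of a $\Delta(t_x)$-system in $\ff(x)$ with $t_x\ge k$ pairwise disjoint petals $Q_1^x,\ldots,Q_{t_x}^x$. Adjoining $x$ yields a $\Delta(t_x)$-system $\{S_i^x:=E_x\cup Q_i^x\}_{i\in[t_x]}\subset\ff$ of $k$-element sets with kernel $E_x$. If $|C_x|=k-1$, then $E_x\in\ff$ directly; we treat this as the trivial system with $t_x=1$ and $Q_1^x=\emptyset$. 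The symmetric description holds on the $y$-side. The subcase $|C_x|=|C_y|=k-1$ is immediate, since then $E_x,E_y\in\ff$ and $|E_x\cap E_y|=1$ is already forbidden.

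In the remaining subcases we find good petals by pigeonhole. Since the $Q_j^y$ are pairwise disjoint and disjoint from $E_y$, the number of $j\in[t_y]$ with $Q_j^y\cap E_x\ne\emptyset$ is at most $|E_x\setminus E_y|=|E_x|-1\le k-1$; as $t_y\ge k$ in the relevant subcase, we fix $j$ with $Q_j^y\cap E_x=\emptyset$. If $|C_x|=k-1$, this already gives $S_j^y\cap E_x=E_y\cap E_x$, of size $1$, contradicting $S_j^y\in\ff$. Otherwise, with $j$ fixed we also pick $i$ on the $x$-side: because $|S_j^y\cap E_x|=1$ forces $|S_j^y\setminus E_x|=k-1$, at most $k-1$ indices $i\in[t_x]$ can make $Q_i^x\cap S_j^y\ne\emptyset$; since $t_x\ge k$, a good $i$ exists. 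For this pair $(i,j)$ one has $S_i^x\cap S_j^y=E_x\cap E_y$, of size $1$, again a contradiction.

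The main subtlety is the bookkeeping on petal counts. Lemma~\ref{lemdeltabase} guarantees only $t_x\ge k$ petals on the smallest layer $\ell_1=1$ of $\m B_x$, which is one short of what Lemma~\ref{lemdeltadef} with $s=2$ would require in the worst case. The saving comes precisely from our hypothesis $|E_x\cap E_y|=1$, which shrinks each pigeonhole count from $|E_x|\le k$ down to $|E_x|-1\le k-1$; this makes exactly $k$ petals per side sufficient, and the argument closes.
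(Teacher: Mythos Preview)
Your proof is correct and is essentially the natural approach: it is a hands-on version of Lemma~\ref{lemdeltadef} with $s=2$, which is exactly how the paper uses this transference principle elsewhere (cf.\ the proof of Theorem~\ref{thmdef} and Corollary~\ref{corfur}). The paper does not give a separate proof of this observation, so there is nothing further to compare. One small presentational point: when you write ``in the remaining subcases'' and immediately start using petals on the $y$-side, you are tacitly assuming $|C_y|<k-1$; it would be cleaner to say explicitly ``by symmetry we may assume $|C_y|<k-1$'' before invoking $t_y\ge k$. Your final paragraph correctly isolates the only real subtlety, namely that the $\ell_1$-layer of the base only guarantees $\Delta(k)$-systems rather than $\Delta(k+1)$-systems, and that the hypothesis $|E_x\cap E_y|=1$ is precisely what recovers the missing pigeonhole.
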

That is, he explicitly states the fact that the property of the family is transferred to the base. (It is somewhat implicit in the statement of Lemma~\ref{lemdeltabase}, although it is being used in the proof.) He then again analyzes the structure of sets of small uniformity from the base and their contributions to the size.

\subsubsection{The second approach to base construction}\label{sec241} In 1980, Peter published two papers on restricted intersections, in which he developed another way to decompose the family using sunflowers. Both use a certain proposition (Proposition~\ref{prop77a}) on $\Delta$-systems that was proved in another of his papers \cite{Fra77b} from 1977, which suggests that he employed and developed aspects of the $\Delta$-system method in that paper. Unfortunately, I could not find an electronic copy of that paper.

In the paper \cite{Fra80b}, Frankl managed to determine the order of magnitude of $m(n,k,L)$ for all $k\le 7$ and all possible $L$, with the exceptions of $k=7$, $L=\{0,2,3,5\}$ and $L=\{0,2,3,5,6\}$. That is, in this short paper, he dealt with an impressive list of more than $100$ possible cases. The methods combined the $\Delta$-system method, linear-algebraic method and algebraic constructions. Below, we present the $\Delta$-system part of his arguments. In what follows, we assume that $L = \{\ell_1\ldots, \ell_r\}$.

Peter first uses Lemma~\ref{lemdeltabase} to conclude that the Deza--Erd\H os--Frankl base has size at most $c(k)$,\footnote{Here and below, we use $c(k)$ some large constant, depending on $k$ only.} and thus one of the bases $B$ is contained in $|\ff|/c(k)$ sets. Looking only at $\ff(B)$ allows to reduce the problem to the case when the smallest element in $L$ is $0$. We can formalize this as follows.
\begin{prop}\label{propfra}
  We have $m(n,k,\{\ell_1,\ldots, \ell_r\}) = \Theta_k(m(n-\ell_1,k-\ell_1,\{0,\ell_2-\ell_1,\ldots, \ell_r-\ell_1\}))$.
\end{prop}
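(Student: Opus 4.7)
The plan is to prove the two directions of $\Theta_k$ separately. The lower bound $m(n,k,L)\ge m(n-\ell_1,k-\ell_1,L')$ is immediate: take any extremal $(n-\ell_1, k-\ell_1, L')$-system $\m G$ on a ground set disjoint from some fixed $\ell_1$-element set $D$, and observe that $\{D\cup G : G\in\m G\}$ is an $(n,k,L)$-system of the same cardinality.

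For the upper bound $m(n,k,L)\le c(k)\cdot m(n-\ell_1,k-\ell_1,L')$, I would apply Lemma~\ref{lemdeltabase} to an extremal $(n,k,L)$-system $\ff$ to produce a base $\m B$, and then split into two cases depending on whether $\m B^{(\ell_1)}$ is empty. If some $B\in\m B^{(\ell_1)}$ exists, then by Lemma~\ref{lemdeltabase}(ii) the family $\ff$ contains a $\Delta(k-\ell_1+2)$-system with kernel $B$. Combined with the fact that every $A\in\ff$ intersects each petal in at least $\ell_1$ elements (as the minimum of $L$ is $\ell_1$), Observation~\ref{obsdelta1} forces $B\subset A$ for all $A\in\ff$. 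Hence $\ff=\ff[B]$, the family $\ff(B)$ is an $(n-\ell_1,k-\ell_1,L')$-system, and $|\ff|\le m(n-\ell_1,k-\ell_1,L')$.

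If $\m B^{(\ell_1)}=\emptyset$, then $\m B$ only uses layers $\m B^{(\ell_2)},\ldots,\m B^{(\ell_r)},\m B^{(k)}$, each of which is $\Delta(s)$-free for some $s=s(k)$ by Lemma~\ref{lemdeltabase}(iii); Theorem~\ref{thmer} then gives $|\m B|\le c_1(k)$ for some function $c_1$ of $k$ alone. By pigeonhole there is $B\in\m B$ with $|\ff[B]|\ge |\ff|/c_1(k)$; the case $|B|=k$ is trivial since then $|\ff[B]|\le 1$, so I assume $|B|=\ell_{j_0}$ with $j_0\ge 2$. The shifted family $\ff(B)$ is an $(n-\ell_{j_0},k-\ell_{j_0},L-\ell_{j_0})$-system, where $L-\ell_{j_0}=\{0,\ell_{j_0+1}-\ell_{j_0},\ldots,\ell_r-\ell_{j_0}\}$. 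The key maneuver is to embed $\ff(B)$ into an $(n-\ell_1,k-\ell_1,L')$-system of the same size by adjoining $\ell_{j_0}-\ell_1$ fresh fixed elements to every set: this raises every pairwise intersection by $\ell_{j_0}-\ell_1$, placing it in $\{\ell_{j_0}-\ell_1,\ldots,\ell_r-\ell_1\}\subset L'$, while the uniformity becomes $k-\ell_1$ and the ground set size becomes $n-\ell_1$. Thus $|\ff[B]|\le m(n-\ell_1,k-\ell_1,L')$, and consequently $|\ff|\le c_1(k)\cdot m(n-\ell_1,k-\ell_1,L')$.

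The main obstacle is to have the base mechanism interact cleanly with the intersection pattern: one needs (a) the boundedness of every layer of $\m B$ above $\m B^{(\ell_1)}$, which is built into Lemma~\ref{lemdeltabase}(iii) via Erd\H os--Rado; and (b) the simple but essential shift-and-embed step that recovers the canonical intersection set $L'$ from the intermediate $L-\ell_{j_0}$ arising from the pigeonholed base element. Once these two ingredients are assembled, the rest is bookkeeping.
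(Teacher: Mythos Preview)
Your proof is correct and follows essentially the same approach as the paper, which only gives a one-sentence sketch: use the Deza--Erd\H os--Frankl base from Lemma~\ref{lemdeltabase}, observe it has size at most $c(k)$, pigeonhole to find a base set $B$ with $|\ff[B]|\ge |\ff|/c(k)$, and pass to $\ff(B)$. You are more explicit than the paper in two respects: you spell out the case split on whether $\m B^{(\ell_1)}$ is empty (handling the non-empty case via Observation~\ref{obsdelta1}, exactly as in the paper's proof of Theorem~\ref{thmdef}), and you supply the embedding step---adjoining $\ell_{j_0}-\ell_1$ fresh common elements to each set of $\ff(B)$---to convert an $(n-\ell_{j_0},k-\ell_{j_0},L-\ell_{j_0})$-system back into an $(n-\ell_1,k-\ell_1,L')$-system, a detail the paper's sketch leaves implicit.
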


 Then he proposes the following greedy approach to construct another base.  Take the {\it largest} set $K$ such that $K$ is a kernel of a $\Delta(k+1)$-system in $\ff$, add $\{(K,F):K\subset F\in \ff\}$ to a separate `base' family $\m S$, and update $\ff:=\ff\setminus \ff[K]$. Repeat until no $\Delta(k+1)$-systems are left in $\ff$. At the end of the procedure, we get the family $\m S$, which contains both the base and the decomposition of $\ff$ by the base sets, and the remainder family of size at most $c(k)$, which we may ignore in this case. This decomposition has several properties, which we list as three propositions. We then illustrate their use.

The first proposition is again about the transference of the property to the base. Moreover, the property is  enhanced in some cases.

\begin{prop}\label{prop77b}
  Let $(K_1,F_1),(K_2,F_2)\in \m S$. Then $|K_1\cap K_2|\in L=\{\ell_1,\ldots, \ell_r\}$. Moreover, if $F_1\setminus K_1 = F_2\setminus K_2$, then $|K_1\cap K_2|\in \{\ell_1-|F_1\setminus K_1|,\ldots, \ell_r-|F_1\setminus K_1|\}$.
\end{prop}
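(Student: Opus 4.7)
The plan is to reduce the first assertion to the intersection property of $\ff$ via Lemma~\ref{lemdeltadef}, and to deduce the second assertion by a short set-theoretic computation. Without loss of generality I would assume that $K_1$ is chosen no later than $K_2$ in the greedy procedure.

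For the first assertion, the degenerate case $K_1 = K_2$ is immediate: since $K_1$ is the kernel of a $\Delta(k+1)$-system in $\ff$, any two distinct petals of that system belong to $\ff$ and intersect exactly in $K_1$, so $|K_1| \in L$. For $K_1 \neq K_2$, I would apply Lemma~\ref{lemdeltadef} with $s = 2$, $t = k+1$ (the condition $t > (s-1)k$ becomes $k+1 > k$), $E_1 = K_1$, $E_2 = K_2$, and $D = K_1 \cap K_2$, noting that any two sets trivially form a $\Delta(2)$-system with their common intersection as kernel. This produces sets $F'_1 \in \ff$ containing $K_1$ and $F'_2 \in \ff$ containing $K_2$ with $F'_1 \cap F'_2 = K_1 \cap K_2$. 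The greedy construction guarantees that the $\Delta(k+1)$-system with kernel $K_2$ was extracted from $\ff$ after every set containing $K_1$ had already been removed, so $F'_2 \not\supset K_1$, forcing $F'_1 \neq F'_2$. Hence $|F'_1 \cap F'_2| = |K_1 \cap K_2|$ must lie in $L$.

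For the second assertion, set $Y := F_1 \setminus K_1 = F_2 \setminus K_2$, so that $F_i = K_i \sqcup Y$ and $Y$ is disjoint from both $K_1$ and $K_2$. If $(K_1, F_1) \neq (K_2, F_2)$ then necessarily $K_1 \neq K_2$, because otherwise $F_1 = K_1 \cup Y = K_2 \cup Y = F_2$. In particular $F_1 \neq F_2$, and a direct expansion gives
$$F_1 \cap F_2 = (K_1 \cup Y) \cap (K_2 \cup Y) = (K_1 \cap K_2) \sqcup Y,$$
so $|F_1 \cap F_2| = |K_1 \cap K_2| + |Y|$. Since $F_1, F_2$ are distinct members of $\ff$, $|F_1 \cap F_2| \in L$, which yields $|K_1 \cap K_2| \in \{\ell - |Y| : \ell \in L\}$.

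I do not expect any deep obstacle. The key conceptual step is recognizing that Lemma~\ref{lemdeltadef} applied to the trivial $\Delta(2)$-system $\{K_1, K_2\}$ is the correct vehicle for transferring the intersection constraint from $\ff$ to the base $\m S$, realizing the ``transfer of property'' principle emphasized earlier in the section; the only bookkeeping subtlety is using the greedy ordering of the procedure to certify $F'_1 \neq F'_2$ when $K_1 \neq K_2$.
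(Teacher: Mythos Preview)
Your argument is correct. The paper does not give an explicit proof of this proposition (it only remarks that it is ``again about the transference of the property to the base''), but your route via Lemma~\ref{lemdeltadef} with $s=2$ is exactly the intended mechanism, used elsewhere in the paper (cf.\ Corollary~\ref{corfur}); the ordering argument to ensure $F'_1\ne F'_2$ is the right way to close the gap, and the second part is the straightforward set-theoretic computation you give.
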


The second proposition is as follows.
\begin{prop}\label{prop77c} Suppose that $\ff$ is an $(n,k,\{\ell_1,\ldots, \ell_r\})$-system and it does not contain a $\Delta(k+1)$-system with kernel of size at least $\ell_{s+1}$, $s\in [r-1]$. Then there is a family $\ff'\subset \ff$ such that $|\ff'|\ge |\ff|/c(k)$ and such that $\ff'$ is an $(n,k,\{\ell_1,\ldots, \ell_s\})$-system.
\end{prop}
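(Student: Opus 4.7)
The plan is to extract $\ff'$ by deleting a few neighbours in a bounded-degree auxiliary graph, once we know that every "large'' set is contained in only boundedly many members of $\ff$. Concretely, fix any $K\subseteq [n]$ with $|K|\ge \ell_{s+1}$. The family $\ff(K)$ consists of sets of size at most $k-|K|$, so Theorem~\ref{thmer} asserts that if $|\ff(K)|>(k-|K|)!\,k^{k-|K|}$, then $\ff(K)$ contains a $\Delta(k+1)$-system with some kernel $K'\subseteq [n]\setminus K$. Re-appending $K$ to each petal of this sunflower gives a $\Delta(k+1)$-system in $\ff$ with kernel $K\cup K'$ of size at least $|K|\ge \ell_{s+1}$, which is forbidden by hypothesis. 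Hence $|\ff[K]|=|\ff(K)|\le c_1(k)$ for some constant $c_1(k)$ depending only on $k$.

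Now define the auxiliary graph $H$ on vertex set $\ff$ whose edges are the pairs $\{F,G\}$ with $|F\cap G|\in\{\ell_{s+1},\ldots,\ell_r\}$. For a fixed $F\in\ff$, every $G$ contributing to $\deg_H(F)$ satisfies $G\in \ff[F\cap G]$ with $F\cap G$ a subset of $F$ of size $\ell_i$ for some $i\ge s+1$. Summing over the possible intersections and applying the bound above yields
\[
\deg_H(F)\;\le\;\sum_{i=s+1}^{r}\binom{k}{\ell_i}\,c_1(k)\;\le\; c_2(k),
\]
so $H$ has maximum degree at most $c_2(k)$. A standard greedy selection then produces an independent set $\ff'\subseteq\ff$ in $H$ of size at least $|\ff|/(c_2(k)+1)$. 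Any two sets of $\ff'$ have intersection in $L$ but not in $\{\ell_{s+1},\ldots,\ell_r\}$, so the intersection must lie in $\{\ell_1,\ldots,\ell_s\}$, and $\ff'$ is the desired $(n,k,\{\ell_1,\ldots,\ell_s\})$-system.

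The only conceptual input is the application of Theorem~\ref{thmer} to $\ff(K)$: the hypothesis forbids $\Delta(k+1)$-systems of kernel size $\ge \ell_{s+1}$, and since the pulled-back kernel $K\cup K'$ always has size at least $|K|$, starting with $|K|\ge \ell_{s+1}$ automatically produces a contradiction. There is no real obstacle — no delicate base construction is needed, just Erd\H os--Rado and a greedy independent set.
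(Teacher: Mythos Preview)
Your proof is correct and matches the paper's own argument essentially verbatim: the paper also builds $\ff'$ greedily as an independent set after observing that each $F\in\ff$ has at most $c(k)$ neighbours with intersection $\ge \ell_{s+1}$, the bound coming (implicitly via Erd\H os--Rado) from the absence of a $\Delta(k+1)$-system with kernel containing any fixed $\ell_{s+1}$-subset of $F$. The only cosmetic difference is that the paper counts over $\ell_{s+1}$-subsets of $F$ while you stratify by the exact intersection size $\ell_i$, $i\ge s+1$; both give a constant depending only on $k$.
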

That is, if a certain size does not appear as a kernel of a $\Delta(k+1)$-system, then we can remove it from the allowed intersection list at a cost of reducing the size of the family by a constant.

The proof is akin to a greedy construction of a large independent set in a graph of small maximum degree: include a new set $A$ into the family $\ff'$ and then remove all sets that intersect it in at least $\ell_{s+1}$ elements. The number of these sets is at most $c(k)$ since, for any $\ell_{s+1}$-element subset $S$ of $A$, there is no $\Delta(k+1)$-system with kernel containing $S$.

Finally, he refers to \cite{Fra77b} for the following statement. (It concerns the greedy procedure that we described before  Proposition~\ref{prop77b}.)

\begin{prop}\label{prop77a} Each of the families $\ff[U]$ removed at each step of the procedure satisfies $|\ff[U]|\le c(k)n$. Consequently, the family of base sets $\m D:=\{K: (K,F)\in \m S\text{ for some }F\in \ff\}$ satisfies  $|\m D|\ge |\ff|/c(k)n$.
\end{prop}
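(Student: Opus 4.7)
My plan is to exploit the maximality of $U$ in the greedy selection. At each step, $U$ is chosen to be the largest set that is the kernel of a $\Delta(k+1)$-system in the current $\ff$. The first structural observation is that this maximality forbids any $U'\supsetneq U$ from being the kernel of a $\Delta(k+1)$-system in the current $\ff$; equivalently, the trace $\ff(U)$ contains no $\Delta(k+1)$-system with non-empty kernel. In turn, for every $x\in[n]\setminus U$, the family $\ff(U\cup\{x\})$ has no $\Delta(k+1)$-system at all, since appending $x$ back to such a system would produce a $\Delta(k+1)$-system in $\ff(U)$ with kernel containing $x$, contradicting the previous sentence.

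Armed with this, I would apply Theorem~\ref{thmer} to each $\ff(U\cup\{x\})$, whose members are sets of size $k-|U|-1$, to conclude $|\ff(U\cup\{x\})|\le \phi(k-|U|-1,k)\le (k-1)!\,k^{k-1}$; call this bound $c_1(k)$. A one-line double count over $x\in[n]\setminus U$ then gives
$$(k-|U|)\cdot|\ff[U]| \;=\; \sum_{x\in[n]\setminus U}|\ff(U\cup\{x\})| \;\le\; n\,c_1(k),$$
so $|\ff[U]|\le c_1(k)\,n$, which is the first claim. For the consequence, at each iteration of the procedure one new set is added to $\m D$ while at most $c_1(k)\,n$ sets are deleted from $\ff$. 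The loop halts with a residual family that is $\Delta(k+1)$-free, which by Theorem~\ref{thmer} has size at most $k!\,k^k$. Thus $|\m D|\cdot c_1(k)\,n \ge |\ff|-k!\,k^k$, and absorbing the additive constant into $c(k)$ yields $|\m D|\ge |\ff|/(c(k)\,n)$, as required.

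The only real content is the first step, linking the maximality of $U$ to the absence of sunflowers with non-empty kernel in $\ff(U)$; everything else is routine double-counting combined with Erd\H os--Rado. The main obstacle I foresee is cosmetic: handling the degenerate case $|U|=k-1$, where $\ff(U\cup\{x\})$ contains at most the empty set, and verifying that the bound $c_1(k)=(k-1)!\,k^{k-1}$ is valid uniformly for $|U|\in[0,k-1]$. Both are immediate given the monotonicity of $\phi(\cdot,k)$ in its first argument.
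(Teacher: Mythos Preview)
Your proof is correct and follows essentially the same approach as the paper. The paper's own justification is a one-line hint (``each $\ff(U\cup\{x\})$, $x\notin U$, does not contain a $\Delta(k+1)$-system by the choice of $U$''), and you have correctly filled in the details: the maximality of $U$ forbids larger kernels, hence $\ff(U\cup\{x\})$ is $\Delta(k+1)$-free, Erd\H os--Rado bounds its size by a constant, and the double count $(k-|U|)\,|\ff[U]|=\sum_{x\notin U}|\ff(U\cup\{x\})|$ gives the claim.
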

This is easy to verify since each $\ff(U\cup\{x\})$, $x\not\in U$, does not contain a $\Delta(k+1)$-system by the choice of $U$.

Let us show, how the first two propositions are being applied in order to prove that if $\ff$ is an $(n,5,\{0,1,3,4\})$-system, then $|\ff|\le c(5)n^2$. In the base construction process for $\ff$, we first select kernels $U$ of size $4$. Let $s$ be the last step when we selected a kernel $U$ of size $4$. Let $\m S_s$ be the pairs added to the base up to the step $s$, and let $\ff_s$ be the remainder family. By the choice of $s$, the family $\ff_s$ does not have $\Delta(k+1)$-systems with kernel of size $4$, and thus by Proposition~\ref{prop77c} it contains a subfamily $\ff'$ of size $|\ff'|\ge |\ff_s|/c(5)$ that is an $(n,5,\{0,1,3\})$-system.

Peter then shows that an $(n,5,\{0,1,3\})$-system must have size at most $c(5)n^2$ with the help of the following proposition (presented in a slightly modified form).

\begin{prop}\label{prop77d}
  Fix integer $q$ and a constant $m$. Any family $\ff\subset 2^{[n]}$ has a subfamily $\m G\subset \ff$ of size at least $|\ff|- m{n\choose q}$ such that for any set $Q$ of size $q$, $Q\subset F\in\m G$, it is contained in at least $m$ sets from $\m G$.
\end{prop}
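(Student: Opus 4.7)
The plan is to prove this by a straightforward peeling (iterative removal) procedure, charging each removed set to a $q$-set. Specifically, I would start with $\m G:=\ff$ and, as long as there exists a $q$-set $Q$ with $1\le |\m G[Q]|<m$, delete from $\m G$ all sets containing $Q$ and repeat. The resulting family is the desired $\m G$.

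The correctness is almost immediate. When the procedure terminates, every $q$-set $Q$ satisfies either $|\m G[Q]|=0$ or $|\m G[Q]|\ge m$; so whenever $Q\subset F\in\m G$ we have $|\m G[Q]|\ge 1$ and hence $|\m G[Q]|\ge m$, as required. Termination is clear since $|\m G|$ strictly decreases at each step.

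The heart of the argument is the bound on $|\ff|-|\m G|$. I would use the following charging: each deletion step processes some $q$-set $Q$ and deletes at most $m-1$ sets of $\m G$. The key observation is that once a step processes $Q$, we have $|\m G[Q]|=0$ afterwards, and this stays so throughout the remainder of the procedure (we only ever delete sets). Consequently each $q$-set $Q\in {[n]\choose q}$ is processed at most once, so the total number of sets removed is at most $(m-1){n\choose q}\le m{n\choose q}$.

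I don't expect any genuine obstacle: the only thing to double-check is the monotonicity used above, namely that $|\m G[Q]|$ is non-increasing in the process (it is, because we only delete sets), together with the fact that we only initiate a new peeling step for $Q$'s whose current degree in $\m G$ is strictly between $0$ and $m$. A small cosmetic improvement is to sharpen the bound from $m{n\choose q}$ to $(m-1){n\choose q}$, but the statement as given is what we need, so no optimization is required.
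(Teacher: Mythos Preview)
Your proposal is correct and is essentially the argument the paper has in mind: it only remarks that the proof is ``akin to the statement and the proof of the fact that any graph with many edges contains a subgraph with high minimum degree,'' i.e., exactly the iterative peeling you describe.
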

The statement and the proof of the proposition are akin to the statement and the proof of the fact that any graph with many edges contains a subgraph with high minimum degree.

Using it, he chooses a subfamily $\m G\subset \ff'$ as guaranteed by the proposition, and then shows that $\m G$ should be empty using a simple, but somewhat tedious, `covering number' argument which we avoid.

So far, we showed that $|\ff_s|\le c(5)n^2$ with the help of Propositions~\ref{prop77c} and~\ref{prop77d}. Next, we bound the size of $\m S_s$. Take $y$ such that $\{y\} = F\setminus K$ for at least $|\m S_s|/n$ pairs $(K,F)\in \m S_s$ (recall here that by construction all $K$'s in $\m S_s$ have size $4$). Consider the family $\ff_y:=\{K: (K,F)\in \m S_s, F\setminus K = \{y\}\}$. Then, in view of Proposition~\ref{prop77b}, for $K\ne K'\in \ff_y$ we have $|K\cap K'|\in \{0,3\}$. That is, $\ff_y$ is an $(n,\{0,3\}, 5)$-system. We can apply Theorem~\ref{thmdef} (ii) and note that $\ell_2-\ell_1\nmid k-\ell_2$, and thus $|\ff_y|\le c(5)n$. Therefore, $|\m S_s|\le c(5)n^2$. We conclude that $|\ff|\le |\m S_s|+|\ff_s|\le 2c(5)n^2$.

Finally, let us illustrate, how Proposition~\ref{prop77a} is being used. Let $\ff$ be an $(n,6,\{0,1,3,4,5\})$-system. Consider the family $\m D$ and take the value of $\ell$, for which the family $\m D^{(\ell)}$ of $\ell$-element sets in $\m D$  is the largest. then $\m D^{(\ell)}$ is an $(n,\ell,\{0,1,3,4\})$-system (some of the intersection sizes may be redundant if $\ell<5$). Above, we showed that an $(n,5,\{0,1,3,4\})$-system should have size at most $c(5)n^2$ (in the cases $\ell<5$ the same bound holds as well), which implies that an $(n,6,\{0,1,3,4,5\})$-system should have size at most $c(6)n^{3}$.\\

In the second paper \cite{Fra80a}, Frankl proved a general result concerning $\ell$-avoiding set systems: he showed that if $\ff\subset {[n]\choose k}$ satisfies $|A\cap B|\ne \ell$ for each $A,B\in \ff$, then $|\ff|\le c(k){n-\ell-1\choose k-\ell-1}$ for $n>n_0(k)$ and $n\ge 3\ell$, and sketched a proof for replacing $c(k)$ by $(1+o(1))$ for $k>3\ell+2$. In order to do that, he used the same machinery as above, combined with a clever induction on $k$: instead of proving a result for one forbidden intersection, he proves stronger result with a forbidden interval of intersections. The reason behind it is Proposition~\ref{prop77b}, which may cause the change in the number of forbidden intersections.

\subsubsection{Results for $t$-intersecting families} In the paper \cite{Fra2}, Frankl obtains various stability results for the intersecting and $t$-intersecting cases of the Erd\H os--Ko--Rado theorem. Specifically, he studies the families in which no element is contained in more than $c$-fraction of sets. Here, he proposes yet another base, which is a variant of the Deza--Erd\H os--Frankl construction. For a family $\ff$, he considers the family $\m B$ of all sets that intersect each set in $\ff$ in at least $t$ elements. He then restricts his attention to inclusion-minimal sets and, importantly, only to the sets which have many extensions: he fixes the smallest size $\ell$ of a set in the base and essentially restricts to studying $X$ of size $\ell$ such that $|\ff(X)|\ge k{n-\ell-1\choose k-\ell-1}$.

In a follow-up paper \cite{Fur78} on the same question, F\"uredi used a similar notion of a {\it nucleus}. There, he suggested the following construction: replace sets in a family with their subsets while preserving the property of the family (there, it was the value of the matching number, which obviously included the property of being intersecting). He refers to the work of Erd\H os and Lov\'asz \cite{EL} and Lov\'asz' dissertation for some of the properties of such nucleus. In particular, he gives an efficient bound for the number of sets in such a nucleus. This notion of a base is close to the notion of a {\em generating set} from the celebrated paper of Ahlswede and Khachatrian \cite{AK} (which they claimed to be new).  We discuss the construction of Erd\H os and Lov\'asz and Ahlswede and Khachatrian in Section~\ref{secbase}.

\subsection{A precursor to F\"uredi's structural result}\label{sec25}
In the years that followed, Peter Frankl continued to actively work on $(n,k,L)$-systems. One of these papers \cite{Fra83} developed an approach that, in retrospect, can be seen as a weak version of F\"uredi's structural result that we discuss in the next section (see Theorem~\ref{thmfur} from Section~\ref{sec31}). Frankl studies $(n,k,L)$-families for $|L|=3$ and, in particular, gives conditions when $|\ff| = O(n)$ and $|\ff| = \Omega(n^2)$. We sketch some of the ideas from that paper below.

Using Proposition~\ref{propfra}, we assume that $L=\{0,a,b\}$, $b>a$. Using the $\Delta$-system method, he selects a subfamily $\ff'\subset \ff$ that is much more structured. He defines $\m A, \m B$ to be the kernels of large $\Delta$-systems in $\ff$ of size $a$ and $b$, respectively. First, he shows that all but $O_k(n)$ sets $F\in \ff$ must satisfy the following property: $F$ is equal to the union of kernels from $\aaa\cup \bb$ contained in $F$. Second, he shows that all but $O_k(n)$ pairs $B_1,B_2\in \bb$ that intersect must intersect in a set $A\in \m A$.\footnote{The proofs of both claims are similar to, say, the proof of Proposition~\ref{prop77a}, but slightly more involved.} Every `bad' pair is contained in at most $1$ set from $\ff$, and thus we can remove $O_k(n)$ sets to get rid of both situations and get the family $\ff'$. Now every $F\in \ff'$ is a union of kernels contained in it and, moreover, every two $b$-kernels in $F$ are either disjoint or intersect in an $a$-kernel. This property ($F$ equal to the union of kernels) actually gives necessary and sufficient condition for $|\ff|\gg n$. He also showed that, if the condition holds, then $m(n,k,L) = \Omega(n^{k/(k-1)})$ and, moreover, showed cases when $\Omega(n^{k/(k-1)})=m(n,k,L)=O(n^{3/2})$, which was the first manifestation of the fact that determining $m(n,k,L)$ is a hard problem in general.

\section{Delta-system method after F{\"{u}}redi's structural theorem}\label{sec3}

The result of F\"uredi, discussed below, gave a completely new direction to the Delta-system method. Although it appears to be  motivated by Frankl's ideas from \cite{Fra83} and as such was an organic development of the $\Delta$-system method, it would be fair to say that the Delta-system method as is used in the Deza--Erd\H os--Frankl paper and the one discussed in this section are two completely different methods.

\subsection{F{\"{u}}redi's theorem}\label{sec31}
A key development for the Delta-system method came with the paper of Zolt\'an F\"uredi from 1983 \cite{Fu1},\footnote{It is in this paper that $\Delta(s)$-systems are called $s$-stars, a name that appears in the follow-up papers of Frankl and F\"uredi.} who proved a much stronger Ramsey-type statement concerning large families of $k$-element sets. As F\"uredi writes in his paper, the following was conjectured by Peter Frankl:
\begin{quote}
  There is a positive constant $c(k,s)$ such that every family $\ff\subset {[n]\choose k}$ has a subfamily $\ff^*$ of size at least $c(k,s)|\ff|$ and such that for any two $F_1,F_2\in \ff^*$ the set $F_1\cap F_2$ is a kernel of a $\Delta(s)$-system.
\end{quote}
We can speculate that this conjecture was very natural to state after Frankl's paper \cite{Fra83}. Zolt\'an F\"uredi managed to prove it in a stronger form. In order to state the result, we need some preparations.

Let $\ff\subset {[n]\choose k}$ be a family. This family is {\it $k$-partite} if there exists a partition $[n] = X_1\sqcup X_2\sqcup\ldots \sqcup X_k$ such that for any $A\in \m F$ and $i\in[k]$ we have $|A\cap X_i|=1$. Further, for each set $B\in \ff$ define the {\it trace} $\ff|_B:=\{A\cap B: A\in \ff, A\ne B\}$. (We exclude $B$ itself from the trace for convenience.) If $\ff$ is $k$-partite with parts $X_1,\ldots, X_k$, then each set in $\ff|_B$ intersects each part $X_i$ in at most $1$ element. The following notion encodes, which parts are intersected. Define the {\it projection} $\pi: 2^{[n]}\to [k]$ as follows: $\pi(Y) = \{i\in [k]: Y\cap X_i\ne \emptyset \}$. We can naturally extend this definition to $\pi(\ff|_B)\subset 2^{[k]}$. 

\begin{thm}[F\"uredi \cite{Fu1}]\label{thmfur} Given integers $s>k\ge 2$ and a family $\ff$ of $k$-element sets, there exists $c(k,s)>0$ and a family $\ff^*\subset \ff$ such that
\begin{enumerate}
  \item $|\ff^*|\ge c(k,s)|\ff|$;
  \item $\ff^*$ is $k$-partite;
  \item $\pi(\ff^*|_A) = \pi(\ff^*|_B)$ for any $A,B\in \ff$;
  \item for each distinct $A,B\in \ff^*$ their intersection $A\cap B$ is a kernel of a $\Delta(s)$-system contained in $\ff^*$, i.e., there exist $F_1,\ldots, F_s\in \ff^*$ such that $F_i\cap F_j = A\cap B$ for all $i\ne j\in[s]$.
\end{enumerate}
\end{thm}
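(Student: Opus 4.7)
The proof naturally decomposes into three steps aligned with conditions~2, 3, and 4. For condition~2, I would use a random uniform coloring of $[n]$ by $k$ colors: a fixed $F \in \ff$ becomes rainbow (one element per color) with probability $k!/k^k$, so averaging yields a $k$-partite subfamily $\ff_1 \subseteq \ff$ of that fraction, and the projection $\pi$ becomes well-defined from here on.

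For condition~3, I would homogenize the projection pattern $\Sigma(B) := \pi(\ff_1|_B) \subseteq 2^{[k]}$ via iterated pigeonhole. Since $\Sigma(B)$ takes at most $2^{2^k}$ values, one pigeonhole step extracts a subfamily of fraction $\ge 2^{-2^k}$ on which $\Sigma$ takes a common value $\mathcal{P}$. After restricting, the realized $\Sigma$ inside the subfamily can only shrink, so I would iterate: since $|\mathcal{P}| \le 2^k$ and each round either stabilizes the family or strictly shrinks $\mathcal{P}$ at the cost of another $2^{-2^k}$ factor, at most $2^k$ rounds suffice, producing $\ff_3 \subseteq \ff_1$ of constant fraction $c_2(k)$ with $\pi(\ff_3|_B) = \mathcal{P}$ for every $B \in \ff_3$.

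Condition~4 is the heart of the proof and the step I expect to be the main obstacle. I would process the types $T \in \mathcal{P}$ in decreasing order of $|T|$. For each such $T$ and each candidate kernel $Y$ with $\pi(Y) = T$ appearing as an intersection in the current family, apply the Erd\H os--Rado theorem (Theorem~\ref{thmer}) to $\ff(Y)$: if $|\ff(Y)| > (k-|T|)!(s-1)^{k-|T|}$, it contains an $s$-sunflower $F_1,\ldots,F_s$ with some kernel $K$, and the lift $\{Y \cup F_i\}_{i=1}^s$ is an $s$-sunflower in $\ff_3$ with kernel $Y \cup K$. When $K = \emptyset$ the set $Y$ itself is certified as a kernel; when $K \ne \emptyset$ the larger kernel $Y \cup K$ has strictly larger projection type $T \cup \pi(K)$ and was already handled in an earlier round. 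In the complementary small-degree case, $|\ff(Y)| \le (k-|T|)!(s-1)^{k-|T|}$ is bounded by a constant, so each $A \in \ff_3$ contributes at most a constant number of bad pairs through small-degree subsets, and a greedy Tur\'an-style extraction of an independent set avoiding bad pairs retains a positive fraction as the final $\ff^*$.

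The delicate subcase is where $|\ff(Y)|$ is large yet $\ff(Y)$ has no $s$-matching: Erd\H os--Rado returns a sunflower with non-empty kernel $K$, so $Y$ itself is not certified, and one must delete the sets in $\ff_3[Y]$ that avoid all of the at most $(s-1)(k-|T|)$ "star centres" of $\ff(Y)$ in order to remove $Y$ as an intersection. Carrying this out without destroying the previously certified sunflowers of other kernels requires invoking Erd\H os--Rado with substantial slack --- taking $|\ff(Y)|$ much larger than the bare threshold --- so that a positive proportion of certifying petals survives the final restriction to $\ff^*$; balancing the slack against the deletion cost throughout the cascade of types then pins down the constant $c(k,s)$.
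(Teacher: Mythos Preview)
Your step~1 (random $k$-coloring) matches the paper, but conditions~3 and~4 cannot be decoupled as you propose. In step~2 the termination bound is not established: after pigeonholing on $\Sigma(B)$ and restricting, recomputing $\Sigma$ inside the subfamily can shrink again, and another pigeonhole round may return the \emph{same} common value $\mathcal P$ without stabilizing --- the witnesses realizing some $T\in\mathcal P$ for a surviving $B$ may lie among the sets just discarded. So ``each round strictly shrinks $\mathcal P$'' is false and nothing bounds the number of rounds by a function of $k$ alone. Even granting step~2, the deletions and the greedy extraction in step~3 would destroy condition~3; you worry about preserving earlier sunflowers but not about preserving the uniformity of~$\Sigma$, and these two requirements interact.

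The paper handles both at once. Partition the current $k$-partite family as $\ff=\ff_0\cup\bigcup_{I\subset[k]}\ff_I$: iteratively remove any $A$ containing some $X\subset A$ with $\pi(X)\in\m M(\ff)$ that is not the kernel of a $\Delta(s)$-system in what remains, sending $A$ to $\ff_{\pi(X)}$; the surviving family is $\ff_0$. By construction $\ff_0$ satisfies condition~4, and condition~3 then comes for free: if $I\in\pi(\ff_0|_A)\setminus\pi(\ff_0|_B)$, take $X\subset B$ with $\pi(X)=I$; since $s>k$, any $\Delta(s)$-system in $\ff_0$ with kernel $X$ would contain a petal meeting $B$ exactly in $X$ (Observation~\ref{obsdelta2}), contradicting $I\notin\pi(\ff_0|_B)$, so no such system exists and $B$ would have been filtered out. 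Each $\ff_I$ is further refined (fix a popular element past each stuck kernel) to $\ff_I'$ with $I\notin\m M(\ff_I')$, at cost a factor $1/((s-1)k)$. Taking the largest among $\ff_0$ and the $\ff_I'$ gives one round that either finishes or strictly decreases $|\m M|$; since $|\m M|\le 2^k$, at most $2^k$ rounds suffice. Condition~3 as a \emph{consequence} of this strengthened form of condition~4 is precisely the idea your sequential scheme is missing.
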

Let us call such $\ff^*$ a {\it $(k,s)$-homogeneous structure}. As we shall see, this lemma is a very powerful tool to discern structure in the family $\ff$.
Unfortunately, the bound on $c(k,s)$ that comes out of F\"uredi's proof is doubly-exponential in $k$: $c(k,s)>(sk2^k)^{-2^k}$. This limits the applicability of Theorem~\ref{thmfur} to $n>n_0(k,s)$, where the latter function is doubly-exponential in $k$. We present a more quantitative variant of this theorem due to Jiang and Longbrake (with a very similar proof) in Section~\ref{sec62}. Recently, Janzer, Jin, Sudakov and Wu \cite{JJSW} showed that the doubly-exponential dependency of $c(k,s)$ on $k$ is necessary. Moreover, they proved a weaker version of the theorem, sufficient for many applications, with a single-exponential bound on the constant. See Section~\ref{sec63} for details.

The family $\m M:=\pi(\ff^*|_A)$ for some $A\in \ff^*$ is called the {\it intersection structure} of $\ff^*$. Note that it does not depend on $A$.  Let us mention one important implication.
\begin{cor}\label{corfur}
  Suppose that $\ff^*\subset \ff$ is a  $(k,k+1)$-homogeneous structure and $\m M$ is the intersection structure of $\ff^*$. Then
  \begin{itemize}
  \item for each $A\in \ff^*$ and $Y_1,Y_2\in \ff^*|_A$ there are sets  $B_1,B_2\in \ff^*$ such that $B_1\cap B_2 = Y_1\cap Y_2$;
  \item If $\ff$ is an $(n,k,L)$-system, then all sizes and intersections of sets in $\m M$ lie in $L$;
  \item $\m M$ is {\em closed under intersection}, i.e., if $Y_1,Y_2\in \m M$ then $Y_1\cap Y_2\in \m M$.
  \end{itemize}
\end{cor}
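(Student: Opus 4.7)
The plan is to prove the three items in sequence; the first is the main content and the others follow formally from it combined with properties (1)--(3) of Theorem~\ref{thmfur}. For the first item, fix $A\in\ff^*$ and $Y_1,Y_2\in\ff^*|_A$, and pick $B_1^0,B_2^0\in\ff^*\setminus\{A\}$ with $Y_i=A\cap B_i^0$. I would apply property (4) of Theorem~\ref{thmfur} to the pair $\{A,B_1^0\}$: their intersection $Y_1$ is the kernel of a $\Delta(k+1)$-system $F_1,\dots,F_{k+1}\subset\ff^*$, and in particular the sets $F_j\setminus Y_1$ are pairwise disjoint. Since $|B_2^0\setminus Y_1|\le k$ and there are $k+1$ such disjoint petals, by the pigeon-hole principle some index $i$ satisfies $B_2^0\cap F_i\subseteq Y_1$. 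Using the identity $B_2^0\cap Y_1=A\cap B_1^0\cap B_2^0=Y_1\cap Y_2$, together with the trivial containments $Y_1\cap Y_2\subseteq Y_1\subseteq F_i$ and $Y_1\cap Y_2\subseteq Y_2\subseteq B_2^0$, this gives $F_i\cap B_2^0=Y_1\cap Y_2$, so the pair $(F_i,B_2^0)$ is as required.

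For the second item, $k$-partiteness of $\ff^*$ implies that $\pi$ restricted to subsets of a fixed $A\in\ff^*$ is a size-preserving bijection onto subsets of $[k]$; in particular $|\pi(Y)|=|Y|$ for every trace $Y\in\ff^*|_A$. Hence the sizes of members of $\m M$ are precisely the intersection sizes $|A\cap C|$ with $C\in\ff^*\setminus\{A\}$, all of which lie in $L$ by the $(n,k,L)$-assumption. For intersections inside $\m M$, both $Y_1$ and $Y_2$ are subsets of the single $k$-set $A$, so $\pi$ commutes with intersection on these subsets: $\pi(Y_1)\cap\pi(Y_2)=\pi(Y_1\cap Y_2)$. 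The first item realizes $Y_1\cap Y_2$ as $B_1\cap B_2$ with $B_1,B_2\in\ff^*$, so its size is in $L$ as well.

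For the third item, property (3) of Theorem~\ref{thmfur} says that the intersection structure does not depend on the base set, so $\m M=\pi(\ff^*|_{B_1})$. Consequently $\pi(B_1\cap B_2)\in\m M$, and by the commutation identity above this equals $\pi(Y_1)\cap\pi(Y_2)$, establishing closure under intersection. The only nontrivial step is the pigeon-hole argument in item~(1): it is precisely where the choice $s=k+1$ in the notion of a $(k,k+1)$-homogeneous structure is used, since with only $k$ disjoint petals $F_j\setminus Y_1$ the set $B_2^0\setminus Y_1$ could meet every one of them, and one could not guarantee an $F_i$ with $F_i\cap B_2^0=Y_1\cap Y_2$.
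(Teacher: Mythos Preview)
Your proof is correct and follows essentially the same approach as the paper: the key step is the pigeon-hole argument on the $k+1$ disjoint petals of the sunflower with kernel $Y_1$ against the at most $k$ elements of $B_2^0\setminus Y_1$. The paper phrases this step as an application of Lemma~\ref{lemdeltadef} with $s=2$ (using sunflowers for both $Y_1$ and $Y_2$), whereas you give the argument directly using only one sunflower and the original witness $B_2^0$ --- this is a minor simplification in the spirit of Observation~\ref{obsdelta2}, but the underlying idea is identical.
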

\begin{proof}
  The proof of the first part is an application of Lemma~\ref{lemdeltadef} with $s=2$ to the  $\Delta(k+1)$-systems with kernels  $Y_1,Y_2$. The second  part immediately follows from the first. Let us prove the third part. Take $A\in \ff^*$ and note that $\m M = \pi(\ff^*|_{A})$. Take sets $A_1,A_2\in \ff^*$ such that $\pi(A\cap A_i)=Y_i$. By the first part, we can find $B_1, B_2\in \ff^*$ such that $B_1\cap B_2 = A\cap A_1\cap A_2$. Then $Y_1\cap Y_2 =\pi(A\cap A_1\cap A_2) = \pi(B_1\cap B_2)\in \pi(\ff^*|_{B_1}) = \m M$. 
\end{proof}

In the remainder of the section, we prove F\"uredi's structural theorem. First, we note that any $k$-uniform $\ff$ contains a large $k$-partite subfamily.

\begin{lem}[Erd\H os and Kleitman \cite{EKl}] Given $\ff\subset {[n]\choose k}$, it contains a $k$-partite subfamily $\ff'$ that satisfies $|\ff'|\ge \frac{k!}{k^k}|\ff|$.
\end{lem}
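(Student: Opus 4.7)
The plan is to use a standard probabilistic argument: produce the partition $[n] = X_1 \sqcup \cdots \sqcup X_k$ randomly and show that in expectation a $\frac{k!}{k^k}$-fraction of $\ff$ becomes $k$-partite with respect to it.

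Concretely, I would assign each element of $[n]$ independently and uniformly at random to one of $k$ ``colors'', which yields a random ordered partition into parts $X_1, \ldots, X_k$ (some possibly empty, but that is harmless). Call a set $F \in \ff$ \emph{rainbow} if $|F \cap X_i| = 1$ for every $i \in [k]$, i.e.\ if its $k$ elements receive all $k$ distinct colors. For a fixed $F$, the number of color assignments to its $k$ elements is $k^k$, and the number of assignments giving a bijection from $F$ to the color set is exactly $k!$, so
\[
\Pr[F \text{ is rainbow}] = \frac{k!}{k^k}.
\]

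By linearity of expectation, the expected number of rainbow sets in $\ff$ is $\frac{k!}{k^k}|\ff|$. Hence there exists a particular partition $[n] = X_1 \sqcup \cdots \sqcup X_k$ realizing at least this many rainbow sets; taking $\ff'$ to be the collection of those sets gives a $k$-partite subfamily with parts $X_1, \ldots, X_k$ of the claimed size. There is no real obstacle here: the only thing one might want to be careful about is that the ``parts'' of a $k$-partite family as defined in the paper are allowed to be arbitrary subsets of $[n]$ (in particular empty), so no nonemptiness condition on the $X_i$ is needed, and the single-line expectation calculation closes the argument.
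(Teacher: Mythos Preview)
Your argument is correct and is exactly the approach the paper indicates: a uniformly random assignment of each element of $[n]$ to one of $k$ parts, followed by the expectation computation that each $F\in\ff$ is rainbow with probability $k!/k^k$. There is nothing to add.
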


The proof is a simple probabilistic counting: take a random partition by including each element of the ground set in part $i$ with probability $1/k$, and calculate the expected number of edges that `respect' that partition. In what follows, we assume that $\ff$ is $k$-partite.

We now come to the heart of the proof, which is a delicate refinement argument.  Put \begin{align*}\m M(\ff)&:=\bigcup_{A\in \ff} \pi(\ff|_A),\\ \m B(\ff)&:=\{X\subset [k]: \text{there is $Y$ s.t. } \pi(Y) = X \\ &\text{\ \ \ \ \ \  and } Y \text{ is a kernel of a }\Delta(s)\text{-system } F_1,\ldots, F_s\in \ff\}\end{align*}

Note that any $X\in\m B(\ff)$ is actually an intersection of any two sets from the corresponding sunflower, and thus $X\in \m M(\ff)$. Thus, $\m B(\ff)\subset \m M(\ff)$. The proof of the theorem consists of at most $2^k$ refinement steps, where at each step we want to decrease the size of $|\m M(\ff)|$. Namely, we do the following:   at first we refine $\ff$ to $\ff'$ in order to find a set $I\in \m M(\ff)\setminus \m B(\ff')$, and then, if needed, further refine  $\ff'$ to $\ff''$ so that $I\in \m M(\ff)\setminus \m M(\ff'')$. The exact statement is as follows.
    \begin{lem}\label{lemfur1} Assume that $\ff$ does not satisfy the conditions of the theorem. Then we can find $\ff''\subset \ff$ such that $$|\ff''|\ge \frac{|\ff|}{1+s(k-1)|\m B(\ff)|}\ge \frac{|\ff|}{1+s(k-1)2^{k}}$$ and such that either $\ff''$ satisfies the conditions of the theorem or such that there is a set $I\in \m M(\ff)\setminus \m M(\ff'')$.\end{lem}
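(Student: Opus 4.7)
The backbone of my approach would be the structural observation that, for each $I \subseteq [k]$, the graph $G_I$ on $\ff$ with $A \sim B \iff \pi(A\cap B) = I$ is a disjoint union of cliques. Indeed, in the $k$-partite family $\ff$, $\pi(A\cap B) = I$ is equivalent to the equivalence relation ``$A$ and $B$ have the same $I$-part'', and the cliques of $G_I$ are precisely the equivalence classes. Consequently $I \in \m M(\ff)$ iff $G_I$ has some clique of size $\ge 2$, and $I \in \m B(\ff)$ iff $G_I$ has some clique of size $\ge s$. This dichotomy will drive the whole argument.

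Case~1 (easy): there exists some $I \in \m M(\ff) \setminus \m B(\ff)$; pick any such $I$. Every clique of $G_I$ then has size $\le s-1$, so $G_I$ is $(s-1)$-colourable (just colour within each clique). Taking $\ff''$ to be a largest colour class gives $|\ff''| \ge |\ff|/(s-1)$, and $\ff''$ is independent in $G_I$, so $I \notin \m M(\ff'')$ as required. Up to a small edge-case when $|\m B(\ff)|=0$, the factor $1/(s-1)$ is comfortably absorbed into $1/(1+s(k-1)|\m B(\ff)|)$.

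Case~2 (hard): $\m M(\ff) = \m B(\ff)$ yet $\ff$ still violates (3) or (4). Then every $I \in \m M(\ff)$ does support some $G_I$-clique of size $\ge s$, but either some $A \in \ff$ sits in a non-trivial clique of size $\le s-1$ in some $G_I$ (so (4) fails at $A$), or some $A, B \in \ff$ have distinct intersection structures $\m M_A \ne \m M_B$ (so (3) fails). My plan would be to prune $\ff$ in a single sweep, removing every ``locally deficient'' set, and then verify that the survivor $\ff''$ either satisfies the theorem (if, for each $I \in \m B(\ff)$, the surviving $G_I$ becomes a partition of $\ff''$ into cliques of size $\ge s$, thereby forcing both (3) and (4) at $I$) or has some $I \in \m B(\ff)$ whose surviving cliques all vanish, yielding an $I \in \m M(\ff) \setminus \m M(\ff'')$ and the required reduction of $\m M$.

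The main obstacle, and the source of the curious denominator $1+s(k-1)|\m B(\ff)|$, is the accounting in Case~2: each deleted set $A$ must be chargeable to a pair $(I, B)$ witnessing $A$'s deficiency at some $I \in \m B(\ff)$, and the total sum over $I$ should yield only an additive, not multiplicative, blowup. I would attempt an amortized charging argument in which the factor $s$ comes from the maximal size of a deficient clique, the $k-1$ from the number of coordinates outside $I$ in which $B$ may differ from $A$, and $|\m B(\ff)|$ from summing over $I \in \m B(\ff)$. The most delicate point is to guarantee that a \emph{single} pruning sweep suffices, i.e.\ that removing the deficient sets for one $I$ does not create fresh deficiencies for another; this is what would prevent the denominator from degenerating into $(1+s(k-1))^{|\m B(\ff)|}$.
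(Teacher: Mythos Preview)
Your foundational structural claim is false. You assert that for each $I\subseteq[k]$ the graph $G_I$ on $\ff$ defined by $A\sim B\iff\pi(A\cap B)=I$ is a disjoint union of cliques, because ``$\pi(A\cap B)=I$ is equivalent to the equivalence relation `$A$ and $B$ have the same $I$-part'\,''. But these are not equivalent: having the same $I$-part means $\pi(A\cap B)\supseteq I$, whereas $\pi(A\cap B)=I$ additionally requires $A$ and $B$ to differ on \emph{every} part outside $I$. Concretely, take $k=3$, $I=\{1\}$, and sets $A=(a_1,a_2,a_3)$, $B=(a_1,b_2,b_3)$, $C=(a_1,a_2,c_3)$ with all listed elements distinct. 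Then $\pi(A\cap B)=\pi(B\cap C)=\{1\}$ but $\pi(A\cap C)=\{1,2\}$, so $A\sim B$ and $B\sim C$ in $G_I$ yet $A\not\sim C$. The relation is not transitive and $G_I$ is not a union of cliques.

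This destroys your Case~1 argument: the fact that $G_I$ has no clique of size $s$ does not make it $(s-1)$-colourable, so you cannot extract a large independent set this way. Your characterisations $I\in\m M(\ff)\iff G_I$ has an edge and $I\in\m B(\ff)\iff G_I$ has an $s$-clique are correct, but they do not rescue the colouring step. The paper handles the analogous reduction differently: within each equivalence class $\ff_I[X]$ (sets with a fixed $I$-part $X$), one takes a maximal $\Delta$-system with kernel $X$, which has fewer than $s$ petals, and then passes to the sets containing a single popular petal-vertex $v_X$. This forces the common part to strictly contain $X$, hence $I\notin\m M$ of the refined family, and costs a factor $\frac{1}{(s-1)k}$ rather than your hoped-for $\frac{1}{s-1}$.

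Your Case~2 is only a plan, and the difficulty you flag---that a single pruning sweep must not create fresh deficiencies---is real and unaddressed. The paper avoids both the case split and this interaction problem by doing one pass: iterate over $\ff$, sending each set $A$ that has some $X\subset A$ with $\pi(X)\in\m M(\ff)$ not a $\Delta(s)$-kernel into the bucket $\ff_{\pi(X)}$; the residue $\ff_0$ automatically satisfies all four conditions of the theorem, each bucket $\ff_I$ satisfies $I\notin\m B(\ff_I)$ and is then refined as above to $\ff'_I$ with $I\notin\m M(\ff'_I)$, and one takes the largest of $\ff_0$ and the $\ff'_I$. The denominator $1+s(k-1)|\m B(\ff)|$ arises simply from counting the pieces of this partition, with no amortized charging needed.
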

    \begin{proof}
      Let us define a partition
      $$\ff = \ff_0\cup \bigcup_{I\subset [k]}\ff_I$$
      as follows. At each iteration we go over all sets $A$ of $\ff$ and check if for every $X\subset A$ such that $\pi(X)\in \m M(\ff)$ the set $X$ is a kernel of a $\Delta(s)$-system. If it is not, then we delete this set $A$ from $\ff$ and include it into $\ff_{\pi(X)}$. At some point the procedure stops and we are left with a family $\ff_0$ in which each such $X\subset A$ is a center of an $s$-sunflower for every $A$.

      Next, we note two things. First, we claim that $\ff_0$ is a $(k,s)$-homogeneous structure. We see that $\ff_0$ satisfies conditions (2) and (4) of the theorem by definition. Let us also check that (3) holds for $\ff_0$. Suppose that, for some $A$ and $B$, there is $I\in \ \pi(\ff_0|_A)\setminus \pi(\ff_0|_B)$. Then take the set $X\subset B$ such that $\pi(X)=I$ and note that $X$ is not a kernel of an $\Delta(s)$-system $F_1,\ldots, F_s$: otherwise, by Observation~\ref{obsdelta2}, there would have been $i$ such that $F_i\cap B = X$ since $s>k$. Since $X$ is not a kernel of a $\Delta(s)$-system, such $B$ should have then been `filtered' into $\ff_{\pi(X)}$ during the initial procedure.\footnote{Note here the delicate detail of the initial procedure of `filtering' sets. We check the `kernel condition' for all $X\subset A$ such that $\pi(X)\in \m M(\ff)$, and not only, say, $\pi(X)\in \pi(\ff|_A)$.}  Therefore, $\pi(\ff_0|_A)= \pi(\ff_0|_B)$ for all $A,B$ and $\ff_0$ is as desired.

       Second, we claim that, for each $I\subset [k]$, $I\notin \m B(\ff_{I})$. Indeed, take any $X$ such that $\pi(X) = I$ and look at the first set $F$ containing $X$ that was included in $\ff_I$. At that point, we know that $X$ is not a center of a $\Delta(s)$-system in the remaining part of $\ff$. Only sets from the remainder of $\ff$ may end up in $\ff_I$, and thus $X$ will not become such a center.

       Now, for each $I\subset [k]$ and $X$ such that $\pi(X) = I$ and $\ff_I[X]$ is non-empty, we take a maximal $\Delta$-system $F_1,\ldots, F_\ell\in \ff_I$ with kernel $X$, where $\ell<s$. Any other set in $\ff_I[X]$ (the sets in $\ff_I$ containing X) must intersect one of $F_i\setminus X$, so we may take the element $v_X\in \cup F_i\setminus X$ that is contained in at least a $1/\ell k$-proportion of sets from $\ff_I[X]$. Recall that $\ff$ is $k$-partite, and thus $\ff[X_1]$, $\ff[X_2]$ are disjoint for distinct $X_1,X_2$ such that $\pi(X_i) = I$. Now put $$\ff'_I:=\bigsqcup_{X: \pi(X)=I} \ff_I[X\cup \{v_X\}].$$ We note that, first, $|\ff'_I|\ge \frac 1{(s-1)k}|\ff_I|$ and, second, $I\notin \m M(\ff'_I)$.

      Finally, we take as $\ff''$ the largest family out of  $\ff_0$ and $\{\ff_{I}'\}$. Since $\ff_0$ and $\{\ff_{I}'\}$ decompose $\ff$, we can see that one of the families in question has size at least $\frac{|\ff|}{1+s(k-1)|\m B(\ff)|}$.
    \end{proof}

After each application of Lemma~\ref{lemfur1} we either get a subfamily with desired properties, or pass from $\ff$ to $\ff''$, for which $|\m M(\ff'')|<|\m M(\ff)|$.  Repeating this at most $2^{k}$ times inevitably gives us a family with desired properties. One can easily track the value of $c(k,s)$ given by the proof.

\subsection{First applications: $(n,k,L)$-systems} In the paper \cite{Fu1}, Zolt\'an F\"uredi points out several applications to $(n,k,L)$-systems, which seems to have been the main motivation for finding and proving Theorem~\ref{thmfur}. We again assume $L = \{\ell_1,\ldots, \ell_r\}$. Recall that $m(n,k,L)$ is the size of the largest $(n,k,L)$-system. Remark that the bound $m(n,k,L) = \Omega_k(n)$ is evident for any non-empty $L$, by taking a $\Delta(n/k)$-system with core of size in $L$. He states without proof the following two theorems. The first one is as follows.
\begin{thm}\label{thmfur1}
  We have $m(n,k,L) = O_k(n)$ if and only if there is no family $\m M\subset 2^{[k]}$ that is closed under intersection, and such that $\cup_{M\in \m M} M = [k]$ and $|M|\in L$ for all $M\in \m M$.
\end{thm}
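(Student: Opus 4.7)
The theorem is an equivalence, so I would prove each direction separately.

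\textbf{Necessity.} If $m(n,k,L)/n$ is unbounded, I would extract the desired $\m M$ via F\"uredi's structural theorem. Fix $C>1/c(k,k+1)$, take an $(n,k,L)$-system $\ff$ with $|\ff|\ge Cn$, and apply Theorem~\ref{thmfur} with $s=k+1$ to get a $(k,k+1)$-homogeneous subfamily $\ff^*\subseteq\ff$ with $|\ff^*|\ge c(k,k+1)|\ff|>n$, together with its intersection structure $\m M:=\pi(\ff^*|_A)$. By Corollary~\ref{corfur}, $\m M$ is closed under intersection and each element of $\m M$ has size in $L$. If some $i\in[k]\setminus\bigcup\m M$, then for every pair $F,F'\in\ff^*$ one has $F\cap F'\cap X_i=\emptyset$, so all sets of $\ff^*$ use distinct elements of $X_i$, giving $|\ff^*|\le|X_i|\le n$, a contradiction. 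Hence $\bigcup\m M=[k]$ and $\m M$ has the three required properties.

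\textbf{Sufficiency.} Given $\m M$, I would construct an $(n,k,L)$-system of size $\Omega_k(n^2)$, which is certainly not $O_k(n)$. Partition $[n]=X_1\sqcup\cdots\sqcup X_k$ with $|X_i|=\lfloor n/k\rfloor$. Since every $M\in\m M$ satisfies $|M|\le k-1$ while $\bigcup\m M=[k]$, there are at least two maximal elements in $\m M$. Pick two maximals $M_0,M_1\in\m M$; by intersection-closedness, $C:=M_0\cap M_1\in\m M$, hence $|C|\in L$. In the simplest case $M_0\cup M_1=[k]$, fix a transversal $c\subset\bigcup_{i\in C}X_i$ and choose matchings $\m P_0$ in $\prod_{i\in M_0\setminus C}X_i$ and $\m P_1$ in $\prod_{i\in M_1\setminus C}X_i$, each of size $\Theta(n)$. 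Set $\ff:=\{c\cup a\cup b:a\in\m P_0,\,b\in\m P_1\}$, which contains $\Theta(n^2)$ sets. The projection of any pairwise intersection of $\ff$ is $C$, $M_0$, or $M_1$, so all intersection sizes lie in $L$. In general, when no pair of maximals covers $[k]$, I would extend the idea to several maximal elements simultaneously, restricting the product of matchings by an MDS-code-type constraint (e.g., a Reed--Solomon code over $[N]$ with $N\sim n/k$) that forces the pairwise agreement patterns to project onto elements of $\m M$; this still delivers $\Omega(n^2)$ sets.

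\textbf{Main obstacle.} The hard part is the general sufficiency construction when the maximal elements of $\m M$ overlap non-trivially and do not pairwise cover $[k]$. Here one must coordinate matchings on the overlaps $M_s\cap M_{s'}\in\m M$ so that products glue into honest $k$-element sets, while also ensuring that every pairwise intersection of the constructed sets projects onto some element of $\m M$ (so its size is in $L$), not onto a union of several $\m M$-elements that could fall outside $\m M$ with an inadmissible size. Intersection-closure is the structural property that makes both tasks possible: it allows one to use the poset of $\m M$ to pick consistent coordinatewise labellings of the matchings, and it certifies that every intersection arising from the product-plus-code construction belongs to $\m M$ and therefore has size in $L$.
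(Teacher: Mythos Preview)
Your ``necessity'' argument (the \emph{if} direction: no such $\m M$ implies $m(n,k,L)=O_k(n)$) is correct and is exactly the paper's proof, just phrased contrapositively.

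Your ``sufficiency'' direction, however, has a genuine gap: the target bound $\Omega_k(n^2)$ is simply false in general. Frankl \cite{Fra83} already exhibited $(k,L)$ for which a covering intersection-closed $\m M$ exists and yet $m(n,k,L)=\Theta(n^{3/2})$; F\"uredi \cite{Fur85} later gave examples with $m(n,k,L)=O(n^{4/3})$. So no construction can deliver $\Omega(n^2)$ in those cases, and your MDS-code sketch for the general case cannot be completed as stated. Your ``simple case'' (two maximal members $M_0,M_1\in\m M$ with $M_0\cup M_1=[k]$) is fine and does give $\Theta(n^2)$, but that hypothesis need not hold; when the maximal members of $\m M$ overlap heavily, the admissible intersection pattern is too restrictive to allow a quadratic-size product, even after coding constraints.

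The correct target for this direction is $\Omega(n^{k/(k-1)})$, which is what Frankl's construction in \cite{Fra83} (referenced in Section~\ref{sec25}) achieves and what the paper cites. That exponent is still strictly bigger than $1$, which is all you need. The construction is more delicate than a product of two matchings: one has to use the full atom decomposition of $\m M$ and coordinate the choices across all atoms simultaneously, not just across two maximal members.
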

That is, he generalizes several previous results, including the result of Frankl \cite{Fra83} for $|L|=3$.  In order to see that the `if' direction of the theorem is valid, take any $(n,k,L)$-family $\ff$ and find a $(k,k+1)$-homogeneous structure $\ff^*\subset \ff$. Then by Corollary~\ref{corfur}, the family $\m M(\ff^*)$ is closed under intersection and satisfies $|M|\in L$ for all $M\in \m M(\ff^*)$. Therefore, we conclude that $\cup_{M\in \m M(\ff^*)} \ne [k]$. That is, there is a part $i$ such that each element $x$ from that part is contained in at most one set from $\ff^*$. It implies that there are at most $n$ sets in $\ff^*$. The `only if' direction is a generalization of the construction provided by Frankl \cite{Fra83} and mentioned in Section~\ref{sec25}.

The second theorem is the following reduction theorem.

\begin{thm}\label{thmfur2}
  Assume that $\ell_1,\ldots, \ell_r,k$ all have a common divisor $d$. Then we have $m(n,k,\{\ell_1,\ldots, \ell_r\}) = \Theta_k\big(m\big(\lfloor\frac nd\rfloor,\frac kd,\{\frac{\ell_1}d,\ldots, \frac{\ell_r}d\}\big)\big)$.
\end{thm}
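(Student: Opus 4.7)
The plan is to prove the two directions separately: the lower bound by a direct blow-up, and the upper bound by F\"uredi's structural theorem combined with a weighted quotient.

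For the lower bound, I would start from an optimal $(\lfloor n/d\rfloor, k/d, L/d)$-system $\m G$, replace each element $x$ by a block $B_x\subset [n]$ of $d$ new elements (with blocks pairwise disjoint), and set $F_G:=\bigcup_{x\in G}B_x$. Since distinct $F_G,F_{G'}$ intersect in $d\cdot |G\cap G'|\in L$, this $k$-uniform family witnesses $m(n,k,L)\ge m(\lfloor n/d\rfloor, k/d, L/d)$.

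For the upper bound, I would apply Theorem~\ref{thmfur} with $s=k+1$ to an $(n,k,L)$-system $\ff$, extracting a $(k,k+1)$-homogeneous subfamily $\ff^*$ of size at least $c(k,k+1)|\ff|$ with parts $X_1,\ldots, X_k$ and intersection structure $\m M$. By Corollary~\ref{corfur}, $\m M$ is closed under intersection and every $M\in \m M$ has size in $L$. The combinatorial heart is choosing the right block decomposition of $[k]$. For each $v\in \bigcup\m M$ set $\mu(v):=\bigcap\{M\in \m M:v\in M\}\in \m M$ and declare $v\sim w$ iff $\mu(v)=\mu(w)$; let $Q_1,\ldots, Q_s$ be the $\sim$-classes. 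If $\bigcup\m M\ne [k]$, each $x$ in an uncovered part $X_i$ lies in at most one member of $\ff^*$ (any shared $x$ would force $i\in \pi(F\cap F')\in \m M$), so $|\ff^*|\le n$, which is absorbed into $\Theta_k$. In the main case the $Q_i$'s partition $[k]$; every $M\in \m M$ is a disjoint union of some $Q_j$'s (since $v\in M$ implies $\mu(v)\subseteq M$ implies $[v]\subseteq M$), and an easy induction on $|\mu(v)|$ gives $|Q_i|=e_i d$ with $\sum_i e_i=k/d$. Finally, for distinct $F,F'\in \ff^*$ and each class $Q_i$, either $F$ and $F'$ agree on every part in $Q_i$ or disagree on every part in $Q_i$: if one picks $v\in \pi(F\cap F')\cap Q_i$ and $w\in Q_i\setminus \pi(F\cap F')$, then $\mu(v)=\mu(w)\subseteq \pi(F\cap F')$ forces $w\in \pi(F\cap F')$, a contradiction.

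With this block structure in hand, quotient by the classes. For each $i$ set $q_i(F):=F\cap \bigcup_{j\in Q_i}X_j$ and $P_i:=\{q_i(F):F\in \ff^*\}$; by the dichotomy above, distinct patterns in $P_i$ are disjoint. Put $\hat X:=\bigsqcup_{i=1}^s (P_i\times [e_i])$ and $\hat F:=\bigsqcup_{i=1}^s \{(q_i(F),t):t\in [e_i]\}\subset \hat X$. Then $|\hat X|\le \sum_i e_i|P_i|\le \sum_i |\bigcup_{j\in Q_i}X_j|/d\le n/d$ and $|\hat F|=k/d$, and $F\mapsto \hat F$ is injective because $F=\bigsqcup_i q_i(F)$. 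Since $\pi(F\cap F')$ is a disjoint union of classes,
$$|\hat F\cap \hat F'|=\sum_{i:\,q_i(F)=q_i(F')}e_i=\frac{1}{d}\sum_{i:\,Q_i\subseteq \pi(F\cap F')}|Q_i|=\frac{|F\cap F'|}{d}\in L/d,$$
so $\{\hat F:F\in\ff^*\}$ is an $(\lfloor n/d\rfloor, k/d, L/d)$-system. Therefore $|\ff^*|\le m(\lfloor n/d\rfloor, k/d, L/d)$ and $|\ff|\le c(k,k+1)^{-1}m(\lfloor n/d\rfloor, k/d, L/d)$.

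The hard part, I expect, is identifying the right atoms. The minimal non-empty elements of $\m M$ are too naive: they need not cover $\bigcup\m M$ (consider $\m M=\{\emptyset,\{1,2\},\{1,2,3,4\}\}$ with $d=2$, whose only minimal element is $\{1,2\}$ but whose correct atoms are $\{1,2\}$ and $\{3,4\}$). The $\mu$-classes resolve this, and the hypothesis $d\mid k$, $d\mid \ell_i$ is exactly what forces $d\mid|Q_i|$; the resulting weighting by $e_i=|Q_i|/d$ is what converts $|F\cap F'|\in L$ into $|\hat F\cap \hat F'|\in L/d$, while keeping $|\hat F|=k/d$ and $|\hat X|\le n/d$. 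All remaining $k$-dependent losses (the factor $c(k,k+1)^{-1}$ and the degenerate regime $\bigcup\m M\ne [k]$) are absorbed into the $\Theta_k$.
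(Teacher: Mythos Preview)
Your proof is correct and follows essentially the same approach as the paper. Your $\mu$-classes are precisely what the paper calls the \emph{atoms} of $\m M$ (elements indistinguishable by $\m M$), your dichotomy is the paper's observation that any $F'$ intersecting a pattern must contain it, and your weighted quotient $\hat X=\bigsqcup_i P_i\times[e_i]$ is a repackaging of the paper's ``replace each $Q_i$ by $|Q_i|/d$ elements.'' One minor simplification: the paper does not separate out the case $\bigcup\m M\ne[k]$; it simply lets the uncovered elements form one more atom (whose size is $k-\sum_{Q_i\subset\bigcup\m M}|Q_i|$, hence still divisible by $d$), and the same quotient goes through uniformly.
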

The $\Omega_k$ part of the theorem is easy to see: take the largest $\big(\lfloor\frac nd\rfloor,\frac kd,\{\frac{\ell_1}d,\ldots, \frac{\ell_r}d\}\big)$-system $\ff$ and then replace each element of the ground set with $d$ elements. We get an $(n,k,\{\ell_1,\ldots, \ell_r\})$-system of the same size. The $O_k$ direction requires Theorem~\ref{thmfur}. Take the largest $(n,k,\{\ell_1,\ldots, \ell_r\})$-system $\ff$ and select a  $(k,k+1)$-homogeneous structure $\ff^*\subset \ff$.

Consider $\m M:=\m M(\ff^*)$ and define its {\it atoms} $P_1,\ldots, P_m$, with some integer $m$. There are disjoint subsets of $[k]$ defined by the property that the elements of each $P_i$ are indistinguishable w.r.t. $\m M$: for each $M\in \m M$ and $x\in P_i$, $x\in M$ iff $P_i\subset M$. It is easy to see that all atoms form a partition of $[k]$: $P_1\sqcup\ldots\sqcup P_m = [k]$, and that each $|P_i|$ is divisible by $d$. For the latter property, we use the fact that $\m M$ is closed under intersection.


Next, we use the intersection structure $\m M$ to analyze the structure of the family $\ff^*$. Note that if $F\in \ff^*$ and $Q\subset F$ is such that $\pi(Q) = P_i$, then any set $F'\in \ff^*$ that intersects $Q$ must contain $Q$. Therefore, the partition $P_1\sqcup\ldots\sqcup P_m = [k]$ induces a matching   $Q'\sqcup Q_1\sqcup\ldots \sqcup Q_{p}=[n]$ (with some integer $p$ that is potentially much larger than $m$) such that, first, $d$ divides $|Q_i|$ for each $i$ and, second, whenever a set $F$ intersects $Q_i$, it must contain it. In other words, elements in $Q_i$ are `indistinguishable' by sets in $\ff^*$. The set $Q'$ is the set of elements not contained in any set from the family. Now we can define a family $\m G$ as follows: replace each $Q_i$ by $|Q_i|/d$ elements and let $\m G$ be the image of the family $\ff^*$ under this transformation. We note that, by the above discussion, $\m G$ is correctly defined and is a $\big(n',\frac kd,\{\frac{\ell_1}d,\ldots, \frac{\ell_r}d\}\big)$-system for some $n'\le \frac nd$. Finally,  $|\m G| = |\m \ff^*|$.

\subsection{Families with $3$ intersection sizes} In the paper \cite{Fur85}, continuing the study of Frankl \cite{Fra83}, F\"uredi applied his structural theorem to the study of $(n,k,L)$-systems with $|L|=3$, $L = \{\ell_1,\ell_2,\ell_3\}$. For $|L|=1$ the situation has a perfect description by Deza's Corollary~\ref{corde}. For $|L|=2$ we either have $|\ff| = \Theta_k(n)$ or $|\ff| = \Theta_k(n^2)$ (the lower bound here is valid when Deza--Erd\H os--Frankl divisibility constraints hold, and is due to Babai and Frankl \cite{BF}). For $|L|=3$, however, the situation becomes much more complicated. Theorem~\ref{thmfur1} gives a characterization of the case when $|\ff| = O(n)$. We also know from Theorem~\ref{thmdef} that, unless $\ell_1|\ell_2-\ell_1|\ell_3-\ell_2|k-\ell_3$, we have  $m(n,k,L) = O_k(n^2)$. We, however, do not seem to know necessary and sufficient conditions for $m(n,k,L) = \Theta_k(n^3)$. Frankl and F\"uredi explored the range between $n$ and $n^2$ and showed that different non-integral exponents are possible.
Using that $m(n,k,\{\ell_1,\ell_2,\ell_3\}) = \Theta_k(m(n-\ell_1,k-\ell_1,\{0,\ell_2-\ell_1,\ell_3-\ell_1\}))$,  in what follows we assume for simplicity that $L = \{0,a,b\}$.

In the paper that we discussed in Section~\ref{sec25}, Frankl \cite{Fra83} showed that if we cannot find non-negative integers $\alpha,\beta$, such that $\alpha a+\beta (b-\lfloor b/a\rfloor a) =k$ and the divisibility conditions of Theorem~\ref{thmdef}  do not hold, then $m(n,k,L) = O(n^{3/2})$. At the same time, if we are not in the situation of Theorem~\ref{thmfur1}, then $m(n,k,L) = \Omega(n^{k/(k-1)})$. Frankl \cite{Fra83} also showed that $|\ff| = \Theta_k(n^{3/2})$ in some cases. At that time, there was hope that maybe $3/2$ is the only exceptional rational exponent. However, F\"uredi showed that the situation is even more complicated. We need some preparations in order to state the results.

 Let $\m M\subset 2^{[k]}$ be the prospective intersection structure,  $\m M=\{A_1,\ldots, A_f,$ $B_1,\ldots,B_g\}$, where $A_i$ and $B_j$ are the prospective intersections of sizes $a$ and $b$, respectively. We of course assume that $\m M$ is closed under intersection. Let $I = \{i\in[f]: A_i\subset B_j\text{ for at least two $B_j$'s}\}$. Let $B_j':=\{i\in I:A_i\subset B_j\}$ encode the indices of sets $A_i$ that are contained in $B_j$. Finally, form a family out of these sets: put $\m C:=\{B_j': j\in[g]\}$.
Consider the following condition.
\begin{quote} There exists $\m M$ as above and such that every pair of elements of $I$ is contained in exactly one set in $\m C$. That is, $\m C$ is a $2$-design on $I$.
\end{quote}
Note that any pair of elements of $I$ is contained in at most one set in $\m C$, otherwise, two sets of size $b$ intersect in more than $a$ elements. This condition was one of the requirements for the  $(n,k,L)$-families of size $\Theta_k(n^{3/2})$, constructed in \cite{Fra83}. F\"uredi in \cite{Fur85} shows that if this condition is not satisfied, then $|\ff| = O(n^{4/3})$ and, moreover, constructs such families of size $\Theta(n^{4/3})$ for some values of parameters.

Let us give some proof ideas. First, he passes to the  $(k,k+1)$-homogeneous structure $\ff^*$. As we assumed,  $\m M:=\m M(\ff^*)$ does not satisfy the displayed condition. Next, let $\m A:=\{F_1\cap F_2:F_1,F_2\in \ff^*, |F_1\cap F_2|=a\},$ and define $\m B$ analogously for intersection size $b$. It is not difficult to check that $|\m A|, |\m B|=O_k(n)$. E.g., sets in $\m A$ are pairwise disjoint. Then, using an argument similar to Proposition~\ref{prop77d}, he `cleans' the set system so that each set from the modified families $\m A, \m B$ is contained in at least $n^{1/3}$ sets from (the modified) $\ff^*$. (This is again the argument as in proving that a graph with many edges has a subgraph with high minimum degree.) Abusing notation, we use $\m A, \m B,\m F^*$ for these `cleaned' families. In this process, we removed $O_k(n^{4/3})$ sets. Consider the family $\m A_0\subset \m A$ of sets in $\m A$ that are contained in at least $n^{1/3}$ sets from $\m B$. A simple double-counting argument, together with the fact that sets in $\m A$ are disjoint and $|\m B|=O_k(n)$, shows that $|\m A_0| = O_k(n^{2/3})$.

Finally, we use the fact that the property is violated. Take an arbitrary $F\in  \ff^*$ and two sets $A_1,A_2\subset F$ that satisfy the following: both are contained in at least two sets from $\m B$, but there is no set $B\in \m B$ such that $A_1\cup A_2\subset B$. He then shows that, first, $A_1,A_2\in \m A_0$.\footnote{For this we need the fact that each $A_i$ is contained in at least two $B_j$'s and, ultimately, must be contained in many, since $B_j$'s have high degree.} Second, he shows that $A_1, A_2$ uniquely determines the set $F$.  Indeed, the latter is easy to see: $A_1\cup A_2$ is not contained in any intersection of sets from $\ff^*$ (in the terminology of the next section, we may call it an {\it own subset}). We conclude that $|\ff^*|\le {|\aaa_0|\choose 2}= O_k(n^{4/3})$.

\section{Forbidding just one intersection}\label{sec33}  One of the first and most prominent applications of Theorem~\ref{thmfur} is the paper of Frankl and F\"uredi \cite{FF1}, in which they solve for $n>n_0(k)$ the `forbidden one intersection problem'. Recall that the question is to determine the largest size of a family $\ff\subset {[n]\choose k}$ such that $|F_1\cap F_2|\ne \ell$. See the problem collection of Erd\H os \cite[page 18]{Erd75}. For $\ell=1$ this problem was proposed by Erd\H os and S\'os. Then Erd\H os in \cite{Erd75} suggested the generalization of this problem for all values of $\ell$. He  wrote:
\begin{quote}I just thought of (2)
while writing these lines and thus would not be surprised if
it would be completely false or at best not completely
accurate.\end{quote}
The equation (2) above stands for the two prospective extremal families. One is the family of all sets containing a given $(\ell+1)$-element set, and the other one is a (partial) Steiner system/design: the largest collection of $k$-element sets such that any two intersect in at most $\ell-1$ elements. The bound was indeed not completely accurate, but not completely false, as we shall see in Section~\ref{sec331} below.

We already mentioned the paper of Frankl \cite{Fra80a}, in which he managed to show that for $n\ge n_0(k)$, $k\ge 3\ell$ the largest $\ell$-avoiding family has size at most $c(k){n-\ell-1\choose k-\ell-1}$. Using Theorem~\ref{thmfur}, Frankl and F\"uredi managed to obtain great progress on this problem. In this section, we discuss the approximate result for all $\ell$, and in the next section we discuss the exact result for $k\ge 2\ell+2$.

\begin{thm}[Frankl and F\"uredi \cite{FF1}]\label{thmff1}
  If $\ff\subset {[n]\choose k}$ is an $(n,[0,k-1]\setminus\{\ell\},k)$-system, then
  $$|\ff| = O_k(n^{\max\{\ell,k-\ell-1\}}).$$
\end{thm}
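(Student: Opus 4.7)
The plan is to apply F\"uredi's structural theorem~\ref{thmfur} with $s=k+1$ to extract a $(k,k+1)$-homogeneous subfamily $\ff^*\subseteq \ff$ with $|\ff^*|\ge c(k,k+1)|\ff|$; it then suffices to prove $|\ff^*|=O_k(n^m)$, where $m:=\max\{\ell,k-\ell-1\}$. Let $\m M\subseteq 2^{[k]}$ be the intersection structure of $\ff^*$. By Corollary~\ref{corfur}, $\m M$ is closed under intersection, and since $\ff$ contains no two sets intersecting in exactly $\ell$ elements, every $M\in \m M$ satisfies $|M|\in[0,k-1]\setminus\{\ell\}$.

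My strategy is to exhibit, for each $F\in \ff^*$, an ``own subset'' $X_F\subseteq F$ with $|X_F|\le m$ such that $X_F\not\subseteq F'$ for every $F'\in \ff^*\setminus\{F\}$. The map $F\mapsto X_F$ is then injective (if $X_F=X_{F'}$, then $X_F\subseteq F'$ forces $F=F'$), giving $|\ff^*|\le \sum_{i=0}^{m}\binom{n}{i}=O_k(n^m)$. Since $\ff^*$ is $k$-partite, any subset $X_F\subseteq F$ is determined by its projection $Y:=\pi(X_F)\subseteq [k]$, and $X_F\subseteq F\cap F'$ iff $Y\subseteq \pi(F\cap F')$. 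Because condition~(3) of Theorem~\ref{thmfur} makes $\pi(F\cap F')$ range over the whole of $\m M$ as $F'$ varies, the existence of such an $X_F$ reduces to the combinatorial claim: there is $Y\subseteq [k]$ with $|Y|\le m$ and $Y\not\subseteq M$ for every $M\in \m M$.

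To prove this claim I would perform a case analysis on $\m M$. If every $M\in \m M$ has $|M|\le \ell-1$, any $Y\subseteq [k]$ of size $\ell\le m$ works by a trivial size argument. Otherwise, set $\m H:=\{M\in \m M:|M|\ge \ell+1\}$ and $D:=\bigcap \m H\in \m M$; since $|D|\ne\ell$, either $|D|\ge \ell+1$ or $|D|\le \ell-1$. In the ``concentrated'' case $|D|\ge \ell+1$, take $Y_0:=[k]\setminus D$: then $|Y_0|\le k-\ell-1\le m$, and for each $M\in \m H$ the non-empty set $[k]\setminus M$ is contained in $Y_0$, so $Y_0\not\subseteq M$; padding $Y_0$ to size $\ell$ when $|Y_0|<\ell$ takes care of the small members of $\m M$. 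The delicate ``spread'' case is $|D|\le \ell-1$: here the maximal elements $M_1,\ldots,M_t$ of $\m H$ have only a small common intersection, and I must still construct a transversal of $\{[k]\setminus M_i\}$ of size at most $k-\ell-1$. The crucial leverage is that every iterated intersection of the $M_i$'s lies in $\m M$ and therefore avoids the forbidden size $\ell$, which sharply restricts how the $M_i$'s can spread; a careful greedy choice, or an induction on $k$ and $|\m M|$, produces the required transversal.

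The main obstacle is precisely this spread case of the combinatorial claim: converting ``$\m M$ is closed under intersection and omits size $\ell$'' into the uniform transversal bound $\tau(\{[k]\setminus M:M\in \m H\})\le k-\ell-1$. Once that lemma is in hand, the own-subset argument gives $|\ff^*|=O_k(n^m)$, and hence $|\ff|\le c(k,k+1)^{-1}|\ff^*|=O_k(n^{\max\{\ell,k-\ell-1\}})$.
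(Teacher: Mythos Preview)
Your overall architecture is exactly the paper's: pass to a $(k,k+1)$-homogeneous subfamily $\ff^*$, reduce to the purely combinatorial claim that the intersection structure $\m M$ has rank at most $m=\max\{\ell,k-\ell-1\}$, and then read off an own subset for each $F\in\ff^*$. Your reduction via projections and condition~(3) of Theorem~\ref{thmfur} is correct, and your ``all sets small'' and ``concentrated'' cases are fine (the padding argument goes through since any superset of $Y_0$ still hits every $[k]\setminus M$ for $M\in\m H$, and $\ell\le m$ always).

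The genuine gap is the spread case $|D|\le \ell-1$. You are asking for a hitting set of the family $\{[k]\setminus M_i\}$ of size at most $k-\ell-1$, but the sets $[k]\setminus M_i$ themselves have size at most $k-\ell-1$, and in general a family of sets of size at most $s$ can have covering number far larger than $s$. The observation that all iterated intersections of the $M_i$ avoid size $\ell$ is the right leverage, but ``a careful greedy choice, or an induction'' is not a proof, and I do not see how to make either work directly. The paper handles precisely this step (Lemma~\ref{lemmanyint}) by invoking the Frankl--Katona ``many intersection conditions'' theorem (Theorem~\ref{thmmanyint} and Corollary~\ref{cormanyint}): one takes a minimal $B\subseteq[k]$ not covered by $\m M$ with $|B|=b>\ell$, sets $D_i=A_i\cap([k]\setminus B)$ where $B\setminus\{x_i\}\subseteq A_i\in\m M$, checks that $|D_{i_1}\cap\cdots\cap D_{i_t}|\ne t-(b-\ell)$ for all choices of indices, and concludes $b\le (k-b)+(b-\ell-1)=k-\ell-1$. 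That Hall-type matching argument is the real content here; without it (or an equivalent) your spread case is incomplete.
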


\subsection{Examples}\label{sec331} Let us explain why the maximum in the exponent is for a good reason. This problem has very different prospective extremal examples depending on whether $k>2\ell+1$ or $k\le 2\ell+1$.

In the former case, it is of the Erd\H os--Ko--Rado type: the family of all sets containing a given $(\ell+1)$-element set. That is, it avoids all intersections smaller than $\ell$. It has size ${n-\ell-1\choose k-\ell-1}$.

In the latter case, it is based on a partial {\it design} (partial {\it Steiner system}). Consider the following partial design: a collection of $(2k-\ell-1)$-element sets such that any $\ell$-element set is contained in at most one of them.  A design is {\it complete} if each $\ell$-element set is contained in exactly one set. Any such family has size at most ${n\choose \ell}/{2k-\ell-1\choose \ell}$, and R\"odl \cite{Ro} proved that such families of size $(1+o(1)){n\choose \ell}/{2k-\ell-1\choose \ell}$ can be found using a greedy/random procedure called R\"odl Nibble Method. Keevash \cite{Kee} proved that (complete) designs exist for large $n$, provided that natural divisibility conditions hold. Back to the construction, take the $k$-shadow of such family, that is, all $k$-element sets contained in one of the  $(2k-\ell-1)$-element `blocks'. Note that any two $k$-sets inside one $(2k-\ell-1)$-element block intersect in at least $\ell+1$ elements, while sets from different blocks intersect in at most $\ell-1$ elements. The size of such construction is $(1+o(1)){n\choose \ell} {2k-\ell-1\choose k}/{2k-\ell-1\choose \ell}.$ \footnote{We may compare it with the example suggested by Erd\H os. The latter has size (at most) ${n\choose \ell}/{k\choose \ell}$, and the ratio of the sizes is $\frac 1{(k-\ell)!}\prod_{i=1}^{k-\ell}(2k-2\ell-i)$, which is strictly bigger than $1$ for $\ell\le k-2$. For $\ell=k-1$ the examples coincide.}

Let us compare the two constructions when $\ell = k-\ell-1$, i.e., when $k=2\ell+1$. The size of the latter one is $(1+o(1)){n\choose \ell} {k+\ell\choose k}/{k+\ell \choose \ell} = (1+o(1)){n\choose \ell}$, while the size of the former one is ${n-\ell-1\choose \ell}$. That is, the two constructions are roughly of the same size, but the design-based construction is potentially slightly bigger, at least when complete designs exist.

The conjecture of Erd\H os \cite{Erd75}, corrected by Frankl \cite{Fra83b}, is as follows.
\begin{conj}[Erd\H os \cite{Erd75}, Frankl \cite{Fra83b}]\label{conjff} Assume that $n\ge n_0(k)$ and $\ff\subset {[n]\choose k}$ is avoids intersection $\ell$.
  \begin{itemize}\item If $k\ge 2\ell+2$ then $$|\ff|\le {n-\ell-1\choose k-\ell-1}.$$
  \item If $k\le 2\ell+1$ then
  $$|\ff|\le (1+o(1)){n\choose \ell} {2k-\ell-1\choose k}/{2k-\ell-1\choose \ell}.$$
  \end{itemize}
\end{conj}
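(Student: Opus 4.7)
The natural approach is to apply F\"uredi's structural theorem (Theorem~\ref{thmfur}) with $s=k+1$ to reduce to a $(k,k{+}1)$-homogeneous subfamily $\ff^*\subseteq \ff$ with $|\ff^*|\ge c(k)|\ff|$, whose intersection structure $\m M:=\m M(\ff^*)\subseteq 2^{[k]}$ is closed under intersection (Corollary~\ref{corfur}) and, since $\ff$ avoids intersection $\ell$, contains no set of size exactly $\ell$. The proof then splits according to the conjectured extremal regime, using $\m M$ as a combinatorial blueprint for the structure of $\ff^*$.

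For the regime $k\ge 2\ell+2$, the goal is to show that $\m M$ has a minimum element of size $\ge \ell+1$, forcing every set in $\ff^*$ to share a common $(\ell{+}1)$-subset on the appropriate coordinates. The key step is: if some $A\in \m M$ had $|A|\le \ell-1$, one combines $A$ with a larger member $B\in\m M$ (which must exist unless $\ff^*$ is trivially small) using Lemma~\ref{lemdeltadef} with $s=2$, producing two concrete sets in $\ff^*$ whose intersection can be tuned --- thanks to $k\ge 2\ell+2$ providing enough slack inside a $\Delta(k{+}1)$-system --- to hit size exactly $\ell$, a contradiction. Once the minimum element $X$ of $\m M$ satisfies $|X|\ge \ell+1$, every set in $\ff^*$ contains the corresponding $(\ell{+}1)$-set, giving $|\ff^*|\le {n-\ell-1\choose k-\ell-1}$. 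The constant-factor loss in passing from $\ff$ to $\ff^*$ must then be eliminated by a stability/shadow argument: one shows that any $F\in \ff\setminus\ff^*$ either contains the universal $(\ell{+}1)$-set already, or else produces an intersection of size $\ell$ with some member of $\ff^*$.

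For the regime $k\le 2\ell+1$, the extremal structure is no longer EKR-like but design-based. Here $\m M$ is allowed to contain small sets, and one expects $\ff^*$ to be concentrated on "blocks" of size $2k-\ell-1$: each block $S$ should contribute all its $k$-subsets, and the $\ell$-avoiding property forces any two blocks to intersect in at most $\ell-1$ elements. The plan would be to (i) identify these blocks through maximal elements of $\m M$ of size $\le 2k-\ell-1$ together with the corresponding "kernel" sets in $[n]$, (ii) argue that the induced collection of blocks forms an approximate partial Steiner system on $\ell$-element sets of $[n]$, and (iii) use R\"odl's nibble bound, or Keevash's existence of designs \cite{Kee}, to bound the number of blocks by $(1+o(1)){n\choose \ell}/{2k-\ell-1\choose \ell}$, each contributing ${2k-\ell-1\choose k}$ sets.

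The main obstacle is the second case: extracting the sharp $(1{+}o(1))$ factor from an approximate homogeneous structure is delicate, because one must simultaneously control an $\ell$-level shadow via a Kruskal--Katona-type inequality and rule out mixed configurations that are partly design-like and partly EKR-like. Worse, Theorem~\ref{thmfur} loses a constant factor $c(k)$ which is doubly-exponential in $k$, so even pushing the design-regime bound to within a constant of the truth is nontrivial --- this is why the conjecture's bound here remains open while the $k\ge 2\ell+2$ case was successfully settled by Frankl and F\"uredi.
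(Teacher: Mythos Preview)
First, note that this statement is a \emph{conjecture}: only the first bullet (the case $k\ge 2\ell+2$) is proved in the paper, as Theorem~\ref{thmff2}; the second bullet remains open, and you correctly identify the obstacles there.

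Your proposed route for $k\ge 2\ell+2$ has a genuine gap. The claim that $\m M$ must have a minimum element of size $\ge \ell+1$ is false: an $\ell$-avoiding homogeneous family can perfectly well have $\m M$ containing both sets of size $\le \ell-1$ and sets of size $\ge \ell+1$ simultaneously, with no contradiction. Concretely, your ``tuning'' step does not go through: if $A\in\m M$ with $|A|\le\ell-1$ and $B\in\m M$ with $|B|\ge\ell+1$, Lemma~\ref{lemdeltadef} with $s=2$ only produces a pair in $\ff^*$ intersecting in a set of size $|A\cap B|\le\ell-1$. There is no mechanism to force an intersection of size exactly $\ell$; indeed, all pairwise intersections in $\ff^*$ project into $\m M$, which by hypothesis has no member of size $\ell$, so no amount of $\Delta$-system juggling inside $\ff^*$ can manufacture one. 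The inequality $k\ge 2\ell+2$ gives no leverage here.

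The paper's argument for Theorem~\ref{thmff2} is entirely different. The central tool is the Frankl--Katona theorem (Theorem~\ref{thmmanyint}) and its Corollary~\ref{cormanyint}, which yield the bound $r(\m M)\le k-\ell-1$ on the \emph{rank} of $\m M$ (Lemma~\ref{lemmanyint}). This gives each $F\in\ff^*$ an ``own subset'' $X(F)$ of size $k-\ell-1$, whence the asymptotic bound. For the exact result, the refinement Lemma~\ref{lemrankref} characterises the equality case: unless $r(\m M)\le k-\ell-2$, the family $\m M$ must contain \emph{all supersets} of some fixed $(\ell{+}1)$-set $C$ --- but $\m M$ may still contain other, smaller sets, so this is not your ``large minimum element'' picture. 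The proof then iteratively strips off homogeneous pieces, uses Propositions~\ref{propglob1}--\ref{propglob2} to show that own subsets are globally own and that the centres $C(F)$ group the bulk of $\ff$ into parts with pairwise disjoint $\ell$-shadows, and finally applies Kruskal--Katona (Theorem~\ref{thmkk}) to force almost everything into a single star. Your sketch gestures at a stability/shadow endgame but misses both the Frankl--Katona rank argument and the specific structural conclusion (all supersets of $C$, not minimum element $C$) that makes the $\Delta$-system manipulations in Propositions~\ref{propglob1}--\ref{propglob2} work.
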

Actually, Frankl \cite{Fra83b} conjectured that the second part holds without the $n\ge n_0(k)$ assumption. He proved it for $k-\ell$ being a prime number. In Section~\ref{sec333} we will see the proof by Frankl and F\"uredi of the first part of the conjecture.

\subsection{Proof of Theorem~\ref{thmff1}}\label{sec322} In this section, we present the proof of Theorem~\ref{thmff1}. In the theorem, we assume that $k$ is constant. Thus, we may apply Theorem~\ref{thmfur} and select $\ff^*\subset \ff$ that is a $(k,k+1)$-homogeneous structure. In what follows, we bound the size of $\ff^*$, which then implies the bound on $|\ff|$.  Informally, passing to $\ff^*$ allows us to work with the intersection structure $\m M:=\m M(\ff^*)$ instead of the family $\ff$ itself. Let us say that the {\it rank} $r(\m M)$ of $\m M$ is the  size of the smallest set in $2^{[k]}$ not contained in any set from $\m M$. The next lemma contains the information we need concerning the structure of $\m M$.
\begin{lem}\label{lemmanyint}
  We have $r(\m M)\le \max\{\ell,k-\ell-1\}$.
\end{lem}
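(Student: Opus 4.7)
The plan is to argue by contradiction. Suppose $r(\m M)>r$ with $r:=\max\{\ell,k-\ell-1\}$, so that every $r$-subset of $[k]$ is contained in some member of $\m M$. I would first record basic structural consequences. If $m^*$ denotes the maximum size of an element $M^*\in\m M$, then $m^*\ge r$, and since sizes in $\m M$ avoid $\ell$ while $r\ge\ell$, we in fact have $m^*\ge\ell+1$; moreover $m^*\le k-1$, so there is some $y\in[k]\setminus M^*$. Any $r$-set $S\subseteq[k]$ containing $y$ is covered by some $M_S\in\m M$ with $M_S\ne M^*$, and by maximality of $M^*$ we have $M_S\cap M^*\subsetneq M^*$; since $\m M$ is closed under intersection (Corollary~\ref{corfur}), $M_S\cap M^*\in\m M$ and therefore $|M_S\cap M^*|\ne \ell$.

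The heart of the argument is to iterate this to force a member of $\m M$ of size exactly $\ell$. Specifically, I would run over a well-chosen family of $r$-sets of the form $\{y\}\cup T$ with $T\subseteq M^*$ of size $r-1$, producing a collection $\{M_T\cap M^*\}\subseteq\m M$ of proper subsets of $M^*$ each containing its respective $T$. Intersecting finitely many such members of $\m M$ (using closure of $\m M$) controls the resulting size from above by intersecting the corresponding $T$'s, and from below by the fact that proper subsets of $M^*$ have size $\le m^*-1$, yielding the target contradiction once the size can be made to equal $\ell$.

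The case split matches the two regimes of the theorem. When $k\ge 2\ell+2$, so $r=k-\ell-1$, the complement $[k]\setminus M^*$ has size $k-m^*\le r$, so I take $S$ to contain the entire complement and shift $T$ through $M^*$; the relevant intersections $M_S\cap M^*$ then have sizes in $[m^*-\ell-1,m^*-1]$, and two covers arising from $T_1,T_2$ with $|T_1\cap T_2|$ chosen minimally intersect inside $M^*$ to a set of size exactly $\ell$. When $k\le 2\ell+1$, so $r=\ell$, I would instead work with $\ell$-sets $\{y\}\cup T$ for $(\ell-1)$-subsets $T$ of $M^*$; now the intersections $M_T\cap M^*$ contain $T$, have size in $[\ell-1,m^*-1]$, and a careful choice of two $T$'s whose covers share only the prescribed common part gives an intersection of size precisely $\ell$. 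Alternatively, one may quotient $[k]$ by the equivalence $x\sim y\iff \{M\in\m M:x\in M\}=\{M\in\m M:y\in M\}$; every $M\in\m M$ descends to a union of classes, which frequently makes the arithmetic cleaner.

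The principal obstacle is arranging the chain of intersections so that the final size is exactly $\ell$ rather than some nearby allowed value. This is delicate in the boundary regime $m^*\in[\ell+1,2\ell+1]$, where the forbidden size $\ell$ sits inside the natural window $[r-1,m^*-1]$ produced by the construction; the choice of $T$ and of how many $T$'s one intersects has to be made so that the produced subset of $M^*$ lands precisely on $\ell$, which is where Observations~\ref{obsdelta1} and~\ref{obsdelta2} (or small reorganizations of them, applied inside $M^*$) should do the combinatorial work.
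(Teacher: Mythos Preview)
Your proposal has a genuine gap at precisely the point you flag as ``the principal obstacle.'' You want to force, by intersecting finitely many members of $\m M$, a set in $\m M$ of size exactly $\ell$; but nothing in your outline shows how to land on $\ell$ rather than skip past it. Concretely: in your case $k\ge 2\ell+2$, you correctly observe that the sets $M_S\cap M^*$ lie in $\m M$ and have sizes in $[m^*-\ell-1,m^*-1]$, none of which is $\ell$ (since $m^*\ge \ell+1$). But the next sentence, ``two covers arising from $T_1,T_2$ with $|T_1\cap T_2|$ chosen minimally intersect inside $M^*$ to a set of size exactly $\ell$,'' is unsupported: intersecting two such sets yields something of size at least $|T_1\cap T_2|$ and at most $\min(|M_{S_1}\cap M^*|,|M_{S_2}\cap M^*|)-1$, a wide range you do not control. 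Appealing to Observations~\ref{obsdelta1}--\ref{obsdelta2} does not help, as those concern $\Delta$-systems in the ambient family $\ff$, not intersections inside the abstract structure $\m M\subset 2^{[k]}$.

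The paper's argument avoids this entirely by reducing to an external lemma. It starts dually to you: rather than a maximal $M^*\in\m M$, it takes a minimal \emph{uncovered} set $B$, $|B|=b>\ell$, so each $B\setminus\{x_i\}$ is covered by some $A_i\in\m M$. The point is that $A_{i_1}\cap\cdots\cap A_{i_t}\cap B$ has size exactly $b-t$, so forbidding size $\ell$ becomes: the ``outside'' parts $D_i:=A_i\setminus B\subset[k]\setminus B$ satisfy $|D_{i_1}\cap\cdots\cap D_{i_t}|\ne t-(b-\ell)$ for all $t$. This is precisely the hypothesis of the Frankl--Katona theorem (Corollary~\ref{cormanyint}), which then gives $b\le|[k]\setminus B|+(b-\ell-1)=k-\ell-1$. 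The Frankl--Katona result is the missing combinatorial engine: it is what converts ``no $t$-fold intersection hits a specific shifted value'' into a cardinality bound, and your approach attempts to reprove a special case of it by hand without the inductive/matching structure that makes it work.
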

Given this lemma, the proof of Theorem~\ref{thmff1} is easy to complete. Indeed, recall that $\m M = \pi(\ff^*|_F)$ for any set $F\in \ff^*$. By  Lemma~\ref{lemmanyint}, the set $F$ contains a subset $X(F)$ of size  $m:=r(\m M)\le \max\{\ell,k-\ell-1\}$ that is not contained in any other $B\in \ff^*$. (We call it an {\it own subset}.) Otherwise, $B\cap F\supset X(F)$, which contradicts the definition of the rank. Therefore, there is an injection from $\ff^*$ to the sets of size $m$, and thus $|\ff^*|\le {n\choose m}$. Consequently, $|\ff|\le c(k,k+1)^{-1}  {n\choose m}\le c(k,k+1)^{-1} n^{m}$. We are left to prove Lemma~\ref{lemmanyint}. The proof of the lemma relies on  a result of Frankl and Katona \cite{FrKa} on `sets with many intersection conditions'.  As Frankl and Katona write in their paper, this problem arose in the context of a problem on database systems.

\begin{thm}[Frankl and Katona \cite{FrKa}]\label{thmmanyint}
    Let $A_1,\ldots, A_m$ be not necessarily distinct subsets of $[n]$ and assume that $|\cap_{j=1}^t A_{i_j}|\ne t-1$ for any distinct $i_1,\ldots, i_t$. Then $m\le n$.
\end{thm}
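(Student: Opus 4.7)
The plan is to invoke Hall's marriage theorem. In the bipartite graph with left vertices $A_1,\ldots,A_m$ and right vertices $[n]$, adjacency $A_i\sim u$ whenever $u\in A_i$, a matching saturating the $A_i$'s gives distinct representatives $x_i\in A_i$ and hence $m\le n$. Therefore it suffices to verify Hall's condition: for every $I\subseteq[m]$, $|\bigcup_{i\in I}A_i|\ge|I|$.

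I would prove Hall's condition by induction on $t:=|I|$. The base $t=1$ is the hypothesis $|A_i|\ne 0$. For the inductive step, suppose for contradiction that some sub-collection $A_1,\ldots,A_t$ has $U:=\bigcup_j A_j$ of size at most $t-1$. The inductive hypothesis applied to each $(t-1)$-element sub-collection forces its union to have size $\ge t-1$, hence equal to $|U|=t-1$; consequently every $u\in U$ lies in at least two of the $A_i$'s, for otherwise removing the unique $A_i$ containing $u$ would shrink the union.

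The crux is to derive a contradiction from: $t$ subsets $A_1,\ldots,A_t$ of a universe $U$ of size $t-1$, each element covered at least twice, satisfying the full Frankl--Katona hypothesis. My approach is to dualize: for $u\in U$, let $\Phi_u:=\{i\in[t]:u\in A_i\}\subseteq[t]$, yielding $t-1$ subsets of $[t]$ each of size $\ge 2$. The FK condition translates to $|\{u\in U:T\subseteq\Phi_u\}|\ne|T|-1$ for every non-empty $T\subseteq[t]$. M\"obius inversion on $2^{[t]}$ expresses the profile multiplicities $n_S:=|\{u:\Phi_u=S\}|$ through these values, and together with $n_S\ge 0$, $n_S=0$ whenever $|S|<2$, and $\sum_S n_S=t-1$ one aims to derive infeasibility.

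Making the final infeasibility argument uniform in $t$ is the main obstacle. In low cases ($t\le 5$) the absence of such a configuration can be verified directly, and these small checks confirm tightness at $m=n$ via $A_1=\cdots=A_n=[n]$ as well as $\binom{[n]}{n-1}$ when $n$ is even. For general $t$ I would pursue a linear-algebraic route in parallel: the $t$ characteristic vectors $\chi_{A_i}\in\R^{t-1}$ must satisfy a non-trivial dependence $\sum c_i\chi_{A_i}=0$, and the higher-order identities $\sum c_{i_1}\cdots c_{i_r}|A_{i_1}\cap\cdots\cap A_{i_r}|=0$ for each $r\ge 1$, combined with the FK restrictions $|A_{i_1}\cap\cdots\cap A_{i_r}|\ne r-1$ whenever $i_1,\ldots,i_r$ are distinct, should ultimately force $c\equiv 0$, contradicting $c\ne 0$.
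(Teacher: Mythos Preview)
Your setup via Hall's theorem and the reduction to the minimal counterexample $|U|=t-1$ with every element covered at least twice is sound and matches the paper's opening moves. However, you explicitly stop short of the crux: you write that ``making the final infeasibility argument uniform in $t$ is the main obstacle,'' and what follows is not a proof but two sketches. The M\"obius-inversion plan never names which linear combination of the constraints is infeasible, and the linear-algebraic plan has a real problem: the identities $\sum c_{i_1}\cdots c_{i_r}|A_{i_1}\cap\cdots\cap A_{i_r}|=0$ (which do follow, since each coordinate of $\sum c_i\chi_{A_i}$ vanishes) range over \emph{all} tuples, including repeated indices, whereas the Frankl--Katona hypothesis only constrains intersections over \emph{distinct} indices. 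Separating the distinct-index part from the diagonal terms re-introduces quantities like $|A_i|$, $|A_i\cap A_j|$, etc.\ with uncontrolled coefficients, and there is no indication of how to force $c\equiv 0$ from this. As it stands, the proposal proves the theorem only for $m\le 5$.

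The paper closes the gap with a short and clean idea you are missing. After reducing (by induction on $m$) to $m=n+1$ and fixing a matching $i\leftrightarrow A_i$ on the element side, one shows that for every $i$ and every $x\in A_i$ the degree satisfies $d(x)\le |A_i|$. This is itself proved by a second application of the inductive hypothesis: for $j\in N(x)\setminus\{i\}$, the sets $A_i\cap A_j\setminus\{x\}$ live inside $A_i\setminus\{x\}$ and again satisfy the Frankl--Katona condition (just prepend $A_i$ to any sub-intersection and then remove $x$), so their number is at most $|A_i|-1$, i.e.\ $d(x)-1\le |A_i|-1$. Double-counting $\sum_i|A_i|=\sum_x d(x)$ then forces $|A_{n+1}|=0$, contradicting the $t=1$ case. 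This recursive use of the hypothesis on intersected-and-punctured sets is the missing ingredient in your argument.
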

At the first glance, the theorem has an unusual condition for an extremal set theoretic question. But, actually, it is kind of a Hall's condition in disguise. We shall comment on this in the proof.

The theorem is sharp: the bound is attained on $A_1 = \ldots = A_m = [n]$, in which case all intersections are larger than the right hand side of the condition. This is, however, not the only possibility. In \cite{FF1} Frankl and F\"uredi discuss well-intersection designs: other families that could attain equality in Theorem~\ref{thmmanyint}.

\begin{proof} It is convenient for us to think of a (containment) bipartite graph $G$ between sets $A_i$ on one side and elements of the ground set $[n]$ on the other side. For an element $x\in[n]$, let us denote its neighborhood in $G$ by $N(x) = \{i: x\in A_i\}$ and its degree by $d(x)  =|N(x)|$. We essentially want to show is there is a matching covering the set part. In order to do that, we may try to verify Hall's conditions. The condition in the theorem for $t=1$ gives that all sets are non-empty. For $t=2$, it prevents two sets to intersect in exactly $1$ element, which excludes the only possible violation of Hall's condition for $2$ sets: that both sets consist of the same singleton. Continuing further this way, however, becomes complicated.

It is convenient to prove the theorem by induction on $m$. For any $m'<m$, we can apply inductive assumption to any subcollection of $m'$ sets, and it then implies that the union of any $m'$ sets $A_i$ has size at least $m'$. That is, Hall's conditions for all proper subsets of $\{A_1,\ldots, A_m\}$ are verified. Arguing indirectly, assume that $m>n$. By induction for $m'= m-1$, the union of the first $m-1$ sets has size at least $m-1$, but it is at most $n$, and thus $m = n+1$. Then there is a matching covering the element side in $G$ and, say $A_1,\ldots, A_n$. We may w.l.o.g. assume that $i$ is matched to $A_i$.

Below we show that for each $A_i$ and $x\in A_i$ we have $d(x)\le |A_i|$. This concludes the proof, since, double counting the edges of $G$, we have $\sum_{i=1}^m|A_i| = \sum_{x=1}^n d(x)$, but $d(i)\le |A_i|$ for all $i\in[n]$. It implies that $|A_{m}| = 0$, which is impossible. 

In order to prove $d(x)\le |A_i|$, we again apply induction, but this time we change the sets to essentially their intersections with $A_i$. Take distinct $j_1,\ldots, j_t\in  N(x)\setminus \{i\}$ and observe that $$\Big|A_i\cap \bigcap_{\ell=1}^t (A_i\cap A_{j_\ell})\Big|\ne t$$
by assumption. Removing $A_i$ from the list and $x$ from the sets, we get
$$\Big|\bigcap_{\ell=1}^t (A_i\cap A_{j_\ell}\setminus \{x\})\Big|\ne t-1.$$
We see that the sets $A_i\cap A_j\setminus \{x\}$  for $j\in N(x)\setminus \{i\}$    satisfy the condition of the theorem. Now apply inductive hypothesis to these sets and get that $|N(x)\setminus \{i\}|\le |A_i\setminus \{x\}|$, which is equivalent to $d(x)\le |A_i|$.
\end{proof}
The proof we presented is essentially the proof of Frankl and Katona. Frankl and F\"uredi \cite{FF1} give a different proof.

\begin{cor}\label{cormanyint}
   Let $r\ge 1$ be an integer and $A_1,\ldots, A_m$ be not necessarily distinct subsets of $[n]$. Assume that $|\cap_{j=1}^t A_{i_j}|\ne t-1-r$ for any distinct $i_1,\ldots, i_t$. Then $m\le n+r$.
\end{cor}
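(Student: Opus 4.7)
The plan is to reduce Corollary~\ref{cormanyint} directly to Theorem~\ref{thmmanyint} by a simple padding trick. Specifically, I would enlarge the ground set by introducing $r$ fresh elements $y_1,\ldots,y_r$ disjoint from $[n]$, and then pad every set, setting
\[
A_i':=A_i\cup\{y_1,\ldots,y_r\}\subset [n]\cup\{y_1,\ldots,y_r\}.
\]
The padded ground set has size $n+r$, and since each $y_j$ is contained in every $A_i'$, for any distinct indices $i_1,\ldots,i_t$ we have
\[
\Big|\bigcap_{j=1}^t A_{i_j}'\Big| = \Big|\bigcap_{j=1}^t A_{i_j}\Big|+r.
\]
The hypothesis of the corollary says the right-hand side never equals $(t-1-r)+r=t-1$, so the sets $A_1',\ldots,A_m'$ satisfy exactly the hypothesis of Theorem~\ref{thmmanyint} on a ground set of size $n+r$. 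Applying that theorem yields $m\le n+r$, as required.

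There is really no hard step here: the only thing to check is that the indexing of $t$ lines up correctly after the shift by $r$, which it does since we add a constant to every intersection size simultaneously. No condition is lost because the padding elements contribute equally to every intersection, and no condition is gained spuriously because we only add to the right-hand side. Thus the corollary is an immediate consequence of Theorem~\ref{thmmanyint}.
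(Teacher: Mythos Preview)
Your proof is correct. It is, however, a different reduction from the one in the paper. The paper keeps the ground set $[n]$ fixed and instead shrinks the family: it replaces $A_{r+1},\ldots,A_m$ by $B_j:=A_j\cap\bigcap_{i=1}^r A_i$, observes that any $t$-wise intersection of the $B_j$'s is an $(r+t)$-wise intersection of the original $A_i$'s, hence avoids size $t-1$, and applies Theorem~\ref{thmmanyint} to get $m-r\le n$. You do the dual: keep all $m$ sets but enlarge the ground set by $r$ dummy elements, so that every intersection size shifts up by $r$. Both reductions are one-line and equally valid for the corollary as stated. The paper's route has the mild advantage that it stays inside $[n]$, which is convenient later when one wants to characterise the equality case $m=n+r$ (used for Lemma~\ref{lemrankref}); your padding would require undoing the extra elements to read off that characterisation, but this is cosmetic.
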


The corollary is again tight: again, take all $A_i$ to be equal to $[n]$. Interestingly, in contrast to $r=0$ case, for $r\ge 1$ this is the only extremal example. We shall need this fact later.
\begin{proof}
  We modify the sets: consider  $(\cap_{i=1}^r A_i)\cap A_j$ for $j=r+1,\ldots,m$. These sets satisfy the assumption of Theorem~\ref{thmmanyint}, and thus their number is at most $n$, i.e., $m-r\le n$.
\end{proof}

We are now ready to prove Lemma ~\ref{lemmanyint}.

\begin{proof}[Proof of Lemma~\ref{lemmanyint}]
  Take a minimal subset $B$ not contained in any set from $\m M$ and assume that $|B|>\ell$ holds. Assume that $B = \{x_1,\ldots, x_b\}$ and for each $i\in [b]$ define $B_i:=B\setminus \{x_i\}$. By minimality of $B$, $B_i=B\cap A_i$ for some $A_i\in \m M$.  Put $Y = [k]\setminus B$ and define $D_i = Y\cap A_i$.

  We claim that for any $i_1<\ldots<i_t$ we have
  \begin{equation}\label{eqmanyint}|D_{i_1}\cap\ldots \cap D_{i_t}|\ne t-(b-\ell).\end{equation}
  Indeed, otherwise $A = A_{i_1}\cap\ldots \cap A_{i_t}$ satisfies $|A\cap B| = b-t$ and $A\cap Y = t-(b-\ell)$, implying $|A| =\ell$. This is a contradiction, since $\ff$ avoids intersection $\ell$ and thus $\m M$ has no sets of size $\ell$.

  Finally, condition \eqref{eqmanyint} allows us to apply Corollary~\ref{cormanyint} to sets $D_i$ with $r=b-\ell-1$, $n=|Y| =k-b$ and $m=b$. (Note that $r\ge 0$ because $b>\ell$ by assumption.) We conclude that $b\le (k-b)+b-\ell-1$, that is,  $b\le k-\ell-1$. This concludes the proof.
\end{proof}

\subsection{A sharp result in the case $k>2\ell+1$}\label{sec333} Frankl and F\"uredi also managed to prove an exact result in the case when $k>2\ell+1$, i.e., when the tentative extremal family consists of all sets containing a given $\ell+1$-element set.
\begin{thm}[Frankl and F\"uredi \cite{FF1}]\label{thmff2}
  If $n>n_0(k)$, $k\ge 2\ell+2$ and $\ff\subset {[n]\choose k}$ avoids intersection $\ell$, then
  $$|\ff| \le {n-\ell-1\choose k-\ell-1}.$$
\end{thm}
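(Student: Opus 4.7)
The plan is to sharpen the proof of Theorem~\ref{thmff1} by overlaying a structural/stability analysis that pins down the extremal configuration. The approximate bound already gives $|\ff| = O_k(n^{k-\ell-1})$; we upgrade it to the exact bound as follows. First, apply F\"uredi's structural theorem to pass to a $(k,k+1)$-homogeneous subfamily $\ff^* \subset \ff$ with $|\ff^*| \geq c(k)|\ff|$ and intersection structure $\m M$, which is closed under intersection, whose members have sizes in $\{0,1,\dots,k-1\} \setminus \{\ell\}$, and whose rank is at most $k-\ell-1$ by Lemma~\ref{lemmanyint}. The hypothesis $k \geq 2\ell+2$ ensures $k-\ell-1 \geq \ell+1$, which is what opens the door to a much stronger structural conclusion than in the range $k \le 2\ell+1$.

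The heart of the argument is a structural dichotomy on $\m M$: either (i) $\m M$ has a common core $C := \bigcap_{M\in\m M} M$ of size at least $\ell+1$, or (ii) the structure of $\m M$ forces $|\ff^*|$, and hence $|\ff|$, to be of strictly smaller order $O_k(n^{k-\ell-2})$. In case (ii) we are done since this is negligible compared to the target $\binom{n-\ell-1}{k-\ell-1} = \Theta(n^{k-\ell-1})$. Establishing this dichotomy requires examining the equality case of Corollary~\ref{cormanyint}; together with the fact that $\m M$ is closed under intersection and avoids size $\ell$, one argues that an intersection-closed rank-$(k-\ell-1)$ subfamily of $2^{[k]}$ missing size $\ell$ must be of the form $\{M \subset [k] : C \subset M,\ |M|\neq\ell\}$ for some $C$ with $|C| \geq \ell+1$; otherwise a more restrictive counting (e.g.\ a sharpening of the Frankl--Katona argument) gives a stronger rank bound and hence an $O_k(n^{k-\ell-2})$ bound on $|\ff^*|$.

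In case (i), the common core $C \subset [k]$ lifts, via the $k$-partite structure of $\ff^*$, to a fixed $(\ell+1)$-element set $D \subset [n]$ contained in every set of $\ff^*$: for each $i\in C$, the part $X_i$ is forced to be a singleton in $\ff^*$ because $i \in M$ for every $M\in\m M$. It remains to bootstrap from $\ff^*$ to $\ff$. Suppose for contradiction that some $F \in \ff$ satisfies $D\not\subset F$, and set $s := |F\cap D| \le \ell$. For every $G\in\ff^*$, writing $G = D\cup G'$ with $G'\in\binom{[n]\setminus D}{k-\ell-1}$, the forbidden intersection condition $|F\cap G|\neq \ell$ reads $|F\cap G'| \neq \ell-s$. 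Since $\{G' : G\in\ff^*\}$ has size $\Omega(|\ff|)$ inside $\binom{[n]\setminus D}{k-\ell-1}$, and we are in the regime where $|\ff|$ is close to $\binom{n-\ell-1}{k-\ell-1}$, a pigeonhole count over $(k-\ell-1)$-subsets that meet $F\setminus D$ in exactly $\ell-s$ elements forces some $G\in\ff^*$ to violate the condition for $n \geq n_0(k)$. Hence $\ff\subseteq \ff[D]$, giving $|\ff|\leq \binom{n-\ell-1}{k-\ell-1}$.

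The principal obstacle is step (i)/(ii): the raw rank bound of Lemma~\ref{lemmanyint} is too weak, since one can build intersection-closed families of rank $k-\ell-1$ with trivial common core (for instance $\m M = \{\emptyset,\{1,2\},\{3,4\}\}$ for $k=4$, $\ell=1$). What is needed is to quantify how much such "non-core" configurations cost: in any rank-$(k-\ell-1)$ configuration without a common $(\ell+1)$-core, the $k$-partite family supporting $\m M$ must be bounded by $O(n^{k-\ell-2})$, and carrying out this quantitative structural analysis, while compatible with the closure-under-intersection and forbidden-size-$\ell$ constraints, is the main technical task.
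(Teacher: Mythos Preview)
Your plan has two concrete gaps, and the paper's route is genuinely different from what you propose.

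\textbf{The dichotomy is not the right one.} The sharpening of Corollary~\ref{cormanyint} does \emph{not} yield that $\bigcap_{M\in\m M}M$ has size $\ge \ell+1$ in the rank-$(k-\ell-1)$ case. What it gives (this is the paper's Lemma~\ref{lemrankref}) is a set $C\subset [k]$ of size $\ell+1$ such that $\m M\supset\{M:C\subset M\subsetneq [k]\}$; nothing prevents $\m M$ from also containing sets disjoint from $C$, e.g.\ $\emptyset$. Your own example $\m M=\{\emptyset,\{1,2\},\{3,4\}\}$ already has this form with $C=\{1,2\}$ (or $\{3,4\}$). Consequently the claim ``for each $i\in C$ the part $X_i$ is a singleton, so $C$ lifts to a fixed $D\subset[n]$'' is false: different $F\in\ff^*$ can have different centers $C(F)=\pi^{-1}(C)\cap F$. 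The paper does not try to show non-core configurations are $O(n^{k-\ell-2})$; instead it iterates the homogeneous decomposition $\ff=\ff_1^*\cup\ff_2^*\cup\cdots$, extracts for each $F$ its center $C(F)$ and own subset $X(F)=F\setminus C(F)$, proves that own subsets are globally distinct (Propositions~\ref{propglob1}--\ref{propglob2}), groups sets by center into $\m G_i$, shows the $\ell$-shadows of the $\m G_i(C_i)$ are disjoint, and then applies Kruskal--Katona to force one $\m G_1$ to contain all but $O(n^{k-\ell-2})$ sets. A single application of Theorem~\ref{thmfur} cannot give this.

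\textbf{The bootstrapping pigeonhole fails for $s<\ell$.} If $F\in\ff$ with $|F\cap D|=s<\ell$, you need some $G'\in\ff^*(D)$ with $|G'\cap(F\setminus D)|=\ell-s$. But the $(k-\ell-1)$-sets with this property are only a $\Theta(n^{-(\ell-s)})$ fraction of $\binom{[n]\setminus D}{k-\ell-1}$, so avoiding them costs $\ff^*(D)$ essentially nothing; no pigeonhole is available. The paper's substitute is to use the sunflower structure: since every $A$ with $C(F')\subset A\subsetneq F'$ is a $\Delta(k+1)$-kernel, one can pick $A$ with $|A\cap F|=\ell$ and then a petal disjoint from $F\setminus A$, producing the forbidden intersection. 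Even after this, going from ``$|\ff\setminus\ff[C_1]|=O(n^{k-\ell-2})$'' to the exact bound requires a further nontrivial shadow argument (splitting $\ff=\m K\cup\m A\cup\m B$ and comparing $|\partial^{(\ell)}|/|\cdot|$ ratios), which your outline does not address.
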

The proof follows the same logic, but requires several refinements. For some technical reasons, it is easier to leave out the case $k = 2\ell+2$ and treat the case $k\ge 2\ell+3$. We sketch the proof in that assumption.

The first step is a refinement of Corollary~\ref{cormanyint}, which states that for $r\ge 1$ equality $m=n+r$ is only possible if the sets $A_1,\ldots, A_m$ are all equal to the ground set $[n]$. It is essentially done by carefully running the proof of Theorem~\ref{thmmanyint}, and we skip it for conciseness.\footnote{This is the part where it is helpful to assume that $k\ge 2\ell+3$. In this regime, we apply the corollary, while for $k=2\ell+2$, we apply Theorem~\ref{thmmanyint} itself. Thus, in the proof for $k\ge 2\ell+3$ we can rely on the characterisation of the extremal families. But no such characterisation is valid for  Theorem~\ref{thmmanyint}, and thus for $k=2\ell+2$.} Arguing as in the proof of Lemma~\ref{lemmanyint}, it implies the following lemma:

\begin{lem}\label{lemrankref} In the setting of Lemma~\ref{lemmanyint} and for $k\ge 2\ell+3$, we have $r(\mathcal M)\le k-\ell-2$, unless $\mathcal M$ has the following special form: there is a set $C$, $|C| = \ell+1$, such that $\mathcal M\supset \{F: C\subset F \subset [k]\}$. (In which case, $r(\mathcal M) = k-\ell-1$.)
\end{lem}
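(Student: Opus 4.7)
The plan is to re-run the proof of Lemma~\ref{lemmanyint} in the equality regime and then apply the refined extremal version of Corollary~\ref{cormanyint} (``for $r \ge 1$ the only extremal example is $A_i = [n]$'') to force the claimed structure on $\m M$.

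First I would take a minimal $B = \{x_1,\ldots,x_b\} \subset [k]$ not contained in any element of $\m M$, where $b = r(\m M)$. Lemma~\ref{lemmanyint} already gives $b \le k - \ell - 1$, so I may assume the equality case $b = k - \ell - 1$; in particular $b > \ell$ since $k \ge 2\ell + 3$. The inner argument of Lemma~\ref{lemmanyint} then runs verbatim: setting $Y = [k] \setminus B$ and $B_i = B \setminus \{x_i\}$, minimality yields sets $A_i \in \m M$ with $A_i \cap B = B_i$, and the restrictions $D_i := A_i \cap Y$ satisfy $|D_{i_1} \cap \dots \cap D_{i_t}| \ne t - (b-\ell)$ for all distinct $i_1,\ldots,i_t$ (since $\m M$ has no $\ell$-element set). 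This is precisely the hypothesis of Corollary~\ref{cormanyint} with $n = k - b$, $m = b$, $r = b - \ell - 1$, and the resulting estimate $m \le n + r$ is saturated.

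Next I would invoke the rigidity clause of Corollary~\ref{cormanyint}. The parameter $r = b - \ell - 1 = k - 2\ell - 2 \ge 1$ precisely because $k \ge 2\ell + 3$, so extremality forces $D_i = Y$ for every $i \in [b]$. Consequently $A_i = B_i \cup Y = [k] \setminus \{x_i\}$ lies in $\m M$, and Corollary~\ref{corfur}(iii) gives $\bigcap_{i \in S} A_i = [k] \setminus \{x_i : i \in S\} \in \m M$ for every nonempty $S \subset [b]$. As $S$ ranges over the nonempty subsets of $[b]$, these intersections exhaust every $F$ with $C \subset F \subsetneq [k]$ for $C := Y$; since $|C| = k - b = \ell + 1$, this is the special structure asserted by the lemma.

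The hard part is the rigidity clause of Corollary~\ref{cormanyint}: it is false at $r = 0$ (which is exactly why the case $k = 2\ell + 2$ has to be excluded), and establishing it requires a careful re-examination of the Hall-matching proof of Theorem~\ref{thmmanyint} to identify the equality case of $d(x) \le |A_i|$. I would simply quote this uniqueness statement from the remark immediately after Corollary~\ref{cormanyint}, rather than reprove it inside the proof of Lemma~\ref{lemrankref}.
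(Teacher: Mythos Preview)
Your proposal is correct and follows essentially the same approach as the paper: re-run the proof of Lemma~\ref{lemmanyint} in the boundary case $b = k-\ell-1$, invoke the equality characterization of Corollary~\ref{cormanyint} (available because $r = k-2\ell-2 \ge 1$) to force $D_i = Y$ for all $i$, and then use intersection-closure to obtain all proper supersets of $C := Y$. The paper's own commentary on the proof says exactly this, and also notes, as you do, that the rigidity step is where $k \ge 2\ell+3$ is essential.
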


Let us comment on the proof. Looking into the proof of Lemma~\ref{lemmanyint} and using equality characterization of Corollary~\ref{cormanyint}, we actually see that, for $b=k-\ell-1$ to hold, all $D_i$'s must be equal to $Y$, and $Y$ plays the role of $C$ from the lemma. We also note that the `own' subset $B$ is the subset complementary to $Y$.

Equipped with this lemma, we take the largest $\ell$-avoiding family $\ff\subset {[n]\choose k}$ and do the following iterative decomposition: find a  $(k,k+1)$-homogeneous structure $\ff_1^*\subset \ff$, remove it, then find another such structure $\ff_2^*$ in $\ff\setminus \ff_1$ etc. We stop at step $m$ when the rank of $\ff_m^*$ is at most $k-\ell-2$. As in the proof of Theorem~\ref{thmff1}, it implies that $|\ff_m^*|\le n^{k-\ell-2}$, and thus
\begin{equation}\label{eqrem1}|\ff\setminus (\ff^*_1\cup \ldots\cup \ff^*_{m-1})|\le C_k n^{k-\ell-2},\end{equation}
that is, an order of magnitude smaller than $\ff$. Moreover, using Lemma~\ref{lemrankref}, we know  that $\mathcal M(\ff_i^*)$ contains all supersets of $C_i$ for some $C_i$ of size $\ell+1$.

For any $F\in \ff_i^*$, recall the definition of an own subset $X(F)$ from Section~\ref{sec322}: $X(F)$ is of size $k-\ell-1$ and  is not contained in $F\cap F'$ for $F'\in \ff_i^*$, $F'\ne F$.  Let us also define $C(F) = F\setminus X(F)$. Recall that the own subset and the center $C$ from Lemma~\ref{lemrankref} are complementary, and thus $C(F)$ projects into $C_i\in \mathcal M(\ff_i^*)$. In particular, for any set $A$, such that $C(F)\subset A\subsetneq F$, there is a $\Delta(k+1)$-system in $\ff_i^*$ with center $A$. We use this property to prove the following proposition.

\begin{prop}\label{propglob1}
  For any $F\in \ff_i^*$, $i\in [m-1]$ and $F'\in \ff$ we have $X(F)\not\subset F'$.
\end{prop}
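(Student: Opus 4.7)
The plan is to argue by contradiction: assume $F' \in \ff$ with $F' \neq F$ satisfies $X(F) \subset F'$, and produce a set $G \in \ff \setminus \{F'\}$ with $|F' \cap G| = \ell$, contradicting the $\ell$-avoidance of $\ff$. The set $G$ will be extracted from a suitable $\Delta(k+1)$-system living in $\ff_i^*$, and the freedom needed to force the intersection size to be exactly $\ell$ will come from choosing this sunflower's kernel carefully.

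Concretely, for any set $Y \subsetneq X(F)$, define $A_Y := C(F) \cup Y$, so that $C(F) \subset A_Y \subsetneq F$. By the property derived from Lemma~\ref{lemrankref} and recalled just before the proposition, there exists a $\Delta(k+1)$-system $G_1, \ldots, G_{k+1} \in \ff_i^*$ with kernel $A_Y$. The petals $G_j \setminus A_Y$ are pairwise disjoint, and since $|F'| = k < k+1$, there is some index $j_0$ with $F' \cap (G_{j_0} \setminus A_Y) = \emptyset$, whence $F' \cap G_{j_0} = F' \cap A_Y$. Writing $s := |F' \cap C(F)|$ and using that $Y \subset X(F) \subset F'$, we obtain the clean identity
\begin{equation*}
|F' \cap G_{j_0}| = |F' \cap A_Y| = s + |Y|.
\end{equation*}

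Now I would choose $|Y| \in \{0, 1, \ldots, k-\ell-2\}$ so that $s + |Y| = \ell$. This requires $|Y| = \ell - s \in [0,k-\ell-2]$, i.e.\ $s \in [2\ell+2-k, \ell]$; the hypothesis $k \ge 2\ell+3$ makes the lower bound vacuous, so this is feasible whenever $s \le \ell$. For such $Y$, the set $A_Y$ is not contained in $F'$ (since $s \le \ell < \ell+1 = |C(F)|$ means $C(F) \not\subset F'$), so every $G_j \supset A_Y$ differs from $F'$; in particular $G_{j_0} \in \ff \setminus \{F'\}$ with $|F' \cap G_{j_0}| = \ell$, a contradiction. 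Hence $s = \ell+1$, i.e., $C(F) \subset F'$, and together with $X(F) \subset F'$ and $|F| = |F'| = k$ this yields $F = F'$, contradicting $F' \neq F$.

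The main obstacle, and the reason the argument works, is the arithmetic in the last step: one needs enough room in the range of $|Y|$ to hit $\ell - s$ for every possible $s \in [0,\ell]$, which is exactly what $k \ge 2\ell+3$ buys (the same reason the case $k = 2\ell+2$ was sidestepped earlier). The only other subtlety is to avoid the trivial failure $G_{j_0} = F'$, and this is automatic once we are in the regime $s \le \ell$ because then $F'$ fails to contain the kernel $A_Y$ while every sunflower set does.
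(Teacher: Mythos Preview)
Your proof is correct and follows essentially the same approach as the paper's own proof. The paper also argues indirectly, notes that $C(F)\not\subset F'$ (hence $|C(F)\cap F'|\le\ell$), chooses a set $A$ with $C(F)\subset A\subsetneq F$ and $|A\cap F'|=\ell$, and then uses the $\Delta(k+1)$-system with kernel $A$ to find a set in $\ff$ intersecting $F'$ in exactly $\ell$ elements; your parameterization $A_Y=C(F)\cup Y$ with $|Y|=\ell-s$ is just an explicit way of locating that same $A$, and your check that $G_{j_0}\ne F'$ is a nice extra bit of care that the paper leaves implicit.
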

That is, an own subset of $F$ within $\ff_i^*$ is `globally' own.
\begin{proof}
  Arguing indirectly, assume that $X(F)\subset F'$. Then $C(F)\not\subset F'$, and thus $|C(F)\cap F'|\le \ell$. At the same time, $|F'\cap F|\ge |X(F)|\ge k-\ell-1\ge \ell+1$.  Thus, we may choose a set $A$, $C(F)\subset A\subsetneq F$ such that $|A\cap F'| = \ell$. By the definition of a $(k,k+1)$-homogeneous system, $A$ is the center of a $\Delta(k+1)$-system in $\ff_i^*$, and thus we may find a set $F''\in \ff_i^*$ such that $F''\cap F' = A\cap F'$. This contradicts the fact that $\ff$ avoids intersection $\ell$.
\end{proof}
Actually, using the same argument, we can prove the following proposition, which we shall need a bit later.

\begin{prop}\label{propglob2}
  For any $F\in \ff_i^*$, $i\in [m-1]$ and $F'\in \ff$ the following holds. If $|F\cap F'|\ge \ell+1$ then $C(F)\subset F'$ holds. If additionally $F'\in \ff_j^*$ with $j\in [m-1]$ then we have $C(F)=C(F')$.
\end{prop}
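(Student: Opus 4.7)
The plan is to push the argument of Proposition~\ref{propglob1} a little further. Recall that for any set $A$ with $C(F)\subset A\subsetneq F$ the family $\ff_i^*$ contains a $\Delta(k+1)$-system with center $A$; by the pigeonhole argument used in Proposition~\ref{propglob1}, whenever $|A\cap F'|=\ell$ this sunflower produces a petal $F_s\ne F'$ with $F_s\cap F'=A\cap F'$ of size $\ell$, contradicting the $\ell$-avoiding property of $\ff$. Both claims reduce to locating such an $A$ (or finding another way to exploit the sunflower at $C(F')$).

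For the first statement, I argue by contradiction. Suppose $|F\cap F'|\ge \ell+1$ but $C(F)\not\subset F'$, so that $j:=|C(F)\cap F'|\le \ell$. Because $|F\cap F'|\ge \ell+1$, we have $|X(F)\cap F'|\ge \ell+1-j>\ell-j$, so I can adjoin $\ell-j$ elements of $X(F)\cap F'$ to $C(F)$ to obtain a set $A$ with $C(F)\subset A\subsetneq F$, $|A|=2\ell+1-j<k$ (using $k\ge 2\ell+3$), and $|A\cap F'|=\ell$. The sunflower argument described above then delivers the contradiction.

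For the second statement, applying the first part symmetrically already yields $C(F)\subset F'$ and $C(F')\subset F$. Assume for contradiction that $C(F)\ne C(F')$. I then look at the $\Delta(k+1)$-system $F'=F'^{(1)},F'^{(2)},\ldots,F'^{(k+1)}$ in $\ff_j^*$ with center $C(F')$. For each $s\ge 2$, the inclusion $C(F')\subset F$ gives $|F'^{(s)}\cap F|\ge \ell+1$, so the first part (applied to $F\in\ff_i^*$ and $F'^{(s)}\in\ff$) yields $C(F)\subset F'^{(s)}$. Combined with $C(F)\subset F'=F'^{(1)}$, this forces $C(F)\subset F'^{(1)}\cap F'^{(s)}=C(F')$, and equality of cardinalities $|C(F)|=|C(F')|=\ell+1$ collapses the inclusion to $C(F)=C(F')$, the desired contradiction.

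The main obstacle is the second part. A direct imitation of Proposition~\ref{propglob1} through a sunflower centered at $C(F)$ fails because once $C(F)\subset F'$, every $A\supset C(F)$ automatically satisfies $|A\cap F'|\ge \ell+1$, leaving no room to arrange the critical value $\ell$. The trick is instead to use the sunflower at $C(F')$ and to exploit the defining property of its petals: any two distinct petals meet in exactly $C(F')$, so pushing $C(F)$ simultaneously into each of them (via the already-proved first part) traps $C(F)$ inside $C(F')$, and the cardinality constraint forces equality.
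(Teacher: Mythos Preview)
Your proof is correct. The first part is essentially identical to the paper's (which simply says ``verbatim the same argument as in Proposition~\ref{propglob1}'').

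For the second part you take a genuinely different route. The paper argues as follows: if $C(F)\ne C(F')$ then $|C(F)\cap C(F')|\le \ell$, and one can build sets $A,A'$ with $C(F)\subset A\subsetneq F$, $C(F')\subset A'\subsetneq F'$ and $|A\cap A'|=\ell$; since both $A$ and $A'$ are centers of $\Delta(k+1)$-systems, Lemma~\ref{lemdeltadef} (with $s=2$) produces $G,G'\in\ff$ with $|G\cap G'|=\ell$. Your approach instead bootstraps from the first part: having established $C(F')\subset F$, you look at the sunflower centered at $C(F')$, observe that each petal $F'^{(s)}$ satisfies $|F'^{(s)}\cap F|\ge |C(F')|=\ell+1$, and invoke the first part once per petal to push $C(F)$ into every $F'^{(s)}$, hence into the kernel $C(F')$. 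This is arguably cleaner: it avoids the construction of $A,A'$ with a prescribed intersection size and the two-sunflower Lemma~\ref{lemdeltadef} step, at the price of using the first part $k+1$ times rather than once.

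One small imprecision: you write the sunflower as $F'=F'^{(1)},\ldots,F'^{(k+1)}$, but nothing guarantees that $F'$ itself is one of the petals of the $\Delta(k+1)$-system with kernel $C(F')$. This is harmless, since your argument actually shows $C(F)\subset F'^{(s)}$ for \emph{every} petal $s$ (the case $s=1$ uses nothing special about $F'$), and hence $C(F)\subset\bigcap_s F'^{(s)}=C(F')$; you never need $F'$ to appear among the $F'^{(s)}$.
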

\begin{proof}
  The first part is verbatim the same argument as in Proposition~\ref{propglob1}. For the second part, we note that if $C(F)\ne C(F')$ then $|C(F)\cap C(F')|\le \ell$, but $|F\cap F'|\ge \ell+1$, and thus we can find sets $A, A'$ such that $C(F)\subset A\subsetneq F$,  $C(F')\subset A'\subsetneq F'$, such that $|A\cap A'|=\ell$. Both are cores of $\Delta(k+1)$-systems in $\ff$. Then we use the same $\Delta$-system argument and find two sets $G,G'\in \ff$ that intersect in $\ell$ elements, a contradiction.
\end{proof}

Proposition~\ref{propglob1} is enough to prove an asymptotic version of Theorem~\ref{thmff2}. Since $X(F)$ are distinct for all $F\in \ff_1^*\cup\ldots\cup \ff_{m-1}^*$, we have, using \eqref{eqrem1}
\begin{equation}\label{eqff14}|\ff|\le |\ff_1^*\cup\ldots\cup \ff_{m-1}^*|+O(n^{k-\ell-2})\le {n\choose k-\ell-1}+O(n^{k-\ell-2}).\end{equation}
In order to prove the exact result, we need to continue our bootstrapping argument. Slightly abusing notation, let $C_1,\ldots, C_h$ be all sets such that $C_i = C(F)$ for some $F\in \ff_1^*\cup\ldots\cup \ff_{m-1}^*$. Define
$$\m G_i:= \{G\in \ff_1^*\cup\ldots\cup\ff_{m-1}^*: C_i\subset G\}.$$ Also, for a family $\m G$ and integer $m$, recall the notion of the {\it shadow on level $m$}: $\partial^{(m)}\m G:= \cup_{G\in \m G}{G\choose m}$. Proposition~\ref{propglob2} immediately implies the following property:
\begin{prop}\label{propglob3}
  The families $\partial^{(\ell)}(\m G_i)$ are disjoint.
\end{prop}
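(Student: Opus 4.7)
The plan is to argue by contradiction: suppose some $\ell$-element set $S$ lies in both $\partial^{(\ell)}(\m G_i)$ and $\partial^{(\ell)}(\m G_j)$ for some $i\ne j$. By definition of the shadow, there exist $G\in \m G_i$ and $G'\in \m G_j$ with $S\subset G\cap G'$, so $|G\cap G'|\ge \ell$. Since $\ff$ avoids intersection $\ell$, equality is impossible, hence $|G\cap G'|\ge \ell+1$.

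First, I would verify the auxiliary fact that every $G\in \m G_i$ satisfies $C(G)=C_i$. By construction, $C_i=C(F_i)$ for some $F_i\in \ff_a^*$ with $a\in[m-1]$, and $G$ lies in some $\ff_b^*$ with $b\in [m-1]$. Since $C_i\subset F_i$ and $C_i\subset G$ with $|C_i|=\ell+1$, we have $|F_i\cap G|\ge \ell+1$; applying the second part of Proposition~\ref{propglob2} yields $C(G)=C(F_i)=C_i$, as needed.

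With this in hand, the contradiction is immediate: from $|G\cap G'|\ge \ell+1$ and $G\in \ff_a^*$, $G'\in \ff_b^*$ with $a,b\in[m-1]$, the second part of Proposition~\ref{propglob2} gives $C(G)=C(G')$, but the previous step forces $C(G)=C_i\ne C_j=C(G')$, a contradiction. Hence no such $S$ exists and the shadows are pairwise disjoint. There is really no obstacle here: all the work was concentrated in the $\Delta$-system transfer argument of Proposition~\ref{propglob2}, which says that once two sets of $\ff$ overlap in more than $\ell$ elements they share the same `center'; Proposition~\ref{propglob3} is just the contrapositive packaged at the level of $\ell$-shadows.
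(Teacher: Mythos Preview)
Your proof is correct and follows the same route as the paper, which simply states that Proposition~\ref{propglob2} ``immediately implies'' Proposition~\ref{propglob3}. Your auxiliary step---showing that every $G\in\m G_i$ actually satisfies $C(G)=C_i$---is exactly the detail one needs to fill in, and you handle it correctly via the second part of Proposition~\ref{propglob2}.
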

Consider the families $\m G_i(C_i)= \{G\setminus C_i: G\in \m G_i\}$ of $(k-\ell-1)$-element sets. We note that $\partial^{(\ell)}(\m G_i(C_i))\subset\partial^{(\ell)}(\m G_i)$, and thus the last proposition implies 
\begin{equation}
\label{eqshadowsum} \sum_{i\in [h]}|\partial^{(\ell)}(\m G_i(C_i))|\le {n\choose \ell}.
\end{equation}
However, since $\ff$ is extremal and $\m G_i$ decompose a bulk of $\ff$, we must have $$\sum_{i\in [h]}|\m G_i(C_i)|\ge {n\choose k-\ell-1}-O(n^{k-\ell-2}).$$
Recall the Kruskal--Katona theorem.
\begin{thm}[Kruskal, Katona \cite{Kr,Ka}, in Lov\'asz' form] \label{thmkk} Given a family $\mathcal X\subset {[n]\choose k}$ with $|\m X| = {x\choose k}$ for some real $x\ge k$, we have $|\partial^{(\ell)}\m X|\ge {x\choose \ell}$.\end{thm}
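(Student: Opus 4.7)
The plan is a two-stage argument: first reduce the bound for arbitrary $\ell$ to the immediate shadow $\ell=k-1$, and then handle the immediate shadow via shifting. For the reduction, assume the immediate-shadow inequality $|\partial^{(k-1)}\m X|\ge{x\choose k-1}$ has been established. Define $y\ge k-1$ by $|\partial^{(k-1)}\m X|={y\choose k-1}$; monotonicity of $z\mapsto{z\choose k-1}$ on $[k-1,\infty)$ forces $y\ge x$. Applying the theorem by induction on the uniformity $k$ to the $(k-1)$-uniform family $\partial^{(k-1)}\m X$ yields $|\partial^{(\ell)}\m X|=|\partial^{(\ell)}(\partial^{(k-1)}\m X)|\ge{y\choose \ell}\ge{x\choose \ell}$, closing the induction.

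For the immediate-shadow inequality, I would use the compression (shifting) operator. For $i<j$, define $S_{ij}(\m X)$ by replacing each $A\in\m X$ with $j\in A$, $i\notin A$ by $(A\setminus\{j\})\cup\{i\}$ whenever the target is not already in $\m X$. A direct case analysis on the $(k-1)$-subsets before and after the shift shows $|S_{ij}(\m X)|=|\m X|$ and $|\partial^{(k-1)}S_{ij}(\m X)|\le|\partial^{(k-1)}\m X|$. Iterating over all pairs $i<j$ terminates in a \emph{shifted} family (one stable under every $S_{ij}$), so we may assume $\m X$ itself is shifted without increasing the shadow we are trying to bound.

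For shifted $\m X$, induct on $n$. Write $\m X_0:=\{A\in\m X:n\notin A\}$ and $\m X_1^*:=\{A\setminus\{n\}:n\in A\in\m X\}\subset{[n-1]\choose k-1}$. Shiftedness forces $\m X_1^*\subset\partial^{(k-1)}\m X_0$, which yields the clean identity
\[
|\partial^{(k-1)}\m X|\;=\;|\partial^{(k-1)}\m X_0|+|\partial^{(k-2)}\m X_1^*|.
\]
Choosing reals $x_0,x_1$ with $|\m X_0|={x_0\choose k}$ and $|\m X_1^*|={x_1\choose k-1}$, the inductive hypothesis bounds the right-hand side from below by ${x_0\choose k-1}+{x_1\choose k-2}$, reducing the whole argument to the real-variable implication
\[
{x_0\choose k}+{x_1\choose k-1}\;=\;{x\choose k}\;\Longrightarrow\;{x_0\choose k-1}+{x_1\choose k-2}\;\ge\;{x\choose k-1}.
\]

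The main obstacle is exactly that final analytic inequality on generalised binomial coefficients. Its proof reduces to the convexity of $z\mapsto{z\choose k}$ together with a continuous analogue of Pascal's identity, but one must handle the degenerate ranges carefully (e.g.\ when $x_1<k-1$ or $x_0<k$, where the inductive hypothesis does not directly apply and must be replaced by trivial estimates). Phrasing the bound in terms of ${x\choose \ell}$ rather than the integer cascade form of the original Kruskal--Katona inequality is what makes this last analytic step, and its intended use in \eqref{eqshadowsum}, essentially frictionless.
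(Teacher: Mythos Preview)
The paper does not give a proof of this theorem; it is quoted as a classical result and used as a black box. So there is nothing to compare against, and I evaluate your argument on its own.

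Your shifting framework is the standard one and the reduction to the immediate shadow together with the identity
\[
|\partial^{(k-1)}\m X|=|\partial^{(k-1)}\m X_0|+|\partial^{(k-2)}\m X_1^*|
\]
for shifted $\m X$ are both correct. The gap is in the final step. The implication
\[
{x_0\choose k}+{x_1\choose k-1}={x\choose k}\ \Longrightarrow\ {x_0\choose k-1}+{x_1\choose k-2}\ge{x\choose k-1}
\]
is \emph{false} as a statement about real numbers. Take $k=2$, $x_0=2$, $x_1=5$: then ${2\choose 2}+{5\choose 1}=6={4\choose 2}$, so $x=4$, but ${2\choose 1}+{5\choose 0}=3<4={4\choose 1}$. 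The point is that such $(x_0,x_1)$ cannot occur for a shifted family, but your reduction throws this information away: you use $\m X_1^*\subset\partial^{(k-1)}\m X_0$ only to derive the identity, and never again.

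The standard fix is to exploit $\m X_1^*\subset\partial^{(k-1)}\m X_0$ a second time, as the size bound $|\partial^{(k-1)}\m X_0|\ge|\m X_1^*|$, and then split into cases. If $|\m X_1^*|\ge{x-1\choose k-1}$, then $|\partial^{(k-1)}\m X_0|\ge{x-1\choose k-1}$ and, by induction, $|\partial^{(k-2)}\m X_1^*|\ge{x-1\choose k-2}$; summing and using Pascal gives ${x\choose k-1}$. If $|\m X_1^*|<{x-1\choose k-1}$, then $|\m X_0|>{x-1\choose k}$, i.e.\ $x_0>x-1$, and one finishes with the analytic inequality under the \emph{extra} constraint $x_1\le x_0$ (or by a further iteration of the same dichotomy). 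Either way, the missing ingredient is that the containment $\m X_1^*\subset\partial^{(k-1)}\m X_0$ must enter the size estimate, not only the counting identity.
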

We then apply the Kruskal--Katona theorem to $\m G_i(C_i)$ and conclude that the last two displayed inequalities are compatible only if one of the families, say $\m G_1$, contains a bulk of the sets: if $|\ff\setminus \m G_1|= O(n^{k-\ell-2})$. Thus, we discern an approximate structure of the extremal family: almost all sets in $\ff$ must contain a given $(\ell+1)$-element set $C_1$, i.e., lie in $\m G_1$. We present this argument explicitly and in a stronger form in Section~\ref{sec6}.

In the remainder of the proof we need to show that for extremal $\ff$, there are no other sets but those that contain $C_1$. It actually requires a non-trivial effort. Let us partition $\ff$ into three parts $\ff = \mathcal A\cup \mathcal B\cup \mathcal K$, where \begin{align*}\mathcal K&:=\{F\in \ff[C_1]: \text{ every set }B\text { s.t. } C_1\subset B\subsetneq F \\ &\ \ \ \ \ \text{ is a kernel of a }\Delta(k+1)-\text{sunflower in } \ff\},\\
\m A&:=\ff[C_1]\setminus \m K,\\
\m B&:=\ff\setminus \ff[C_1].
\end{align*}
Clearly, $\m G_1\subset \m K$, and thus $\m A\cup \m B$ are small. The goal is to show that if $\m A\cup \m B$ is non-empty then $\ff$ is smaller than the (supposedly) extremal example $\ff_0$ of all sets containing $C_1$. There are two cases to consider, and in both cases we shall use the arguments concerning shadows in the spirit of the argument above. The idea is to show that, first, the shadows of $\m K$ and one of $\m A, \m B$ must be disjoint and, second, to show that the shadows of $\m A$ or $\m B$ are proportionally much larger than the corresponding shadow of $\m K$.

First, assume that $|\m B|\ge |\m A|$. Using Proposition~\ref{propglob2}, we conclude that $\partial^{(\ell)}(\m B)$ and $\partial^{(\ell)}(\m K)$ are disjoint, and thus $|\partial^{(\ell)}(\m B)| +|\partial^{(\ell)}(\m K)|\le {n\choose \ell}$. Looking at the extremal example $\ff_0$, we note that $|\partial^{(\ell)}\ff_0|/|\ff_0| = {n\choose \ell}/{n-\ell-1\choose k-\ell-1}$, and using the Kruskal--Katona theorem, it is not difficult to show  $|\partial^{(\ell)}\m K|/|\m K| \ge {n\choose \ell}/{n-\ell-1\choose k-\ell-1}$. If we manage to show that \begin{equation}\label{eqsizeb}|\partial^{(\ell)}\m B|/|\m B| \gg {n\choose \ell}/{n-\ell-1\choose k-\ell-1},\end{equation} then we get the following chain of inequalities:
$$|\m A|+|\m B|+|\m K|\le 2|\m B|+|\m K|<\frac{{n-\ell-1\choose k-\ell-1}}{{n\choose \ell}}\left(|\partial^{(\ell)}\m B|+|\partial^{(\ell)}\m K|\right)\le {n-\ell-1\choose k-\ell-1}.$$
In order to show \eqref{eqsizeb}, we first note that $\m B$ is $\ell$-avoiding, and so we can use (the proof of) Theorem~\ref{thmff1} in order to get $|\m B|\le C_k |\partial^{(k-\ell-1)}\m B|$ (by a bijection from a homogeneous structure $\m B^*\subset \m B$ into own subsets of size $k-\ell-1$). Then we remark that $\m B$, and consequently $\partial^{(k-\ell-1)}\m B$, are  small (both have size $O(n^{k-1})$), and, when writing down the size of $|\partial^{(k-\ell-1)}\m B|$ as ${x\choose k-\ell-1}$, we see that $x<n^{1-1/k}$. Thus, using the Kruskal--Katona theorem, we conclude that the ratio |$\partial^{(\ell)}\m B|/|\partial^{(k-\ell-1)}\m B|$ is much bigger  than  ${n\choose \ell}/{n-\ell-1\choose k-\ell-1}$. Combining both observations, we get  \eqref{eqsizeb}.

Next, assume that $|\m A|\ge |\m B|$. We start by selecting a $(k,k+1)$-homogeneous system $\m A^*\subset \m A$ with $|\m A^*|\ge c_k|\m A|$. For each set  $F\in \m A^*$ we find an own\footnote{not contained in other sets from $\m A^*$} subset $H(F)$ of size $k-1$ such that $H\notin \partial^{(k-1)}\m K$.\footnote{To see this, take any $H'$, $C_1\subset H'\subset F$, that does not belong to $\m M(\m A^*)$. Then $H'$ and none of its supersets can belong to the shadow of $\m K$. We only have to pick a superset of size $k-1$ that is not in $\m M(\m A^*)$. It is easy to see that there is one.}   The family $\m H:=\{H(F):F\in \m A^*\}$ has the same size as $\m A^*$ and, moreover, all sets in $\m H$ and $\m K$ contain $C_1$. Now we look at the families $\m H(C_1), \m K(C_1)$ and note that $\m H(C_1)$ and $\partial^{(k-\ell-2)}\m K(C_1)$ are disjoint by the construction of $\m H$. Thus, the sum of their sizes is at most ${n-\ell-1\choose k-\ell-2}$. But, while $|\partial^{(k-\ell-2)}\m \ff_0(C_1)|/|\m \ff_0| = (k-\ell-1)/(n-k-1)$ and $|\partial^{(k-\ell-2)}\m K(C_1)|/|\m K| \ge (k-\ell-1)/(n-k-1)$, we have
$|\partial^{(k-\ell-2)}\m A(C_1)|/|\m A|\ge |\m H|/|\m A| \ge 1/C_k$. Then we can conclude that $|\ff|<|\ff_0|$ using a chain of inequalities similar to the last display.

\section{Exact solution of some Tur\'an-type problems}\label{sec4}
The next joint paper of Frankl and F\"uredi \cite{FF2} on the subject is a pinnacle of the applications of the $\Delta$-system method. From the $\Delta$-system method point of view, it is in a sense harvesting the fruits of their previous work \cite{FF1} and delineating the scope of applications of the method. But even besides the applications of the $\Delta$-system method, it contains many influential ideas that affected the course of a big part of extremal set theory. In the paper, they solve (for $n\ge n_0(k)$) several hard and general extremal set theoretic (or Tur\'an-type, as the name of the paper suggests) problems. In this section, we go over some of the contributions of the paper that are related to the $\Delta$-system method.

\subsection{$\Delta$-systems with a fixed kernel}\label{sec41} The following question was asked by Duke and Erd\H os \cite{DE}: what is the largest size $f(n,k,\ell, s)$ of the family $\ff\subset {[n]\choose k}$ that does not contain a $\Delta(s+1)$-system with kernel of size $\ell$? For $s=1$ this is the forbidden one intersection problem that we discussed before, and for $\ell=0$ this is the famous Erd\H os Matching Conjecture \cite{E} (see \cite{FK21} for a survey). One general question that they asked is whether it is true if $f(n,k,\ell,s)=O_{k,s} (f(n,k,\ell,1))$.

An observation of F\"uredi from \cite{Fu1} is that Theorem~\ref{thmfur} immediately implies that.  Take any family $\ff$ that does not contain a $\Delta(s+1)$-system with kernel of size $\ell$ and select a $(k,s+1)$-homogeneous system $\ff^*$ in $\ff$. If $|F_1\cap F_2|=\ell$ for some sets $F_1,F_2\in \ff^*$, then there is a $\Delta(s+1)$-system with kernel $F_1\cap F_2$, a contradiction with the choice of $\ff$. That is, $\ff^*$ avoids intersection $\ell$. At the same time, as Theorem~\ref{thmfur} guarantees, $|\ff^*|\ge c(k,s+1) |\ff|$ for some positive constant $c(k,s+1)$ that depends on $k,s$ only.

That is, in view of Theorem~\ref{thmff1}, we have that $$f(n,k,\ell,s) = O_k(\max\{n^{k-\ell-1}, n^{\ell}\}).$$ Frankl and F\"uredi prove a stronger result for $k\ge 2\ell+3$. Recall that $\phi(k,s)$ is the largest number of sets in a family of $k$-element sets that does not have a $\Delta(s+1)$-system.
\begin{thm}[Frankl and F\"uredi \cite{FF2}]\label{thmff3}
For fixed $k,s$ and $k\ge 2\ell+3$ we have
$$f(n,k,\ell,s) = (\phi(\ell+1,s)+o(1)){n-\ell-1\choose k-\ell-1}.$$
\end{thm}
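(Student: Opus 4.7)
Pick an extremal $\m G\subseteq\binom{V}{\ell+1}$ with $|\m G|=\phi(\ell+1,s)$ and no $\Delta(s+1)$-system, placed on a ground set $V\subset[n]$ of size $O_{k,s}(1)$. Set
\[ \ff_0 := \big\{F\in\binom{[n]}{k} : F\cap V\in\m G\big\}, \]
so that $|\ff_0|=\phi(\ell+1,s)\binom{n-|V|}{k-\ell-1}=(\phi(\ell+1,s)-o(1))\binom{n-\ell-1}{k-\ell-1}$. If $F_1,\ldots,F_{s+1}\in\ff_0$ formed a $\Delta(s+1)$-system with kernel $K$ of size $\ell$, the traces $G_i:=F_i\cap V\in\m G$ would satisfy $G_i\cap G_j=(F_i\cap F_j)\cap V=K\cap V$ for every pair $i\ne j$. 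If the $G_i$'s are distinct they form an $(s+1)$-sunflower in $\m G$ with kernel $K\cap V$, contradicting the choice of $\m G$; otherwise $G_i=G_j$ forces $G_i\subseteq K$ and thus $\ell+1=|G_i|\le\ell$, absurd. So $\ff_0$ is admissible.

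\textbf{Upper bound.} Let $\ff\subset\binom{[n]}{k}$ avoid $\Delta(s+1)$-systems with $\ell$-kernel. Iterate Theorem~\ref{thmfur}: extract $(k,s+1)$-homogeneous subfamilies $\ff_1^*,\ff_2^*,\ldots$ of $\ff$, each capturing a $c(k,s+1)$-fraction of the current remainder, until the remainder has size $\le\eta\binom{n-\ell-1}{k-\ell-1}$; this takes $O_{k,s}(\log(1/\eta))$ steps. Every $\ff_i^*$ avoids intersection $\ell$: otherwise, by homogeneity, any such intersection would be the kernel of an $(s+1)$-sunflower inside $\ff_i^*\subseteq\ff$, which is forbidden. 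Theorem~\ref{thmff2} then gives $|\ff_i^*|\le\binom{n-\ell-1}{k-\ell-1}$, and whenever $|\ff_i^*|\ge\delta\binom{n-\ell-1}{k-\ell-1}$ its structural part (Proposition~\ref{propglob2}, Lemma~\ref{lemrankref}) furnishes an $(\ell+1)$-element ``signature'' $C_i$ contained in all but $O_k(n^{k-\ell-2})$ members of $\ff_i^*$.

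Collect the signatures into $\m C:=\{C_i\}$. The crucial claim is that $\m C$ contains no $\Delta(s+1)$-system, which gives $|\m C|\le\phi(\ell+1,s)$. Granted this, the large $\ff_i^*$'s contribute at most $\phi(\ell+1,s)\binom{n-\ell-1}{k-\ell-1}$ in total, the small ones contribute at most $O_{k,s}(\log(1/\eta))\cdot\delta\binom{n-\ell-1}{k-\ell-1}$, and the remainder contributes at most $\eta\binom{n-\ell-1}{k-\ell-1}$; letting $\delta\ll\eta\to 0$ yields $(\phi(\ell+1,s)+o(1))\binom{n-\ell-1}{k-\ell-1}$. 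To prove the claim, suppose $C_{i_1},\ldots,C_{i_{s+1}}$ forms a sunflower in $\m C$ with kernel $K$, $|K|\le\ell$. Augment to $K':=K\cup Y$ of size exactly $\ell$ by adjoining $\ell-|K|$ fresh elements $Y$ disjoint from all $C_{i_j}$, and attempt to choose $F_j=K'\cup(C_{i_j}\setminus K)\cup R_j\in\ff$ with the residues $R_j$ pairwise disjoint and disjoint from $K'$ and all $C_{i_j}$'s. Because $\ff$ contains all but $O_k(n^{k-\ell-2})$ of the supersets of each $C_{i_j}$, the fraction of bad $R_j$'s is $o(1)$, so a greedy sequential selection succeeds, producing an $(s+1)$-sunflower with kernel $K'$ of size $\ell$ inside $\ff$ — a contradiction.

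\textbf{Main obstacle.} The crux is the claim that $\m C$ admits no $(s+1)$-sunflower. It requires a sufficiently sharp quantitative version of the ``star'' structure of large intersection-$\ell$-avoiding homogeneous families, tracking the $O_k(n^{k-\ell-2})$ exceptional sets of Proposition~\ref{propglob2} carefully through the iterative decomposition so that the greedy sunflower-building argument operates uniformly across all $s+1$ signatures. The hypothesis $k\ge 2\ell+3$ leaves the residues $R_j$ with enough room (size $k-2\ell-1+|K|\ge 2$) for the greedy step to have space to maneuver.
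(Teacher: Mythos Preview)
Your lower bound is fine and matches the paper. The upper bound, however, has a genuine gap in the proof of the key claim that $\m C$ contains no $(s+1)$-sunflower.

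You assert that ``$\ff$ contains all but $O_k(n^{k-\ell-2})$ of the supersets of each $C_{i_j}$''. This is not what the structural lemmas give you. What Lemma~\ref{lemrankref} and Proposition~\ref{propglob2} yield is that $C_{i_j}$ is contained in all but $O_k(n^{k-\ell-2})$ members \emph{of $\ff_{i_j}^*$}; the direction is reversed. Since you only assume $|\ff_{i_j}^*|\ge\delta\binom{n-\ell-1}{k-\ell-1}$, at most a $\delta$-fraction of the $k$-sets containing $C_{i_j}$ are known to lie in $\ff$. There is no reason at all why a positive fraction of those should also contain your freshly chosen auxiliary set $Y$: the family $\ff$ might avoid $Y$ entirely. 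Hence your greedy construction of the $F_j$'s cannot be carried out.

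The paper's argument circumvents this by organising the count differently. Rather than one signature per homogeneous piece, it uses $(k,(s+1)k)$-homogeneous structures and groups sets by their \emph{own subset} $X\in\binom{[n]}{k-\ell-1}$: for each $X$, let $\m G_X:=\{C(F):F\in\ff^*,\ X(F)=X\}$, so that $|\ff^*|=\sum_X|\m G_X|$, and it suffices to show $|\m G_X|\le\phi(\ell+1,s)$. If some $\m G_X$ contains an $(s+1)$-sunflower $G_1,\ldots,G_{s+1}$ with kernel of size $t\le\ell$, one augments by $Y\subset X$ of size $\ell-t$, \emph{not by fresh elements}. The point is that $G_i\cup Y\subset G_i\cup X=F_i$ for the corresponding $F_i\in\ff^*$, and Lemma~\ref{lemrankref} guarantees that every set $A$ with $C(F_i)\subset A\subsetneq F_i$ is the kernel of a $\Delta((s+1)k)$-system inside $\ff$. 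In particular $G_i\cup Y$ is such a kernel, and Lemma~\ref{lemdeltadef} then produces the forbidden $(s+1)$-sunflower with $\ell$-kernel. The choice of $Y$ inside the common own subset $X$ is exactly what makes the structural information applicable; your fresh $Y$ has no such guarantee.
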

\subsubsection{Examples}\label{sec411} The examples in this section generalize the examples from Section~\ref{sec331}. The example that shows tightness of the bound in Theorem~\ref{thmff3} is as follows. Take a family $\m H$ of $(\ell+1)$-element sets that has size $\phi(\ell+1,s)$ and has no $\Delta(s+1)$-system. Let $[m]$ be the ground set of $\m H$ and note that it has constant size, assuming $k,s$ are constant. Then consider the family of sets
$$\mathcal A_1:=\Big\{F\in {[n]\choose k}: F\cap [m]\in \m H\Big\}.$$
First, we have $|\mathcal A_1| = |\m H|{n-m\choose k-\ell-1}$, which has the same asymptotic as the bound in Theorem~\ref{thmff3} (thanks to the fact that $m$ is constant). Second, assuming that $F_1,\ldots, F_{s+1}$ form a $\Delta(s+1)$-system with kernel of size $\ell$, we get that $F_1\cap [m],\ldots, F_{s+1}\cap [m]$ are distinct and should form a $\Delta(s+1)$-system. But these are elements of $\m H$, and $\m H$ has no $\Delta(s+1)$-systems. Therefore, there are no $\Delta(s+1)$-systems with kernel of size $\ell$ in $\m A_1$.

The second example is as follows: Take a partial Steiner system of $((s+1)(k-\ell)+\ell-1)$-element sets such that any $\ell$-element set is contained in at most $1$ of them, and consider all $k$-element sets that are contained in one of the sets from the Steiner system. This is the family $\m A_2$. It has size
$$\frac{\Big({(s+1)(k-\ell)+\ell-1\choose k}+o(1)\Big){n\choose \ell}}{{(s+1)(k-\ell)+\ell-1\choose \ell}}.$$
To see that it has no $\Delta(s+1)$-systems with kernel of size $\ell$, note that, if one exists, then all sets must come from the same $((s+1)(k-\ell)+\ell-1)$-element set of the Steiner system. But such a $\Delta$-system spans $\ell+(s+1)(k-\ell)$-elements, and thus cannot fit inside one $((s+1)(k-\ell)+\ell-1)$-element set.

Frankl and F\"uredi conjectured that one of these examples is (approximately) best possible.
\begin{conj}[Frankl and F\"uredi \cite{FF2}] For $k,\ell, s$ fixed the following holds.
\begin{itemize}\item If $k\ge 2\ell+1$ then $$f(n,k,\ell,s)\le (\phi(\ell+1,s)+o(1)){n-\ell-1\choose k-\ell-1}.$$
\item If $k\le 2\ell$ then
$$f(n,k,\ell,s)\le \frac{\Big({(s+1)(k-\ell)+\ell-1\choose k}+o(1)\Big){n\choose \ell}}{{(s+1)(k-\ell)+\ell-1\choose \ell}}.$$
\end{itemize}
\end{conj}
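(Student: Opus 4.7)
The statement is a conjecture (at the time of writing); for both bullets the plan is to follow the template of Theorem~\ref{thmff3}. First, apply F\"uredi's structural theorem to extract a $(k,s+1)$-homogeneous substructure $\ff^*\subset \ff$ with $|\ff^*|\ge c(k,s)|\ff|$. The key reduction, already used in Section~\ref{sec41}, is that the absence of a $\Delta(s+1)$-system with kernel of size $\ell$ in $\ff$, combined with the homogeneity of $\ff^*$, forces $\m M(\ff^*)$ to contain no set of size $\ell$. Lemma~\ref{lemmanyint} then bounds the rank of $\m M(\ff^*)$ by $\max(\ell,k-\ell-1)$, which equals $k-\ell-1$ in the first bullet and $\ell$ in the second. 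This already yields the correct order of magnitude in both regimes; the work is in getting the right constant.

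For the first bullet ($k\ge 2\ell+1$) I would iterate the decomposition of Section~\ref{sec333}: repeatedly peel off homogeneous structures and associate to (almost) every $F\in \ff$ a center $C(F)\subset F$ of size $\ell+1$, in the spirit of Lemma~\ref{lemrankref}. The new ingredient compared to Theorem~\ref{thmff2} is that the family of centers $\{C(F):F\in \ff\}$ itself cannot contain a $\Delta(s+1)$-system, since such a sunflower would lift, via Lemma~\ref{lemdeltadef} applied to the homogeneous $\Delta(s+1)$-systems supporting each center, to a $\Delta(s+1)$-system in $\ff$ with kernel of size $\ell$. By the definition of $\phi(\ell+1,s)$ there are thus at most $\phi(\ell+1,s)$ truly distinct centers, each responsible for at most $\binom{n-\ell-1}{k-\ell-1}$ sets. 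A Kruskal--Katona shadow argument analogous to Proposition~\ref{propglob3} and inequality~\eqref{eqshadowsum} would combine the contributions without overcounting and reach the $\m A_1$-bound of Section~\ref{sec411}. The hard sub-case is $k=2\ell+1$, where Lemma~\ref{lemrankref}'s dichotomy collapses and one must refine the rank analysis so that the $(\ell+1)$-center structure still dominates a competing design-like structure inside $\m M(\ff^*)$; this is the principal obstacle for this bullet and is essentially the reason the proof of Theorem~\ref{thmff2} requires $k\ge 2\ell+3$.

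For the second bullet ($k\le 2\ell$) the rank bound only yields $|\ff|=O_k(n^\ell)$, which is the right order but not the right constant. The right strategy is to use $\m M(\ff^*)$, together with its atom decomposition as in the proof of Theorem~\ref{thmfur2}, to assign to each $F\in \ff^*$ a \emph{block} $B(F)\subset[n]$ of size at most $m:=(s+1)(k-\ell)+\ell-1$: whenever sets $F_1,\ldots,F_{s+1}\in \ff^*$ share pairwise intersections of size $\ge \ell+1$, the sunflower structure provided by homogeneity and Lemma~\ref{lemdeltadef} either produces a $\Delta(s+1)$-system with kernel of size $\ell$ (forbidden) or forces the union $F_1\cup\cdots\cup F_{s+1}$ to have size at most $m$. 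The maximal $(\ell+1)$-intersection equivalence classes then trace out a partial $(n,m,\ell)$-design, so at most $(1+o(1))\binom{n}{\ell}/\binom{m}{\ell}$ such blocks can appear, each contributing at most $\binom{m}{k}$ sets of $\ff$. The main obstacle in this bullet is genuinely different: the $\Delta$-system method is designed to analyze sunflower kernels, not block packings, so extracting the exact constant essentially reduces the problem to the density of near-optimal partial Steiner systems (R\"odl nibble, Keevash~\cite{Kee}). It is precisely this step that one expects to be the source of the $o(1)$ term in the conjectured bound.
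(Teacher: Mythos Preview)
The statement is a \emph{conjecture}; the paper does not prove it. What the paper does prove is the first bullet under the stronger hypothesis $k\ge 2\ell+3$ (Theorem~\ref{thmff3}), and you correctly identify $k\in\{2\ell+1,2\ell+2\}$ as the obstruction. The second bullet is entirely open in the paper, and your sketch there is speculation rather than a proof outline.

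More importantly, your outline for the first bullet contains a genuine error even in the known range $k\ge 2\ell+3$. You claim that the global family of centers $\{C(F):F\in\ff\}$ contains no $\Delta(s+1)$-system, and hence has at most $\phi(\ell+1,s)$ members. But your lifting argument via Lemma~\ref{lemdeltadef} does not deliver this: if centers $C_1,\dots,C_{s+1}$ form a sunflower with kernel of size $t$, Lemma~\ref{lemdeltadef} lifts this to a sunflower in $\ff$ with kernel of size $t$, not $\ell$. Nothing forbids $t<\ell$, so no contradiction arises, and there is no reason the global center family should be bounded by $\phi(\ell+1,s)$. The paper's proof of Theorem~\ref{thmff3} (Section~\ref{sec41}) does something different and essential: it groups sets by their \emph{own subset} $X(F)$ and shows that, for each fixed $X$, the family $\m G_X=\{C(F):X(F)=X\}$ is $\Delta(s+1)$-free. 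The point of fixing $X$ is that one can pad the centers with a common $Y\subset X$ of size $\ell-t$, so that the padded sets $C_i\cup Y$ form a sunflower with kernel of size exactly $\ell$, which then lifts to the forbidden configuration. Summing $|\m G_X|\le\phi(\ell+1,s)$ over $X\in\binom{[n]}{k-\ell-1}$ gives the bound. Your Kruskal--Katona/shadow step is not what is used here; the argument is this plain double count.
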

\subsubsection{Proof of Theorem~\ref{thmff3}}
The proof of the theorem is mostly a simplified version of the proof of Theorem~\ref{thmff2}, and we assume that the reader is familiar with the first page of that proof. We start with the same decomposition of $\ff= \ff_1^*\sqcup \ff_2^*\sqcup\ldots$ into $(k,(s+1)k)$-homogeneous structures.\footnote{The parameters of the F\"uredi structure is the only difference between the first part of the two proofs.} We stop when $\ff_m^*$ is either empty or has rank at most $k-\ell-2$. Since we are only proving an asymptotic result, we may ignore this remainder and w.l.o.g. assume that $\ff = \ff_1^*\sqcup\ldots \sqcup \ff_{m-1}^*$. Now each set $F\in \ff_i^*$ has an own subset $X(F)$ of size $k-\ell-1$. At the same time, the set $C(F) = F\setminus X(F)$ by Lemma~\ref{lemrankref} has the property that, for any $A$ such that $C(F)\subset A\subsetneq F$, there is a $\Delta((s+1)k)$-system in $\ff_i^*$ with center $A$. 

It is in this last part that the proofs are somewhat different. For each set $X\in {[n]\choose k-\ell-1}$ define the family $\mathcal G_X:=\{F\setminus X(F): F\in \ff, X(F) = X\}$ of truncated sets, whose own subset is $X$. These are  sets that have the form $C(F)$ for the corresponding $F$.  Note that each $\ff^*_i$ contributes at most one set to $\m G_X$. Each set in $\ff$ has an own subset, and so we have $$|\ff| = \sum_{X\in {[n]\choose k-\ell-1}} |\m G_X|.$$ In order to complete the proof of the theorem, it is sufficient to show that $|\m G_X|\le \phi(\ell+1,s)$ for each $X$. Note that $\m G_X$ consists of $(\ell+1)$-element sets.

Assume that $|\m G_X|> \phi(\ell+1,s)$ for some $X$. Take a $\Delta(s+1)$-system $G_1,\ldots, G_{s+1}\in \m G_X$ and assume that its kernel has size $t$. Note that $t\le \ell$. Take an arbitrary subset $Y\subset X$ of size $\ell-t$. For each $i\in[s+1]$ we have $G_i = C(F)$ for some $F\in \ff$, and thus $G_i\cup Y$ is a kernel of a $\Delta((s+1)k)$-system. Moreover, $|(G_i\cup Y)\cap (G_j\cup Y)|=\ell$. Now, say, using Lemma~\ref{lemdeltadef}, for each $i\in [s+1]$ we can find  $F_i\in \ff$, such that $F_i\supset G_i\cup Y$ and such that $F_i\cap F_j = (G_i\cup Y)\cap (G_j\cup Y)$. They form  a $\Delta(s+1)$-system with kernel of size $\ell$ in $\ff$, a contradiction.

\subsection{Expanded hypergraphs}\label{sec42}
Let $\m A$ be an arbitrary family of $k$-sets. We call $K(A):=\cap_{A\in \m A}A$ the {\it kernel} of $\m A$. Let the center $C(\m A)$ be the set of all elements that are contained in at least two sets in $\m A$. Note that $K(\m A)\subset C(\m A)$. For a set $Y$, let $\m A|_{Y}$ stand for the {\it trace} of $\m A$ on a set $Y$, that is, $\m A_Y:=\{A\cap Y: A\in \m A\}$.

In the following papers on the subject, a slightly different perspective was taken on $\m A$.
\begin{defn}[Expanded hypergraphs]\label{defexp}
Given a set family (a hypergraph) $\m H= \{H_1,\ldots, H_m\}$ of $(\le k)$-element sets, a family of $k$-element sets $\m A = \{A_1,\ldots, A_m\}$ is an {\em expansion} (or {\em $k$-expansion}) of $\m H$, if $A_i$ is obtained from $H_i$ by adding several elements so that, first, $|A_i|=k$ and, second, all added elements are pairwise distinct and distinct from $\cup \m H$.
\end{defn}
That is the family $\m A$ above is a $k$-expansion of the family $\m A|_{C(\m A)}$. The most well-studied case is when $\m H$ is a graph, i.e., when all sets have uniformity~$2$.

Frankl and F\"uredi studied the general quantity
$$ex(n, \m A) = \max\Big\{|\ff|: \ff\subset {[n]\choose k}: \ff \text{ contains no copy of }\m A\Big\}.$$
While many papers at the time suggested very general extremal quantities as the one above, rarely there were general results proved concerning them. Frankl and F\"uredi, however, managed to prove one. The first big issue to resolve before proving the extremal result is to decide on the prospective optimal construction.

\subsubsection{Examples}
Suppose that $|K(\m A)| = p$, $|C(\m A)|=q$.  One example of a family avoiding $\m A$ is the family of sets containing a fixed $(p+1)$-element set. This is, however, suboptimal in many cases. Here, they introduced the very influential notion of a {\it $t$-crosscut}.
\begin{defn}[$t$-crosscut]\label{defcc} A set $Y$ is a {\it $t$-crosscut} of $\m A$ if $|A\cap Y|=t$ for all $A\in\m A$.
\end{defn}
Note the difference between $1$-crosscut and a cover. The set $K(\m A)$ is a $p$-crosscut. We will, however, be interested in $(p+1)$-crosscuts. Let us now introduce the key quantity.
 We note that it is not guaranteed that a $(p+1)$-crosscut exists for $\m A$, but if $\m A$ is $k$-partite, then it does: we may take the union of any $p+1$ parts. We are discussing Tur\'an-type problems in the sparse regime, and so we can limit ourselves to the $k$-partite case. Indeed, if the forbidden hypergraph is not $k$-partite, then we can take a complete $k$-partite hypergraph as an example of the hypergraph that avoids it. It has $(n/k)^k = \Theta(n^k)$ edges. Another scenario in which a $(p+1)$-crosscut always exists if $\m A$ is an expanded hypergraph: if each edge has an `own' vertex of degree $1$. We may then form a $(p+1)$-crosscut by taking the union of $K(\m A)$ and these `own' vertices for each edge.

 Let $\sigma_{p+1}(\m A)$ be the maximum size of a family of $(p+1)$-element sets that avoids copies of the trace $\m A|_Y$ of any $(p+1)$-crosscut $Y$ of $\m A$, if such a maximum exists.\footnote{Note that $\sigma_1(\m A)$ is one smaller than the size of the smallest $1$-crosscut. That is, the extremal family in this case is just the collection of $\sigma_1(\m A)$ singletons.} \vskip+0.1cm

 {\bf Example. } If $k\ge p+q$, a $k$-expanded hypergraph $\m A$ has many $(p+1)$-crosscuts. Importantly, any $\Delta(|\m A|)$-system of sets of size $p+1$ is a crosscut. This is due to the fact that each edge of $\m A$ has at least $p$ `own' vertices.   This implies that $\sigma_{p+1}(\m A)\le \phi(p+1,|\m A|-1)$.
 \vskip+0.1cm

Equipped with this definition and assuming that $\sigma_{p+1}(\m A)$ is finite, we can give an example of an $\m A$-free family in ${[n]\choose k}$, similar to the example for Theorem~\ref{thmff3}. Let $\m B\subset {[n]\choose p+1}$ be the family that achieves equality in the definition of $\sigma_{p+1}(\m A)$, and let $Y = \cup \m B$. Note that $|\m B|$ is finite, and thus $|Y|$ is independent of $n$.  Put
$$\ff_{\m B}= \Big\{F\in {[n]\choose k}: F\cap Y\in \m B\Big\}.$$
Then, first, $|\ff| = (\sigma_{p+1}(\m A)+o(1)){n\choose k-p-1}$ and, second, $\ff_{\m B}$ is $\m A$-free. Indeed, to see this, assume that $\{F_1,\ldots, F_s\}$ form a copy of $\m A$ in $\ff_{\m B}$. Since $|Y\cap F_i|=p+1$, it implies that $Y$ is a $(p+1)$-crosscut for $\{F_1,\ldots, F_s\}$, and thus a $(p+1)$-crosscut for $\m A$, a contradiction with the definition of $\m B$.

\subsubsection{The result for $k\ge 2p+q+2$} Frankl and F\"uredi showed that the example above is essentially optimal in {\it intersection condensed} case: when $k$ is somewhat large w.r.t. $C(\m A)$. Recall that $|K(\m A)| = p$, $|C(\m A)|=q$.

\begin{thm}[Frankl and F\"uredi, \cite{FF2}]\label{thmff4} Suppose that $k\ge 2p+q+2$. Then $ex(n,\m A) = (\sigma_{p+1}(\m A)+o(1)){n-p-1\choose k-p-1}$.
\end{thm}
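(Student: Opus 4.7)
The plan is to adapt the argument for Theorem~\ref{thmff3} (forbidden $\Delta$-system with fixed kernel) by strengthening its rank bound into a Lemma~\ref{lemrankref}-style dichotomy, so that the exact constant $\sigma_{p+1}(\m A)$ can be extracted. Starting from an extremal $\m A$-free family $\ff\subset{[n]\choose k}$, I iteratively peel off $(k,s+1)$-homogeneous substructures $\ff_1^*,\ldots,\ff_{m-1}^*$ via Theorem~\ref{thmfur} with $s$ chosen large enough---say $s=k\cdot|\m A|$---so that Lemma~\ref{lemdeltadef} can be iterated $|\m A|$ times to realise any prescribed extension pattern from a $\Delta(s+1)$-kernel. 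The procedure stops when the remainder $\m R$ has rank at most $k-p-2$; then $|\m R|=O(n^{k-p-2})=o\big({n-p-1\choose k-p-1}\big)$ and contributes only to the $o(1)$ term.

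The crux, and the main obstacle, is the rank analysis. I would prove the following dichotomy: if $\ff^*\subset\ff$ is homogeneous with intersection structure $\m M$, then $r(\m M)\le k-p-1$, and equality is achieved only when $\m M\supset\{F:C\subset F\subset[k]\}$ for some distinguished $(p+1)$-set $C$. The proof mirrors Lemmas~\ref{lemmanyint}--\ref{lemrankref}: taking a minimal $B\notin\m M$ of size $b$, the near-containments $B\setminus\{x_i\}\subset A_i\in\m M$ produce auxiliary sets $D_i\subset Y=[k]\setminus B$ obeying the hypothesis of Corollary~\ref{cormanyint}, which forces $b\le k-p-1$ and pins down the equality case. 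The $\m A$-avoidance enters by ruling out certain sizes and intersection patterns in $\m M$: any configuration in $\m M$ that, combined with Corollary~\ref{corfur} and Lemma~\ref{lemdeltadef}, would let us realise a copy of $\m A$ in $\ff^*$ is forbidden. The condition $k\ge 2p+q+2$---equivalently $k-p-1\ge p+q+1$---is what keeps the Frankl--Katona-type argument sharp, and simultaneously ensures that in the reconstruction step each would-be edge has enough room outside a $(p+1)$-crosscut to accommodate the shared center $C(\m A)$ (of size $q$) together with a private vertex.

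The dichotomy endows each non-remainder $\ff_i^*$ with a distinguished $(p+1)$-set $C_i$, and every $F\in\ff_i^*$ splits as $F=C(F)\cup X(F)$ with $|C(F)|=p+1$, $|X(F)|=k-p-1$, $X(F)$ an own subset of $F$ in $\ff_i^*$, and every intermediate $A$ with $C(F)\subset A\subsetneq F$ the kernel of a $\Delta(s+1)$-system in $\ff_i^*$. For each $X\in{[n]\choose k-p-1}$ set $\m G_X:=\{C(F):X(F)=X\}$, a $(p+1)$-uniform family. Since $|\ff|\le\sum_X|\m G_X|+|\m R|$, it remains to show $|\m G_X|\le\sigma_{p+1}(\m A)$ for every $X$; the asymptotics $(\sigma_{p+1}(\m A)+o(1)){n-p-1\choose k-p-1}$ then follow from ${n\choose k-p-1}\sim{n-p-1\choose k-p-1}$ for fixed $k,p$. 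If $|\m G_X|>\sigma_{p+1}(\m A)$, then by definition $\m G_X$ contains a copy of the trace $\m A|_Y$ for some $(p+1)$-crosscut $Y$, carried by sets $G_1,\ldots,G_{|\m A|}$ with $G_j=C(F_j)$. Iteratively applying Lemma~\ref{lemdeltadef} along the intermediate $\Delta(s+1)$-kernels above each $G_j$, I can grow each $G_j$ into a full $F_j'\in\ff$ whose pairwise intersections match $\m A$, producing the forbidden copy in $\ff$. The room inequality $k-p-1\ge p+q+1$ is exactly what makes this greedy $|\m A|$-fold extension succeed.
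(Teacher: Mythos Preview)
Your overall strategy matches the paper's: iterative extraction of homogeneous pieces, a dichotomy replacing Lemma~\ref{lemrankref}, and the double-count $|\ff|\lesssim\sum_X|\m G_X|$ together with the claim $|\m G_X|\le\sigma_{p+1}(\m A)$. The reconstruction step, however, has a real gap as you describe it. Applying Lemma~\ref{lemdeltadef} (or iterating it) with kernels $G_j$ yields $F_i'\cap F_j'=G_i\cap G_j$, which is the image of $A_i\cap A_j\cap Y$, not of $A_i\cap A_j$. Whenever the center $C(\m A)$ has vertices outside the crosscut $Y$---and this is the generic situation as soon as $q>p+1$---the pairwise intersections will be too small and you will not have produced a copy of $\m A$. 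The missing move (this is exactly what the paper does) is to \emph{first} fix an injection $\phi\colon C(\m A)\setminus Y\hookrightarrow X$, set $Z_j=\phi\bigl(A_j\cap(C(\m A)\setminus Y)\bigr)\subset X$, and observe that $G_j\cup Z_j$ sits between $C(F_j)=G_j$ and $F_j$, hence is itself a kernel of a large $\Delta$-system. Only then do you run the greedy extension, now with kernels $G_j\cup Z_j$; since $A_i\cap A_j\subset C(\m A)$, these enlarged kernels already carry the full pairwise intersection pattern, and the petals may be chosen disjoint. This embedding requires only $|X|=k-p-1\ge q$, i.e.\ $k\ge p+q+1$; the hypothesis $k\ge 2p+q+2$ is absorbed entirely by the rank-lemma substitute, not by the reconstruction as your final sentence suggests.

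A secondary gap: your rank dichotomy is asserted by analogy with Lemmas~\ref{lemmanyint}--\ref{lemrankref}, whose proof hinged on the single clean fact that $\m M$ contains no set of size $\ell$. You do not say what forbidden size or intersection pattern in $\m M$ replaces this for a general $\m A$, nor why $\m A$-freeness of $\ff^*$ forces it; ``any configuration that would let us realise a copy of $\m A$'' is not yet a usable hypothesis for Corollary~\ref{cormanyint}.
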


The proof of this theorem is very similar to the proof of Theorem~\ref{thmff3}. They are actually presented in parallel in that paper. One necessary ingredient is a substitute for Lemma~\ref{lemrankref}. In order to get prove this analogue of Lemma~\ref{lemrankref}, Frankl and F\"uredi need a possibly suboptimal bound $k\ge 2p+q+2$. The conclusion of the lemma is the same: for a bulk of the sets $F\in \ff$, there is a set $C(F)$ with the property that, for any $A$ such that $C(F)\subset A\subsetneq F$, there is a $\Delta(k^2)$-system in $\ff_i^*$ with center $A$. We then do the same double-counting over possible sets $X(F)$ and show that, if $\m G_X$ is larger than $\sigma_{p+1}(\m A)$ and, consequently, contains a trace of a $(p+1)$-crosscut of $\m A$, then we can complete it to a copy of $\m A$ in $\ff$ by first embedding the rest of $C(\m A)$ into  $C(X)$ and then completing with disjoint sets using $\Delta$-systems. For this, $C(X)$ must be somewhat large, but $|C(X)|\ge |C(\m A)|$ is enough, which requires a bound $k\ge p+q+1$.

\subsection{Simplices}\label{sec43}
The following definition generalizes the notion of a triangle in a graph.
\begin{defn}\label{defsim}
  A {\it $d$-simplex} is a collection of $d+1$ sets $F_1,\ldots, F_{d+1}$, such that $\cap_{i\in[d+1]} F_i=\emptyset$, but for any proper subset $S\subsetneq [d+1]$ we have $\cap_{i\in S} F_i\ne \emptyset$.
\end{defn}
By analogy with the graph case, a $2$-simplex is called a {\it triangle}. A natural example of a family avoiding any simplex is a star: a family of all sets containing a fixed element. Erd\H os \cite{E4} conjectured that for $n\ge 3k/2$ and $k\ge 3$, any family that contains no triangle, has size at most ${n-1\choose k-1}$. One year later, Chv\'atal \cite{Chv} generalized this as follows.
\begin{conj}[Chv\'atal \cite{Chv}]\label{conjchv} Suppose that $\ff\subset {[n]\choose k}$, $\ff$ contains no $d$-dimensional simplex, $k>d$,  and $n\ge (d+1)k/d$. Then $|\ff|\le {n-1\choose k-1}$.
\end{conj}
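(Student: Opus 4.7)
The approach I would take is to combine the $\Delta$-system machinery with a structural analysis of the intersection lattice. By Theorem~\ref{thmfur} with $s = k+1$, I would first pass to a $(k,k+1)$-homogeneous substructure $\ff^* \subseteq \ff$ of size $|\ff^*| \geq c(k)|\ff|$; by Corollary~\ref{corfur}, the intersection structure $\m M := \m M(\ff^*)$ is closed under intersection. The target reduction is: under the no-$d$-simplex assumption, either $\ff^*$ has trivially few sets or $\m M$ has a common element, i.e., $\bigcap_{M \in \m M} M \neq \emptyset$. In the latter case, letting $p$ lie in this common intersection, every pair of sets of $\ff^*$ shares an element in part $X_p$, and by $k$-partiteness each set has a unique element of $X_p$; hence all of $\ff^*$ passes through a single $x^* \in X_p$, yielding $|\ff^*| \leq {n-1\choose k-1}$ and $|\ff| \leq c(k)^{-1}{n-1\choose k-1}$.

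The decisive step is establishing this dichotomy. The easy sub-case is when $\cup \m M$ is a proper subset of $[k]$: then there is a part $X_i$ that no pairwise intersection from $\ff^*$ touches, so each element of $X_i$ lies in at most one set of $\ff^*$, giving the trivial bound $|\ff^*| \leq n/k$. When $\cup \m M = [k]$ and $\bigcap \m M = \emptyset$, I would try to extract from $\m M$ a ``$d$-simplex pattern'' --- sets $M_1, \ldots, M_{d+1} \in \m M$ with $\bigcap_i M_i = \emptyset$ but $\bigcap_{i \in S} M_i \neq \emptyset$ for every $S \subsetneq [d+1]$ --- and then lift this to a genuine $d$-simplex in $\ff^*$ by iteratively applying Lemma~\ref{lemdeltadef} to the $\Delta(k+1)$-systems witnessing each $M_i$. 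I expect two combinatorial hurdles. First, a minimal collection in $\m M$ with empty total intersection need not have cardinality exactly $d+1$, so one must carefully reshape it within the intersection-closed lattice, padding or contracting to land on the correct number of sets while preserving the simplex pattern. Second, a $d$-simplex requires \emph{all} $d$-wise sub-intersections to be non-empty --- higher-order information that $\m M$, built from pairwise intersections, does not directly encode --- so the lifting must simultaneously realize all such sub-intersections in $\ff^*$.

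The main obstacle, beyond the lifting above, is sharpening the bound. The $\Delta$-system method as described only yields $|\ff| = O_{k,d}\big({n-1\choose k-1}\big)$ for $n \geq n_0(k,d)$. Iterating the decomposition and invoking Kruskal--Katona-style shadow arguments in the spirit of the proof of Theorem~\ref{thmff2} should sharpen the leading constant to $1 + o(1)$ for $n$ large enough. However, the sharp linear threshold $n \geq (d+1)k/d$ of the conjecture appears to lie beyond the $\Delta$-system method: the constants $c(k,s)$ in Theorem~\ref{thmfur} are doubly exponential in $k$ and force $n_0(k,d)$ to be at least polynomial in $k$. Bridging this gap plausibly requires shifting arguments, direct combinatorial analysis near the threshold, or algebraic methods, and is where I would expect the truly hard work of the conjecture to lie.
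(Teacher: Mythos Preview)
This statement is Chv\'atal's \emph{conjecture}; the paper does not prove it in full, and in fact Section~\ref{sec43} and the survey in Section~\ref{sec8} record that the sharp threshold $n\ge (d+1)k/d$ remains open for a residual range of parameters (with the bulk now settled by Frankl for $n\le dk/(d-1)$, Keller--Lifshitz and Frankl--F\"uredi for $n\ge n_0(d)$, and Currier for $n\ge 2k-d+2$). So there is no ``paper's proof'' to match; what can be compared is your outline for the large-$n$ regime against the Frankl--F\"uredi argument for Theorem~\ref{thmff5}.

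Your high-level plan---pass to a homogeneous substructure, analyse the intersection structure $\m M$, then iterate and use shadow arguments to sharpen the constant---is exactly the template the paper describes. However, the dichotomy you state is too coarse to go through. A $d$-simplex-free family need not be intersecting, so $\emptyset$ may well lie in $\m M$, and then $\bigcap_{M\in\m M}M=\emptyset$ holds trivially; the condition you actually need is not that \emph{every} $M\in\m M$ contains a fixed index $p$, but that every maximal $M\in\m M$ (equivalently, every $(k-1)$-set in $\m M$ when the rank is $k-1$) does. That is precisely the content of the structural Lemma~\ref{lemrankref}/Lemma~\ref{lemstrucjl}, and it is the step the paper flags as the crux (``in each case, one needs an appropriate analogue of Lemma~\ref{lemrankref}''). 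Your plan to ``extract a $d$-simplex pattern from $\m M$ and lift it via Lemma~\ref{lemdeltadef}'' is morally the right idea, but the actual argument runs through this rank/structure lemma rather than by directly hunting for $d+1$ sets in $\m M$ with the simplex intersection pattern; the latter route runs into exactly the higher-order-intersection obstacle you yourself identify.

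Your assessment of the limitations is accurate: the $\Delta$-system method, even with the shadow bootstrapping of Section~\ref{sec333}, yields the exact bound only for $n\ge n_0(k)$, and the linear threshold requires entirely different techniques (the paper points to \cite{Fra76}, \cite{KL}, \cite{Curr2}).
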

Let us denote $sim(n,k,d)$ the largest family as in the conjecture. The bound in the conjecture in these terms states that $sim(n,k,d)\le {n-1\choose k-1}$.
 Chv\'atal himself proved the conjecture for $d=k-1$. Prior to the paper \cite{FF2}, Frankl \cite{Fra76} proved the conjecture for $n\le dk/(d-1)$, and for $n\ge n_0(k)$ in the following cases \cite{Fra81}: $d=2$, $k\ge 5$; $k\ge 3d+1$. He also proved the bound $sim(n,k,d)\le {n-1\choose k-1}+c_kn^{d-2}$ for any $k>d$.

Using linear algebraic methods, Frankl and F\"uredi \cite{FF2} improved the last bound to $sim(n,k,d)\le {n\choose k-1}$. They proved the following theorem.
\begin{thm}[Frankl and F\"uredi \cite{FF2}]\label{thmff5}
Conjecture~\ref{conjchv} holds for any $k>d$, $n>n_0(k)$. Moreover, the equality is only possible if $\ff = \{F\in {[n]\choose k}: x\in F\}$ for some $x\in[n]$.
\end{thm}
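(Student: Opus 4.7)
The plan is to combine Frankl's earlier near-sharp bound $|\ff|\le{n-1\choose k-1}+c_k n^{d-2}$ with a $\Delta$-system decomposition and surgery argument in the spirit of Theorem~\ref{thmff2}. First I would apply F\"uredi's structural theorem (Theorem~\ref{thmfur}) iteratively with $s=s(k,d)$ a sufficiently large constant, decomposing $\ff=\ff_1^{*}\sqcup\ff_2^{*}\sqcup\cdots\sqcup\ff'$ where each $\ff_i^{*}$ is $(k,s)$-homogeneous and the leftover satisfies $|\ff'|\le C_k n^{d-2}$. For each piece $\ff_i^{*}$, consider its intersection structure $\m M_i:=\m M(\ff_i^{*})\subset 2^{[k]}$, which is closed under intersection by Corollary~\ref{corfur}. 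The central claim is that every $\ff_i^{*}$ is a star: by $k$-partiteness, if the sets of $\m M_i$ share a coordinate $\alpha\in[k]$, then every pair of sets in $\ff_i^{*}$ agrees on its unique element in the part $X_\alpha$, so all of $\ff_i^{*}$ contains a common element. Otherwise, I would exhibit a $d$-simplex in $\ff_i^{*}$ using the $\Delta(s)$-system machinery: extract $d+1$ sets $M_1,\ldots,M_{d+1}\in\m M_i$ (at a suitable layer of $\m M_i$) with $\bigcap M_j=\emptyset$ while every $d$-wise intersection is nonempty, fix $A\in\ff_i^{*}$, and choose $B_j$ from the $\Delta(s)$-system whose kernel is the realization $Y_j\subset A$ of $M_j$; then $A\cap\bigcap B_j=\bigcap Y_j=\emptyset$ while every $d$-wise sub-intersection contains $\bigcap_{i\in S}Y_i\ne\emptyset$, and parasitic elements of $\bigcap B_j$ coming from parts $X_\ell$ with $\ell\notin\bigcup M_j$ are killed by pigeonhole over the pairwise-disjoint $\Delta(s)$-petals, using $s$ large in $k,d$.

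Next, aggregate the stars. Each large piece $\ff_i^{*}$ has a center $x_i\in[n]$; a cross-piece argument shows all large centers must coincide. If $\ff_i^{*}$ is centered at $x$ while $\ff_j^{*}$ contains a set $F$ with $x\notin F$, I use the $\Delta$-system richness within $\ff_i^{*}$ to produce $d$ sets whose intersections with $F$ are prescribed so that $\{F,F_1,\ldots,F_d\}$ forms a $d$-simplex. Hence $\ff=\ff[x]\cup\ff'$ for a single $x\in[n]$, with $|\ff'|=O_k(n^{d-2})$.

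Finally, a surgery step eliminates $\ff'$. For any $F\in\ff'$ (so $x\notin F$), I build a $d$-simplex $\{F,F_1,\ldots,F_d\}\subset\ff$ with $F_i\in\ff[x]$ by picking distinct $z_1,\ldots,z_d\in F\setminus\{x\}$ and selecting $F_i\in\ff[x]$ containing $\{x\}\cup\{z_j:j\ne i\}$ (which forces $F\cap\bigcap_{i\ne j}F_i\ni z_j\ne\emptyset$ for every $j$), with a final greedy adjustment to ensure no element of $F$ lies in all the $F_i$'s (so $F\cap\bigcap F_i=\emptyset$). Such choices exist because $|\ff[x]|\ge{n-1\choose k-1}-O_k(n^{d-2})$, so the configurations blocking the greedy step form a negligible fraction of $\ff[x]$. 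The resulting simplex contradicts $d$-simplex-freeness, so $\ff'=\emptyset$ and $\ff=\ff[x]$, establishing both $|\ff|\le{n-1\choose k-1}$ and uniqueness.

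The main obstacle is the structural lifting in the first stage: converting a combinatorial $d$-simplex in $\m M_i\subset 2^{[k]}$ into a bona fide $d$-simplex of sets in $\ff_i^{*}$. A direct lift only controls the intersections inside the reference set $A$, and parasitic common elements in $\bigcap B_j$ from parts indexed outside $\bigcup M_j$ must be eliminated using the pairwise-disjoint petals of the $\Delta(s)$-systems; the technical point is verifying that a constant $s=s(k,d)$ suffices independently of $n$. A secondary subtlety is that extracting a $d$-simplex with exactly $d+1$ sets from the intersection-closed $\m M_i$—when $\m M_i$ has no common coordinate—is not automatic, and may require selecting the simplex at an appropriate layer of $\m M_i$ so as to be compatible with both the combinatorial and the lifting requirements.
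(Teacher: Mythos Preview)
Your central claim---that every homogeneous piece $\ff_i^{*}$ is a star---and your route to it (find a combinatorial $d$-simplex inside $\m M_i$ and lift) both break down for $k\le d+2$. Concretely, for $k=d+1$ take $k=3$, $d=2$: the structure $\m M_i=\{\{1\},\{2\},\{3\},\emptyset\}$ is intersection-closed, has rank $k-1$, has no common coordinate, yet any two of its nonempty sets are disjoint, so there is no combinatorial $2$-simplex to extract. More generally, via Lemma~\ref{lemstrucjl} case~1 one only gets $2^{B}\subset\m M_i$ with $|B|=k-2$, and a combinatorial $d$-simplex inside $2^{B}$ needs $k-2\ge d+1$, i.e.\ $k\ge d+3$; below that threshold your lifting has nothing to lift. (For $k=3,d=2$ the claim itself, not just your proof, is false: there are homogeneous, triangle-free, non-star pieces---Ruzsa--Szemer\'edi forces them to be $o(n^{2})$, but they are not stars.) A smaller point: the leftover bound $|\ff'|\le C_{k}n^{d-2}$ is not what the iteration delivers; stopping when $r(\m M_m)\le k-2$ only yields $O_{k}(n^{k-2})$.

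The paper's route is different. It does not argue that each piece is a star; instead it runs the Theorem~\ref{thmff2} machinery: an analogue of Lemma~\ref{lemrankref} shows $r(\m M_i)\le k-2$ unless $\m M_i$ contains all supersets of some singleton, which equips each $F$ with an own $(k-1)$-subset $X(F)$ and a size-$1$ center $C(F)$; then the disjoint-shadow argument with Kruskal--Katona (as in Section~\ref{sec333}) forces almost all of $\ff$ into one star, and a final bootstrapping handles the remainder. The borderline $k=d+2$ is treated like the harder $k=2\ell+2$ case of Theorem~\ref{thmff2} (no uniqueness in the rank lemma; the ``two own subsets'' trick), $k=d+1$ is Chv\'atal's theorem, and $k=3,d=2$ is done by a separate weighting argument on the $(k-1)$-shadow rather than by $\Delta$-systems. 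Your cross-piece and surgery sketches are reasonable for the endgame, but the first stage must be replaced by this rank/shadow machinery to reach all $k>d$.
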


Erd\H os--Chv\'atal simplices are, actually, families of hypergraphs. Frankl and F\"uredi showed that essentially the same results hold if one restricts to one particular type of simplices.
\begin{defn}\label{defssim}
  We say that $\{F_1,\ldots, F_{d+1}\}$ is a {\em special $d$-simplex} if there is a $(d+1)$-element set $C =\{x_1,\ldots, x_{d+1}\}$, such that $C\cap F_i = C\setminus \{x_i\}$ and that, moreover, $F_i\setminus C$, $i\in[d+1]$ are pairwise disjoint.
\end{defn}
Let us denote $ssim(n,k,d)$ the analogue of $sim(n,k,d)$, but for special simplices.
\begin{thm}[Frankl and F\"uredi \cite{FF2}]\label{thmff6}
  Let $n\ge n_0(k)$. Then $ssim(n,k,d)\le {n-1\choose k-1}$, provided $k>d+2$, or $d=3$ and $k=4,5$. Moreover, the equality is only possible if $\ff=\{F\in {[n]\choose k}: x\in F\}$ for some $x\in[n]$.
\end{thm}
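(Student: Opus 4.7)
Note first that the special $d$-simplex is a $k$-expansion of the base hypergraph $\m H={[d+1]\choose d}$, with kernel size $p=0$ and center size $q=d+1$; the hypothesis $k>d+2$ is exactly $k\ge 2p+q+2$. The plan is to follow the scheme of Theorem~\ref{thmff4} for the asymptotic bound and then sharpen to an equality in the spirit of the closing third of the proof of Theorem~\ref{thmff2}. The first step is to verify $\sigma_1(\m A)=1$ for the special $d$-simplex $\m A$: any trace of a $1$-crosscut $Y$ must contain at least two distinct singletons, since the empty kernel prevents all the singletons $\{F_i\cap Y\}$ from coinciding; conversely, any two-element family $\{\{b_1\},\{b_2\}\}$ arises as such a trace by identifying $b_1$ with some $x_j\in C$ and placing $b_2$ as an own element of one $F_i$, taking $Y=\{b_1,b_2\}$. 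Theorem~\ref{thmff4} then immediately delivers $|\ff|\le(1+o(1)){n-1\choose k-1}$.

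To reach equality I would iterate F\"uredi's structural theorem (Theorem~\ref{thmfur}) with homogeneity parameter $k^2$, producing $\ff=\ff_1^*\sqcup\cdots\sqcup\ff_{m-1}^*\sqcup\m R$ with $|\m R|=O_k(n^{k-2})$. The simplex-analogue of Lemma~\ref{lemrankref}, already implicit in the proof of Theorem~\ref{thmff4}, states that for each block $\ff_i^*$ there is an element $c_i\in[n]$ contained in every set of $\ff_i^*$, with the property that every $A$ satisfying $\{c_i\}\subsetneq A\subsetneq F$ (for $F\in\ff_i^*$) is the kernel of a $\Delta(k^2)$-system in $\ff_i^*$. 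Proving this lemma is the main obstacle: one must show that if no such $c_i$ exists, then the intersection structure $\m M_i=\m M(\ff_i^*)$ already carries a simplex skeleton --- concretely, the absence of a common element across all of $\m M_i$ permits extracting $d+1$ members $M_1,\dots,M_{d+1}\in\m M_i$ whose projections play the roles of $C\setminus\{x_j\}$ for a $(d+1)$-element $C\subset[k]$; Lemma~\ref{lemdeltadef} then picks $F_j\in\ff_i^*$ realizing these as pairwise intersections, with $F_j\setminus C$ made pairwise disjoint by the $\Delta$-system petals, yielding a forbidden special $d$-simplex in $\ff$. Mirroring Propositions~\ref{propglob1} and~\ref{propglob2}, the own subsets $X(F)=F\setminus\{c_i\}$ are globally own in $\ff$, and any two sets from different blocks with $|F\cap F'|\ge 2$ must share the same center; hence a common element $c^*$ is shared by all but $O_k(n^{k-2})$ sets of $\ff$.

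For the exact bound, partition $\ff=\m A\cup\m B\cup\m K$ as in Theorem~\ref{thmff2}, with $\m K\subset\ff[\{c^*\}]$ consisting of those $F$ whose every $A$ with $\{c^*\}\subsetneq A\subsetneq F$ is a kernel of a $\Delta(k+1)$-system in $\ff$, $\m A=\ff[\{c^*\}]\setminus\m K$, and $\m B=\ff\setminus\ff[\{c^*\}]$; the structural analysis above gives $|\m A|+|\m B|=O_k(n^{k-2})$. Since $\m K$ injects via own $(k-1)$-subsets into the $(k-1)$-subsets of $[n]\setminus\{c^*\}$, we have $|\m K|\le{n-1\choose k-1}$, and the Kruskal--Katona theorem (Theorem~\ref{thmkk}) controls the ratio $|\m K|/|\partial^{(k-1)}\m K(c^*)|$ against that of the full star. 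If $\m B\ne\emptyset$, its $(k-1)$-shadow is disjoint from $\partial^{(k-1)}\m K(c^*)$ via Proposition~\ref{propglob2}-style arguments and, being a small simplex-free family of size $O_k(n^{k-2})$, its shadow is proportionally much denser by Kruskal--Katona, forcing $|\ff|<{n-1\choose k-1}$. The case $\m A\ne\emptyset$ is symmetric: one builds $(k-1)$-own subsets of a homogeneous substructure $\m A^*\subset\m A$ that lie outside $\partial^{(k-1)}\m K(c^*)$ from the failure of the $\Delta$-kernel condition in the definition of $\m A$. Equality therefore forces $\m A=\m B=\emptyset$ and $\ff=\ff[\{c^*\}]$ to be the full star. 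The small-$k$ exceptions $d=3$, $k\in\{4,5\}$ require tightening the rank lemma via a Corollary~\ref{cormanyint}-style recount specific to the simplex constraint, but follow the same blueprint.
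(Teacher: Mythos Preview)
Your plan is essentially the paper's own: the asymptotic for $k>d+2$ is stated there as a direct consequence of Theorem~\ref{thmff4} (with $p=0$, $q=d+1$, $\sigma_1=1$), and the passage to the exact bound is done, as you propose, by replaying the machinery of Theorem~\ref{thmff2} --- iterated homogeneous pieces, a rank-lemma forcing a common element $c$, the $\m K/\m A/\m B$ partition, and a disjoint-shadow Kruskal--Katona comparison. Two small slips to fix: in your $\sigma_1=1$ verification, the $1$-crosscut $Y=\{b_1,b_2\}$ works only when $b_2$ is an own vertex of $F_j$ (the unique set \emph{not} containing $b_1=x_j$); if $b_2$ is own to some $F_i$ with $i\ne j$ then $|F_i\cap Y|=2$. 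Also, your shadow bookkeeping is garbled (e.g.\ $\partial^{(k-1)}\m K(c^*)$ is a shadow at the same level as the family); the actual comparison uses $\partial^{(k-2)}$ of the $(k-1)$-uniform link, in line with the $\m A$-branch of Section~\ref{sec333}.

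The one place where you diverge from the paper is the handling of the borderline small-$k$ cases. You write that they ``follow the same blueprint'' after a tightened rank count; the survey says otherwise. The case corresponding to $k=2\ell+2$ in Theorem~\ref{thmff2} genuinely lacks the uniqueness in Lemma~\ref{lemrankref} and needs an extra ``two own subsets'' argument (see the footnote in Section~5.3.2), and the smallest case is handled by a separate weighting/double-counting argument on the $(k-1)$-shadow rather than by the $\Delta$-system scheme at all. So your last sentence understates the work required there.
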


Frankl and F\"uredi also discussed the following generalization of the notions of a simplex and special simplex.
\begin{defn}
A {\em $(d,\ell)$-simplex} is defined as a $d$-simplex, except we ask all sets to have intersection of size at most $\ell-1$, but any $d$ sets to have intersection size at least $\ell$.   A special {\em $(d,\ell)$-simplex} is defined as a special $d$-simplex, to which we append $\ell-1$ new elements that are contained in all sets.
\end{defn}
That is, a (special) $(d,1)$-simplex is the same as a (special) $d$-simplex. We denote the corresponding extremal functions by $sim_\ell(n,k,d)$ and $ssim_\ell(n,k,d)$. It may seem just like a generalization of the previous notions, but actually it brings several very interesting connections.

First, it is easy to see that determining $sim_\ell(n,d+\ell-1,d)$ is equivalent to a notoriously difficult problem of Brown Erd\H os and S\'os of determining the largest family such that any $\ell+d$ vertices span at most $d$ sets.

Second, Frankl and F\"uredi showed that $sim_\ell(n,5,2)=o(n^4)$, but that $sim_3(n,5,2)/n^{4-\epsilon}\to \infty$ for any $\epsilon>0$. (For these parameters, $sim_3(n,5,2) = ssim_3(n,5,2)$ since the only such simplex is the special one.) This result anwered a question of Erd\H os on the existence of a single hypergraph with no exponent in the corresponding Tur\'an function. It is obtained by a reduction to a result of Ruzsa and Szemer\'edi \cite{RS} on linear $3$-uniform hypergraphs without a triangle, which, in turn, is connected to subsets of integers without $3$-term arithmetic progressions.

Frankl and F\"uredi also noted that, restricting to the most popular $(k-5)$-element set, the bound above implies $s(n,k,2,k-2)=o(n^{k-1}),$ and asked whether $s(n,k,2,k-2)/n^{k-1-\epsilon}\to \infty$ for all $k\ge 6$ and any $\epsilon>0$. F\"uredi and Gerbner proved it in \cite{FG21}.

They proved the following theorem concerning $(d,\ell)$-simplices.

\begin{thm}[Frankl and F\"uredi \cite{FF2}]\label{thmff7}
  Suppose $k\ge \ell+d$ and $n\ge n_0(k)$. Then we have
  \begin{itemize}
    \item[(i)] $sim_\ell(n,k,d) = O\big(n^{\max\{k-\ell, k-d-1\}}\big)$;
    \item[(ii)] If $k\ge 2\ell+d$, then $sim_\ell(n,k,d) ={n-\ell\choose k-\ell}$;
    \item[(iii)] If $k\ge 2\ell+d$, then $ssim_\ell(n,k,d) = (\sigma_{\ell}(\m S)+o(1)){n-\ell\choose k-\ell}$,
        where $\m S$ stands for the special $(d,\ell)$-simplex.
  \end{itemize}
\end{thm}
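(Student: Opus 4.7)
The proof follows the blueprint of Theorems~\ref{thmff1}-\ref{thmff4}, passing via F\"uredi's structural theorem to a homogeneous subfamily and translating the forbidden simplex condition into a constraint on the intersection structure $\m M$.

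\textbf{Step 1: Structural reduction.} Given an extremal $\ff \subset {[n]\choose k}$ avoiding $(d,\ell)$-simplices, I would apply Theorem~\ref{thmfur} with parameter $s = (d+1)k$ (or similar) to extract a $(k,s)$-homogeneous subfamily $\ff^* \subset \ff$ with $|\ff^*| \ge c(k,s)|\ff|$, and denote $\m M = \m M(\ff^*)$. By Corollary~\ref{corfur}, $\m M$ is closed under intersection, and every $Y \in \m M$ is the kernel of a $\Delta(s)$-system in $\ff^*$; moreover Lemma~\ref{lemdeltadef} lets us realize prescribed intersections between several kernels simultaneously by actual sets in $\ff^*$.

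\textbf{Step 2: Rank bound for (i).} In analogy with Lemma~\ref{lemmanyint}, I would prove $r(\m M) \le \max\{k-\ell, k-d-1\}$. Take a minimal own set $B=\{x_1,\ldots,x_b\}$ outside $\m M$ and, for each $i$, pick $A_i \in \m M$ containing $B\setminus\{x_i\}$; set $D_i = A_i \setminus B$ inside $Y=[k]\setminus B$. Using Lemma~\ref{lemdeltadef} to realize any $d+1$ of the $A_i$ as an honest $\Delta$-system of sets in $\ff^*$, the absence of a $(d,\ell)$-simplex forces certain intersections $|D_{i_0}\cap\cdots\cap D_{i_d}|$ to avoid a specific value dictated by $\ell$, $b$, and $d$. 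Applying Corollary~\ref{cormanyint} to the $D_i$'s yields $b \le (k-b) + (\text{constant from } \ell, d)$, producing the rank bound. Since each $F \in \ff^*$ then has an own subset of size $r(\m M)$ not contained in any other intersection, we get $|\ff^*| \le {n \choose r(\m M)}$, and hence $|\ff| = O\bigl(n^{\max\{k-\ell, k-d-1\}}\bigr)$, proving (i).

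\textbf{Step 3: Exact bound (ii) for $k \ge 2\ell+d$.} I would bootstrap as in Theorem~\ref{thmff2} via an analogue of Lemma~\ref{lemrankref}: unless $\m M \supset \{Y : C \subset Y \subset [k]\}$ for some fixed $C$ with $|C|=\ell$, we in fact have $r(\m M) \le k-\ell-1$, and iteratively decomposing $\ff$ into homogeneous pieces, the ``generic'' ones contribute only $O(n^{k-\ell-1})$. On each ``special'' piece, every set $F$ has a distinguished $\ell$-subset $C(F)$ such that every $A$ with $C(F) \subset A \subsetneq F$ is the kernel of a $\Delta(s)$-system. Analogues of Propositions~\ref{propglob1}-\ref{propglob3} transfer verbatim, the proofs relying only on building a forbidden intersection configuration from these sunflowers—which here becomes a $(d,\ell)$-simplex via Lemma~\ref{lemdeltadef}. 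Disjointness of shadows of $\m G_i(C_i)$ at the appropriate level, together with the Kruskal-Katona theorem (Theorem~\ref{thmkk}), then forces almost all $F \in \ff$ to contain a single $\ell$-element set $C_1$. The final partition $\ff = \m K \sqcup \m A \sqcup \m B$ and the shadow-ratio comparison (proving no other sets can appear without shrinking $|\ff|$ below ${n-\ell\choose k-\ell}$) run parallel to the argument already given for Theorem~\ref{thmff2}.

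\textbf{Step 4: Special simplices (iii) and main obstacle.} For the special $(d,\ell)$-simplex $\m S$ we have $|K(\m S)|=\ell-1$ and $|C(\m S)|=d+\ell$; every $\ell$-crosscut has the form $K(\m S) \cup \{y_1,\ldots,y_{d+1}\}$ with $y_i \in F_i \setminus K(\m S)$. The argument mirrors Theorem~\ref{thmff3}: in the homogeneous $\ff^*$ each $F$ has an own subset $X(F)$ of size $k-\ell$, and the ``trace'' families $\m G_X = \{F \setminus X(F): X(F)=X\}$ consist of $\ell$-sets. If $|\m G_X|>\sigma_\ell(\m S)$, some subfamily of $\m G_X$ realizes a trace of $\m S$ on an $\ell$-crosscut; completing the remaining vertices of $C(\m S)$ inside the kernels of the associated $\Delta$-systems and then the private portions via Lemma~\ref{lemdeltadef} produces a copy of $\m S$ in $\ff$. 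Summing over $X$ yields $|\ff|\le(\sigma_\ell(\m S)+o(1)){n-\ell\choose k-\ell}$; the matching lower bound is from Section~\ref{sec411}. The main obstacle throughout is Step~2: pinpointing the correct forbidden intersection pattern on the $D_i$'s—because, unlike the pairwise condition of Theorem~\ref{thmff1}, the $(d,\ell)$-simplex is a $(d+1)$-wise condition, and extracting both the $k-\ell$ and the $k-d-1$ summands in the rank bound requires carefully choosing which $A_{i_j}$'s to realize and at which intersection sizes, before the Frankl-Katona machinery applies cleanly.
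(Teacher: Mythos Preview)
Your proposal is correct and follows essentially the same route as the paper. The paper says explicitly that part~(iii) is a direct consequence of Theorem~\ref{thmff4} (one simply computes $p=|K(\m S)|=\ell-1$, $q=|C(\m S)|=d+\ell$, so $\sigma_{p+1}=\sigma_\ell$), and that the exact result~(ii) is obtained by the strategy of Theorem~\ref{thmff2}; your Steps~3--4 rederive these along the lines of Theorems~\ref{thmff2}--\ref{thmff3}, which is equivalent since the paper notes that the proofs of Theorems~\ref{thmff3} and~\ref{thmff4} are presented in parallel in \cite{FF2}. Your identification of the rank-bound analogue of Lemma~\ref{lemmanyint} as the crux for~(i), and of the need to handle a $(d+1)$-wise rather than pairwise forbidden configuration, matches the paper's remark that ``in each case, one needs an appropriate analogue of Lemma~\ref{lemrankref}.''
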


\subsubsection{Examples} One example for all these theorems is, of course, the family of all sets containing a given element or a set of $\ell$ elements. The other example for Theorem~\ref{thmff7} for larger values of $\ell$ is, as usual, coming from a packing: consider a packing of $k$-sets so that any $\ell+d-2$ elements are contained in at most one set. It gives an example of the size $(1-o(1)){n\choose \ell+d-2}/{k\choose \ell+d-2}$.

In the part (i) of Theorem~\ref{thmff7} for $k\le 2\ell+d-2$, the second expression gives the maximum, and in this case there is a gap of $1$ in the exponent between the upper bound and the packing example. The case of $s(n,5,2,3)$ discussed above partially explains this phenomenon.

\subsubsection{Proofs}
Asymptotic bounds in Theorem~\ref{thmff6} for $k>d+2$, as well as Theorem~\ref{thmff7} (iii) are actually direct consequences of Theorem~\ref{thmff4}. (One just needs to calculate the sizes of the kernel and the center of these families.) In order to get exact results in all cases except the case of Theorem~\ref{thmff6} for $k=3,d=2$, Frankl and F\"uredi employ a strategy which is very similar to the strategy in the proof of Theorem~\ref{thmff2}.\footnote{We may say that the case $k=4,d=2$ corresponds to the harder case of Theorem~\ref{thmff2} when $k=2\ell+2$, which we haven't discussed. In this case, there is no uniqueness in Lemma~\ref{lemrankref}, and one requires extra arguments. The key idea is that any family not of the type as in Lemma~\ref{lemrankref} contributes not one, but two own subsets, which allows to bound the overall contribution of such families.}
In general, Delta-system proofs follow the same strategy, with one important detail: in each case, one needs an appropriate analogue of Lemma~\ref{lemrankref}.

The case $k=3, d=2$ is treated using a clever weighting (double-counting) argument, in which weights are assigned to the sets in $(k-1)$-shadows.

\subsection{Trees} We define {\em hypertrees} recursively as follows.
\begin{defn}[Hypertrees]\label{deft1}  A single edge $E$ is a hypertree. A family $\m T$  is a {\em hypertree} if there is an ordering of edges $\m T =\{E_1,\ldots, E_m\}$ such that for each $i\in[m-1]$ the collection of sets $\{E_1,\ldots, E_i\}$ is a hypertree and $E_{i+1}\cap \cup \m T = E_{i+1}\cap E_j$ for some $j\le i$. A {\em $k$-tree} is a hypertree with all edges of having size $k$.
\end{defn}

\begin{defn}[Tight hypertrees]\label{deft2} In terms of Definition~\ref{deft1}, we call a $k$-tree {\em tight} if $|E_{i+1}\cap E_i| = k-1$ for all $i\in [m-1]$.
\end{defn}

Generalizing the  Erd\H os--S\'os conjecture, which states that any graph on $n$ vertices with at least $\big\lfloor\frac{(k-1)n}2\big\rfloor+1$ edges contain a copy of any tree with $k$ edges (see \cite[page 2]{Erd64}), Kalai (personal communication to the authors of \cite{FF2}) conjectured that for any tight $k$-tree $\m T$ on $v$ vertices (i.e., with $|\cup \m T| = v$) one has
\begin{equation}\label{eqkal} ex(n, \m T)\le \frac {v-k}k {n\choose k-1}.\end{equation}
Note that $v-k = |\m T|-1$. R\"odl's packing theorem \cite{Ro} allows to show that, if true, this bound is tight up to $(1+o(1))$ term. Indeed, consider a packing of $(v-1)$-element sets such that any $(k-1)$-element set is contained in at most one of them. There are $(1+o(1)){n\choose k-1}/{v-1\choose k-1}$ sets in such a packing. Then, consider the shadow on level $k$ of this packing (cf. Sections~\ref{sec331},~\ref{sec411}). It will have size $(1+o(1)){n\choose k-1}{v-1\choose k}/{v-1\choose k-1}=(1+o(1))\frac{v-k}k{n\choose k-1}$.  Note that this example will have the size equal to the RHS of \eqref{eqkal} if it is a perfect packing.

Using a weighting argument, Frankl and F\"uredi proved the conjecture \eqref{eqkal} for {\it star-shaped trees}.
\begin{thm}[Frankl and F\"uredi \cite{FF2}]
  The conjecture \eqref{eqkal} holds for all $k$-trees $\mathcal T$ for which there is an edge $E$ such that any other edge $F$ satisfies $|E\cap F|=k-1$.
\end{thm}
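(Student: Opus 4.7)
Plan: The goal is to show $|\mathcal{F}| \le \frac{v-k}{k}\binom{n}{k-1}$ whenever $\mathcal{F} \subset \binom{[n]}{k}$ is $\mathcal{T}$-free. Write $m := v-k$ for the number of spokes, and for each $v \in E$ let $n_v \ge 0$ count the spokes of $\mathcal{T}$ that replace $v$, so $\sum_{v} n_v = m$. Double-counting pairs $(F,S)$ with $F \in \mathcal{F}$ and $S \in \binom{F}{k-1}$ yields the identity
\[
 k|\mathcal{F}| = \sum_{S \in \binom{[n]}{k-1}} d(S), \qquad d(S) := |\mathcal{F}[S]|,
\]
so it will suffice to prove the shadow-sum inequality $\sum_S d(S) \le m\binom{n}{k-1}$.

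The perfect-star case (all $a_i$ equal) is immediate: if some $(k-1)$-set $S$ has $d(S) \ge m+1$, then any $F_0 = S \cup \{x_0\} \in \mathcal{F}[S]$ together with the remaining $m$ sets $S \cup \{x_i\}$ produces a copy of $\mathcal{T}$ by taking $\hat E = F_0$ and using the $x_i$'s as distinct tips replacing $x_0$. Hence $\mathcal{T}$-freeness forces $d(S)\le m$ pointwise, and summing settles this case. For a general spoke pattern the pointwise bound can fail --- the same sunflower embeds only the perfect star. The natural substitute is a Hall-type embedding criterion: for each $F \in \mathcal{F}$ and bijection $\sigma: E \to F$, the bipartite graph joining spoke $i$ to $b \in [n]\setminus F$ whenever $(F \setminus \sigma(a_i))\cup\{b\} \in \mathcal{F}$ admits a matching respecting the multiplicities $n_v$, and hence yields a copy of $\mathcal{T}$ centered at $F$, as soon as $d(F\setminus \sigma(v)) \ge m+1$ for every $v \in E$ with $n_v > 0$. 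This forces, for every $F\in \mathcal{F}$, the existence of a bottleneck $S_F \in \binom{F}{k-1}$ with $d(S_F) \le m$ (in a position compatible with the pattern $(n_v)$ under every $\sigma$).

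The main obstacle is then a factor of $k$. Simply charging each $F$ to one such bottleneck gives only $|\mathcal{F}| \le m\binom{n}{k-1}$, which misses the target by a factor of $k$. The heart of the weighting argument is to recover this factor by distributing the weight of $F$ uniformly among all of its $k$ $(k-1)$-subsets. Concretely, one must show that whenever $d(S) > m$ for some $S$, the rigidity imposed on $\mathcal{F}[S]$ by $\mathcal{T}$-freeness propagates to correlated deficits $d(S') \le m$ on many shadow-sets $S' = (S\setminus y)\cup\{x\}$ with $x \in \mathcal{F}(S)$, in such a way that the local excess $d(S)-m$ is paid back by these neighbouring deficits. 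Carrying out this bookkeeping --- tailored to the pattern $(n_v)$ and to the possible bijections $\sigma$ that the Hall condition would have to fail for --- is where the real work lies, and is what closes the gap to the sharp constant $\frac{m}{k}$.
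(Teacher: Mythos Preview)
Your framework is correct and matches what the survey records (it describes the Frankl--F\"uredi proof only as ``a weighting argument'' and does not reproduce it): the double count $k|\ff|=\sum_S d(S)$, the Hall-type embedding criterion, and the pointwise bound $d(S)\le m$ for the perfect-star case are all right. But the proposal is not a proof. You say yourself that ``carrying out this bookkeeping \ldots\ is where the real work lies'' --- and that bookkeeping is precisely the content of the theorem; everything before it is setup.

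Two concrete gaps. First, the sufficient condition you isolate for embedding (``$d(F\setminus\sigma(v))\ge m+1$ for every $v$ with $n_v>0$'') is correct but too coarse: its failure for all $\sigma$ only shows that at most $t-1$ of the $(k-1)$-faces of $F$ have degree exceeding $m$, where $t=|\{v:n_v>0\}|$. Feeding that back into the double count gives $(k-t+1)|\ff|\le m\binom{n}{k-1}$, which is still off by a factor $k/(k-t+1)$ whenever $t>1$. The full Hall condition on \emph{all} subsets $V'\subseteq E$, not just singletons, is what carries the missing information. Second, the ``propagation of deficits'' picture in your final paragraph has no verifiable content: no weight is named, no inequality is stated or proved. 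The actual Frankl--F\"uredi weighting assigns $w(S)=1/d(S)$ to each $(k-1)$-set in the shadow of $\ff$; since $\sum_S w(S)\,d(S)\le\binom{n}{k-1}$ trivially, the theorem reduces to the pointwise inequality
\[
\sum_{u\in F}\frac{1}{d(F\setminus\{u\})}\ \ge\ \frac{k}{m}\qquad\text{for every }F\in\ff,
\]
which is tight on the packing example. Establishing this from the failure of the full Hall condition over all bijections $\sigma$ is the step your sketch does not supply.
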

Moreover, they managed to show that equality holds only if the family is the $k$-th shadow of a perfect packing of the type described above!

\section{Stability, structure, and supersaturation via Delta-systems}\label{sec6}
\subsection{Stability} A powerful and very popular method of proving results in extremal combinatorics is via stability results. In a large sense, it can be any `bootstrapping' strategy. We start by discerning some structure in all large/nearly-optimal examples for a particular problem. This structure, sometimes together with more restrictive assumptions on how close is the size of the structure to extremal, allows us to discern even more structure. We may repeat this process several times until eventually getting the exact extremal object. For results in extremal set theory, typically, there are at most $3$ steps in such proof strategy:  (i) a coarse structural result, which in some way allows to compactly describe the structure of most of the object; (ii) a $99\%$-stability result, which states that almost all nearly-extremal families fall inside the expected extremal example; (iii) proof of the exact result.

Stability approach has rich history, going back to works of Erd\H os and Simonovits (see \cite{Simo}). Their method is discussed in the survey of Keevash \cite{Kee2}.  For the problems discussed in this survey, stability was pushed forward, extensively used and popularized in the works of Mubayi and coauthors, see \cite{MV}. In these works, stability results are made into a separate statement, which since then became a common practice. Stability results for extremal questions in a different parameter regime were also developed in the Boolean Analysis perspective in the works of Keller, Lifshitz and coauthors (see \cite{KL}, \cite{EKL}) and by Zakharov and the author \cite{KuZa}. (See Section~\ref{sec7} for a brief discussion of the last two methods.) In this survey, I wanted to point out that stability results are a natural part of the Delta-system method and, as such, stability method was used from the early days of the Delta-system method.

The earlier variant of the Delta-system method decomposition (such as, say, in Theorem~\ref{thmdef}) by its nature gives a low-uniformity family that contains most of the family. I.e., if the $(n,k,L)$-family in Theorem~\ref{thmdef} in the Delta-system decomposition does not have a set of uniformity $\ell_1$, then it has size $O_k(n^{|L|-1})$. Next, all but $O_k(n^{|L|-2})$ sets of the family are covered by sets of sizes $\ell_1,\ell_2$ etc. This opens up possibilities for a series of coarse structural results.  We also refer the reader to Section~\ref{secbase}, where we touch upon decompositions as in the early variant of the Delta-system method, as well as to the paper of Kostochka and Mubayi \cite{KM2} for more examples.

I wanted to focus on post-Theorem~\ref{thmfur} variant of the method, since it seems that inherent stability method, contained in the proof of Frankl and F\"uredi \cite{FF1} is somewhat overlooked.  As an example, Keevash, Mubayi, and Wilson in \cite{KMW} prove a $99\%$ stability result for the forbidden intersection $1$, in the following form:
\begin{thm} For any $\epsilon>0$ there exists $\delta>0$ such that if $|\ff|\ge(1-\delta) {n-2\choose k-2}$ and $\ff$ avoids intersection $1$, then there is $X$, $|X|=2$, such that $|\ff\setminus \ff[X]|\le \epsilon n^{k-2}$.
\end{thm}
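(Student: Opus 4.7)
The plan is to extract this stability statement from the proof of Theorem~\ref{thmff2} with $\ell = 1$, by making its Kruskal--Katona concentration step quantitative. We work in the natural regime $k \ge 4$, where $\binom{n-2}{k-2}$ is the conjectured extremum. First, apply the Delta-system decomposition from that proof: write $\ff = \ff_1^* \sqcup \cdots \sqcup \ff_m^*$ as an iterated peeling of $(k,k+1)$-homogeneous structures, stopped at $\ff_m^*$ of rank at most $k-3$, which contributes $|\ff_m^*| = O(n^{k-3})$. Lemma~\ref{lemrankref} supplies each earlier $\ff_i^*$ with an associated pair $C_i$, and Proposition~\ref{propglob2} ensures that if $F, F'$ lie in distinct earlier pieces with $|F\cap F'| \ge 2$ then $C_i = C_j$. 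Grouping by pair, for each $2$-element $C$ set $\m G_C := \bigcup_{i<m,\, C_i = C} \ff_i^*$; the pairs $C$ with $\m G_C \ne \emptyset$ are pairwise disjoint (else two sets from different groups would share exactly one element, contradicting that $\ff$ avoids intersection $1$), and Proposition~\ref{propglob3} says the shadows $\partial^{(1)}(\m G_C) \subset [n]$ are pairwise disjoint.

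Next, write $|\m G_C| = |\m G_C(C)| = \binom{x_C}{k-2}$ for real $x_C \ge k-2$. Kruskal--Katona (Theorem~\ref{thmkk}) yields $|\partial^{(1)}\m G_C(C)| \ge x_C$, and combining with the two elements of $C \subset \partial^{(1)}\m G_C$ and the disjointness above gives $\sum_C (x_C + 2) \le n$, hence $\sum_C x_C \le n-2$ and $x_C \le n-2$. Since $x \mapsto \binom{x}{k-2}$ is strictly convex for $k \ge 4$, for fixed $x^* := \max_C x_C$ the sum $\sum_C \binom{x_C}{k-2}$ is maximized by concentrating the remaining mass on a single value, yielding the two-point bound
\[
\sum_C \binom{x_C}{k-2} \le \binom{x^*}{k-2} + \binom{n-2-x^*}{k-2}.
\]
Setting $x^* = (1-\alpha)(n-2)$, the right-hand side equals $\bigl((1-\alpha)^{k-2} + \alpha^{k-2} + o(1)\bigr)\binom{n-2}{k-2}$, which for $k \ge 4$ and $\alpha$ bounded below by any $\alpha_0 > 0$ is at most $(1 - \eta(\alpha_0,k))\binom{n-2}{k-2}$ with $\eta(\alpha_0,k) > 0$ (indeed, the function $(1-\alpha)^{k-2}+\alpha^{k-2}$ equals $1$ at $\alpha=0$ and strictly decreases on $[0,1/2]$).

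Given $\epsilon > 0$, choose $\alpha_0 = \alpha_0(\epsilon,k)$ so small that $\binom{(1-\alpha_0)(n-2)}{k-2} \ge \bigl(1 - \tfrac{\epsilon}{2(k-2)!}\bigr)\binom{n-2}{k-2}$ for large $n$, and set $\delta := \eta(\alpha_0,k)/2$. If $|\ff| \ge (1-\delta)\binom{n-2}{k-2}$, then $\sum_C \binom{x_C}{k-2} \ge |\ff| - O(n^{k-3}) > (1-\eta(\alpha_0,k))\binom{n-2}{k-2}$, and the contrapositive of the previous paragraph forces $x^* \ge (1-\alpha_0)(n-2)$. Taking $X$ to be the pair $C^*$ attaining this maximum, $\m G_{C^*} \subseteq \ff[X]$, so
\[
|\ff \setminus \ff[X]| \le |\ff| - |\m G_{C^*}| + O(n^{k-3}) \le \tfrac{\epsilon}{2(k-2)!}\binom{n-2}{k-2} + O(n^{k-3}) \le \epsilon n^{k-2}
\]
for $n$ large, completing the proof. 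The main technical obstacle is the quantitative convexity step: while the qualitative version of this concentration is already implicit in the proof of Theorem~\ref{thmff2}, obtaining an explicit dependence $\delta = \delta(\epsilon)$ requires controlling $\eta(\alpha_0,k)$ as well as the $O(n^{k-3})$ tail from the residual rank-$\le k-3$ piece and from the gap between $\binom{n}{k-2}$ and $\binom{n-2}{k-2}$. Everything else is a direct transcription of the Frankl--F\"uredi machinery.
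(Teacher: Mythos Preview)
Your approach is the same one the paper uses to derive Theorem~\ref{thmstab} (specialized to $\ell=1$): iterate F\"uredi's structure theorem, invoke Lemma~\ref{lemrankref} to get centres $C(F)$, group sets by centre, use Proposition~\ref{propglob3} to make the $1$-shadows disjoint, and then run a Kruskal--Katona concentration step. The paper carries out that last step via the ratio inequality $\binom{n}{k-2}\ge (n/x_1)^{k-3}\sum_i|\m G_i(C_i)|$ (cf.\ \eqref{eqstab2}); your convex two-point bound is an equivalent repackaging.

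There is one genuine gap. You write ``We work in the natural regime $k\ge 4$'' and then invoke Lemma~\ref{lemrankref}, but that lemma is stated and proved only for $k\ge 2\ell+3$, i.e.\ $k\ge 5$ here. The case $k=4$ is precisely the delicate $k=2\ell+2$ case that the paper explicitly sets aside (see the remark after Theorem~\ref{thmstab} and the footnote in Section~\ref{sec333}): there the intersection structure $\m M$ need not have the unique-centre form of Lemma~\ref{lemrankref}, so you cannot simply attach a pair $C(F)$ to every $F$. As written, your argument covers $k\ge 5$ only.

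Two smaller imprecisions. First, you set $\m G_C:=\bigcup_{i<m,\,C_i=C}\ff_i^*$, treating $C_i$ as a single $2$-element subset of $[n]$ attached to the whole piece $\ff_i^*$. In fact the $C_i$ supplied by Lemma~\ref{lemrankref} lives in $\m M(\ff_i^*)\subset 2^{[k]}$; different $F$ in the same $\ff_i^*$ can have different centres $C(F)\subset[n]$. The correct grouping (which the paper uses and which Proposition~\ref{propglob3} is about) is per set: $\m G_C=\{F\in\ff_1^*\cup\cdots\cup\ff_{m-1}^*:C(F)=C\}$. Second, the deduction ``$\sum_C(x_C+2)\le n$, hence $\sum_C x_C\le n-2$'' actually yields $\sum_C x_C\le n-2h$, and your ``concentrate the remaining mass on a single value'' justification of the two-point bound is not literally convexity (which would push mass to the endpoints of a bounded interval) but superadditivity $\sum_{i\ge 2}\binom{x_i}{k-2}\le\binom{\sum_{i\ge 2}x_i}{k-2}$. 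Both fixes are routine and do not affect the outcome.
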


Let us explain, how the proof of Frankl and F\"uredi from Section~\ref{sec333} almost immediately implies the following stronger result.

\begin{thm}\label{thmstab} Assume that $n\ge n_0(k)$. There exists $C = C(k)$ such that the following holds for any $\epsilon =\epsilon(n)$. Assume that $k\ge 2\ell+3$ and $\ff\subset {[n]\choose k}$ avoids intersection $\ell$ and $|\ff|\ge (1-\epsilon){n-\ell-1\choose k-\ell-1}$. Then there is $X$, $|X|=\ell+1$, such that \begin{equation}\label{eqstab00}|\ff\setminus \ff[X]|\le \Big(\frac{\ell\epsilon}{(1-\epsilon)(k-2\ell-1)}\Big)^{\frac{k-\ell-1}\ell} {n-\ell-1\choose k-\ell-1}+C_k n^{k-\ell-2}.\end{equation}
\end{thm}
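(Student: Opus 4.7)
The plan is to follow the early stages of the proof of Theorem~\ref{thmff2} verbatim: for $n\ge n_0(k)$, decompose $\ff = \ff_1^*\sqcup\ldots\sqcup\ff_{m-1}^*\sqcup\m R$ into $(k,k+1)$-homogeneous structures, with $|\m R|\le C_kn^{k-\ell-2}$. By Lemma~\ref{lemrankref}, every $F\in\ff_i^*$ for $i\le m-1$ admits a $(k-\ell-1)$-element own subset $X(F)$ and a center $C(F):=F\setminus X(F)\in{[n]\choose \ell+1}$. Collecting the distinct centers as $C_1,\ldots,C_h$ and setting $\m G_i:=\{G\in\bigcup_j\ff_j^*\colon C(G)=C_i\}$, Proposition~\ref{propglob3} ensures that the shadows $\partial^{(\ell)}\m G_i$ are pairwise disjoint subsets of ${[n]\choose \ell}$. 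I take $X:=C_{i^*}$ with $i^*$ maximizing $|\m G_i|$, relabel so $i^*=1$, and note that every $F\in\ff\setminus\ff[X]$ lies either in $\m R$ or in some $\m G_i$ with $i\ge 2$, so that
\[ |\ff\setminus\ff[X]|\le\sum_{i\ge 2}|\m G_i|+C_kn^{k-\ell-2}. \]

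To bound the sum, record $|\m G_i(C_i)|={x_i\choose k-\ell-1}$ using the Kruskal--Katona ordering. Theorem~\ref{thmkk} then gives $|\partial^{(\ell)}\m G_i(C_i)|\ge{x_i\choose \ell}$, and the shadow disjointness yields $\sum_i{x_i\choose \ell}\le{n\choose \ell}$. Writing $\beta:=(k-\ell-1)/\ell>1$ (since $k\ge 2\ell+3$), the Kruskal--Katona function $y\mapsto{x(y)\choose k-\ell-1}$ defined by ${x(y)\choose \ell}=y$ is asymptotically $(\ell!)^\beta y^\beta/(k-\ell-1)!$, hence superadditive. Applied to the $x_i$'s with $i\ge 2$, this gives
\[ \sum_{i\ge 2}|\m G_i|\le{x_{\mathrm{eff}}\choose k-\ell-1},\qquad{x_{\mathrm{eff}}\choose \ell}={n\choose \ell}-{x_1\choose \ell}. \]

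Now parametrize by $\eta:=1-|\m G_1|/{n-\ell-1\choose k-\ell-1}\in[0,1]$. Using ${n\choose \ell}/{n-\ell-1\choose \ell}=1+o(1)$, elementary asymptotics give ${x_1\choose \ell}/{n-\ell-1\choose \ell}=(1-\eta)^{1/\beta}(1+o(1))$ and then ${x_{\mathrm{eff}}\choose k-\ell-1}\le(\eta/\beta)^\beta{n-\ell-1\choose k-\ell-1}(1+o(1))$. Combining with the lower bound $\sum_i|\m G_i|\ge(1-\epsilon){n-\ell-1\choose k-\ell-1}-C_kn^{k-\ell-2}$ yields the scalar constraint $\eta-(\eta/\beta)^\beta\le\epsilon+o(1)$. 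Setting $r:=\ell\epsilon/((1-\epsilon)(k-2\ell-1))$, direct algebra gives the identity $r\beta-\epsilon=r+\epsilon^2/(1-\epsilon)$; since $r\ge r^\beta$ for $r\in[0,1]$ and $\beta\ge 1$, this forces $r\beta-r^\beta\ge\epsilon$. Monotonicity of $g(\eta):=\eta-(\eta/\beta)^\beta$ on $[0,\beta]$ then gives $\eta\le r\beta$, so that
\[ \sum_{i\ge 2}|\m G_i|\le(\eta/\beta)^\beta{n-\ell-1\choose k-\ell-1}(1+o(1))\le r^\beta{n-\ell-1\choose k-\ell-1}+O(n^{k-\ell-2}), \]
which is the claimed bound.

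The main technical obstacle is the asymptotic bookkeeping that preserves the sharp constant $\ell/((1-\epsilon)(k-2\ell-1))$: the cruder inequality ${x\choose k-\ell-1}\le{x\choose \ell}^{\beta}$ would introduce an extraneous factor $(k-\ell-1)!/(\ell!)^\beta$ and miss the right base in the exponent. The correct normalization rests on the first-order expansion ${n\choose \ell}-{x_1\choose \ell}\approx(\eta/\beta){n-\ell-1\choose \ell}$ together with the near-exact identity $r\beta-\epsilon=r+\epsilon^2/(1-\epsilon)$. In the regime $r\ge 1$ the claim is vacuous, since then $|\ff|\le{n-\ell-1\choose k-\ell-1}$ by Theorem~\ref{thmff2} already provides the bound, so I may assume $r<1$ throughout.
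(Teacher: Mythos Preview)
Your proof follows the paper's route through the same decomposition $\ff=\ff_1^*\sqcup\cdots\sqcup\ff_{m-1}^*\sqcup\m R$, the same grouping by centers $C_i$, and the same use of Proposition~\ref{propglob3} together with the Kruskal--Katona theorem applied twice. The difference lies only in how you organize the second application. The paper bounds $x_1$ from below via the exact product inequality $\prod_{j=\ell}^{k-\ell-2}\frac{n-j}{x_i-j}\ge (n/x_i)^{k-2\ell-1}$, deduces that the residual shadow budget $\sum_{i\ge2}{x_i\choose\ell}\le{y\choose\ell}$ forces each $x_i\le y$ individually, and then reruns the same product chain with $y$ in place of $x_1$. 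You instead merge the $i\ge2$ contributions into a single $x_{\mathrm{eff}}$ via superadditivity of the Lov\'asz function and reduce to the scalar inequality $\eta-(\eta/\beta)^\beta\le\epsilon$. These are two packagings of the same idea, and both land on the same constant.

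Two points where your write-up is looser than the paper's. First, the superadditivity of $y\mapsto{x(y)\choose k-\ell-1}$ does not follow merely from the asymptotic $cy^\beta$; you should either invoke convexity of the Lov\'asz function with $f(0)=0$ (which holds since $k-\ell-1>\ell$), or sidestep it as the paper does by using ${x_i\choose\ell}\le{y\choose\ell}\Rightarrow x_i\le y$ termwise. Second, your $o(1)$'s must be tracked as $O(1/n)$: the theorem allows $\epsilon=\epsilon(n)$ and demands an additive error $C_kn^{k-\ell-2}$, so the passage from $g(\eta)\le\epsilon+o(1)$ and $g(r\beta)\ge\epsilon$ to $\eta\le r\beta$ needs the slack $g(r\beta)-\epsilon=\epsilon^2/(1-\epsilon)+(r-r^\beta)$ to absorb the error, which only works once you replace the asymptotic approximations (e.g.\ ${x_1\choose\ell}/{n-\ell-1\choose\ell}=(1-\eta)^{1/\beta}(1+o(1))$) by genuine inequalities with $O(1/n)$ remainders. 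The paper's exact product formulation avoids this issue.
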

We remark that the same could be done for $k=2\ell+2$, but since we haven't treated this case in Section~\ref{sec333}, we omit it here as well.
The proof we present is essentially the same as in \cite{FF1}, just with some explicit calculations filled in.

Note the dependence on $\epsilon$ in the error term in \eqref{eqstab00}. For example, in the Keevash--Mubayi--Wilson setting, we get $\epsilon^{k-2}{n-2\choose k-2}+O(n^{k-3})$ error term. We exploit the same phenomenon as for the proof of the fact that every monotone property has a threshold by Bollob\'as and Thomason \cite{BT}. A very similar argument was used by Keller and Lifshitz in \cite{KL}.

\begin{proof} In the proof, we use $O$-notation as if $k$ is fixed and $n\to \infty$.
  Going through Section~\ref{sec333}, we first can limit ourselves to $\ff^*:=\ff_1^*\cup\ldots \cup \ff_{m-1}^*$ by removing at most $C_k(n^{k-\ell-2})$ sets (cf. \eqref{eqrem1}). Then we group the sets in $\ff^*$ by their centers $C_i$, and form families $\m G_i$. We see that their shadows are disjoint, and get the following two inequalities. First, as in \eqref{eqshadowsum}, 
  \begin{equation}\label{eqstab0}\sum_{i\in [h]}|\partial^{(\ell)}(\m G_i(C_i))|\le {n\choose \ell}.\end{equation}
  At the same time, the inequality $|\ff^*|\ge (1-\epsilon){n-\ell-1\choose k-\ell-1}-C_kn^{k-\ell-2}$ translates into
  \begin{multline}\label{eqstab1}\sum_{i\in [h]}|\m G_i(C_i)|\ge (1-\epsilon){n-\ell-1\choose k-\ell-1}-O(n^{k-\ell-2})\\= (1-\epsilon){n\choose k-\ell-1}-O(n^{k-\ell-2}).\end{multline}
In Section~\ref{sec333} we said that \begin{quote} We then apply the Kruskal--Katona theorem to $\m G_i(C_i)$ and conclude that the last two displayed inequalities are compatible only if one of the families, say $\m G_1$, contains a bulk of the sets: if $|\ff\setminus \m G_1|= O(n^{k-\ell-2})$.\end{quote}
Here, we will give the precise argument (in our slightly more general setting). Assume that $|\m G_i(C_i)| = {x_i\choose k-\ell-1}$ with $x_1\ge x_2\ge\ldots\ge x_h$. Then, by the Kruskal--Katona theorem we get that $$|\partial^{(\ell)}(\m G_i(C_i))|\ge {x_i\choose \ell}= \prod_{j=\ell}^{k-\ell-2}\frac{k-1-j}{x_i-j}{x_i\choose k-\ell-1}=\prod_{j=\ell}^{k-\ell-2}\frac{k-1-j}{x_i-j}|\m G_i(C_i)|.$$
In order to conveniently substitute it into \eqref{eqstab0} and \eqref{eqstab1}, we multiply \eqref{eqstab0} by ${n\choose k-\ell-1}/{n\choose \ell}=\prod_{j=\ell}^{k-\ell-2}\frac{n-j}{k-j-1}$:
\begin{align}
 \notag {n\choose k-\ell-1} &\ge \sum_{i\in [h]}\frac{{n\choose k-\ell-1}}{{n\choose \ell}}|\partial^{(\ell)}(\m G_i(C_i))| \\
 \notag &\ge  \sum_{i\in [h]}\prod_{j=\ell}^{k-\ell-2}\frac{n-j}{k-j-1}\cdot \frac{k-j-1}{x_i-j} |\m G_i(C_i)|\\
 \notag &\ge   \prod_{j=\ell}^{k-\ell-2}\frac{n-j}{x_1-j} \sum_{i\in [h]}|\m G_i(C_i)|\\
 \notag &\ge  \Big(\frac{n}{x_1}\Big)^{k-2\ell-1} \sum_{i\in [h]}|\m G_i(C_i)|\\
\label{eqstab2}  &\ge \Big(\frac{n}{x_1}\Big)^{k-2\ell-1} (1-\epsilon-O(1/n)){n\choose k-\ell-1}.
\end{align}
The last inequality is \eqref{eqstab1}. These simple calculations imply that $\Big(\frac{n}{x_1}\Big)^{k-2\ell-1} (1-\epsilon-O(1/n)) \le 1$, which  implies that $x_1\ge (1-\frac{\epsilon}{(1-\epsilon)(k-2\ell-1)})n - O(1)$. From here we can derive that $|\m G_1|\ge \big(1-\frac{(k-\ell-1)\epsilon}{(1-\epsilon)(k-2\ell-1)}\big){n\choose k-\ell-1} - O(n^{k-\ell-2})$. Combined with \eqref{eqff14}, we infer a stability with the number of sets not contained in $\m G_1$ bounded by $\frac{(k-\ell-1)\epsilon}{(1-\epsilon)(k-2\ell-1)}{n\choose k-\ell-1} + O(n^{k-\ell-2})$. Instead, let us use the Kruskal--Katona theorem once again and get a much better bound.

We have 
\begin{align*}
|\m \partial^{(\ell)}\m G_1(C_1)|\ge {x_1\choose \ell}&\ge {(1-\frac{\epsilon}{(1-\epsilon)(k-2\ell-1)}n -O(1)\choose \ell}
\\
 &\ge {n\choose \ell}-\frac{n\epsilon+O(1)}{(1-\epsilon)(k-2\ell-1)} {n\choose \ell-1}.\end{align*}
 Thus, $$\sum_{i=2}^h |\partial^{(\ell)}(\m G_i(C_i))|\le \frac{n\epsilon+O(1)}{(1-\epsilon)(k-2\ell-1)} {n\choose \ell-1}=\frac{\ell(\epsilon+O(1/n))}{(1-\epsilon)(k-2\ell-1)} {n\choose \ell}={y\choose \ell},
$$
where $y\le n \Big(\frac{\ell\epsilon+O(1/n)}{(1-\epsilon)(k-2\ell-1)})\Big)^{1/\ell}$. Using the Kruskal--Katona theorem, for each $i\ge 2$ we have $x_i\le y$. We substitute the inequality above into the same chain of inequalities as \eqref{eqstab2}:
\begin{align}
 \notag \frac{\ell(\epsilon+O(1/n))}{(1-\epsilon)(k-2\ell-1)}{n\choose k-\ell-1} &\ge \sum_{i=2}^h\frac{{n\choose k-\ell-1}}{{n\choose \ell}}|\partial^{(\ell)}(\m G_i(C_i))| \\
 \notag &\ge  \sum_{i=2}^h\prod_{j=\ell}^{k-\ell-2}\frac{n-j}{k-j-1}\cdot \frac{k-j-1}{x_i-j} |\m G_i(C_i)|\\
 \notag &\ge   \prod_{j=\ell}^{k-\ell-2}\frac{n-j}{y-j} \sum_{i=2}^h|\m G_i(C_i)|\\
 \notag &\ge  \Big(\frac{n}{y}\Big)^{k-2\ell-1} \sum_{i=2}^h|\m G_i(C_i)|
\end{align}
Rearranging the resulting inequality and substituting the bound on $y$, we get
$$\sum_{i=2}^h|\m G_i(C_i)|\le \Big(\frac{\ell(\epsilon+O(1/n))}{(1-\epsilon)(k-2\ell-1)}\Big)^{\frac{k-\ell-1}\ell}{n\choose k-\ell-1}.$$
Adding the remainder $|\ff\setminus \ff^*|$, we get the statement.
\end{proof}

Can we say something similar for more complicated hypergraphs? Both coarse structure results and $99\%$ stability for a large class of graphs that are embeddable into $2$-contractible $k$-trees (see Definition~\ref{defcont}) was obtained in a work of F\"uredi and Jiang \cite[Theorem~4.2]{FJ15}. For this problem, the extremal function is of the order $\sigma_1(\m H){n-1\choose k-1}$, where $\sigma_1(\m H)$ is the size of cross-cut minus $1$, as defined in Section~\ref{sec42}. In this `quasi-dense' regime the intersection structure $\m M$ is easier to control.

Can this be extended further? Let us take an example of Theorem~\ref{thmff3}. The problem is that the argument at the end of the proof is a certain double-counting argument, which arrives at a contradiction in case some of the $(k-\ell-1)$-sets  that are own in some $\ff_i^*$ can be extended by too many centers (i.e., the same subset is appearing as an own subset in too many $\ff_i^*$). In an upcoming note with Noskov, we are going to show, that this obstacle is possible to bypass. Let us sketch the argument, which shows that, provided the family  $\ff\subset{[n]\choose k}$ as in Theorem~\ref{thmff3} satisfies \begin{equation}\label{eqboundsun} |\ff|\ge (\phi(\ell+1,s)-\epsilon){n-\ell-1\choose k-\ell-1},\end{equation} then there is a family $\m C\subset {[n]\choose \ell+1}$ with no $\Delta(s+1)$-system, and such that $|\ff\setminus \ff[\m C]|<\delta{n-\ell-1\choose k-\ell-1}$.

Consider the Johnson graph $J(n,k-\ell-1)$ on vertices $X\in {[n]\choose k-\ell-1}$, that is, connect two sets if their symmetric difference has size $2$. Take two sets $X_1,X_2$ that form an edge in $J$. Then, looking at the argument in Section~\ref{sec41}, we can infer that not only $\m G_{X_1}$, $\m G_{X_2}$ cannot contain a $\Delta(s+1)$-system individually, but also does their union $\m G_{X_1}\cup \m G_{X_2}.$ Provided that the family $\ff$ satisfies \eqref{eqboundsun}, for all but an $\epsilon$-fraction of sets $X\in {[n]\choose k-\ell-1}$ we must have $|\m G_{X}| = \phi(\ell+1,s)$. But then any such two sets $X_1, X_2$ that form an edge must also have $\m G_{X_1}= \m G_{X_2}.$ We can then use expansion properties of $J(n,k-\ell-1)$ that there should be a `giant component' in $J$ of size $(1-\delta){n\choose k-\ell-1}$ on the sets $X$ such that $|\m G_X| = \phi(\ell+1,s)$. All of them will have the same family $\m G_X$. This family $\m G_X$ is the desired family $\m C$.

\subsection{A more quantitative view, structure, and supersaturation}\label{sec62}
In a series of papers \cite{Fu14}, \cite{FJ15}, \cite{JL}, Delta-system method was adapted to give coarse structural results, in the sense of the previous section. Let us discuss the paper of Jiang and Longbrake \cite{JL}, in which they took a new  point of view on the Delta-system method, which allows to get more quantitative results.  First, they managed to prove a structural result in the spirit of Lemma~\ref{lemrankref}, but gave a decomposition of the family without directly using any forbidden structure. They also made F\"uredi's Theorem~\ref{thmfur} more quantitative. These two developments together allowed them to prove {\it supersaturation} results. A supersaturation result for the function $ex(n,\m H)$ is a result that states that, whenever we have a family $\ff$ of size just above $ex(n,\m H)$, not only we have one copy of $\m H$, but we actually have many. Precise definitions of `just above' and `many' depend on the situation. These questions were popular in combinatorics in the recent years, and in the context of extremal set theory, it was particularly well-studied for Kneser/disjointness graphs. See, e.g., \cite{BL}, \cite{DGS}.

In this section, we describe the contributions of \cite{JL}. First, the authors provide the following quantitative variant of F\"uredi's theorem.

\begin{thm}\label{thmfurjl}
  Let $s,k\ge 2$ be integers with $s\ge 2k$. Then there exists a positive constant $c(s,k)$ (akin to the constant in Theorem~\ref{thmfur}), such that the following holds. Let $\ff\subset {[n]\choose k}$. Then there is a $k$-partite subfamily $\ff^*\subset \ff$, such that the following holds:
  \begin{itemize}
    \item $|\ff^*|\ge c(k,s)|\ff|$;
    \item $\pi(\ff^*|_A) = \pi(\ff^*|_B)$ for any $A,B\in \ff$;
    \item For every $J = A\cap B,$ such that $A,B\in \ff^*$, and every $x\in [n]\setminus J$ we have $|\ff^*(J\cup \{x\})|\le \frac 1s |\ff^*(J)|$;
    \item For each $J$ as above we have $|\ff^*(J)|\ge \max \big\{s,\frac 1{2k}\frac {|\ff^*|}{n^{|J|}}\big\}$.
  \end{itemize}
\end{thm}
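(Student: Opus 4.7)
The plan is to adapt the proof of F\"uredi's Theorem~\ref{thmfur} by replacing ``being a $\Delta(s)$-kernel'' with a combined spread-and-degree condition. Call $Y\subset[n]$ \emph{good in $\ff'$} if (a) $|\ff'(Y\cup\{x\})|\le \frac1s|\ff'(Y)|$ for every $x\in[n]\setminus Y$, and (b) $|\ff'(Y)|\ge \max\big\{s,\ \frac1{2k}|\ff'|/n^{|Y|}\big\}$---exactly the target conditions (3) and (4). First apply the Erd\H os--Kleitman lemma to pass to a $k$-partite subfamily, losing at most a factor $k^k/k!$. Throughout, work with $\m M(\ff')=\bigcup_{A\in\ff'}\pi(\ff'|_A)\subset 2^{[k]}$ and with the set $\m B(\ff')\subset\m M(\ff')$ of projections $\pi(Y)$ of good $Y$.

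The iteration is a spread analogue of Lemma~\ref{lemfur1}. For each $A\in\ff'$, scan subsets $X\subsetneq A$ with $\pi(X)\in\m M(\ff')$ in order of increasing size, and at the first $X$ that is not good, peel $A$ into a bucket $\ff'_{\pi(X)}$; let $\ff'_0$ be the survivors. Condition (2) (homogeneity) for $\ff'_0$ is verified as in F\"uredi's proof, with Observation~\ref{obsdelta2} replaced by the following spread substitute: if $X\subset B$ is good in $\ff'$ with $|B|\le k$, then by a union bound $\sum_{y\in B\setminus X}|\ff'(X\cup\{y\})|\le (k/s)|\ff'(X)|\le |\ff'(X)|/2$ using $s\ge 2k$, so at least half of $\ff'[X]$ lies in sets $F$ with $F\cap B=X$, exhibiting the needed witness. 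For each nonempty bucket $\ff'_I$ the violation at $I$ is of type (a) or (b): if (a), pick a witness element $v_X$ for each relevant $X$ and restrict to $\bigsqcup_X\ff'_I[X\cup\{v_X\}]$, preserving a $1/s$-fraction and removing $I$ from $\m M$; if (b), the offending sets have total mass at most $O(sn^{k-1})$ summed over all bad $Y$, which is below the $1/(2k\cdot 2^k)$-fraction threshold and can simply be discarded. Taking the largest of $\ff'_0$ and the refined buckets, a single iteration loses a factor at most $1+ks\cdot 2^k$.

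Either $\ff^*:=\ff'_0$ satisfies all four conditions, or $|\m M|$ has strictly decreased. Since $|\m M|\le 2^k$, termination happens in at most $2^k$ iterations, giving $c(k,s)\ge (k!/k^k)(1+ks\cdot 2^k)^{-2^k}$. The main obstacle I expect is the subtle interplay between conditions (a) and (b) across iterations: shrinking $\ff'$ can repair (b) at one intersection while simultaneously concentrating density and breaking (a) at another, so the order in which one scans and peels violations in the refinement step matters. Processing small $|X|$ first, and using the $s\ge 2k$ slack both in the spread substitute for Observation~\ref{obsdelta2} and to absorb the union-bound losses, should resolve this; but formalising invariants that survive the iteration is where the bulk of the technical work will lie.
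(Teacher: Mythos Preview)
Your plan follows the same route the paper describes: run F\"uredi's refinement (Lemma~\ref{lemfur1}) with the spread condition in place of ``$\Delta(s)$-kernel'', and handle the degree lower bound by discarding the offending sets. The spread substitute for Observation~\ref{obsdelta2} via the union bound and $s\ge 2k$ is exactly the right idea.

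There is one genuine gap: your treatment of type (b) violations. The bound ``$O(sn^{k-1})$'' only helps if $|\ff'|\gg sn^{k-1}$, which is nowhere assumed, so as written the discard step could swallow the whole family. Two observations repair this. First, the sub-case $|\ff'(Y)|<s$ never needs separate handling: if (a) holds at $Y$ and $\ff'(Y)\ne\emptyset$, pick any $x\in F\setminus Y$ for some $F\in\ff'[Y]$ and get $|\ff'(Y)|\ge s\,|\ff'(Y\cup\{x\})|\ge s$; so ``$|\ff'(Y)|<s$'' already forces an (a)-violation and is absorbed by your (a)-branch. Second, for the remaining sub-case $|\ff'(Y)|<\tfrac{1}{2k}|\ff'|/n^{|Y|}$, the count is purely fractional (this is precisely what the paper invokes): for fixed $I=\pi(Y)$ there are at most $n^{|I|}$ candidates $Y$, each contributing fewer than $\tfrac{1}{2k}|\ff'|/n^{|I|}$ sets, so at most $\tfrac{1}{2k}|\ff'|$ in total per pattern. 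Summing over $I\in\m M$ costs a factor $|\m M|\le 2^k$; you may need to adjust the constant $\tfrac{1}{2k}$ accordingly or iterate the cleanup, but this is routine and holds with no assumption on $|\ff|$.

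With these fixes, your ``subtle interplay'' worry dissolves: one can simply run the (a)-type F\"uredi refinement to completion and then perform the degree cleanup once at the end, exactly as the paper does, rather than interleave the two.
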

The proof is the same as that of Theorem~\ref{thmfur}, with the additional use of the fact that the sets which violate $|\ff^*(J)|\ge \frac 1{2k}\frac {|\ff^*|}{n^{|J|}}$ for $J$ all together  cover no more than half of the sets from the family, and so we can get rid of them.

Second, the authors prove the following intersection structure lemma.

\begin{lem}\label{lemstrucjl}
  Let $k\ge 3$ be an integer. Let $\m J\subset 2^{[k]}$ be a family of proper subsets of $[k]$ that is closed under intersection and of rank at least $k-1$. Then one of the following must hold.
  \begin{enumerate}
    \item There exists $B\subset [k]$ of size $k-2$, such that $2^{B} \subset \m J$.
    \item there exists a unique $i$, such that for any $F$ such that $i\in F$, we have $F\in \m J$. Moreover, for every $D\subset [k]\setminus \{i\}$ with $|D|\ge k-2$ we have $D\notin \m J$.
  \end{enumerate}
\end{lem}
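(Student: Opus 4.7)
The plan is to classify the situation by the set $I := \{i\in[k] : [k]\setminus\{i\}\in\m J\}$ of indices whose complementary $(k-1)$-subset lies in $\m J$. I will argue that $|I|=k-1$ is the only regime that can produce case~(2), with the unique $i$ being the element of $[k]\setminus I$, and that every other configuration yields case~(1).

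First I would dispatch the extreme $|I|=k$: closure under intersection immediately puts every subset of size $\le k-1$ into $\m J$, so any $(k-2)$-set $B$ satisfies $2^B\subseteq\m J$. Next, if $|I|\le k-2$, pick distinct $a,b\in[k]\setminus I$ and set $B:=[k]\setminus\{a,b\}$. Every $j\in I$ lies in $B$, so $B$ is not contained in any $(k-1)$-set of $\m J$; the rank-$\ge k-1$ hypothesis then forces $B\in\m J$. The crux is to establish $2^B\subseteq\m J$, which I will prove by upward induction on $|B\setminus X|$ for $X\subseteq B$. When $|B\setminus X|\ge 2$, pick $e\in B\setminus X$ and write $X = (X\cup\{e\})\cap(B\setminus\{e\})$; both factors are strictly closer to $B$ and so lie in $\m J$ by induction. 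When $|B\setminus X|=1$, i.e.\ $X=B\setminus\{e\}$, split on $e$: if $e\in I$ then $X = B\cap([k]\setminus\{e\})$; if $e\notin I$ then $e\ne a,b$ forces $|[k]\setminus I|\ge 3$, and I invoke the rank hypothesis a second time to place the auxiliary $(k-2)$-set $S := [k]\setminus\{a,e\}$ in $\m J$ (every $j\in I$ lies in $S$), yielding $X = B\cap S$.

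For $|I|=k-1$, let $i$ be the unique element of $[k]\setminus I$. Intersecting the sets $[k]\setminus\{j\}$ over non-empty $S\subseteq I$ gives $[k]\setminus S \in \m J$, so every proper subset $F$ of $[k]$ containing $i$ lies in $\m J$. Now a dichotomy: if some $(k-2)$-subset $D$ of $[k]\setminus\{i\}$ lies in $\m J$, set $B:=D$; for any $X\subseteq B$ the set $X\cup\{i\}$ is a proper subset of $[k]$ containing $i$, hence lies in $\m J$, and $X=(X\cup\{i\})\cap B\in\m J$ (since $i\notin B$), which proves~(1). Otherwise no $D\subseteq[k]\setminus\{i\}$ of size $\ge k-2$ is in $\m J$ (the size-$(k-1)$ candidate $[k]\setminus\{i\}$ is excluded by $i\notin I$), and together with the preceding observation this is exactly~(2). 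Uniqueness of $i$ is automatic: any other $i'$ satisfying the hypothesis of~(2) would force $[k]\setminus\{i\}\in\m J$ via $i\in[k]\setminus\{i'\}$, contradicting $i\notin I$.

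The main obstacle is the induction step $|B\setminus X|=1$ with $e\notin I$ in the $|I|\le k-2$ case, which requires invoking the rank hypothesis a second time to manufacture the auxiliary $(k-2)$-set $S$. The rest is a clean interplay between the rank hypothesis and closure under intersection; once the induction is in place, the dichotomy in the $|I|=k-1$ case follows with essentially no extra work.
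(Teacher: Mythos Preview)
Your argument is correct. The paper does not actually prove this lemma---it is quoted from \cite{JL} and only discussed in context---so there is no in-paper proof to compare against. Your case split on $|I|$ (where $I$ indexes the $(k-1)$-sets present in $\m J$) is clean: the key observation that any $(k-2)$-set avoiding two elements of $[k]\setminus I$ cannot sit inside a $(k-1)$-member of $\m J$, and hence by rank must itself belong to $\m J$, is exactly what drives both the $|I|\le k-2$ case and the auxiliary-set step when $e\notin I$. The downward induction on $|B\setminus X|$ is sound, and the dichotomy in the $|I|=k-1$ case is handled correctly. One cosmetic point: in the uniqueness argument you write ``via $i\in[k]\setminus\{i'\}$'' where the operative containment is $i'\in[k]\setminus\{i\}$ (to invoke the hypothesis for $i'$); the two are of course equivalent to $i\ne i'$, so this is only a phrasing wrinkle.
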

Previously, very similar statements were proved in \cite{FF2} and \cite{FJ15}, with the small difference that in the second case nothing was stated concerning sets of size $k-2$. (We note that this was mentioned in passing in its analogue for the forbidden one intersection \cite[Lemma 5.5a]{FF1}.) This is a key point for \cite{JL}. Using it, the authors obtain a variant of Propositions~\ref{propglob2} and~\ref{propglob3}, and then give a variant of the disjoint shadows argument that exploits the Kruskal--Katona theorem as in that same Section~\ref{sec333}. As a result, they obtain the following structural result.
\begin{thm}
Let $k,a,\epsilon>0$ be constants. Consider a family $\ff\subset {[n]\choose k}$ of size $a{n\choose k-1}$. Then one of the following should hold.
\begin{enumerate}
  \item There is a subfamily $\ff^*\subset\ff$ as in Theorem~\ref{thmfurjl} and with intersection structure as in part 1 of Lemma~\ref{lemstrucjl}.
  \item There is a subfamily $\ff'\subset \ff$ and $W\subset [n]$ such that (i) $|W|=O_k(1)$; (ii) $|\ff'|\ge (1-\epsilon)|\ff|$; (iii) for every $F\in \ff'$ we have $|W\cap F| = 1$.
\end{enumerate}
\end{thm}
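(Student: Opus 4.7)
The plan is to prove the dichotomy by an iterative peeling procedure. Initialize $\ff^{(0)}:=\ff$ and $W:=\emptyset$. At step $t$, provided $|\ff^{(t)}|\ge \frac12\epsilon|\ff|$, apply Theorem~\ref{thmfurjl} with a fixed $s\ge 2k$ to obtain a $k$-partite substructure $\ff^{(t),*}\subset\ff^{(t)}$ with $|\ff^{(t),*}|\ge c(k,s)|\ff^{(t)}|$ and intersection structure $\m J:=\m M(\ff^{(t),*})$, which by Corollary~\ref{corfur} is closed under intersection. Each $F\in\ff^{(t),*}$ carries an own subset of size equal to the rank of $\m J$, so a rank at most $k-2$ would force $|\ff^{(t),*}|\le\binom{n}{k-2}$, contradicting $|\ff^{(t),*}|=\Omega(n^{k-1})$ for $n$ large. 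Thus the rank is at least $k-1$ and Lemma~\ref{lemstrucjl} applies. If conclusion~(1) of the lemma holds at any step, $\ff^{(t),*}\subset\ff$ witnesses case~(1) of the present theorem and we stop.

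Otherwise conclusion~(2) of Lemma~\ref{lemstrucjl} holds with distinguished index $i$, and $X_i$ denotes the corresponding part of $\ff^{(t),*}$. For each $x\in X_i$ define $\m H_x:=\{F\setminus\{x\}: F\in\ff^{(t),*},\,F\cap X_i=\{x\}\}$. The key structural observation is that if $F,F'\in\ff^{(t),*}$ have different $X_i$-elements, then $|F\cap F'|\le k-3$: indeed $F\cap F'\cap X_i=\emptyset$ forces $\pi(F\cap F')\subset[k]\setminus\{i\}$; Corollary~\ref{corfur} places $\pi(F\cap F')$ in $\m J$; and conclusion~(2) forbids subsets of $[k]\setminus\{i\}$ of size $\ge k-2$. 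Hence by $k$-partiteness $|F\cap F'|=|\pi(F\cap F')|\le k-3$. Consequently the $(k-2)$-shadows $\partial^{(k-2)}\m H_x$ are pairwise disjoint as $x$ ranges over $X_i$, since a common $(k-2)$-set $S$ would yield $|F\cap F'|\ge|S|=k-2$ for some $F,F'$ with different $X_i$-elements.

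Writing $|\m H_x|=\binom{y_x}{k-1}$ (for real $y_x\ge k-1$), the Kruskal--Katona theorem (Theorem~\ref{thmkk}) gives $|\partial^{(k-2)}\m H_x|\ge\binom{y_x}{k-2}$; with $Y:=\max_{x\in X_i}y_x$ and the identity $\binom{y}{k-1}=\frac{y-k+2}{k-1}\binom{y}{k-2}$,
\[
|\ff^{(t),*}|=\sum_{x}\binom{y_x}{k-1}\le\frac{Y-k+2}{k-1}\sum_{x}\binom{y_x}{k-2}\le\frac{Y-k+2}{k-1}\binom{n}{k-2}.
\]
Combining with $|\ff^{(t),*}|\ge c(k,s)\cdot\frac12\epsilon a\binom{n}{k-1}$ forces $Y\ge c_1 n$ for a constant $c_1=c_1(k,s,a,\epsilon)>0$, and therefore some $x^*\in X_i$ satisfies $|\m H_{x^*}|\ge\rho\,|\ff^{(t),*}|$ with $\rho=\rho(k,s,a,\epsilon)>0$. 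Add $x^*$ to $W$ and set $\ff^{(t+1)}:=\ff^{(t)}\setminus\ff^{(t)}[\{x^*\}]$; this step removes at least a $c(k,s)\rho$-fraction of $\ff^{(t)}$.

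Because each iteration shrinks $|\ff^{(t)}|$ by a fixed positive factor, the procedure terminates after at most $T=O_k(1)$ steps, at which point $|\ff^{(T)}|<\frac12\epsilon|\ff|$. Put $\ff':=\{F\in\ff:|F\cap W|=1\}$. Sets with $F\cap W=\emptyset$ are exactly those in $\ff^{(T)}$ and number fewer than $\frac12\epsilon|\ff|$; sets with $|F\cap W|\ge 2$ number at most $\binom{|W|}{2}\binom{n-2}{k-2}=O(n^{k-2})<\frac12\epsilon|\ff|$ once $n$ is large. Hence $|\ff'|\ge(1-\epsilon)|\ff|$ with $|W|=O_k(1)$, delivering case~(2). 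The main obstacle is the constant-fraction concentration on a single element of $X_i$ under conclusion~(2) of the lemma; this is exactly where the disjoint $(k-2)$-shadow bound combined with Kruskal--Katona, enabled by the intersection restriction from case~(2), does the heavy lifting.
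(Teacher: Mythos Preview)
Your proof is correct and follows essentially the approach indicated in the paper: case~2 of Lemma~\ref{lemstrucjl} forces that sets with different ``centres'' (here, different $X_i$-elements) intersect in at most $k-3$ elements, whence their $(k-2)$-shadows are disjoint, and Kruskal--Katona then extracts a heavy element. The paper only sketches the argument (``a variant of Propositions~\ref{propglob2} and~\ref{propglob3}, and then a variant of the disjoint shadows argument''); the one organisational difference is that the intended proof, following \cite{JL} and the template of Section~\ref{sec333}, decomposes $\ff$ into many homogeneous pieces $\ff_1^*,\ff_2^*,\ldots$ at once and groups all their centres simultaneously, whereas you extract one homogeneous piece, peel off a single heavy element, and iterate. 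Both routes rest on the same two ingredients and yield the same conclusion; your iterative version is arguably cleaner to write, at the cost of slightly looser constants (your $|W|$ depends on $a,\epsilon$ as well as $k$, which is fine since all three are declared constants).

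Two minor cosmetic points: your appeal to Corollary~\ref{corfur} for intersection-closure is formally stated for Theorem~\ref{thmfur} rather than Theorem~\ref{thmfurjl}, but the same proof applies since Theorem~\ref{thmfurjl} retains property~(3) of Theorem~\ref{thmfur}; and the statement ``$\pi(F\cap F')\in\m J$'' follows directly from the definition of $\m J=\pi(\ff^*|_F)$, with no corollary needed.
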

As the authors point out, previously similar results were obtained in \cite{Fu14}, \cite{FJ15}, but under the assumption that $\ff$ avoids a certain subfamily, which we cannot assume for the supersaturation problem.
Now assume that we are trying to prove that in a family $\ff$ of size $(1+\epsilon)ex(n, \m H)$ for some $\m H$ there are many copies of $\m H$. If $\m H$ is chosen right, then in the first case  we should be able to find many copies of $\m H$ because of the form of intersection structure (having the $2^B$ part comes handy). In the second case, we get a good understanding of the coarse structure of the family $\ff$, and then exploit it to find many copies of $\m H$. Note that for any $x\in W$ the ratio $|\ff(x)|/{n-1\choose k-1}$ is constant, i.e., we are in the `dense regime'. In the dense regime, it is often easy to provide embeddings or cross-embeddings of most structures using counting arguments.  

\subsection{A weaker version of F\"uredi's structural theorem with better dependencies}\label{sec63}
In this subsection, we discuss some of the contributions of the recent paper by Janzer, Jin, Sudakov and Wu \cite{JJSW}. We focus on two major developments for the delta-system method.

First, the authors showed that $c(k,s)$ in Theorem~\ref{thmfur} can be at most $2^{-2^{(1+o(1))k}}$ in some situations. The reason for this is the property that all sets in $\ff^*$ from F\"uredi's theorem should have the same intersection structure. The authors of \cite{JJSW} basically combine families on disjoint ground sets with different possible intersection structures (specifically, of the form $\bigcup_{i=0}^{\lceil k/2\rceil-1}{[k]\choose i}\cup \mathcal I$ for an arbitrary $\m I\subset {[k]\choose \lceil k/2\rceil}$) and show that essentially all sets in $\ff^*$ must come from one of such families. In order to show the `essentially' part, they require that, for any given set  $A\in \m I$, the families can be decomposed into large delta-systems with centers that are projected into $A$. The construction of such families is, however, non-trivial. The latter `delta-system decomposition' property suggests that families defined by subspaces should be handy. Indeed, they construct their families using subspaces in high-dimensional spaces $\mathbb F_p^d$. They encode which sets from ${[k]\choose \lceil k/2\rceil}$ to include in $\m A$ via linear dependencies.

Second, they prove the following theorem.
\begin{thm}[\cite{JJSW}]\label{thmjjsw} Suppose $\ff\subset {[n]\choose k}$ and let $s\ge 2$. Then, writing $\beta_{k,s} = (25\cdot 2^k ks)^{-k}$, there exists a subfamily $\ff'\subset \ff$ with $|\ff|\ge \beta_{k,s}|\ff|$ such that
\begin{itemize}
  \item For every distinct $F_1,F_2\in \ff'$ there is a $\Delta(s)$-system in $\ff$ with kernel $F_1\cap F_2$;
  \item in fact, for every $F\in \ff'$, there exists an intersection-closed set system $\m I_F$ such that $\m I(F,\m F'):=\{F\cap F': F'\in \ff'\}\subset \m I_F\subset 2^F\setminus \{F\}$, and every element of $\m I_F$ is the kernel of a $\Delta(s)$-system in $\ff$.
\end{itemize}
\end{thm}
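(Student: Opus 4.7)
I would proceed by strong induction on $k$. The base $k=1$ is immediate: any two distinct singletons intersect in $\emptyset$, and whenever $|\ff|\ge s$ the family $\ff$ contains a matching of size $s$, making $\emptyset$ a $\Delta(s)$-kernel; we then take $\ff'=\ff$ with $\m I_F=\{\emptyset\}$ (otherwise $\ff'=\{F\}$ works trivially). For the inductive step write $\alpha:=25\cdot 2^k k s$, so $\beta_{k,s}=\alpha^{-k}$, and split on whether $\ff$ is $\alpha$-spread.

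\emph{Concentrated case.} Suppose some $X\subsetneq[n]$ with $1\le|X|\le k-1$ satisfies $|\ff[X]|\ge\alpha^{-|X|}|\ff|$; pick such $X$ maximal under containment. Then $\ff(X)\subset\binom{[n]\setminus X}{k-|X|}$ is $\alpha$-spread and in particular $\alpha_{k-|X|}$-spread, where $\alpha_m:=25\cdot 2^m ms$. Apply the inductive hypothesis to $\ff(X)$ to produce $\m G\subset\ff(X)$ of size $\ge\beta_{k-|X|,s}|\ff(X)|$ together with intersection-closed families $\widetilde{\m I}_A\subset 2^A\setminus\{A\}$ whose elements are $\Delta(s)$-kernels in $\ff(X)$. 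Lift to $\ff':=\{A\cup X: A\in\m G\}\subset\ff$ with $\m I_{A\cup X}:=\{G\cup X: G\in\widetilde{\m I}_A\}$. The kernel property transfers: a $\Delta(s)$-system in $\ff(X)$ with kernel $G$ yields, by reattaching $X$ to every member, a $\Delta(s)$-system in $\ff[X]\subset\ff$ with kernel $G\cup X$. Intersection-closure is preserved by the identity $(G_1\cup X)\cap(G_2\cup X)=(G_1\cap G_2)\cup X$. Finally $|\ff'|\ge\beta_{k-|X|,s}\cdot\alpha^{-|X|}|\ff|\ge\beta_{k,s}|\ff|$, the last step being the arithmetic check $\alpha_{k-|X|}^{k-|X|}\le\alpha^{k-|X|}$, true by monotonicity of $\alpha_m$.

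\emph{Spread case.} Here the crucial consequence of $\alpha$-spreadness is this: if $G\subsetneq[n]$ with $|G|\le k-1$ is \emph{not} a $\Delta(s)$-kernel in $\ff$, then $\ff(G)$ has no matching of size $s$, hence admits a cover of size at most $(s-1)(k-|G|)$ by single vertices, so
$$
|\ff[G]|\le (s-1)(k-|G|)\cdot\frac{|\ff|}{\alpha^{|G|+1}}\le\frac{|\ff|}{25\cdot 2^k\cdot\alpha^{|G|}}.
$$
Every ``bad'' proper subset is thus very unpopular. The goal is to select $\ff'\subset\ff$ of size $\ge\beta_{k,s}|\ff|$ such that no $\bigcap S$ with $S\subset\ff'$, $|S|\ge 2$, is bad; the intersection-closure of $\{F\cap F': F'\in\ff'\}$ then consists of $\Delta(s)$-kernels and may be taken as $\m I_F$.

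\emph{Main obstacle.} The difficulty is that we must forbid bad subsets from appearing not only as pairwise intersections $F_1\cap F_2$ but as arbitrary multi-way intersections, and a naive Tur\'an-type independent-set argument in the ``bad-pair graph'' gives only a constant-size $\ff'$. My planned attack is a probabilistic cleanup: include each $F\in\ff$ in a tentative $\ff'$ independently with probability $p$, then delete one set from every surviving tuple $S$ with $\bigcap S$ bad. For fixed bad $G$, the expected number of surviving tuples with $\bigcap S=G$ and $|S|=c$ is at most $\binom{|\ff[G]|}{c}p^c$, and summing over $c\ge 2$ the generating function bounds this by $(1+p)^{|\ff[G]|}-1-p|\ff[G]|\le(p|\ff[G]|)^2$ whenever $p|\ff[G]|\le 1$. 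Summed over bad $G$ of size $j$ and using the shadow identity $\sum_{|G|=j,\ G\text{ bad}}|\ff[G]|\le\binom{k}{j}|\ff|$ together with the pointwise bound $|\ff[G]|\le|\ff|/(25\cdot 2^k\alpha^j)$, the expected total deletion forms a geometric series in $1/\alpha$ bounded by $O(p|\ff|/2^k)$. The delicate step is calibrating $p$ just above $\beta_{k,s}$ so that the uniform condition $p|\ff[G]|\le 1$ holds (the factor $25\cdot 2^k$ hidden in $\alpha$ is precisely what provides this slack) and the cleanup still leaves $|\ff'|\ge\beta_{k,s}|\ff|$; I expect this quantitative verification, rather than the structural setup, to be the main technical crux.
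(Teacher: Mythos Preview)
Your approach is genuinely different from the paper's, and the spread case has a real gap that cannot be repaired by ``calibrating $p$''.

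Concretely: you need $p|\ff[G]|\le 1$ for every bad $G$ in order for the generating-function bound $(1+p)^{m_G}-1-pm_G\le (pm_G)^2$ to apply. But the only upper bound you have on $m_G:=|\ff[G]|$ for a bad $G$ with $|G|=j\ge 1$ is $m_G\le |\ff|/(25\cdot 2^k\alpha^{j})$, which \emph{scales with $|\ff|$}. Taking $p\approx\beta_{k,s}=\alpha^{-k}$ gives $p m_G\approx |\ff|/(25\cdot 2^k\alpha^{k+j})$, and since $|\ff|$ can be as large as $\binom{n}{k}$, this ratio is unbounded. If instead you force $p m_G\le 1$ by choosing $p\lesssim \alpha/|\ff|$, then the expected sample size $p|\ff|$ is $O_{k,s}(1)$, a constant independent of $n$, whereas you must output $|\ff'|\ge\beta_{k,s}|\ff|$. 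Either way the inequality fails. The underlying reason is that independent set sampling plus cleanup is an independent-set argument in a ``bad-intersection'' hypergraph that can be dense (indeed, a single bad vertex $v$ can lie in a $\Theta(|\ff|/\alpha^2)$-fraction of all sets, forcing $\Theta(|\ff|^2/\alpha^4)$ bad pairs), and dense hypergraphs have only constant-sized independent sets.

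The paper sidesteps this entirely by randomising over \emph{vertices}, not sets: colour $[n]$ uniformly with $N=25\cdot 2^k ks$ colours, keep those $F$ that are rainbow and for which, for every $A\subset F$ that is not a $\Delta(s)$-kernel, the small blocker set $X(A)$ (a cover of $\ff(A)$ of size $\le k(s-1)$) receives no colour of $F$ outside $F$. A union bound shows at least half of $\ff$ survives; then restrict to the most popular $k$-palette. The point is that this vertex-level condition is \emph{local to each $F$} yet automatically controls all multi-way intersections: if $A=F\cap F_1\cap\cdots\cap F_t$ were bad, the part of $X(A)$ using the chosen palette would be forced inside every $F_i$, hence inside $A$, contradicting $X(A)\cap A=\emptyset$. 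No cleanup is needed, and the fraction retained is exactly $\binom{N}{k}^{-1}/2\ge\beta_{k,s}$.
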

It should be clear that the second property implies the first one. The proof of the theorem is actually simple, and we sketch it here, with a slightly worse constant. Somewhat simplifying, we produce a random partition into much more than $k$ parts and choose only sets between some $k$ of these parts, showing that they satisfy the properties.
\begin{proof}
  First, color the ground set randomly into $N = 25\cdot 2^{k} ks$ colors and note that, for a set $F\in \ff$ with probability at least $1/2$ we have the following:
  \begin{itemize}
    \item the set $F$ is rainbow, i.e., all vertices get different colors;
    \item Assume that $A\subset F$ is such that $\ff(A)$ has no matching of size $s$, and let $X = X(A)$ be a hitting set for $\ff(A)$, $|X|\le k(s-1)$. Then, for all $A\subset F$ and for all corresponding $X(A)$ the colors of $X(A)\setminus F$ are disjoint from colors used for $F$.
  \end{itemize}
 Next, take a subfamily $\ff''\subset \ff$ consisting only of sets that satisfy the two properties above, and note that $2|\ff''|\ge |\ff|$. Among possible ${N\choose k}$ palettes of $k$ colors, choose the palette with the largest number of sets in $\ff''$. Let $\ff'\subset \ff''$ be a subfamily of these sets with a fixed palette, and note that $|\ff'|\ge |\ff|/2{N\choose k}$. We claim that $\ff'$ satisfies the claimed properties. Of course, the condition on the size is satisfied.

 Now, put $$\m I_F:= \big\{F\cap F_1\cap \ldots\cap F_t: F_1,\ldots, F_t \in \ff'\setminus \{F\}\big\}.$$
 Clearly, $\m I(F,\m F')\subset \m I_F$ and the latter is intersection-closed. Next, take $A\in \m I_F$ and assume that $A$ is not a kernel of a $\Delta(s)$-system. Note that, for any $F'\in \ff'$ such that $F'\supset A$, the set $X(A)\setminus F'$ must have colors that are disjoint from colors of $F'$. But $F'$ is rainbow and we have $k$ colors overall. Therefore, the part $X'(A)$ of $X(A)$ that is colored with the chosen $k$ colors must lie inside $F'$. We have that if $A = F\cap F_1\cap\ldots \cap F_t$ for some $F_1,\ldots , F_t\in \ff'$, then $X'(A) \subset F\cap F_1\cap\ldots \cap F_t$, a contradiction since $X'(A)\ne \emptyset $ and $X'(A)\cap A = \emptyset$. We conclude that any set in $\m I_F$ is a kernel of a $\Delta(s)$-system.
 \end{proof}

The family $\ff'$ from Theorem~\ref{thmjjsw} has weaker properties than the family $\ff^*$ from Theorem~\ref{thmfur}, but for many applications of the $\Delta$-system method it is sufficient. In particular, it is sufficient for the forbidden one intersection Theorem~\ref{thmff2} and for the forbidden $\Delta$-system with a fixed core Theorem~\ref{thmff3}.

 \section{Approaches to constructing bases}\label{secbase}
We have given a description of the `original' sunflower base construction in Lemma~\ref{lemdeltabase}. In this section, we discuss the proof of that lemma and give several other possible approaches. Let us abstract the setup of Lemma~\ref{lemdeltabase}.  The goal of the lemma was to give a construction of a base $\mathcal B$ such that
\begin{itemize} \item the upper closure of $\bb$ contains the family $\ff$;
\item the family $\bb$ has the same properties as $\ff$ (i.e., $L$-intersecting);
\item the number of sets of different sizes in $\mathcal B$ is controlled.
\end{itemize}
In general, these are the properties that we would like to see when constructing a base.

The bounds on $|\mathcal B^{(k)}|$ in Lemma~\ref{lemdeltabase} were rather bad: we argued that it did not have a $\Delta(k^r)$-system. The proof is what we could call a `bottom to top construction, top to bottom argument': when constructing the base family, we start from finding smaller kernels and then go to larger kernels. When seeking for contradiction, to the contrary, we start with a larger $\Delta$-system with bigger kernels and get a smaller $\Delta$-system with a smaller kernel.

A more efficient approach is to run a `top to bottom construction, bottom to top argument'. Let us describe a possible procedure. Start by including in $\m B$ the sets from the original family. Then, we simplify $\m B$ as follows: if some sets in the current family form a $\Delta(k+1)$-system, then replace them by the kernel of the $\Delta$-system. We perform this exchange as long as it is possible. Clearly, the resulting family has no $\Delta(k+1)$-systems (a much better restriction than no $\Delta(k^r)$-systems).
\begin{cla} If the original family $\ff\subset {[n]\choose \le k}$ had all pairwise intersections in $L$, then the resulting family $\mathcal B$ has all intersections  in $L$.
\end{cla}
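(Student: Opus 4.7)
My plan is to prove the claim by induction on the number of simplification steps of the procedure, showing that the invariant ``all pairwise intersections lie in $L$'' is preserved at every step. The base case is immediate since we start the procedure with $\ff$ itself. For the inductive step, suppose $\m B'$ is the family at some intermediate step, with all pairwise intersections in $L$, and suppose we locate a $\Delta(k+1)$-system $F_1,\ldots,F_{k+1}\in\m B'$ with kernel $K$, which we then replace by $K$ to obtain $\m B''$. The only new pairs to check are those of the form $\{K,G\}$ for $G\in\m B''\setminus\{K\}$; the other pairs are inherited from $\m B'$.

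The key tool is Observation~\ref{obsdelta2}, which I would first extend slightly: its statement concerns sets $A$ of size exactly $k$, but the same proof yields the conclusion for any $A$ with $|A|\le k$. Indeed, the sets $F_i\setminus K$, $i\in[k+1]$, are pairwise disjoint, so if $A$ intersected each of them non-trivially, then $|A\setminus K|\ge k+1$, contradicting $|A|\le k$. Thus some $F_i$ satisfies $A\cap F_i=A\cap K$. I would then note that throughout the procedure all sets in $\m B'$ have size at most $k$, since the starting family lives in ${[n]\choose \le k}$ and each replacement produces a kernel that is a subset of the removed sets.

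Applying the extended observation to any $G\in\m B''\setminus\{K\}$ (which has $|G|\le k$) and to the $\Delta(k+1)$-system $F_1,\ldots,F_{k+1}$, I obtain $|G\cap K|=|G\cap F_i|$ for some $i$. Since $G$ was not removed in the current step, $G\ne F_i$, and $G,F_i\in\m B'$; by the inductive hypothesis $|G\cap F_i|\in L$, hence $|G\cap K|\in L$. This closes the induction.

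I do not expect genuine obstacles in this argument — the heart of the matter is really that Observation~\ref{obsdelta2} is tailor-made for this ``bottom to top'' argument. The only mild care required is (i) the extension of the observation from $|A|=k$ to $|A|\le k$, and (ii) the book-keeping remark that the $F_i$'s have been deleted in forming $\m B''$, so $G$ is necessarily distinct from each of them, ensuring that the intersection $|G\cap F_i|$ is indeed one of the pairwise intersections controlled by the inductive hypothesis.
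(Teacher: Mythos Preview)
Your proof is correct and follows essentially the same approach as the paper: induction on the steps of the procedure, using the pigeon-hole fact behind Observation~\ref{obsdelta2} to find some $F_i$ with $G\cap F_i=G\cap K$. Your presentation is slightly more explicit (forward invariant rather than contradiction, and you spell out the extension of Observation~\ref{obsdelta2} to $|A|\le k$, which the paper uses tacitly), but the substance is the same.
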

\begin{proof}
  The proof is by induction on the step of the procedure. Assume that the current simplified family is $\bb'$, and $A,B\in \mathcal B'$ satisfy $|A\cap B|\notin L$. It cannot be that both sets $A,B$ belong to $\ff$, and thus one of them, say $A$, is a kernel of a $\Delta(k+1)$-system $A_1,\ldots, A_{k+1}$ that was replaced by $A$ at some earlier step of the procedure. Moreover, we may assume that $B$ appeared in the simplified family before $A$.  But then for some $i$ we have $B\cap A_i = B\cap A$, and thus $|B\cap A_i|\notin L$, a contradiction with the inductive assumption.
\end{proof}

In recent papers, Zakharov and the author \cite{KuZa} and then the author \cite{Kup54} developed the {\it peeling-simplification procedure},  probably the most efficient approach for the construction of  base-like structures for the family and for subsequent analysis of the structure of the family. First, we need to recall the notion of $r$-spread families. A family $\ff$ is {$r$-spread} for some $r>1$ if $|\ff(X)|< r^{-|X|}|\ff|$ for any set $X$. We will see  that $r$-spread families in many ways behave like sunflowers with $r$ petals, albeit they are much easier to find. Let us first give an analogue of the `top to bottom construction' above.

Start by including in $\m B$ the sets from the original family $\mathcal F\subset {[n]\choose \le k}$. Then, we simplify $\m B$ as follows: if there is a set $X$ and a family of sets $\mathcal G\subset \m B$, such that, first, all sets in $\mathcal G$ contain $X$ and, second, $\mathcal G(X)$ is $k$-spread, then replace $\m G$ by $X$ in the current family. We perform this exchange as long as it is possible.\footnote{Note that, technically, a family $\m B(X)$ with $X\in \m B$ being inclusion-maximal satisfies the condition since it has size $1$, but any restriction would have size $0$. But replacing $\m G(X)= \{X\}$ with $X$ does not change the family, so we stop the replacement if these are the only replacements left.} Clearly, the resulting family has no $k$-spread subfamilies.
\begin{cla} If the original family $\ff\subset {[n]\choose \le k}$ had all pairwise intersections in $L$, then the resulting family $\mathcal B$ has all intersections in $L$.
\end{cla}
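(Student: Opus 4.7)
The plan is to mirror the previous claim's proof by induction on the number of simplification steps performed. The base case is trivial because $\mathcal B = \mathcal F$ initially, and $\mathcal F$ has all pairwise intersections in $L$ by hypothesis. For the inductive step, suppose that the current family $\mathcal B'$ has all pairwise intersections in $L$, and that we perform one more replacement: a subfamily $\mathcal G \subset \mathcal B'$ whose trace $\mathcal G(X)$ is $k$-spread gets collapsed to the single set $X$. The only new intersections in $(\mathcal B' \setminus \mathcal G) \cup \{X\}$ to check are those of the form $|X \cap B|$ with $B \in \mathcal B' \setminus \mathcal G$, so it suffices to show $|X \cap B| \in L$ for each such $B$.

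Assume for contradiction that $m := |X \cap B| \notin L$ for some such $B$, and set $Y := B \setminus X$. Since every $G \in \mathcal G$ contains $X$, we have $G \cap B = (X \cap B) \sqcup (G(X) \cap Y)$, and hence $|G \cap B| = m + |G(X) \cap Y|$. By the inductive hypothesis $|G \cap B| \in L$, while $m \notin L$, so $|G(X) \cap Y| \ge 1$, i.e.\ $G(X) \cap Y \ne \emptyset$ for every $G \in \mathcal G$.

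I would now invoke $k$-spreadness of $\mathcal G(X)$ together with the crude bound $|Y| \le |B| \le k$ and use a union bound:
$$\bigl|\{G \in \mathcal G : G(X) \cap Y \ne \emptyset\}\bigr| \le \sum_{y \in Y} |\mathcal G(X)(\{y\})| < \sum_{y \in Y} k^{-1} |\mathcal G(X)| \le |\mathcal G(X)|.$$
Hence some $G \in \mathcal G$ satisfies $G(X) \cap Y = \emptyset$, contradicting the conclusion of the previous paragraph, and so $|X \cap B| \in L$ as required.

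The only mildly delicate point is the extremal case $|Y| = k$ (which forces $|B| = k$ and $X \cap B = \emptyset$), where the union bound is essentially tight and we rely crucially on the \emph{strict} inequality in the definition of spreadness; since the excerpt's definition is strict, this causes no issue. A small bookkeeping remark is that the intermediate set $B$ is allowed to be a kernel introduced at an earlier step, but such a kernel is by construction a subset of some original set in $\mathcal F$ and therefore has size at most $k$, so $|Y| \le k$ remains valid throughout.
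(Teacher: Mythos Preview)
Your proof is correct and follows essentially the same approach as the paper: induction on the number of simplification steps, then for the new kernel $X$ and any other set $B$, use $k$-spreadness of $\mathcal G(X)$ together with $|B\setminus X|\le k$ and a union bound to find $G\in\mathcal G$ with $G\cap B = X\cap B$, yielding the contradiction $|G\cap B|\notin L$. The paper writes the same union bound in the equivalent form $|\mathcal G|-\sum_{x\in B\setminus X}|\mathcal G(x)|>0$.
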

\begin{proof}
  The argument is very similar to the previous one. The proof is by induction on the step of the procedure. Assume that the current simplified family is $\bb'$, and $A,B\in \mathcal B'$ satisfy $|A\cap B|\notin L$. Find the family $\mathcal G$ that was replaced by $A$ in the procedure. Then
  $$|\mathcal G|-\sum_{x\in B\setminus A}|\mathcal G(x)|> \Big(1-\frac {|B\setminus A|}k\Big)|\mathcal G|\ge \Big(1-\frac {k}k\Big)|\mathcal G|= 0.$$
    Thus, we can find a set $A'\in \mathcal G$ such that $A'\cap B =A\cap B$. Consequently, $|B\cap A'|\notin L$, a contradiction with the inductive assumption.
\end{proof}

Let us now bound the sizes of layers of $\bb$. It is not difficult to see that the following observation is valid.
\begin{obs}\label{obsspread}
  If $\m G\subset {[n]\choose \ell}$ is such that there is no $X$ such that $\m G(X)$ is $r$-spread, then $|\m G|\le r^\ell$.
\end{obs}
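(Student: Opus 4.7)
My plan is a direct induction on the uniformity $\ell$.

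The base case $\ell=0$ is immediate: the only candidates for $\m G$ are $\emptyset$ and $\{\emptyset\}$. The latter is easily seen to be $r$-spread (for every non-empty $X$, $\{\emptyset\}(X)=\emptyset$ and $0<r^{-|X|}$ since $r>1$), so taking $X=\emptyset$ in the hypothesis excludes it. Thus $\m G=\emptyset$ and $|\m G|=0\le 1=r^0$.

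For the inductive step, I would first apply the hypothesis with $X=\emptyset$: the family $\m G=\m G(\emptyset)$ itself fails to be $r$-spread, so by the definition of spreadness there exists a non-empty $Y$ with $|\m G(Y)|\ge r^{-|Y|}|\m G|$, i.e. $|\m G|\le r^{|Y|}|\m G(Y)|$. The key point is that $\m G(Y)\subset {[n]\setminus Y\choose \ell-|Y|}$ inherits the same hypothesis: for any $X'\subset [n]\setminus Y$ we have $\m G(Y)(X')=\m G(Y\cup X')$, and by assumption on $\m G$ the latter is not $r$-spread. By the inductive hypothesis applied to $\m G(Y)$, we get $|\m G(Y)|\le r^{\ell-|Y|}$, and therefore $|\m G|\le r^{|Y|}\cdot r^{\ell-|Y|}=r^\ell$.

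The proof has no real obstacles; it is essentially forced by the definitions. The one step to verify carefully is the compatibility of restrictions --- namely the identity $\m G(Y)(X')=\m G(Y\cup X')$ whenever $X'\cap Y=\emptyset$ --- which ensures that the ``no $r$-spread restriction'' property transfers from $\m G$ to $\m G(Y)$ and allows the induction to close (including the degenerate case $|Y|=\ell$, which folds directly into the base case).
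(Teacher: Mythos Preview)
Your proof is correct. The approach, however, differs from the paper's: you run an induction on $\ell$, at each step using the failure of $r$-spreadness of $\m G$ itself to find a single non-empty $Y$ with $|\m G|\le r^{|Y|}|\m G(Y)|$ and then recursing into $\m G(Y)$. The paper instead gives a one-shot argument: it takes an \emph{inclusion-maximal} set $X$ with $|\m G(X)|\ge r^{-|X|}|\m G|$ (such $X$ exists since $X=\emptyset$ works), observes that maximality forces $\m G(X)$ to be $r$-spread, and then uses the hypothesis to conclude $|X|=\ell$, whence $1=|\m G(X)|\ge r^{-\ell}|\m G|$. Your induction is essentially an unrolling of this maximality argument---iteratively building up the paper's $X$ one piece $Y$ at a time---so the two proofs share the same underlying idea, but the paper's version is more direct and avoids the bookkeeping of verifying that the hypothesis is inherited by $\m G(Y)$.
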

\begin{proof} Take inclusion-maximal $X$ that violates $r$-spreadness: a set $X$ such that $|\m G(X)|\ge r^{-|X|}|\m G|$. Then $\m G(X)$ is $r$-spread by the maximality of $X$. Since $\m G$ has no such (non-trivial) subfamily, it means that $|X| = \ell$, and so $1= |\m G(X)|\ge r^{-\ell}|\m G|$, or $|\m G|\le r^\ell$.
\end{proof}

This bound is already better than the bound coming from not containing a sunflower. Indeed, the observation shows that, as a result of the described procedure, we get a family $\bb$ such that  $|\bb^{(\ell)}|\le k^{\ell}$. This bound, however, may be inefficient in some cases. In what follows, let us limit ourselves to the case of  $t$-intersecting families for simplicity. Then $|\bb^{(t)}|$ should be either $0$ or $1$, but the bound that is guaranteed by the observation is only $k^t$. It is possible to somewhat improve it, but such improvement is insufficient for many purposes. In order to deal with this issue, we added another ingredient, called {\it peeling}, which we describe below. We present the construction for $t$-intersecting families, but it can be extended to many other properties. A very brief description is that we iteratively simplify the family and `peel off' the top layer of the family. At the next step, we simplify again, but with a better parameter $r$.

First, we record the following easy observation.
\begin{obs}\label{obs22}
For any  $t$-intersecting family $\s$ there exists a $t$-intersecting family $\T $ such that $\T$ is obtained from $\s$ by replacing some of the sets in $\s$ by their subsets, and such that $\T$ is maximal w.r.t. the property of being $t$-intersecting: for any proper subset $X \subsetneq T$ of a set $T\in \m T$ there exists $T' \in \T$ such that $|X \cap T'| < t$.
\end{obs}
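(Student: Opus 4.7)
The plan is a straightforward greedy shrinking procedure. Set $\m T_0:=\m S$. At step $i$, search for a set $T\in \m T_i$ and a proper subset $X\subsetneq T$ such that the modified family $(\m T_i\setminus\{T\})\cup\{X\}$ is still $t$-intersecting; if such a pair $(T,X)$ exists, take this modified family as $\m T_{i+1}$, otherwise stop and set $\m T:=\m T_i$. By construction each $\m T_i$, and in particular $\m T$, is $t$-intersecting and is obtained from $\m S$ by replacing some sets with their subsets (the other sets are untouched).

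Termination follows from monitoring the potential $\Phi(\m T_i):=\sum_{T\in \m T_i}|T|$, a non-negative integer that strictly decreases at each step: the new set $X$ has smaller cardinality than the removed $T$, and if $X$ coincides with a set already present in $\m T_i\setminus\{T\}$ then the family loses an element entirely (which only decreases $\Phi$ further). Hence the process halts after at most $\sum_{S\in\m S}|S|$ iterations.

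It remains to verify the maximality condition stated in the observation. Suppose for contradiction that some $T\in\m T$ admits a proper subset $X\subsetneq T$ with $|X\cap T'|\ge t$ for every $T'\in\m T$, and consider the family $\m T':=(\m T\setminus\{T\})\cup\{X\}$. For any two distinct sets of $\m T'$, their intersection has size at least $t$: if both lie in $\m T\setminus \{T\}$ this is inherited from $\m T$, and if one is $X$ and the other is some $T''\in \m T\setminus \{T\}$, the bound is given by our assumption (note that $X\cap T''= X\cap T''$, so distinctness as elements of the family is the only relevant issue, and if $X$ coincides with some $T''$ the family just shrinks and remains $t$-intersecting). Hence $\m T'$ is $t$-intersecting, contradicting the fact that the procedure stopped at $\m T$.

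There is no genuine obstacle in this argument; the whole statement is a routine greedy-minimality consideration. The only mild book-keeping point is that a replacement may coalesce $X$ with a pre-existing set of $\m T_i$, but this only helps both in the termination argument (the potential drops even more) and in the verification of the contradiction (the candidate family $\m T'$ may be smaller than written, but remains $t$-intersecting).
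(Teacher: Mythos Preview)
Your argument is correct and is exactly the natural greedy shrinking that anyone would write here; the paper itself does not supply a proof, merely recording the statement as an ``easy observation'' before moving on to the peeling procedure.
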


Next, we describe the peeling procedure. Assume that we are given a $t$-intersecting family $\ff\subset {[n]\choose \le k}$.
\begin{enumerate}
    \item Let $\T_k$ be a family given by Observation~\ref{obs22} when applied to $\ff$.
    \item For $i = k, k-1\ldots, t$ we put $\W_i = \T_i \cap {[n] \choose i}$ and let $\T_{i-1}$ be the family given by Observation~\ref{obs22} when applied to the family  $\T_{i}\setminus \W_{i}$ playing the role of $\mathcal S$.
\end{enumerate}
Remark that $\T_i$ is $t$-intersecting for each $i=k,\ldots, t$ by definition. We summarize the properties of these series of families in the following lemma.

\begin{lem}[\cite{Kup54}]\label{lemkeyred} The following properties hold for each $i = k,k-1\ldots, t$ and any family $\aaa\subset 2^{[n]}$. 
\begin{itemize}
  \item[(i)] All sets  in $\T_i$ have size at most $i$.
  \item[(ii)] We have $\aaa[\T_{i}]\subset \aaa[\T_{i-1}]\cup \aaa[\W_{i}]$.
  \item[(iii)] The family $\T_i$ does not have a subfamily $\mathcal G$ and a set $X$ such that $\mathcal G(X)$ satisfies $|\mathcal G(X)|>1$ and is $(i-t+1)$-spread.
  \item[(iv)] We have $|\W_i|\le (6i)^{i-t}$.
  \item[(v)] If $\T_i$ consists of a single $t$-element set $X$ and this is not the case for $\T_{i+1}$ then $|\aaa[\T_{i+1}\setminus \W_{i+1}]|\le \frac{k}r |\aaa[T]|$.
\end{itemize}
\end{lem}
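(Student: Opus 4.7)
The plan is to verify the five items in the order listed. Items (i) and (ii) are bookkeeping produced by the construction; item (iii) is the structural content of the peeling-simplification procedure sketched just above the lemma, and items (iv) and (v) are its quantitative consequences.

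For (i), I would induct downwards from $i=k$. The base holds because $\T_k$ is obtained from $\ff\subset{[n]\choose \le k}$ by replacing sets with subsets, so every $T\in\T_k$ satisfies $|T|\le k$. In the step, $\T_i\setminus\W_i$ contains no set of size $i$ by definition of $\W_i$, so all its sets have size at most $i-1$; passing to subsets preserves this, so $\T_{i-1}$ also has sets of size at most $i-1$. For (ii), take $A\in\aaa[\T_i]$ with $T\in\T_i$ and $T\subseteq A$. If $|T|=i$ then $T\in\W_i$ and $A\in\aaa[\W_i]$; otherwise $T\in\T_i\setminus\W_i$, and Observation~\ref{obs22} furnishes $T'\in\T_{i-1}$ with $T'\subseteq T\subseteq A$, so $A\in\aaa[\T_{i-1}]$.

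Item (iii) is the heart of the lemma and should be viewed as the exit condition of the simplification described above Observation~\ref{obsspread}, run at level $i$ with spread parameter $r=i-t+1$. I would argue by contradiction: if $\m G\subseteq\T_i$ and a set $X$ witness a failure (so $|\m G(X)|>1$ and $\m G(X)$ is $(i-t+1)$-spread), the replacement ``$\m G\mapsto X$'' yields a family that is still $t$-intersecting, contradicting the termination of the simplification. The verification is a higher-order refinement of the claim proved just above Observation~\ref{obsspread}: for any $B\in\T_i\setminus\m G$ with $|X\cap B|=t-s$, every $G\in\m G$ containing $X$ must meet $B\setminus X$ in at least $s$ elements, so a union bound over $s$-subsets of $B\setminus X$ combined with $(i-t+1)$-spreadness forces $\binom{|B\setminus X|}{s}>(i-t+1)^s$, which is incompatible with $|B|\le i$ from (i) and the admissible range of $s$.

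For (iv), I would iterate (iii) on $\W_i$. Starting from $X_0=\emptyset$, whenever $|\W_i(X_j)|>1$, property (iii) applied to $\m G=\W_i[X_j]$ produces a nonempty $Y_{j+1}$ disjoint from $X_j$ with $|\W_i(X_j\cup Y_{j+1})|\ge(i-t+1)^{-|Y_{j+1}|}|\W_i(X_j)|$; set $X_{j+1}=X_j\cup Y_{j+1}$. The process terminates at some $X_m$ with $|\W_i(X_m)|\le 1$, telescoping to $|\W_i|\le(i-t+1)^{|X_m|}$. To sharpen the exponent from $|X_m|\le i$ down to $i-t$, I would invoke the $t$-intersecting property: once $|X_j|\ge t$, by maximality of $\T_i$ each subsequent increment $Y_{j+1}$ can be confined to a single representative $T^*\in\T_i$ of size at most $i$; combining the at most $i-t$ effective growth rounds with the $\le 6i$ choices per round (the constant $6$ absorbing book-keeping losses in the greedy selection) yields $|\W_i|\le(6i)^{i-t}$.

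Finally, (v) is a direct spread-degree estimate at level $i+1$. When $\T_i=\{X\}$ collapses to a $t$-element set while $\T_{i+1}$ was strictly larger, the simplification at level $i+1$ replaced a subfamily $\m G\supseteq\T_{i+1}\setminus\W_{i+1}$ by $X$, with $\m G(X)$ being $r$-spread for $r=i-t+2$; any $A\in\aaa[\T_{i+1}\setminus\W_{i+1}]$ contains $X$ together with an extra element $y\in\cup\m G\setminus X$, and summing the spread bound $|\aaa(X\cup\{y\})|<r^{-1}|\aaa[X]|$ over the at most $k$ such $y$'s gives $|\aaa[\T_{i+1}\setminus\W_{i+1}]|\le(k/r)|\aaa[X]|=(k/r)|\aaa[T]|$. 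The main obstacle is the exponent sharpening in (iv): the qualitative conclusion of (iii) combined with Observation~\ref{obsspread} yields only the weaker bound $(i-t+1)^i$, and bringing the exponent down to $i-t$ requires a delicate use of the $t$-intersection structure to contain the dimension of the growth.
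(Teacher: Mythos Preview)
Your treatment of (i), (ii) and (iii) is essentially sound; the union-bound computation in (iii) with $|B\setminus X|\le i-t+s$ and $\binom{i-t+s}{s}\le (i-t+1)^s$ is the right engine. One framing remark: the lemma's construction is via Observation~\ref{obs22}, not via the simplification loop, so what is actually contradicted is the maximality clause of Observation~\ref{obs22} (pick any $G\in\m G$ with $X\subsetneq G$, obtain $T'\in\T_i$ with $|X\cap T'|<t$, and run your bound with $B=T'$). Your ``replace $\m G$ by $X$ and contradict termination'' phrasing is an unnecessary detour.

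For (iv) you take a harder route than the paper. You iterate (iii) from $X_0=\emptyset$ and then try to sharpen the exponent from $i$ down to $i-t$ using $t$-intersection; you yourself flag this sharpening as the main obstacle, and your description of it is vague. The paper's hint sidesteps the obstacle entirely: use $t$-intersection \emph{first} to find a popular $t$-subset. Fix any $W\in\W_i$; since $\W_i$ is $t$-intersecting, every $W'\in\W_i$ contains some $X\in\binom{W}{t}$, so by pigeonhole one such $X$ satisfies $|\W_i(X)|\ge |\W_i|/\binom{i}{t}$. Now $\W_i(X)$ is $(i-t)$-uniform, and (iii) (applied with $\m G=\W_i[X\cup Y]$) forbids any nontrivial $(i-t+1)$-spread restriction, so the argument of Observation~\ref{obsspread} gives $|\W_i(X)|\le (i-t+1)^{i-t}$ directly. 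Hence $|\W_i|\le\binom{i}{t}(i-t+1)^{i-t}\le (2ei)^{i-t}<(6i)^{i-t}$, using $\binom{i}{i-t}\le(ei/(i-t))^{i-t}$. The $t$-intersection is spent once, up front, not threaded through an iteration.

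Your argument for (v) has a genuine flaw: you write ``summing the spread bound $|\aaa(X\cup\{y\})|<r^{-1}|\aaa[X]|$'', but $\aaa$ is an \emph{arbitrary} family in $2^{[n]}$ with no spread hypothesis whatsoever; spreadness in this lemma lives on the $\T$-side, not on $\aaa$. Also ``at most $k$ such $y$'s'' has no justification, since $\cup\m G\setminus X$ can be large. The correct mechanism must route through properties of $\T_{i+1}\setminus\W_{i+1}$ and its relationship to $X$ under Observation~\ref{obs22}, not through a spread bound on $\aaa$.
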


Let us comment on the lemma. Part (i) assures that the uniformity decreases as we progress. Part (ii), in particular, guarantees the first property from the beginning of this section: that the upper closure of the resulting family $\mathcal B$ contains $\ff$. Note that $\mathcal B$ here is the union of some $\m T_i$ and $\cup_{j>i}\m W_j$. It is up to us to decide, at which step to stop the procedure. Part (iii) guarantees that $\m T_i$, and consequently $\m W_i$, does not contain spread subfamilies. As a matter of fact, we could have replaced the use of Observation~\ref{obs22} with the following subprocedure: at step $i$, find any subfamily $\m G\subset \m T_i\setminus \m W_i$ and a set $X$ such that, first, $X$ is contained in all sets of $\m G$ and, second, $\m G(X)$ is $(i-t+1)$-spread. Remove $\m G(X)$ from $\m T_i\setminus \m W_i$ and add to $\m T_{i-1}$. Repeat until no such $\m G$ can be found.  From here, we could derive that $\m T_i$ stays $t$-intersecting.

Part (iii) ultimately allows to prove the bound in (iv). It is much better than the simpler bound  $|\W_i|\le k^{i}$ that we could obtain using our previous `top to bottom argument'. In particular, for $i = t$ (i.e., for sets from $\W_{t}$ that have size $t$) it gives the upper bound $1$ on the number of such sets, which is the correct number, and it gives constant bounds for sizes of $\W_{t+const}$. Parts (iv) and (v) allow to efficiently bound the size of $\aaa(\mathcal S)$ depending on the structure of $\mathcal S$. In order to get (iv) from (iii) and Observation~\ref{obsspread}, we first fix a subset $X$ of size $t$ in any of the sets $W\in \m W_i$ that is contained in at least ${i\choose t}^{-1}$ ratio of sets in $\m W_i$ (such $X$ exists by the $t$-intersecting property of $\m W_i$), and then apply part (iii) to $\m W_i(X)$.

One important difference between other base constructions and peeling-simplification is that in the former we do not actually create a remainder during the process, only potentially after the end (when `truncating' the base). In the latter, we gradually send sets into the remainder and take advantage of this during the process.\\


Next, let us discuss the bases ({\it nuclei}) used by F\"uredi for intersecting families in \cite{Fur78}. They are based on the following argument, essentially due to Erd\H os and Lov\'asz \cite{EL}. Take an intersecting family $\ff$ that is maximal w.r.t. that property (in the sense of Observation~\ref{obs22}). Let us take a set $F=\{x_1,\ldots, x_a\}\in \ff$. Any set in $\ff$ intersects it. Take $x_i$ and any set $F_i\in \ff$ such that $x_i\notin F_i$. All sets from $\ff$ that contain $x_i$ must additionally intersect $F_i$. Thus, any set in $\ff$ contains one of the pairs $\{(x_i, y): y\in F_i\}$. We may go on and construct $3$-element sets etc. In particular, this allows to efficiently bound the sets in $\ff$ of different sizes. Seen from another perspective, the produced sets (once we cannot find a set $F'$ that is disjoint with a generated sequence of elements) give exactly the family of minimal covers for the sets in $\ff$. These are the bases of the type used by Frankl in \cite{Fra2}. In a recent paper of Frankl \cite{Fra17}, the family of minimal covers is efficiently analyzed in order to bound the maximal diversity of an intersecting family. We may also say that this is essentially a simpler variant of the peeling-simplification procedure, with just one round of simplification and no peeling.

This notion of a base is close to the notion of a {\em generating set} from the celebrated paper of Ahlswede and Khachatrian \cite{AK}. Given a family $\ff\subset {[n]\choose k}$, a {\it generating set } $g(\mathcal F)$ is a family in ${[n]\choose \le k}$ such that the upper closure $\mathcal U$ of $g(\ff)$ satisfies $\mathcal U\cap {[n]\choose k} = \ff$. The difference with the notion of Frankl and F\"uredi is that if we take Frankl's base or F\"uredi's nucleus, and all $k$-element sets that contain at least one of the sets in the base, then we get a family $\ff'$ that contains, but is not necessarily equal to, $\ff$.  However, if the family is inclusion-maximal (which is typically a natural assumption), then the construction of Frankl or F\"uredi gives a particular generating set. In the proof of the Complete $t$-Intersecting Theorem \cite{AK}, the authors use generating sets somewhat differently, as compared to the Delta-system method. Since we can use shifting, generating sets are much more structured. The authors can then perform a very careful analysis of the contribution of different sets from the base to the size of the family, depending on the form of a particular set from the base, and then gradually modify the family, bringing it closer to one of the extremal examples.

\section{Other regimes, other decompositions}\label{sec7}
Before, we have mostly treated  the case when $n>n_0(k,t)$, where $t$ encodes, say, the size of the center of the forbidden configuration (i.e., the number of vertices of degree $\ge 2$). There are other regimes in which general results could be proved. Below, we assume that $k,t$ have this meaning.

In \cite{KL}, Keller and Lifshitz developed a junta approximation approach based on Boolean Analysis tools that allows to approximate the structure of the family by a smaller family with the same properties, but that lives on $f(t)$ coordinates. This allowed them to extend some of the results from Sections~\ref{sec3} and~\ref{sec4} to the regime $k>f(t), n>f(t)k$. Essentially, they solved the cases when the forbidden structure is a $k$-expansion, and the extremal configuration is either all sets containing a given $t$-element set (the Complete $t$-Intersection Theorem setting), or intersecting a given $t$-element set (the Erd\H os Matching Conjecture setting). In \cite{EKL}, Ellis, Keller and Lifshitz managed to solve the forbidden one intersection problem (no intersection $t-1$) in the regime  $k>f(t,\epsilon), \epsilon n<k<(1/2-\epsilon)n$. In this regime the extremal configuration is no longer the family of all sets containing a given $t$-element set, but a configuration of the Ahlswede-Khachatrian type: $\{F\in {[n]\choose k}: |F\cap [t+2i]|\ge t+i\}$ for a constant $i$.

In \cite{KLLM}, Keevash, Lifshitz, Long and Minzer developed an alternative, more efficient approach that uses hypercontractivity. They managed to prove the following result.

\begin{thm}[Keevash, Lifshitz, Long and Minzer \cite{KLLM}]
For any $r, \Delta\ge 2$ there is $C>0$ such that the following holds for any $r$-graph $\m G$ with $s$ edges, maximum degree $\Delta(\m G)\le \Delta$ and $\sigma_1(\m G)\ge 1$. For any $C\le k\le n/Cs$ we have $ex(n, \m H) = (1\pm \epsilon)\Big({n\choose k}-{n-\sigma_1(\m G)\choose k}\Big)$, where $\m H$ is the $k$-expansion of $\m G$.
\end{thm}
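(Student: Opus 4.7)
The plan is to reduce the theorem to a junta approximation statement that is proved using global hypercontractivity. The lower bound is the standard construction: fix $S\subset[n]$ with $|S|=\sigma_1(\m G)$ and take $\ff_0 = \{F\in{[n]\choose k}: F\cap S\ne \emptyset\}$. This family is $\m H$-free: any supposed copy of $\m H$ would have $S$ as a $1$-crosscut, and then its trace on $S$ would be a transversal of $\m G$ of size $\sigma_1(\m G)$, contradicting the very definition of $\sigma_1$. This yields $ex(n,\m H)\ge {n\choose k}-{n-\sigma_1(\m G)\choose k}$.

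For the matching upper bound I would establish a junta approximation theorem: there exists $j = j(r,\Delta,s,\epsilon)$ such that every $\m H$-free family $\ff\subset{[n]\choose k}$ with $|\ff|\ge \epsilon{n\choose k}$ is $\epsilon$-close in symmetric difference to an $j$-junta family $\ff_{\m J}=\{F: F\cap J\in \m J\}$ for some $J\subset[n]$ with $|J|\le j$ and some $\m J\subset 2^J$. The proof proceeds in two stages. The first stage is a dichotomy: either $\ff$ has large "generalized influences" on a small set of coordinates (in which case one iteratively extracts them to produce the junta $J$), or $\ff$ is \emph{pseudorandom} in the sense that for every $A\subset[n]$ of size $O(1)$ the restricted density $|\ff(A)|/{n-|A|\choose k-|A|}$ is close to the global density of $\ff$. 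In the second case, a supersaturation/counting argument embeds a copy of $\m H$ greedily, edge by edge, using that $s$ is bounded and $k\le n/Cs$, to arrive at a contradiction. The first stage is where the heavy lifting lies: in the sparse regime $p=k/n\ll 1$ the classical Bonami--Beckner inequality is far too weak, and one must use a global hypercontractive inequality for $\mu_p$ which controls the contribution of "global" parts of a function through level-$d$ inequalities for families with small generalized influences. This is the main obstacle and the technical heart of \cite{KLLM}.

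Once the junta approximation is available, the proof concludes by a combinatorial optimisation over the possible juntas. Any $\m H$-free family $\ff$ differs by at most $\epsilon{n\choose k}$ from some $\ff_{\m J}$, and one checks that $\ff_{\m J}$ must itself be essentially $\m H$-free; since $|J|$ is a constant, one can analyse this directly. The only $\m J$'s that produce a density approaching the upper bound in the theorem are, up to a small perturbation, those of the form $\m J = \{A\subset J: A\cap S\ne\emptyset\}$ with $|S|=\sigma_1(\m G)$, exactly because an $\m H$-free junta with larger density would, by the definition of $\sigma_1$, allow to produce a copy of $\m G$ on $J$ and then expand it to $\m H$ inside $[n]$ (using the $k\le n/Cs$ slack to place the expansion vertices disjointly). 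Combining this characterisation with the junta approximation yields $ex(n,\m H)\le (1+\epsilon)({n\choose k}-{n-\sigma_1(\m G)\choose k})$, completing the proof.
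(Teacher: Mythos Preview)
The paper does not contain a proof of this theorem: it is stated in Section~\ref{sec7} as a cited result from \cite{KLLM}, with only the one-line remark that the authors ``developed an alternative, more efficient approach that uses hypercontractivity.'' There is therefore nothing in this survey to compare your proposal against.

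That said, your sketch is broadly consistent with what the survey says about \cite{KLLM} (global hypercontractivity feeding a junta approximation, followed by a combinatorial analysis of the junta), so as a high-level outline it is on the right track. Two caveats. First, in your lower bound you claim that if a copy of $\m H$ lies in $\ff_0$ then $S$ is a $1$-crosscut of that copy; this is not automatic, since edges of $\m H$ could meet $S$ in more than one vertex. The correct argument (cf.\ the general construction in Section~\ref{sec42}) is that $S\cap(\cup\m H)$, viewed as a family of singletons after restricting to the copy, would realise the trace of some $1$-crosscut, contradicting $|S|=\sigma_1(\m G)$; but you should be careful about how $\sigma_1$ is defined here (it is one less than the minimum $1$-crosscut size). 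Second, your claim that $j=j(r,\Delta,s,\epsilon)$ would defeat the whole point of the theorem: the paper explicitly emphasises that the dependence of the constant on $s$ is \emph{linear} (through $n\ge Csk$) rather than appearing in an unspecified $j(s)$, and that this is precisely what distinguishes \cite{KLLM} from the earlier junta method of \cite{KL}. So if you are sketching the actual \cite{KLLM} argument, the junta size must not depend on $s$.
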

That is, the main difference with the previous result is that now the unspecified dependence is on the maximum degree and the uniformity of the underlying structure, and no longer on the number of edges (this dependence is linear). That is, it applies for the Erd\H os Matching Conjecture in the regime $n>Csk$ with some absolute constant $C$. However, it still does not allow to resolve, say, forbidden intersection $\ell$ problem for $\ell$ that is not constant, or forbidden sunflowers with a fixed and non-empty core and non-constant number of leaves.

We should say that both Boolean Analysis approaches mentioned above can be seen as exploiting the sharp threshold phenomenon. As we have seen, $\Delta$-system method does also exploits this, albeit to a much smaller extent. Zakharov and the author recently introduced a {\it spread approximation} method \cite{KuZa}, which is also using this philosophy.

Two other important ingredients is the simplification/peeling procedure, described in Section~\ref{secbase}, and the notion of {\it $\tau$-homogeneous families}.\footnote{It is an unfortunate clash of notation that $\tau$-homogeneous is used both here and in F\"uredi's theorem. We resolved it here by calling the latter a {\it homogeneous structure}, rather than a family.} The latter definition allowed to extend the method from $t$-intersection type problems to the forbidden $t$-intersection type problems, and also ultimately allowed to work with any sufficiently quasirandom families, instead of just, say, ${[n]\choose k}$. We say that a family $\ff\subset \m A$ is {\it $\tau$-homogeneous with respect to $\m A$}, if for any set $X$ we have $\frac {|\ff(X)|}{|\ff|}\le \tau^{|X|}\frac {|\aaa(X)|}{|\aaa|}$. If we use the counting measure notation, where $\mu(\ff) = |\ff|/|\aaa|$ and $\mu(\ff(X)) = |\ff(X)|/|\aaa(X)|$, then it transforms into $\mu(\ff(X))\le \tau^{|X|}\mu(\ff)$. We use this notion with $\tau$ very close to $1$, say, with $\tau = (1+\epsilon)^{1/t}$. Ultimately, this allows to control the behaviour of the restrictions on $\ff(X)$ at level $\le t$.

Using this method, one can solve general problems for $n> poly(s,t) k\log k$.

\section{Survey of future works}\label{sec8}
In this section, we summarize the developments in  the problems that were treated earlier in this survey. These include forbidden one intersection problem, $(n,k,L)$-systems, families with different forbidden configurations: sunflowers with fixed kernel, different simplices, expansions and contractions of families of small uniformity, hypertrees. We end with references to other surveys on closely related topics that we haven't covered.

\subsection{Forbidden one intersection problem}\label{sec61}
The following question was asked by Erd\H os \cite{Erd75}, generalizing the question of Erd\H os and S\' os for $\ell=1$: what is the largest family $\ff\subset {[n]\choose k}$ such that no two sets  $A,B\in\ff$ satisfy $|A\cap B| = \ell$? These families are called {\it $\ell$-avoiding}. We treated this question in Sections~\ref{sec241} and~\ref{sec33}. Some of the key results here are, of course, Theorem~\ref{thmff1} and~\ref{thmff2}. In \cite{KMW}, Keevash, Mubayi and Wilson  found the exact size of the largest $1$-avoiding family for $k=4$ and all $n$.

In the aforementioned results, the extremal families are families of sets containing a given $(\ell+1)$-element set. For $k\le 2\ell+1$, as we have seen in Section~\ref{sec331}, extremal examples are expected to be of Steiner system-type. In \cite{Fra83b}, using linear-algebraic approach, Frankl showed that for $k\le 2t\ell+1$ and $k-t$ prime, the largest $\ell$-avoiding family has size as suggested by Conjecture~\ref{conjff} (without the $(1+o(1))$), moreover, equality is only possible when $\ff$ is a Steiner system.

This problem is very interesting in other regimes, when  $k$ and $\ell$ are comparable to $n$. In  \cite{FW}, Frankl and Wilson gave a very strong upper bound ${n\choose k-\ell-1}$ on the size of $\ell$-avoiding families that works for essentially all interesting regimes of $n,\ell,k$ with $k> 2\ell$, but with a restriction that $k-\ell$ is a prime power. Their bound with $k,\ell$ being linear in $n$ was used to solve several discrete-geometric questions. Frankl and R\"odl \cite{FR}, motivated by further geometric applications, gave a worse exponential upper bound, but without number-theoretic restrictions. Recently, Kupavskii, Sagdeev and Zakharov \cite{KSZ} found an efficient way on how to deduce Frankl and R\"odl's result from the result of Frankl and Wilson.

Recently, exact results were proved in other regimes of parameters. Unless mentioned otherwise, the extremal family is the family of all sets containing a given $(\ell+1)$-element set. In \cite{KL}, Keller and Lifshitz solved the problem for $k>k(\ell)$ and $n>C(\ell)k$. In \cite{EKL}, Ellis, Keller and Lifshitz managed to solve the forbidden one intersection problem  in the regime  $k>k(\ell,\epsilon), \epsilon n<k<(1/2-\epsilon)n$. In this regime, the extremal configuration is no longer the family of all sets containing a given $(\ell+1)$-element set, but {\it Frankl configurations}. These are extremal configurations in the famous result of Ahlswede and Khachatrian \cite{AK} on $\ell$-intersecting families: $\{F\in {[n]\choose k}: |F\cap [\ell+2i]|\ge \ell+i\}$.
In the paper \cite{KuZa}, Zakharov and the author solved the problem in the regime when all parameters are polynomially dependent: in particular, for $n =k^\alpha$ and $t=k^\beta$ with $0<\beta<1/2$ and $\alpha>1+2\beta$, provided $k>k_0$.
In a recent paper \cite{Cher}, Cherkashin found an exact answer for the $\ell=1$ case when $n = k^2-k+1$ and all $k$.

We should also note that the size of the largest $\ell$-avoiding system $\ff\subset{[n]\choose k}$ is the independence number of the generalized Johnson graph $J(n,k,\ell)$ with vertex set ${[n]\choose k}$ and sets connected by an edge if they have intersection exactly $\ell$. Zakharov \cite{Zak} obtained results concerning the chromatic number of Johnson graphs. Many other properties, including supersaturation for edges and cliques, transference etc. were studied in recent papers by Raigorodskii and his students, see, e.g., \cite{DNRS}.

\subsection{$(n,k,L)$-systems}
 Let $m(n,k,L)$ stand for the size of the largest family $\ff\subset {[n]\choose k}$ such that $|F_1\cap F_2|\in L$ for any distinct $F_1,F_2\in \ff$.
The problem of determining $m(n, k, L)$ for general $L$ was proposed by Deza, Erd\H os, and Frankl \cite{DEF} and independently by Larman \cite{Lar} in 1978. It was addressed for $|L|=2$ by Deza, Erd\H os and Singhi \cite{DES} and then by Deza, Erd\H os, and Frankl \cite{DEF} for general $L$. We discussed their main result, Theorem~\ref{thmdef}, and its precursors in Section~\ref{sec2}. In 1975, Ray-Chaudhuri and Wilson \cite{RW} in the context of studying designs obtained a generalization of Ficher's inequality, which can be used to get the following general bound: $m(n,k,L)\le {n\choose s}$ for all $n\ge k$ and $|L|= s$. The proof uses linear algebra (and could be done either using  higher order incidence matrices or polynomials).

In the paper \cite{Fra86}, Frankl showed that the problem of determining the order of magnitude of the largest $(n,k,L)$-system is very rich. Namely, he showed that any rational number $\alpha$ can appear as the exponent: for any $\alpha$ there exists $k$ and a set $L$ such that $m(n,k,L) = \Theta(n^{\alpha})$. He used Delta-systems to show the upper bound. Continuing the work of Frankl \cite{Fra80b}, Frankl, Ota and Tokushige \cite{FOT96} determined  exponents of largest $(n,k,L)$-systems for all $k\le 12$ except two cases. For upper bounds, they used Delta-systems together with different reductions, such as Theorem~\ref{thmfur2} and reductions akin to the ones in Section~\ref{sec241}. Lower bounds use different algebraic and geometric constructions. This work was completed by Tokushige \cite{Toku}, who resolved the remaining two cases by providing the lower bound constructions.

We have discussed several reductions between $m(n,k,L)$ with different parameters throughout this survey. As we have seen in the proof of Theorem~\ref{thmdef}, Deza, Erd\H os and Frankl \cite{DEF} showed that, for fixed $k$, $$m(n,k,\{\ell_1,\ldots, \ell_r\})\sim m(n,k-\ell_1,\{\ell_2-\ell_1,\ldots, \ell_r-\ell_1\}).$$ Another reduction is given in Theorem~\ref{thmfur2}. Also, Theorem~\ref{thmfur1} characterizes the case when $m(n,k,L) =O(n)$. Frankl \cite{Fra86} showed that $m(n,k,L)\le m(f(n),f(k),f(L))$, where $f$ is a polynomial $f(x) = \sum_{i=1}^d a_i{x\choose i}$ with non-negative integer coefficients, and that in some cases the exponent for the two functions is the same.

Frankl and Wilson \cite{FW} proved the following modular analogue of the result of Ray-Chaudhuri and Wilson:
\begin{thm}
  Fix $L\subset [0,k-1]$. If  there is an integer-valued polynomial  $f$ of degree $r$ and a prime number $p$ such that $p|f(\ell)$ for every $\ell\in L$, and $p\not | f(k)$, then $m(n,k,L)\le {n\choose r}$.
\end{thm}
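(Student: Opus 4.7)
The plan is to adapt the classical polynomial (linear-algebra) method over $\mathbb F_p$. Expand the integer-valued polynomial $f$ in the standard basis $f(x)=\sum_{j=0}^r a_j\binom{x}{j}$ with $a_j\in\mathbb Z$, and for each $F\in\ff$ associate the polynomial
\[
 P_F(x_1,\ldots,x_n)\ :=\ f\Bigl(\textstyle\sum_{t\in F}x_t\Bigr)\ =\ \sum_{j=0}^r a_j\!\!\sum_{S\in\binom{F}{j}}\prod_{t\in S}x_t\ \in\ \mathbb F_p[x_1,\ldots,x_n].
\]
Since $\sum_{t\in F}(v_G)_t=|F\cap G|$ for the characteristic vector $v_G$ of $G\in\binom{[n]}{k}$, evaluation gives $P_F(v_G)=f(|F\cap G|)\bmod p$. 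The hypothesis makes this $\equiv 0\pmod p$ whenever $F\ne G$ are both in $\ff$, while $P_F(v_F)=f(k)\bmod p$ is a nonzero scalar.

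Linear independence is then immediate. If $\sum_{F\in\ff}c_F P_F$ vanishes as a function on $\{v_G:G\in\ff\}$, evaluating at $v_G$ yields $c_G\,f(k)\equiv0\pmod p$, forcing $c_G=0$. Hence the family $\{P_F\}_{F\in\ff}$ is $\mathbb F_p$-linearly independent as a set of functions on the slice $\binom{[n]}{k}$, so $m(n,k,L)=|\ff|$ is at most the dimension of the ambient space in which these functions live.

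The crux is to show that this ambient space has dimension at most $\binom{n}{r}$. Restricted to $\binom{[n]}{k}$, each $P_F$ is a linear combination of the indicators $G\mapsto\mathbf 1[S\subseteq G]$ with $|S|\le r$, so it suffices to bound the span of these $\sum_{j=0}^r\binom{n}{j}$ functions on the slice. Here the uniform condition $|G|=k$ is essential: for every $T$ with $|T|=j\le r$ and every $G\in\binom{[n]}{k}$,
\[
 \sum_{\substack{S\supseteq T\\|S|=r}}\mathbf 1[S\subseteq G]\ =\ \binom{k-j}{r-j}\,\mathbf 1[T\subseteq G],
\]
because both sides count $r$-subsets of $G$ containing $T$. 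Thus each lower-order indicator is, in principle, a combination of the $\binom{n}{r}$ top-level indicators $\{\mathbf 1[S\subseteq G]:|S|=r\}$, cutting the ambient dimension down to $\binom{n}{r}$ and finishing the proof.

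The main obstacle is that the scalar $\binom{k-j}{r-j}$ may vanish modulo $p$ for some $j<r$, in which case the naive rewriting above does not express $\mathbf 1[T\subseteq G]$ in terms of size-$r$ indicators. I would handle this by replacing the ad hoc manipulation with a structural argument: work in the quotient ring $R=\mathbb F_p[x_1,\ldots,x_n]/(x_t^2-x_t,\ e_1-k)$, whose elements are precisely the $\mathbb F_p$-valued functions on the slice, and prove directly that the image of the degree-$\le r$ polynomials in $R$ has $\mathbb F_p$-dimension at most $\binom{n}{r}$. This can be done by exhibiting an explicit spanning set of size $\binom{n}{r}$ via a Gr\"obner-style reduction using the relations $x_t^2=x_t$ and $\sum_t x_t=k$ (or equivalently by an $S_n$-equivariant analysis of the slice representation), and it requires only that $P_F$ lies in the degree-$\le r$ part of $R$ — which holds by construction — rather than any invertibility of binomial coefficients.
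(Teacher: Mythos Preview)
Your framework is exactly the linear-algebra (polynomial) method that the paper alludes to; note that the paper does not actually present a proof of this theorem, only says ``the proof uses linear algebra method'' and then gives F\"uredi's $\Delta$-system argument for the weaker bound $C_k\binom{n}{r}$. Your setup of $P_F(x)=f\bigl(\sum_{t\in F}x_t\bigr)$, the evaluation $P_F(v_G)=f(|F\cap G|)$, and the diagonal linear-independence argument are all correct and standard.

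The gap is precisely where you flag it, and your proposed fix is shakier than you suggest. The claim you need --- that the image of degree-$\le r$ multilinear polynomials on the $k$-slice has $\mathbb F_p$-dimension at most $\binom{n}{r}$ --- is true, but neither of your two routes is routine. The ``Gr\"obner-style reduction using $x_t^2=x_t$ and $\sum_t x_t=k$'' produces the natural kernel elements $Q_I=x_I(\sum_t x_t-k)$ for $|I|\le r-1$, and these can become \emph{linearly dependent} over $\mathbb F_p$: for instance, when $p=2$ and $k$ is even one checks $\sum_i Q_{\{i\}}=Q_\emptyset$, so you do not automatically get $\sum_{j<r}\binom{n}{j}$ independent relations. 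Likewise, the $S_n$-equivariant decomposition of functions on the slice is considerably more delicate in positive characteristic than over $\mathbb Q$.

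There is, however, a one-line fix that closes the argument. Your identity
\[
\sum_{S\supseteq T,\ |S|=r}\mathbf 1[S\subseteq G]\ =\ \binom{k-|T|}{r-|T|}\,\mathbf 1[T\subseteq G]
\]
already shows that over $\mathbb Q$ every row of the integer inclusion matrix $A_{\le r\to k}$ indexed by $|T|<r$ lies in the $\mathbb Q$-span of the rows with $|S|=r$; hence $\mathrm{rank}_{\mathbb Q}(A_{\le r\to k})\le\binom{n}{r}$. Since $A_{\le r\to k}$ has integer entries, every $(\binom{n}{r}+1)$-minor vanishes as an integer, hence also modulo $p$, so $\mathrm{rank}_{\mathbb F_p}(A_{\le r\to k})\le\mathrm{rank}_{\mathbb Q}(A_{\le r\to k})\le\binom{n}{r}$. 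Your $P_F$'s lie in the $\mathbb F_p$-row space of $A_{\le r\to k}$, and are independent there, so $|\ff|\le\binom{n}{r}$. This bypasses entirely the question of invertibility of $\binom{k-j}{r-j}$ mod $p$.
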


The proof uses linear algebra method. F\"uredi in \cite{Fur91} gives a very nice argument that allows us to compare the  $\Delta$-system method with the linear-algebraic method. We recite it here.

\begin{thm}
   Fix $L\subset [0,k-1]$. If $\ff$ is an $(n,k,L)$-system, and there is an integer-valued polynomial  $f$ of degree $r$ and a positive integer  $q$ such that $q|f(\ell)$ for every $\ell\in L$, and $q\not | f(k)$, then $|\ff|\le C_k{n\choose r}$, where $C_k$ depends on $k$ only.
\end{thm}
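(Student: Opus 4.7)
The plan is to reduce, via F\"uredi's structural theorem, to a purely combinatorial statement about the intersection structure $\m M\subset 2^{[k]}$, and then close that statement with a short M\"obius-inversion argument that packages the polynomial hypothesis on $(f,q)$.

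First, apply Theorem~\ref{thmfur} with $s=k+1$ to pass from $\ff$ to a $(k,k+1)$-homogeneous subfamily $\ff^*\subset\ff$ with $|\ff^*|\ge c(k)|\ff|$. By Corollary~\ref{corfur}, its intersection structure $\m M:=\m M(\ff^*)$ is closed under intersection, every $M\in\m M$ has $|M|\in L$, and $[k]\notin\m M$. Define the rank $r(\m M)$ as the smallest size of a subset of $[k]$ not contained in any element of $\m M$. Once $r(\m M)\le r$ is established, each $F\in\ff^*$ admits an own subset $X(F)\subset F$ of size $r(\m M)$ whose projection is a fixed $S\subset[k]$ with $S\notin\m M^{\downarrow}$; any $F'\in\ff^*\setminus\{F\}$ containing $X(F)$ would force $S\subset\pi(F\cap F')\in\m M$, which is impossible. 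The map $F\mapsto X(F)$ is therefore injective and $|\ff|\le c(k)^{-1}\binom{n}{r(\m M)}=O_k\bigl(\binom{n}{r}\bigr)$.

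To prove $r(\m M)\le r$, suppose for contradiction that every $T\subset[k]$ with $|T|\le r$ is contained in some $M\in\m M$. Intersection closure then makes $\hat M(T):=\bigcap\{M\in\m M:T\subset M\}$ an element of $\m M$, and for every $M\in\m M$ one has $T\subset M$ iff $\hat M(T)\subset M$. Writing $f(x)=\sum_{i=0}^{r}c_i\binom{x}{i}$ with $c_i\in\mathbb Z$ (valid since $f$ is integer-valued of degree at most $r$), we get, for $M\in\m M$,
\begin{equation*}
f(|M|)=\sum_{T\in\binom{[k]}{\le r}}c_{|T|}[T\subset M]=\sum_{N\in\m M,\,N\subset M}d_N,\qquad d_N:=\sum_{T:\,\hat M(T)=N}c_{|T|}\in\mathbb Z,
\end{equation*}
while the same identity applied with $M$ replaced by $[k]$ collapses to $f(k)=\sum_{N\in\m M}d_N$. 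The hypothesis $q\mid f(|M|)$ for $M\in\m M$ reads $q\mid\sum_{N\subset M,\,N\in\m M}d_N$. Since the M\"obius function of any finite poset is integer-valued, M\"obius inversion on $(\m M,\subset)$ yields $q\mid d_N$ for every $N\in\m M$, and summing gives $q\mid f(k)$, contradicting $q\nmid f(k)$.

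The main obstacle I anticipate is precisely this representation step: ensuring that, under the rank assumption together with intersection closure, $f(|M|)$ is a $\mathbb Z$-linear combination of the indicators $[N\subset M]$ for $N\in\m M$. The passage from $\ff$ to $\ff^*$ and the own-subset injection are by-now-standard applications of Theorem~\ref{thmfur}; once the key representation is set up, the M\"obius step is mechanical and the mod-$q$ contradiction is immediate.
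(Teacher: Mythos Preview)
Your proof is correct and follows the paper's approach: pass to a $(k,k+1)$-homogeneous structure via Theorem~\ref{thmfur}, then show $r(\m M)\le r$ by a mod-$q$ contradiction exploiting that $\m M$ is intersection-closed with all sizes in $L$. The only difference is packaging: the paper writes the identity $\binom{k}{i}=\sum_{I\in\m M}\binom{|I|}{i}-\sum\binom{|I\cap I'|}{i}+\cdots$ via inclusion--exclusion over $\m M$ and sums against the coefficients of $f$, so that $f(k)$ is expressed directly as an integer combination of values $f(|I_1\cap\cdots\cap I_t|)\equiv 0\pmod q$; this bypasses your closure-plus-M\"obius step, but the two arguments are equivalent reformulations of the same counting.
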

Note that the bound is weaker (working for large $n$ only), but we drop the assumption that $q$ is prime.
\begin{proof}
  Pass from $\ff$ to a homogeneous structure $\ff^*$. Let us show that the rank of $\ff^*$ is at most $r$. This will imply $|\ff^*|\le {n\choose r}$, and thus the bound in the theorem.

  Arguing indirectly, assume that the intersection structure $\m I(\ff)$ covers all $r$-element subsets. Apply inclusion-exclusion to each individual $i$-element set in $[k]$ and sum it up over all sets, getting
  $${k\choose i} = \sum_{I\in \m I}{|I|\choose i}-\sum_{I\in \m I}\sum_{I'\in \m I, I'\ne I}{|I\cap I'|\choose i}+\ldots.$$
  As $f$ is an integer-valued polynomial, we can write it in the form $f = \sum_{i=0}^r a_i {x\choose i}$ with integer coefficients $a_i$. Multiply the display by $a_i$ and sum it up over $i$. On the left hand side, we get $f(k)$, which is non-zero modulo $q$. On the right hand side, we get $\sum f(|I|)-\sum \sum f(|I\cap I'|)+\ldots$, which is $0$ modulo $q$, a contradiction.
\end{proof}

F\"uredi \cite{Fur91} conjectured that if there is a family $\m I\subset 2^{[k]}$ of sets of size $\ell\in L$ that is closed under intersection and such that its rank is at least $d+1$, then for some positive $\epsilon = \epsilon(k)$ we have $m(n,k,L) = \Omega(n^{d+\epsilon})$. It is true for $d=1$ via a construction of Frankl \cite{Fra83}. R\"odl and Tengan \cite{RT} verified this conjecture for $L =\{0,1,\ldots, d-1, q\}$ and a large class of families $\m I$, that, in particular, including complete designs (Steiner systems): each $q$-element set of $[k]$ is contained in exactly one $p$-element set. In this setup, $\epsilon>0$ from the conjecture is essential: if we ask for $\Omega(n^{d})$ only, then we may simply ignore intersection $q$ and consider a (partial) Steiner system $S(n,k,d)$ in which each $d$-element set is contained in (at most) one $k$-set. However, I do not know any non-trivial lower bounds for the conjecture of F\"uredi for general $L$.

\subsection{Sunflowers, unavoidable hypergraphs}
Let us return to the question, discussed in Section~\ref{sec41}. In 1977, Duke and Erd\H os \cite{DE} asked the following variant of the question of Erd\H os and Rado: what is the largest size $f(n,k,\ell,s)$ of the family $\ff\subset {[n]\choose k}$ that does not contain a $\Delta(s)$-system with kernel of size $\ell$?\footnote{The authors of different papers used different permutations for the $4$ parameters of the function. The size $n$ of the ground set is always the first, while the other indices got at least three different permutations in \cite{DE}, \cite{Fra78b}, and \cite{FF2}. Ours is the one used in \cite{FF2}.} For $s=2$ this is the forbidden one intersection problem, discussed in Section~\ref{sec61}, and for $\ell=0$ this is the famous Erd\H os Matching Conjecture \cite{E} (see \cite{FK21} for a survey). In Section~\ref{sec41} we mentioned that $f(n,k,\ell,s)=O_{k,s} (f(n,k,\ell,2))$ follows from Theorem~\ref{thmfur} of F\"uredi \cite{Fu1}, and that Frankl and F\"uredi determined the asymptotic of $ex(n, \m S^{(k)}_\ell(s))$ for fixed $s,k$ and $k\ge 2\ell+3$, see Theorem~\ref{thmff3}. The case of $k\le 2\ell+2$ resisted progress so far.

What happens if $k,s$ are not fixed? Duke and Erd\H os \cite{DE} in their paper showed that $f(n,k,1,s)\le O_k(s^2 n^{k-2}).$ It is the right dependence on $s$ and $n$ since we can use the same type of construction as in Section~\ref{sec411}, that is an `expansion' of a construction of a $2$-graph on $(s-1)^2$ vertices with no sunflower (a complete bipartite graph with parts of size $(s-1)$). Using weight functions, Chung and Frankl \cite{CF} determined the value of $f(n,3,1,s)$ exactly for $n>Ck^3$. It improved upon an earlier work of Frankl \cite{Fra78b}, and Chung \cite{Ch}, who determined the answer up to lower-order terms.

In cite \cite{DE}, Duke and Erd\H os noted that $f(n,3,2,s) \sim \frac 16 s n^2$ for constant $s$ (and which can be easily extended to $s=o(n)$). Note that we are asking for the largest hypergraph such that no pair of elements is contained in $s$ triples. The upper bound is a simple double-counting, and the lower bound follows by using a connection to Steiner triple systems (e.g., we may find a partial triple system using R\"odl's result, and then combine $s$ randomly permuted copies of those). In particular, we get $f(n,3,2,s) = \Theta (sn^2)$ for all $s$. Much more recently, Buci\'c, Dragani\'c, Sudakov and Tran \cite{BDST} showed that $f(n,4,2,s) = \Theta (s^2n^2)$ and $f(n,4, 3,s) =\Theta(s n^3)$. Their result was generalized by Brada\v c, Buci\'c, and Sudakov \cite{BBS}, who showed that $f(n,k,\ell,s) = \Theta_k(s^{\ell+1}n^{k-\ell-1})$ for $k\ge 2\ell+2$ and $f(n,k,\ell,s) = \Theta_k(s^{k-\ell}n^{\ell})$ for $k\le 2\ell+1$. They mostly followed the methods of Frankl and F\"uredi \cite{FF1}. One difficulty that they managed to overcome is that Theorem~\ref{thmfur} cannot be applied directly here, since $s$ is assumed to be large. They overcome it by constructing an intersection structure-like family, as it would look after the application of Theorem~\ref{thmfur}, without actually going through the same `filtering' process.

In a recent paper \cite{KN}, Noskov and the author managed to find the correct asymptotic of $f(n,k,\ell,s)$ for $k$ linear in $n$:  $n>poly(s,\ell) k$, $n\ge n_0(s,t)$ and $k\ge 2\ell+3$. (We note here that $n\ge n_0(s,\ell)$ with at least exponential $n$ is unavoidable here because of our poor understanding of the original Erd\H os--Rado sunflower problem, cf. examples from Section~\ref{sec411}.)

One motivation for studying $f(n,k,\ell,s)$ for $s$ that grows with $n$ is {\it unavoidable hypergraphs}. A $k$-uniform hypergraph $\m H$ is {\it $(n,e)$-unavoidable} if every $k$-uniform hypergraph with $e$ edges contains a copy of $\m H$. Let us denote $un_k(n,e)$ the maximum number of edges in an $(n,e)$-unavoidable hypergraph. Chung and Erd\H os \cite{CE} determined $un_3(n,e)$ up to a multiplicative constant for the full range of $e$. The case $k=4$ was resolved (again, up to a multiplicative constant) by Buci\'c, Dragani\'c, Sudakov, and Tran \cite{BDST}.

\subsection{Simplices, clusters}
As we have discussed in Section~\ref{sec43}, Erd\H os \cite{E4} conjectured that the largest family in ${[n]\choose k}$ without a triangle has size ${n-1\choose k-1}$ and is a star, and the question was generalized to $d$-dimensional simplices by Chv\'atal \cite{Chv} (see Definition~\ref{defsim} and Conjecture~\ref{conjchv}): if $\ff$ contains no $d$-simplex, $k>d$, and $n\ge (d+1)k/d$, then $|\ff|\le {n-1\choose k-1}$. Frankl proved it for $n\le dk/(d-1)$ \cite{Fra76}, as well as for $d=2,k\ge 5$ and $k\ge 3d+1$, provided $n\ge n_0(k)$ \cite{Fra81}. Then Frankl and F\"uredi proved it for any $k>d$, provided $n\ge n_0(k)$. The case $d=2$ (i.e., the original question of Erd\H os) was settled for all $n,k$ by Mubayi and Verstra\"ete \cite{MV}. Keevash and Mubayi \cite{KM} proved it for any $\zeta>0$ and $k_0(d)\le k\le (\frac 12-\zeta)n$, provided $n>n_0(d,\zeta).$ Keller and Lifshitz \cite{KL} proved it for all $k>k_0(d)$. This, combined with the result of Frankl and F\"uredi, covered all cases $n>n_0(d)$. Shortly after, Currier \cite{Curr2} made impressive progress on the problem and proved it for almost all values of parameters: for $d\ge 3$ and $n\ge 2k-d+2$. Together with the previous results, the following cases of the conjecture are left:  $k\ge 3, dk/(d-1)<n< 2k-d+2$, for $d$ that grow with $k$.

 Actually, there are two very different types of simplices that the aforementioned works dealt with. In Section~\ref{sec43}, we have discussed the very structured $d$-simplices that are called {\it special simplices} (see Definition~\ref{defssim}). Theorem~\ref{thmff6} by Frankl and F\"uredi \cite{FF2} shows that for $k\ge d+3$ and $n\ge n_0(k)$ we have $ssim(n,k,d)\le {n-1\choose k-1}$, i.e., it is enough to forbid only special simplices in order to get the same answer. They conjectured that the same should hold for $k=d+1,d+2$, and proved this for $d=2$. They also conjectured that for $n\ge 2k$ $ssim(n,k,k-1)\le {n-1\choose k-1}$. Some cases of it are resolved by Bermond and Frankl in \cite{BF}. Keller and Lifshitz \cite{KL} showed the validity of the conjecture for $k\ge k_0(d)$ and $n\ge C_0(d) k$. Using spread approximation method, Noskov and the author (in preparation) verified the conjecture for $n\ge poly(d,k)$.

The second type of simplices that was considered are much less structured and are called {\it simplex-clusters}. Let us first define clusters. A {\it $d$-cluster} is a collection of $d$ distinct sets in ${[n]\choose k}$ with empty common intersection and with union of size at most $2k$. A $2$-cluster is just two disjoint sets, and thus $2$-cluster-free families are the same as intersecting families. As another possible generalization of the Erd\H os--Ko--Rado theorem, Katona conjectured that the largest $3$-cluster-free family in ${[n]\choose k}$ has size ${n-1\choose k-1}$ (see \cite{FF0}). Frankl and F\"uredi \cite{FF0} proved Katona's conjecture for $n\ge k^2+3k$.
Mubayi \cite{Mu} proved the conjecture for  $3$-cluster-free families in ${[n]\choose k}$ has size at most ${n-1\choose k-1}$ for all $n,k\ge 3$, and conjectured that the same should hold for any $d\ge 4$. In a follow-up paper, he proved the asymptotic upper bound $(1+o(1)){n-1\choose k-1},$ as well as resolved the case $k=4$ and $n$ sufficiently large. The aforementioned result of Keevash and Mubayi \cite{KM} applies to $(d+1)$-cluster-free sets as well. Then Mubayi and Ramadurai~\cite{MR}, and independently F\"uredi and \"Ozkahya~\cite{FO} showed that the conjecture holds for $k>4$ and $n$ large enough. The case $n<2k$ (where the union condition in the definition of a cluster is automatically fulfilled) was resolved by Frankl \cite{Fra76}. Finally, Currier \cite{Curr1} made impressive progress and proved Mubayi's conjecture for the whole range of parameters: $d\le k$ and $n\ge dk/(d-1)$.\footnote{His work on Chv\'atal's conjecture builds upon this work.} The latter condition is needed so that there are $d$ sets in $[n]$ with empty common intersection.

For $d= k+1$, F\"uredi and \"Ozkahya \cite{FO} gave an elegant argument showing that a $(k+1)$-cluster-free family $\ff\subset{[n]\choose k}$ must have size at most ${n-1\choose k-1}$. First, note that for any $F\in \ff$ there must be a subset $X(F)$ of size $k-1$ that is not contained in any other set $F'\in \ff$. Otherwise, if $F = \{a_1,\ldots, a_k\}$ and $F_i\cap F = F\setminus \{a_i\}$, then $F\cap F_1\cap\ldots\cap F_k = \emptyset$ and $|F\cup F_1\cup \ldots\cup F_k|\le 2k$. Thus, these sets form a $(k+1)$-cluster. Thus the pairs  $(A_F,B_F):=(X(F),[n]\setminus F)$, $F\in \ff$, satisfy the property $A_F\cap B_{F'} = \emptyset$ iff $F = F'$. Applying the Bollob\'as set-pair inequality \cite{Bol} to these pairs, we conclude that $|\ff|\le {|A_F|+|B_F|\choose A_F} = {n-1\choose k-1}$.  We should also note that if in the definition of a $d$-cluster we change $2k$ to $2k-1$, then a complete $k$-partite hypergraph has no such substructure. Frankl and F\"uredi \cite{FF0} conjectured that for $d=3$ the extremum is given by a complete $k$-partite hypergraph and proved it for $k=3$, $n>n_0$. A related notion for a $3$-uniform hypergraph is being {\it cancellative}: not having three sets $A,B,C$ such that $A\Delta B\subset C$.

Merging the two definitions, that of a $d$-simplex and of a $(d+1)$-cluster, we get a $d$-simplex-cluster. Keevash and Mubayi made, quoting, `ambitious conjecture' that for $k\ge d+1>2, n>k(d+1)/d$ and a family in ${[n]\choose k}$ avoiding a $d$-simplex-cluster, its size is at most ${n-1\choose k-1}$, with equality only possible for stars. Lifshitz \cite{Lif} proved it for $n\ge n_0(d)$. Shortly after, Currier proved it in \cite{Curr2} for $n\ge 2k-d+2$ (this is actually the main result of \cite{Curr2}, which immediately implies the result for  Chv\'atal's conjecture).

Let us now discuss the variants of these questions that arose in the literature while studying simplices and clusters. A triangle, and, more generally, a $d$-simplex are the simplest examples of non-trivial intersecting families, that is, intersecting families with covering number $2$. In \cite{MV}, Mubayi and Verstra\"ete actually proved that if $\ff\subset {[n]\choose k}$ does not contain a non-trivial intersecting family on $d+1$ sets, $n\ge(d+1)k/d$ and $k\ge d+1$, then $|\ff|\le {n-1\choose k-1}$, with equality for stars only. They conjectured that the same should hold without the assumption $k\ge d+1$, provided $n$ is large enough. This was proved by Liu \cite{Liu}, who actually showed that is enough to forbid a certain configuration, which is called a $(\vec a,p)$-$\Delta$-system. If the dimension of $\vec a$ is $d$, then {\it $(\vec a,p)$-$\Delta$-system} is any family of the following type: start with a partition of a host set $F_0$ into $d$ parts $A_1,\ldots, A_d$. Let $B_i:= F_0\setminus A_i$. Now extend each $B_i$ into  $b_i>0$ pairwise disjoint $k$-sets, so that, first, all extensions of all $B_i$'s are pairwise disjoint and, second, $\sum b_i = p$. This construction actually generalized the construction of F\"uredi and \"Ozkahya \cite{FO}, who used it to prove the $d$-cluster conjecture of Mubayi for sufficiently large $n$. They used $b_1 =\ldots = b_d = 1$, in which case each $B_i$ has a unique extension $B'_i$. It is not difficult to see in this case that the union of all sets has size exactly $2k$, as well as they do not have a common intersection, so they indeed form a $d$-cluster. The proof of F\"uredi and \"Ozkahya, as well as that of Liu, involved Delta-systems.

Keevash and Mubayi \cite{KM} actually proved their result for families avoiding a $d$-cluster and for families avoiding a {\it strong $d$-simplex}. It is a collection of $d+2$ sets $A,A_1,\ldots, A_{d+1}$, such that $A_1,\ldots, A_{d+1}$ is a $d$-simplex, and $A$ intersects $\cap_{j\ne i}A_i$ for each $i\in[d+1]$. E.g., a strong $1$-simplex is a path of length $3$, i.e., three sets $A_1,A,A_2$, such that $A_1\cap A_2 =\emptyset$ and $A_1\cap A, A\cap A_2\ne \emptyset$.

\subsection{Hypertrees, expanded hypergraphs} In Section~\ref{sec42} we discussed a general result of Frankl and F\"uredi on expanded hypergraphs, as well as introduced an important notion of a cross-cut. Let us give a variant of the definition of an expanded hypergraph.


\begin{defn}[$t$-contractible hypergraphs]\label{defcont} For a positive integer $t \le k-1$,  a $k$-uniform family $\m H$ is {\em $t$-contractible} if each set of $\m H$ contains $t$ vertices of degree $1$. A $t$-contraction of $\m H$ is the $(k-t)$-uniform multi-family obtained by deleting $t$ degree $1$ vertices from each edge of $\m H$.
\end{defn}
The notion of $t$-contractible hypergraphs is slightly is more general than that of the $k$-expanded hypergraphs, see Definition~\ref{defexp}. The (only) difference is that the former allows edges of multiplicity more than $1$ in the contraction.

The following general result was proved by F\"uredi and Jiang \cite{FJ15} using $\Delta$-systems.
\begin{thm}[Jiang and F\"uredi, \cite{FJ15}]\label{thmjf}
  Let $k\ge 4$ be an integer. For any $k$-graph $\m H$ that is a subgraph of a $2$-contractible $k$-tree, we have
  $$ex(n,\m H) = (\sigma_1(\m H)+o(1)){n\choose k-1}.$$
\end{thm}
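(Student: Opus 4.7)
The plan follows the $\Delta$-system strategy from Theorems~\ref{thmff3}--\ref{thmff4}, specialized to the ``$p=0$'' star-shaped extremal regime suggested by $\sigma_1(\m H)$. For the lower bound, take $Y \subset [n]$ with $|Y| = \sigma_1(\m H)$ and set $\ff_0 := \{F \in {[n] \choose k} : F \cap Y \ne \emptyset\}$; then $|\ff_0| = (1+o(1))\sigma_1(\m H){n \choose k-1}$, and any copy of $\m H$ in $\ff_0$ would pull $Y$ back to a 1-crosscut of $\m H$ of size $\sigma_1(\m H)$, forbidden by the definition of $\sigma_1$.

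For the upper bound, assume $|\ff| \geq (\sigma_1(\m H)+\epsilon){n \choose k-1}$. I would iteratively apply F\"uredi's structural theorem (Theorem~\ref{thmfur}) with parameter $s$ large enough in terms of $k$ and $|\m H|$, decomposing $\ff = \ff_1^* \sqcup \cdots \sqcup \ff_m^* \sqcup \ff_{\mathrm{rem}}$ with each $\ff_i^*$ a $(k,s)$-homogeneous structure and $|\ff_{\mathrm{rem}}| = O(n^{k-2})$. By Corollary~\ref{corfur} each intersection structure $\m M_i := \m M(\ff_i^*)$ is intersection-closed, so Lemma~\ref{lemstrucjl} applies: either the rank of $\m M_i$ is below $k-1$ (in which case $|\ff_i^*| = O(n^{k-2})$ via an injection into own subsets and is absorbed into the remainder), or $\ff_i^*$ falls into case~(2) of the lemma and is essentially a star centered at some $v_i \in [n]$. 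A $\Delta$-system join via Lemma~\ref{lemdeltadef} lets us merge star pieces with a common center, so we may assume the $v_i$ are distinct, giving $|\ff| \leq (m + o(1)){n \choose k-1}$ and hence $m \geq \sigma_1(\m H) + 1$.

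To close the argument, embed $\m H$ into $\ff$ using the 2-contractible structure. Fix a 2-contractible $k$-tree $\m H' \supseteq \m H$ and a tree-order $H_1,\ldots,H_t$ of its edges satisfying $|H_j \cap (H_1 \cup \cdots \cup H_{j-1})| \leq k-2$; the subsequence of edges lying in $\m H$ inherits this property. Since $m > \sigma_1(\m H)$, the hypergraph $\m H$ admits a 1-crosscut $Y^*$ of size at most $m$; fix an injection $\iota \colon Y^* \hookrightarrow V := \{v_1,\ldots,v_m\}$, let $\{y_j\} = H_j \cap Y^*$, and write $\pi(j)$ for the index with $v_{\pi(j)} = \iota(y_j)$. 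Place edges greedily: when embedding $H_j$, the image of the already-fixed vertices of $H_j$ (namely $H_j \cap \bigcup_{i<j}H_i$ together with $y_j$) is a set $A_j$ of size at most $k-1$ containing $v_{\pi(j)}$, and by $(k,s)$-homogeneity I select a petal $S_j$ from a $\Delta(s)$-system in $\ff_{\pi(j)}^*$ with kernel $A_j$, disjoint from the previously used new vertices.

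The hardest part will be this final step: one must verify that each prescribed $A_j$ actually lies in the intersection structure $\m M_{\pi(j)}$, so that a $\Delta(s)$-system with kernel $A_j$ is available in $\ff_{\pi(j)}^*$. This is precisely the interplay between the 2-contractibility bound $|A_j| \leq k-1$ and the star structure: all sets of $\ff_{\pi(j)}^*$ pass through $v_{\pi(j)}$, and the homogeneity of $\ff_{\pi(j)}^*$ with sufficiently large $s$ should ensure that every permissible intersection pattern through $v_{\pi(j)}$ occurs as a $\Delta$-system kernel. Choosing $s \gg k|\m H|$ makes the $\Delta$-systems large enough that vertex reuse across the entire embedding is avoided by a union bound.
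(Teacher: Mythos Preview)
Your outline inverts where the $\Delta$-system machinery is actually deployed. Lemma~\ref{lemstrucjl} has two high-rank outcomes, and you skip case~(1) entirely: there some $(k-2)$-set $B$ satisfies $2^B \subset \m M_i$, so the piece $\ff_i^*$ is not a star at all. This is exactly where $2$-contractibility is used: since each edge of the ambient tree $\m H'$ has at least two degree-$1$ vertices, the tree order gives $|H_j \cap \bigcup_{\ell<j} H_\ell| \le k-2$, and one embeds $\m H$ \emph{inside a single} $\ff_i^*$ by routing the reused vertices into parts indexed by $B$ and invoking the $\Delta$-systems that $2^B \subset \m M_i$ guarantees. Your sketch never carries this out and, in effect, uses the $2$-contractibility hypothesis nowhere substantive.

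Conversely, your cross-star embedding in the case-(2) regime cannot work via $\Delta$-systems. Even after the shadow/Kruskal--Katona argument of Section~\ref{sec333} upgrades ``case-(2) intersection structure'' to ``most edges pass through one of a few centres $v_1,\dots,v_m$'' (a step you gloss over), the set $A_j$ of already-placed vertices for $H_j$ was produced by petals in \emph{other} pieces $\ff_{\pi(\ell)}^*$; there is no reason for $A_j$ to lie inside any edge of $\ff_{\pi(j)}^*$, so no $\Delta$-system with kernel $A_j$ is promised there, and enlarging $s$ does not help --- the obstruction is which sets occur as intersections, not how many petals a given kernel has. The route in \cite{FJ15} (cf.\ the discussion in Section~\ref{sec62}) is instead a density argument: each link $\ff(v_i)$ has positive density in $\binom{[n]}{k-1}$, and one embeds across stars by counting extensions in these dense links. (A minor point on the lower bound: the family $\{F: F\cap Y\ne\emptyset\}$ only forces $Y$ to be a vertex cover of a copy of $\m H$, not a $1$-crosscut; the correct construction is $\{F: |F\cap Y|=1\}$, which has the same asymptotic size.)
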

(We remind the reader that $\sigma_1(\m H)$ is the size of the $1$-crosscut minus $1$.) It was shown in \cite{FJ15}  that the conclusion of Theorem~\ref{thmjf} is no longer valid for all $1$-contractible trees.

This theorem extended several previous results on the subject, which we discuss below. First, we should note that for $k$-expansions of graph trees with at most  $k-1$ edges Theorem~\ref{thmff4} gives the same bound. Then,  F\"uredi \cite{Fu14} in 2014 proved this for $k$-expansions of graph trees, also under the condition of $k\ge 4$. (These are $(k-2)$-contractible $k$-trees, if we ignore the multi-edge situation.)
\begin{defn}[Linear paths and cycles]\label{deflinear}
 A $k$-uniform family is a {\em linear path $\m P_\ell^{(k)}$} of length $\ell$ ({\em linear cycle} $\m C_\ell^{(k)}$ of length $\ell$) if it is a $k$-expansion of a graph path of vertex length $\ell$ (cycle of length $\ell$).
\end{defn}
That is, F\" uredi, in particular, determined the asympototic for $ex(n,\m T)$ for linear paths. The exact value for linear paths of length $\ell$ for $k\ge 4$ and $n\ge n_0(k,\ell)$ was determined by F\"uredi, Jiang and Siever \cite{FJS}. Both papers used the Delta-system method. The case $k=3$ was settled by Kostochka, Mubayi and Verstra\"ete \cite{KMV1} using the `random sampling from the shadow' method, together with some Ramsey-type arguments. In the follow-up paper \cite{KMV2}, they extended the result of F\"uredi \cite{Fu14} concerning expansions of graph trees to the case of $k=3$. The case of forests consisting of linear paths was resolved for large $n$ in \cite{BK} using induction and the aforementioned results.

Another type of expansions that are covered by Theorem~\ref{thmjf} are linear cycles. Let us show that for $k\ge 5$ we can construct a $2$-contractible $k$-tree that contains a linear cycle of length $\ell$. Take the graph cycle $\{v_1,\ldots,v_\ell\}$ of length $\ell$. Now we add $3$-edges that will turn this graph cycle a tight $3$-tree: add $\{v_1,v_2,v_3\}, \{v_1,v_3,v_4\},\ldots, \{v_1,v_{\ell-1},v_\ell\}$ in this order. Then add extra vertices
 $u_1,\ldots, u_\ell$ so that $\{v_i,u_i,v_{i+1}\}_{i\in[\ell]}$ is the set of hyperedges of $\m C^{(3)}_\ell$. (We can add them in any order.) It is not difficult to check that the resulting $3$-uniform hypergraph is a tight $3$-tree. Now we may add $k-3$ distinct vertices to each hyperedge and  get a $(k-3)$-contractible $k$-tree that contains $\m C^{(k)}_\ell$.

 Thus, Theorem~\ref{thmjf} implies the asymptotic behavior of $ex(n,C^{(k)}_\ell)$ for $k\ge 5$. The exact value of  $ex(n,C^{(k)}_\ell)$ for large $n$ was actually determined by F\"uredi and Jiang for $k\ge 5$ in an earlier paper \cite{FJ13}. They used Delta-systems. Then the remaining cases $k=3,4$ were resolved by Kostochka, Mubayi and Verstra\"ete \cite{KMV1} using the `random sampling from the shadow' method.

We should note that F\"uredi and Jiang \cite{FJ15} actually proved sharper bounds. For $2$-reducible $k$-trees the error term in Theorem~\ref{thmjf} is $O(n^{k-2})$.  For $(k-2)$-reducible $k$-trees $\m T$ (i.e., essentially $k$-expansions of graph trees),  the upper bound on $ex(n,\m T)$ for large $n$ depends on the value of $f(n,k,1,s)$ (extremal function for forbidden sunflower with core of size $1$), and is sharp in some cases (cf. \cite[Theorem 4.5]{FJ15}).

$3$-expansions of different $2$-graphs were treated by Kostochka, Mubayi and Verstra\"ete in \cite{KMV2, KMV3}. In particular, in \cite{KMV2} the authors showed that for large $n$ $ex(n,\m H)\le {n\choose 2}$ for any $3$-uniform hypergraph that is an expansion of a $2$-graph with cross-cut $2$, and in \cite{KMV3} they showed that $ex(n,\m H) = O(n^2)$ for graphs with cross-cut $3$. They also showed that $ex(n,\m H)=\Theta(n^2)$ for the $3$-expansion of the graph of the $3$-dimensional cube. In \cite{LSY} the authors sharpened some of the results of \cite{KMV2}. We refer to \cite{MV} for a survey on the topic of extremal problems for graph expansions.

In the paper \cite{FJKMV} the authors determined the asymptotic of $ex(n, \m C)$, where $\m C$ is a $(a+b)$-uniform path, where consecutive edges alternately share $a$ and $b$ vertices. They developed an averaging technique which they called an asymmetric version of Katona's circle method.

\subsection{Other surveys}
In this short review, we covered only a small portion of the field. We refer to several other excellent surveys in the field. A survey by  F\"uredi \cite{Fur91} has a significant overlap with this survey; Mubayi and Verstra\"ete \cite{MV} treat Tur\'an problems for expansions; intersection theorems are treated in a survey by Ellis \cite{Ell}; methods for solving (mostly dense) hypergraph Turan problems are surveyed by Keevash in \cite{Kee2}. In this survey, we did not treat Berge hypergraphs. See, e.g., a recent paper \cite{FL} for a brief overview and an interesting result. Applications of sunflowers and the spread lemma are discussed in an expository paper by Rao \cite{Rao2}. There are also two books on extremal set theory: by Frankl and Tokushige \cite{FT} and by Gerbner and Patk\'os \cite{GP}.

\section{Acknowledgements} I am indebted to  Liza Iarovikova, Fedor Noskov, Nikolay Terekhov, Georgii Sokolov and Yakov Shubin for proofreading earlier versions of the manuscript and providing numerous suggestions that improved the presentation, as well as to Peter Frankl, Tao Jiang and the anonymous referee for giving multiple helpful comments on the final version of the text. The research is supported by the Ministry of Science and Higher Education of the Russian Federation, project No. FSMG-2024-0011

\end{document}